\newcommand{\red}[1]{{\color{red}#1}}
\newcommand{\blue}[1]{{\color{blue}#1}}
\renewcommand{\subsection}{\@startsection{subsection}{2}{0pt}{-3ex
plus -1ex minus -0.2ex}{-2mm plus -0pt minus
-2pt}{\normalfont\bfseries}} \makeatother
\numberwithin{equation}{subsection}
\theoremstyle{plain}
    \newtheorem{Theorem}[equation]{Theorem}
    \newtheorem{Corollary}[equation]{Corollary}
\newtheorem{Lemma}[equation]{Lemma} 
\newtheorem{Proposition}[equation]{Proposition} 
\theoremstyle{definition}
\newtheorem{defn}[equation]{Definition}
\newtheorem*{Definition}{Definition} 
\theoremstyle{remark}
\newtheorem{examp}[equation]{Example}
\newtheorem{rem}[equation]{Remark}
\newtheorem{rems}[equation]{Remarks}
\newtheorem{claim}[equation]{Claim}
\newtheorem{Remark}[equation]{Remark} 
\DeclareMathOperator{\Ad}{{\mathrm{Ad}}}
\DeclareMathOperator{\ad}{{\mathrm{ad}}}
\DeclareMathOperator{\Spec}{{\mathrm{Spec}}}
\DeclareMathOperator{\Sym}{{\mathrm{Sym}}}
\DeclareMathOperator{\mmod}{{\text{-}{\mathrm{mod}}}}
\DeclareMathOperator{\Rep}{{\mathrm{Rep}}}
\DeclareMathOperator{\Ind}{{\mathrm{Ind}}}
\DeclareMathOperator{\Res}{{\mathrm{Res}}}
\DeclareMathOperator{\Ker}{{\mathrm{Ker}}}
\DeclareMathOperator{\bimod}{{\mathrm{\text{-}\mathrm{bimod}}}}
\DeclareMathOperator{\comod}{{\mathrm{\text{-}\mathrm{comod}}}}
\DeclareMathOperator{\act}{{\mathrm{act}}}
\DeclareMathOperator{\coact}{{\mathrm{coact}}}
\DeclareMathOperator{\Av}{{\mathrm{Av}}}
\renewcommand{\o}{\otimes }
\newcommand{\td}{\text{-}}
\newcommand{\LieAlgebraofUnipotentRadicalOfOppositeBorel}{\mathfrak{n}}
\newcommand{\unipotentRadicalOfOppositeBorel}{N}
\newcommand{\LieAlgebraOfUnipotentRadicalOfBorel}{\bar{\mathfrak{n}}}
\newcommand{\BorelSubgroup}{\bar{B}}
\newcommand{\unipotentRadicalOfBorel}{\bar{N}}
\newcommand{\oppositeOfBorelSubgroup}{B}
\newcommand{\characterlatticeforT}{{\mathbb{X}}^*(T)}
\newcommand{\LTd}{\LT^{\ast}}
\newcommand{\LGd}{\mathfrak{g}^*}
\newcommand{\Symt}{\operatorname{Sym}(\mathfrak{t})}
\newcommand{\LG}{\mathfrak{g}}
\newcommand{\LT}{\mathfrak{t}}
\newcommand{\regularCentralizersForThisGroupDefaultstoG}[1][G]{Z^{\text{reg}}_G}
\newcommand{\hb}{{\hbar}}
\newcommand{\khb}{{k[\hb]}}
\newcommand{\C}{\mathcal{C}} 
\newcommand{\D}{\mathcal{D}} 
\renewcommand{\O}{\mathcal{O}} 
\newcommand{\F}{\mathcal{F}} 
\newcommand{\QCoh}{\operatorname{QCoh}} 
\newcommand{\Ga}{\Gamma }
\newcommand{\modwithdash}{\mmod} 
\newcommand{\op}{{\text{op}}} 
\newcommand{\Z}{\mathcal{Z}} 
\newcommand{\Avpsil}{\text{Av}_*^{\psi_{\ell}}} 
\newcommand{\Avpsir}{\text{Av}_*^{\psi_{r}}} 
\newcommand{\Ngo}{\operatorname{{\mathsf{Ng\mathaccent"705E o}}}}
\newcommand{\ups}{\Upsilon }
\newcommand{\ssh}{/\!/ }
\newcommand{\iso}{{\;\stackrel{_\sim}{\to}\;}}
\newcommand{\uhg}{{U_\hb\g}}
  \newcommand{\Dh}{{D_\hb}}
  \newcommand{\Dht}{{D_{\lhd,\hb}}}
\newcommand{\obl}{{\operatorname{Obl}}}
\newcommand{\zhg}{{Z_\hb\g}}
\newcommand{\jh}{{{\mathfrak J}_\hb}}
\newcommand{\jht}{{\J_{\lhd,\hb}}}
\newcommand{\jt}{{\J_{\lhd}}}
\newcommand{\erem}{\hfill$\lozenge$\end{rem}}
\newcommand{\eerem}{\hfill$\lozenge$\end{rem}\vskip 3pt }
\newcommand{\beq}{\begin{equation}\label}
  \newcommand{\eeq}{\end{equation}}
\newcommand{\fc}{{\mathfrak c}}
\def\ccirc{{{}_{\,{}^{^\circ}}}}
\DeclareMathOperator{\Hom}{\mathrm{Hom}}
\DeclareMathOperator{\GL}{\mathrm{GL}}
\newcommand{\sk}{{\operatorname{sk}}}
\newcommand{\pt}{{\operatorname{pt}}}
\newcommand{\De}{\Delta }
\newcommand{\nilDAHAAtHBARISONE}{\mathbb{H}}
\renewcommand{\o}{\otimes }
\newcommand{\wt}{\widetilde }
\newcommand{\Id}{{\operatorname{Id}}}
\newcommand{\BX}{{{\mathsf Q}}}
\newcommand{\BM}{{\mathbf M}}
\newcommand{\quantizationOfCentralizersHBAREQUALSONE}{{\mathbf N}}
\newcommand{\fn}{{\mathfrak n}}
\newcommand{\scr}[1]{\mathscr{#1}}
\newcommand{\we}{_{\text{weak}}}
\newcommand{\Mi}{{\mathbb{M}}}
\newcommand{\MiuraBimodule}{\Mi}
\newcommand{\Dpsirhbar}{D^{\psi_{r}}_{\hbar}}
\newcommand{\Dpsilhbar}{D^{\psi_{\ell}}_{\hbar}}
\newcommand{\Dpsirzero}{\mathcal{O}(T^*_{\psi}(G/\unipotentRadicalOfOppositeBorel))}
\newcommand{\biWhittakerDifferentialOperatorsHBAR}{\mathfrak{J}_{\hbar}}
\newcommand{\Dpsil}{D^{\psi_{\ell}}}
\newcommand{\Dpsir}{D^{\psi_{r}}}
\newcommand{\biWhittakerDifferentialOperatorsonG}{\mathfrak{J}}
\newcommand{\biWhittakerDifferentialOperatorsZERO}{J}
\newcommand{\freeFunctorASYMPTOTICSETTING}{{\operatorname{Fr}}}
\newcommand{\DhbarG}{{D_{\hbar}}}
\newcommand{\Zhbarg}{Z_{\hbar}\LG}
\newcommand{\Zhbarh}{Z_{\hbar}\LH}
\newcommand{\Uhbarg}{U_{\hbar}\LG}
\newcommand{\AsymptoticHarishChandraBimodulesForTHISGROUPDefaultsToG}{\mathcal{HC}_{\hbar}}
\newcommand{\kostantWhittakerForNONFILTEREDtoZGBIMOD}
{\varkappa}
\newcommand{\kostantWhittakerVIEWEDASLEFTZGMODULE}{\varkappa_{\ell}}
\newcommand{\kostantWhittakerToWComod}{\kappa}
\newcommand{\kostantWhittakerInCLASSICALSETTINGwithGRADEDLIFTtoREPOFWZERO}{\kappa_0}
\newcommand{\kostantWhittakerHBARandVIEWEDASLEFTZGMODULE}{\varkappa_{\hbar, \ell}}
\newcommand{\kostantWhittakerHBARtoZGBIMOD}{\varkappa_{\hbar}} 
\newcommand{\kostantWhittakerASYMPTOTICtoComodForASYMPTOTICbiWhit}{\kappa_{\hbar}}
\newcommand{\LieAlgebraofUnipotentRadicalOfOppositeBorelPSI}{\LieAlgebraofUnipotentRadicalOfOppositeBorel^{\psi}}
\newcommand{\LieAlgebraofUnipotentRadicalOfOppositeBorelPSIell}{\mathfrak{n}^{\psi}_{\ell}}
\newcommand{\LieAlgebraofUnipotentRadicalOfOppositeBorelPSIl}{\LieAlgebraofUnipotentRadicalOfOppositeBorelPSIell}
\newcommand{\LieAlgebraofUnipotentRadicalOfOppositeBorelPSIr}{\mathfrak{n}^{\psi}_{r}}
\def\ccirc{{{}_{^{\,^\circ}}}}
\newcommand{\la}{\lambda}
\newcommand{\J}{{\mathfrak J}}
\newcommand{\bone}{{\boldsymbol{1}}}
\newcommand{\dd}{{\mathscr{D}}}
\newcommand{\oo}{{\mathcal{O}}}
\newcommand{\U}{{U}}
\newcommand{\back}{\backslash }
\renewcommand{\t}{{\mathfrak t}}
\newcommand{\Gr}{{\mathsf{Gr}}}
\newcommand{\al}{\alpha }
\newcommand{\vkap}{\varkappa }
\newcommand{\kap}{\kappa }
\newcommand{\eps}{\epsilon }
\newcommand{\g}{\mathfrak{g}}
\newcommand{\h}{\mathfrak{h}}
\newcommand{\inv}{^{-1}}
\newcommand{\cf}{{\mathcal F}}
\newcommand{\en}{{\enspace}}
\newcommand{\vi}{${\en\sf {(i)}}\;$}
\newcommand{\vii}{${\;\sf {(ii)}}\;$}
\newcommand{\viii}{${\sf {(iii)}}\;$}
\newcommand{\iv}{${\sf {(iv)}}\;$}
\newcommand{\vv}{${\sf {(v)}}\;$}
\newcommand{\sset}{\subseteq}
\newcommand{\intoo}{\,\xymatrix{\ar@{^{(}->}[r]&}\,}
\newcommand{\ontoo}{\,\xymatrix{\ar@{->>}[r]&}\,}
\newcommand{\into}{\,\hookrightarrow\,}
\newcommand{\mto}{\mapsto}
\newcommand{\onto}{\,\twoheadrightarrow\,}
\newcommand{\hdot}{{\:\raisebox{2pt}{\text{\circle*{1.5}}}}}
\newcommand{\freeFunctorForNONFILTEREDHarishChandraBimodules}{\text{Fr}}
\newcommand*{\HarishChandraBimodulesForTHISGROUPDefaultsToG}{\mathcal{HC}}
\newcommand{\LH}{\mathfrak{h}}
\newcommand{\bfN}{\mathbf{N}}
\newcommand{\differentialOperatorsOnT}{D(T)}
\newcommand{\differentialOperatorsOnG}{D(G)}
\newcommand{\quantizationOfCentralizersHBAR}{\mathbf{N}_{\hbar}}
\newcommand{\End}{\operatorname{End}}
\newcommand{\PhiTHISGROUPRho}[1]{\mathbf{\Phi}_{#1, {V}}}
\newcommand{\IndTG}{\operatorname{Ind}_T^G}
\newcommand{\kostantWhittakerSTANDIN}{K}
\newcommand{\fieldOfFractionsForSymt}{\operatorname{Q}}
    \title{Quantization of the universal centralizer and central D-modules}
   \author{Tom Gannon and Victor Ginzburg}    
\begin{document}

\begin{abstract}
The group scheme of universal centralizers of a complex reductive group $G$
has a quantization called the \textit{spherical nil-DAHA}. The category of modules over this ring
is equivalent, as a symmetric monoidal category, to the category of bi-Whittaker $D$-modules
on $G$. We construct a braided monoidal equivalence of
this category with a full monoidal subcategory of the abelian category of
Ad $G$-equivariant $D$-modules, establishing a $D$-module abelian counterpart of an equivalence established by Bezrukavnikov and Deshpande, in a different way. 

As an application of our methods, we prove conjectures of Ben-Zvi and Gunningham by relating this equivalence to parabolic induction and prove a conjecture of Braverman and Kazhdan in the $D$-module setting.
\end{abstract}

\maketitle

{\small
\tableofcontents
}
\section{Main results}

We work over the field  of complex numbers to be denoted $k$, put $\o=\o_k$,
and write ${\mathcal O}(X)$, resp. $D(X)$, for the algebra of regular functions,
resp. differential operators, on
a variety $X$.
\subsection{Knop-Ng\^o morphism}
\label{main1}
Let $G$ be a connected
reductive group with Lie algebra $\LG$.
Let  ${\mathfrak c}:=\LG^*\ssh G=\Spec((\Sym\LG)^G)$ be the categorical quotient
 of  the coadjoint action
of $G$ on $\g^*$ (to be denoted `$\Ad$'), and let
  $Z=\{(g,\xi)\in G\times \g^*\mid \Ad g(\xi)=\xi\}$ 
denote the commuting variety, viewed as a reduced closed subscheme of $G\times \g^*$.
The projection map $pr: Z \to \LGd$ makes $Z$
a group scheme over $\LGd$ such that the fiber  $pr\inv(\xi)$ for $\xi\in\LG^*$
 is the stabilizer of $\xi$ in $G$.
Restricting to the set $\LG^*_{\text{reg}}\subseteq\LG^*$  
of regular elements, one obtains
the group scheme $Z_{\text{reg}}=pr\inv(\LG^*_{\text{reg}})\to \LG^*_{\text{reg}}$
of \lq{}regular centralizers\rq{}.

In his proof of the fundamental lemma, Ng\^o constructed a morphism from a smooth affine abelian group scheme $J\to{\mathfrak c}$ called
the \lq universal centralizer\rq{}, equipped with
a  canonical isomorphism $\LG^*_{\text{reg}}  \times_{{\mathfrak c}} J\iso
Z_{\text{reg}}$
of  $G$-equivariant group schemes over $\LG^*_{\text{reg}}$
that fits into commutative diagram
 
 \begin{equation}\label{Classical Ngo Map on Regular Locus}
  \xymatrix{
    \LG^*_{\text{reg}}  \times_{{\mathfrak c}} J\  \ar@{^{(}->}[rr]
    \ar[d]^<>(0.5){\cong}&&\   \LG^* \times_{{\mathfrak c}} J\ar[d]^<>(0.5){j}\\
Z_{\text{reg}} \   \ar@{^{(}->}[rr] && \  Z
   }
\end{equation}
of morphisms of  $G$-equivariant group schemes, \cite[Section 2.1]{NgoLeLemmeFondamentalPourLesAlgebresdeLie}.
The scheme $J$, resp. its group theoretic analogue, has been  considered earlier
by Kostant \cite{KostantTheSolutiontoaGeneralizedTodaLatticeandRepresentationTheory},
resp. Lusztig  \cite{LusztigCoxeterOrbitsandEigenspacesofFrobenius},
using a less canonical construction involving slices.
Similar constructions in the setting of spherical varieties were also considered by Knop \cite{KnopAutomorphismsRootSystemsandCompactificationsofHomogeneousVarieties}.

The group scheme structure
makes $\mathcal{O}(Z)$, resp. $\mathcal{O}(J)$, a Hopf algebra over
$\Sym(\LG)=\mathcal{O}(\LG^*)$, resp. $\mathcal{O}({\mathfrak c})$, and
there is an isomorphism
$\mathcal{O}(\LG^* \times_{{\mathfrak c}} J)=\Sym(\LG)\otimes_{{\mathcal{O}}({\mathfrak c})} \mathcal{O}(J)$
of Hopf algebras over
$\Sym(\LG)$.
Therefore, it follows from the above diagram that 
restriction  of regular functions from $Z$ to $Z_{\text{reg}}$
can be factored  as a composition of  Hopf algebra maps
\begin{equation}\label{class ngo alg}
\mathcal{O}(Z)\xrightarrow{{\mathsf{kn}}}
\Sym(\LG)\otimes_{\mathcal{O}({\mathfrak c})} \mathcal{O}(J)\xrightarrow{}\mathcal{O}(Z_{\text{reg}}).
\end{equation} We  refer to the first map in \labelcref{class ngo alg} as the \textit{Knop-Ng\^o morphism}.

Let $\mathcal{O}(J)\mmod$, resp. $\mathcal{O}(Z)\mmod^{G}$, 
be the abelian category of $\mathcal{O}(J)$-modules, resp. $G$-equivariant
$\mathcal{O}(Z)$-modules.
The group scheme structure gives each of these categories a monoidal structure with respect to convolution.
Using Diagram 
\labelcref{Classical Ngo Map on Regular Locus} we obtain a chain of monoidal functors
\begin{equation}\label{class ngo functor}
  \mathcal{O}(J)\mmod
  \ \xrightarrow{\Sym(\LG)\otimes_{\mathcal{O}({\mathfrak c})}({\text{-}})}\
\mathcal{O}(\LG^*\times_{\mathfrak{c}}J)\mmod^{G}\ \xrightarrow{j_*}\ 
\mathcal{O}(Z)\mmod^{G},
\end{equation}
where the second functor $j_*$, which corresponds to push-forward of quasi-coherent
sheaves along the embedding $j$, makes an $\mathcal{O}(\LG^*\times_{\mathfrak{c}} J)$-module
into an $\mathcal{O}(Z)$-module via the algebra map ${\mathsf{kn}}$.

%

One of the goals of this paper, inspired to a large extent by the ideas of Ben-Zvi and Gunningham
\cite{BenZviGunninghamSymmetriesofCategoricalRepresentationsandtheQuantumNgoAction},
is to construct a quantization of the algebra morphism ${\mathsf{kn}}$
in \eqref{class ngo alg} and 
the composite functor
in  \labelcref{class ngo functor}.

The algebras that appear
in \eqref{class ngo alg} each have a well known \textit{quantization},
that is, non-commutative deformation.
The quantization of  $\Sym(\LG)$,
resp. $\mathcal{O}({\mathfrak c})$, is the enveloping algebra
$U\LG$, resp.  the center $Z\LG$ of $U\LG$.
The  quantization of $\mathcal{O}(G\times \LG^*)$
is the algebra $ D(G)$, where
we have used the identification $G\times \LG^*\cong T^*G$.
The left ideal $ D(G)\ad\LG\sset D(G)$ quantizes the
ideal of definition of
the {\em not necessarily reduced} subscheme $Z_{\text{scheme}}\sset G\times \LG^*$.
Therefore,
the $D(G)$-module
${\mathbf N}:= D(G)/D(G)\ad\LG$ provides a quantization of 
the functions on this subscheme, viewed as an $\mathcal{O}(T^* G)$-module.
We show that the coalgebra structure on ${\mathcal O}(Z)$ can be
  deformed to the structure of a coalgebra object on ${\mathbf N}$, cf. \cref{Definition of Ring Object Coring Object Symmetric and Left Comodule}.


\newcommand{\gv}{{G^\vee}}
The algebra $\mathcal{O}(J)$ is known to have
a  deformation to a noncommutative algebra 
${\mathfrak J}$ called {\em spherical nil DAHA}.
This algebra
may be defined as a quantum Hamiltonian reduction (aka bi-Whittaker reduction)
of $D(G)$ with respect to  $(N\times N_{op},\, \psi\times\psi)$,
where $N$ is the unipotent radical of a Borel
subgroup $B\sset G$,
the groups $N$ and $N_{op}$ act on $G$ by left and right
translations, respectively, and  $\psi$ is  a nondegenerate character of $N$,
see \cite{BezrukavnikovFinkelbergMirkovicEquivariantHomologyandKTheoryofAffineGrassmanniansandTodaLattices}, \cite{BezrukavnikovFinkelbergEquivariantSatakeCategoryandKostantWhittakerReduction}, and also \cref{Monoidal Structure on BiWhit Mod and Monoidality}
for more details. 
It was shown in \textit{loc. cit.} that $\biWhittakerDifferentialOperatorsonG$ is isomorphic, via geometric Satake,
to the equivariant Borel-Moore homology of the affine Grassmannian
$\mathsf{Gr}_{\gv}$ of the Langlands dual group as an algebra.
The coproduct on the
equivariant Borel-Moore homology of the affine Grassmannian induced,
via the K\"unneth formula, by
the diagonal embedding
$\mathsf{Gr}_{\gv}\into \mathsf{Gr}_{\gv}\times \mathsf{Gr}_{\gv}$, therefore equips $\biWhittakerDifferentialOperatorsonG$ with a cocommutative coalgebra structure.
In \cref{Kostant-Whittaker Reduction Is Monoidal Localization Section} we will give an algebraic construction of this
coproduct and an associated
symmetric monoidal structure on the category ${\mathfrak J}\mmod$.

In view of the above, it is natural to expect that the quantization of the  map
$\mathsf{kn}$ is a map of the form
\begin{equation}\label{quant ngo alg}
    {\mathbf N} \to
    U\LG\otimes_{Z\LG}{\mathfrak J}.
  \end{equation}
  
  A primary difficulty in  constructing such a map is that
  it is not  {\em a priori} clear what kind of structure to expect from
  the map in \labelcref{quant ngo alg}:
  the domain of the map
  has the structure
  of a $ D(G)$-module 
  while the codomain is a $(U\LG, {\mathfrak J})$-bimodule that
  has no immediately visible  $D(G)$-action.
  
  We construct the quantized Knop-Ng\^o
  morphism and the quantized Knop-Ng\^o  functor
  using the monoidal category $\HarishChandraBimodulesForTHISGROUPDefaultsToG$ of
Harish-Chandra $(U\LG, U\LG)$-bimodules, a category which serves as a bridge between the categories of ${\mathfrak J}$-modules and $D(G)$-modules. 
To this end, given a ${\mathfrak J}$-module $E$ we make   $U\LG\otimes_{Z\LG} E$
a Harish-Chandra bimodule with `diagonal'  action of
the algebra $U\LG\otimes U\LG_{op}$ defined by the formula
  $u_1\otimes u_2: u\otimes e\mapsto u_1uu_2\otimes e$.
  A key idea is  that the ${\mathfrak J}$-module structure on $E$ provides
  the Harish-Chandra bimodule $U\LG\otimes_{Z\LG}E$
  with a canonical {\em nontrivial} central structure.
  That is, we show that there is
    a canonical lift of $U\LG\otimes_{Z\LG}E$ to an object of  the Drinfeld center  
  of $\HarishChandraBimodulesForTHISGROUPDefaultsToG$. 
The Drinfeld center has been identified in \cite{BezrukavnikovFinkelbergOstrikCharacterDModulesViaDrinfeldCenterofHarishChandraBimodules}
with the category $\differentialOperatorsOnG\mmod^{\Ad G}$ of $G$-equivariant $\differentialOperatorsOnG$-modules, where $G$ acts by the adjoint action. We will actually use a
strengthening of this result
that
identifies the latter category
with the {\em centralizer}  of the category $\Rep G$ in $\HarishChandraBimodulesForTHISGROUPDefaultsToG$, see \cref{Radj for Drinfeld Center via Relative Setting}. 
We therefore obtain the structure of a
$G$-equivariant $\differentialOperatorsOnG$-module on $U\LG\otimes_{Z\LG}E$.

Let $W$ be the Weyl group and $D(T)^W$ the algebra of $W$-invariant
differential operators on the maximal torus $T$.
In \cref{Miura Bimodule and Comodules Subsection}, cf. also \cite{Gin},  we will see that there is a natural algebra embedding $D(T)^W \into \biWhittakerDifferentialOperatorsonG$.
There is also an algebra isomorphism 
$(\End_{\differentialOperatorsOnG}{\quantizationOfCentralizersHBAREQUALSONE})_{\text{op}} \cong \differentialOperatorsOnT^W$ constructed by Harish-Chandra 
via the `radial parts' map, cf. \cite[Remark (iv)]{GinzburgParabolicInductionandtheHarishChandraDModule}
for a short exposition.  This gives a left $\differentialOperatorsOnT^W$-action on
any $\biWhittakerDifferentialOperatorsonG$-module and 
a right $\differentialOperatorsOnT^W$-action
on ${\quantizationOfCentralizersHBAREQUALSONE}$ that commutes with the
(left) $\differentialOperatorsOnG$-action.

Our first result, whose first part is proved in
\cref{Definition of Ngo Functor Subsection}
and second part is proved in \cref{Proof of First Theorem Part Two}, reads
 
\begin{Theorem}\label{Theorem Asserting Existence of Ngo Functor as Braided Monoidal Fully Faithful Functor to Center}
  \vi   The functor $\biWhittakerDifferentialOperatorsonG\mmod \to {\HarishChandraBimodulesForTHISGROUPDefaultsToG},\, E\mapsto U\LG \otimes_{Z\LG} E$,
  has a canonical lift
  to a fully faithful, braided monoidal, exact functor
  $\ups: {\biWhittakerDifferentialOperatorsonG}\mmod \to \differentialOperatorsOnG\mmod^{\Ad G}$.
  
  \vii For any $\biWhittakerDifferentialOperatorsonG$-module $E$, the map
  $D(G) \o E \to U\LG \otimes_{Z\LG} E, \, u \o e\mto u(1\o e)$,
  given by the $D(G)$-action yields an isomomorphism of $D(G)$-modules:
  \[\mathbf{N}\o_{D(T)^W} E \iso\ups(E).
    \]
  \end{Theorem}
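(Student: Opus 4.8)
The plan is to prove part (i) first and then deduce part (ii) from it. For part (i), the key observation is that the $\biWhittakerDifferentialOperatorsonG$-module structure on $E$ is exactly the data needed to promote the Harish-Chandra bimodule $U\LG\otimes_{Z\LG}E$ to a central object. First I would recall that $\biWhittakerDifferentialOperatorsonG$ is, by construction as a bi-Whittaker (quantum Hamiltonian) reduction of $D(G)$, the endomorphism algebra of the object $\mathbf{N}$ regarded inside $\HarishChandraBimodulesForTHISGROUPDefaultsToG$ together with its central (i.e. $D(G)$-module) structure — more precisely, $\mathbf{N}$ is the image under bi-Whittaker averaging of the regular bimodule, and its endomorphisms as a central object form $\biWhittakerDifferentialOperatorsonG$ (this is the content referenced in \cref{Monoidal Structure on BiWhit Mod and Monoidality} and \cref{Kostant-Whittaker Reduction Is Monoidal Localization Section}). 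Hence for any $\biWhittakerDifferentialOperatorsonG$-module $E$ the object $\mathbf{N}\otimes_{\biWhittakerDifferentialOperatorsonG}E$ acquires a central structure, and one computes directly from the formula for bi-Whittaker reduction that as a plain Harish-Chandra bimodule this is $U\LG\otimes_{Z\LG}E$ with the diagonal action; this simultaneously furnishes the canonical lift and identifies $\KnopNgo$ as a relative tensor functor, whence it is exact and monoidal.

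Next I would establish full faithfulness and braidedness. For full faithfulness: since $\mathbf{N}$ carries the tautological $\biWhittakerDifferentialOperatorsonG$-action on the right commuting with its $D(G)$-structure, and since $\biWhittakerDifferentialOperatorsonG = \End(\mathbf{N})$ in the relevant center, the functor $E\mapsto \mathbf{N}\otimes_{\biWhittakerDifferentialOperatorsonG}E$ is fully faithful provided $\mathbf{N}$ is a projective (or at least suitably flat, generating) $\biWhittakerDifferentialOperatorsonG$-module with $\End_{\biWhittakerDifferentialOperatorsonG}(\mathbf{N})$ acting through the center — this is a standard Morita-type criterion, and the freeness/flatness of $\mathbf{N}$ over $\biWhittakerDifferentialOperatorsonG$ should follow from the analogous freeness of $U\LG$ over $Z\LG$ (Kostant) plus the description of $\biWhittakerDifferentialOperatorsonG$ as a reduction. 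For the braided monoidal structure: the identification of $\differentialOperatorsOnG\mmod^{\Ad G}$ with the centralizer of $\Rep G$ in $\HarishChandraBimodulesForTHISGROUPDefaultsToG$ (\cref{Radj for Drinfeld Center via Relative Setting}) equips the target with its braiding, and the symmetric monoidal structure on $\biWhittakerDifferentialOperatorsonG\mmod$ constructed in \cref{Kostant-Whittaker Reduction Is Monoidal Localization Section} is designed to match; one must check that the central structure on $\mathbf{N}\otimes_{\biWhittakerDifferentialOperatorsonG}E$ is compatible with the coalgebra structure on $\mathbf{N}$, i.e. that the comultiplication $\mathbf{N}\to\mathbf{N}\otimes\mathbf{N}$ (from \cref{Definition of Ring Object Coring Object Symmetric and Left Comodule}) intertwines the half-braidings. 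This compatibility is the technical heart and should be verified by unwinding the definitions of both the half-braiding (given by the $D(G)$-action) and the coproduct on $\mathbf{N}$.

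Part (ii) is then essentially a reorganization of part (i). The right $D(T)^W$-action on $\mathbf{N}$ comes from the Harish-Chandra isomorphism $(\End_{D(G)}\mathbf{N})_{\op}\cong D(T)^W$; since $\biWhittakerDifferentialOperatorsonG\cong Z\LG$-relative endomorphisms enhanced by the bi-Whittaker structure, and $D(T)^W$ sits inside $\biWhittakerDifferentialOperatorsonG$ as the "classical" or "undeformed" part (via the Harish-Chandra-type embedding $Z\LG\cong D(T)^W{}^{\text{-}\text{part}}$, or more precisely via $\biWhittakerDifferentialOperatorsonG$ being a module over $D(T)^W$), the map $D(G)\otimes E\to U\LG\otimes_{Z\LG}E$, $u\otimes e\mapsto u(1\otimes e)$, factors through $\mathbf{N}\otimes_{D(T)^W}E = (D(G)/D(G)\ad\LG)\otimes_{D(T)^W}E$. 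I would then check that the induced map $\mathbf{N}\otimes_{D(T)^W}E\to\KnopNgo(E)=\mathbf{N}\otimes_{\biWhittakerDifferentialOperatorsonG}E$ is an isomorphism: surjectivity is clear since $1_{\mathbf{N}}\otimes e$ generates, and injectivity follows from comparing flatness — both sides are built by base change from $\mathbf{N}$, and the two relevant subalgebras $D(T)^W$ and $\biWhittakerDifferentialOperatorsonG$ act on $\mathbf{N}$ compatibly so that $\mathbf{N}\otimes_{D(T)^W}\biWhittakerDifferentialOperatorsonG\cong\mathbf{N}$ (this last isomorphism is really the assertion that $\mathbf{N}$, as a $(D(G),\biWhittakerDifferentialOperatorsonG)$-bimodule, is obtained from its $D(T)^W$-module structure, which is again the bi-Whittaker reduction picture).

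The main obstacle I anticipate is the compatibility check in part (i) — showing that the half-braiding on $\KnopNgo(E)$, which a priori is transported from the $D(G)$-module structure via the Bezrukavnikov-Finkelberg-Ostrik identification, agrees with the one coming from the coproduct on $\mathbf{N}$ and the symmetric monoidal structure on $\biWhittakerDifferentialOperatorsonG\mmod$. Making these two descriptions of the same structure literally coincide requires carefully tracing through the geometric Satake / Borel-Moore homology origin of the coproduct on $\biWhittakerDifferentialOperatorsonG$ versus its algebraic construction in \cref{Kostant-Whittaker Reduction Is Monoidal Localization Section}, and this is where the bulk of the genuine work lies; the rest is flatness bookkeeping and Morita theory.
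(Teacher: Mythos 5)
There is a genuine gap, and it sits at the foundation of your argument for part (i). You assert that $\biWhittakerDifferentialOperatorsonG$ is the endomorphism algebra of $\quantizationOfCentralizersHBAREQUALSONE$ (as a central object) and that $\quantizationOfCentralizersHBAREQUALSONE$ carries a ``tautological'' right $\biWhittakerDifferentialOperatorsonG$-action commuting with the $D(G)$-action, so that $\KnopNgo(E)=\quantizationOfCentralizersHBAREQUALSONE\otimes_{\biWhittakerDifferentialOperatorsonG}E$. This is false: by Harish-Chandra's radial parts map one has $(\End_{D(G)}\quantizationOfCentralizersHBAREQUALSONE)_{\op}\cong D(T)^W$, and $D(T)^W\hookrightarrow\biWhittakerDifferentialOperatorsonG$ is a \emph{proper} subalgebra embedding, so $\quantizationOfCentralizersHBAREQUALSONE$ admits no right $\biWhittakerDifferentialOperatorsonG$-action commuting with $D(G)$ and the relative tensor product $\quantizationOfCentralizersHBAREQUALSONE\otimes_{\biWhittakerDifferentialOperatorsonG}E$ is not defined. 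Your closing identity $\quantizationOfCentralizersHBAREQUALSONE\otimes_{D(T)^W}\biWhittakerDifferentialOperatorsonG\cong\quantizationOfCentralizersHBAREQUALSONE$ is likewise false: by part (ii) itself the left side is $U\LG\otimes_{Z\LG}\biWhittakerDifferentialOperatorsonG$, whose $G$-invariants are $\biWhittakerDifferentialOperatorsonG$, while $\quantizationOfCentralizersHBAREQUALSONE^G=D(T)^W$. The actual construction of the lift goes through an entirely different mechanism: one first shows (using the Hopf algebroid structure on $\biWhittakerDifferentialOperatorsonG$, its cocommutativity, and flatness over $Z\LG$) that every $\biWhittakerDifferentialOperatorsonG$-module, regarded as a \emph{trivial} $\biWhittakerDifferentialOperatorsonG$-comodule, carries a canonical half-braiding in $\Z(\biWhittakerDifferentialOperatorsonG\comod)$, and then transports this to $\Z(\HarishChandraBimodulesForTHISGROUPDefaultsToG)$ via the fully faithful right adjoint of the functor induced by $\kostantWhittakerToWComod$ on Drinfeld centers; the identification of the underlying bimodule with $U\LG\otimes_{Z\LG}E$ uses that $\kostantWhittakerToWComod^R$ restricted to trivial comodules is $U\LG\otimes_{Z\LG}(\td)$. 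No coproduct on $\quantizationOfCentralizersHBAREQUALSONE$ enters the construction of the half-braiding.

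For part (ii) your reduction of the statement to the single module $E=\biWhittakerDifferentialOperatorsonG$ is the right move, and surjectivity is indeed clear, but injectivity is not ``flatness bookkeeping'': the argument that works is to observe that the map $\quantizationOfCentralizersHBAREQUALSONE\otimes_{D(T)^W}\biWhittakerDifferentialOperatorsonG\to U\LG\otimes_{Z\LG}\biWhittakerDifferentialOperatorsonG$ restricts to the identity on $G$-invariants, and that the counit $\Ind_T^G(M^G)^W\to M$ has injective image for any $M$ because its kernel $K$ satisfies $K^G=0$ and $\Res_T^G(\td)^W$ is also a \emph{left} adjoint of $\Ind_T^G(\td)^W$ by the Second Adjointness theorem, forcing $\Hom(K,\Ind_T^G(M^G)^W)=0$. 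Without this (or some substitute), the injectivity claim is unsupported, especially since the would-be ``comparison of flatness'' rests on the false isomorphism above.
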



  Applying the theorem in the special case where
$E$ is a rank one free ${\mathfrak J}$-module yields the desired $\differentialOperatorsOnG$-module
structure on the object  $U\LG \otimes_{Z\LG} {\mathfrak J}=\ups({\mathfrak J})$
that appears in \eqref{quant ngo alg}. Using this, we prove the following result, which confirms a conjecture of Nadler:

\begin{Corollary}\label{Morphism Which Quantizes the Ngo Morphism as Map of Coalgebra Objects in Central HCG} The map
  \[{\quantizationOfCentralizersHBAREQUALSONE}=
  {\quantizationOfCentralizersHBAREQUALSONE}\o_{D(T)^W} {D(T)^W}\ \xrightarrow{} \
  {\quantizationOfCentralizersHBAREQUALSONE}\o_{D(T)^W} {\mathfrak J}\cong
  {U\LG}\otimes_{Z\LG} {\mathfrak J}
\]
  gives a $D(G)$-module map of coalgebras in $\HarishChandraBimodulesForTHISGROUPDefaultsToG$ that quantizes the morphism $\mathsf{kn}$ in \eqref{class ngo alg}.
\end{Corollary}

\begin{Remark}
    A geometric construction of
     the object $\ups(\mathfrak{J})$
     that makes sense in the
      constructible setting (eg. of $\ell$-adic sheaves) in terms of averaging functors and the Grothendieck-Springer resolution will be given in  \cref{Perverse Sheaf Counterpart Remark}.
\end{Remark}

\newcommand{\derivedCategoryOfWEquivariantDTModules}{\mathscr{D}^W(T)}
\newcommand{\derivedCategoryOfWEquivariantDTModulesVANISHINGCONDITION}{\mathscr{D}^W_{\circ}(T)}
\newcommand{\derivedCategoryofAdGEquivariantDgModules}{\mathscr{D}^G(G)}

\smallskip

\begin{rem}[{\em Comparison to work of
  Ben-Zvi and Gunningham}]
The idea to relate the category $\biWhittakerDifferentialOperatorsonG$-mod to the category $\differentialOperatorsOnG\mmod^G$ has been implemented in a different way by Ben-Zvi and Gunningham \cite{BenZviGunninghamSymmetriesofCategoricalRepresentationsandtheQuantumNgoAction}, who have obtained a result resembling our \cref{Theorem Asserting Existence of Ngo Functor as Braided Monoidal Fully Faithful Functor to Center}(i). However, their approach is different from ours in that 
they work with the full triangulated category of $G$-equivariant $D$-modules on $G$ rather
than with the abelian heart of a particular $t$-structure. The categorical center construction does not produce reasonable results when applied to triangulated categories,
and the formalism of $\infty$-categories (or, more precisely, \textit{DG categories} as defined in \cite[Chapter 1.10]{GRI}) has been employed to overcome this difficulty. Using the exactness of Kostant-Whittaker reduction allows us to bypass that difficulty and work with abelian categories; thus the technique of the present paper is more elementary than that of \cite{BenZviGunninghamSymmetriesofCategoricalRepresentationsandtheQuantumNgoAction}.
Also, the proof of \cite[Conjecture 2.9]{BenZviGunninghamSymmetriesofCategoricalRepresentationsandtheQuantumNgoAction} discussed below, which was one of our main motivations for this project, is unlikely to be accessible by the methods of \cite{BenZviGunninghamSymmetriesofCategoricalRepresentationsandtheQuantumNgoAction} alone.
\end{rem}

\newcommand{\derivedCategoryOfNEquivariantDModulesOnBaseAffineSpace}{\mathscr{D}^N({G/N})}
\newcommand{\derivedCategoryofVeryCetnralDGModules}{\mathscr{D}^G_{\text{vc}}(G)}
\newcommand{\abeliandCategoryOfVeryCentralAdGEquivariantDmodulesonG}{\mathscr{D}^G_{\operatorname{vc}}(G)^{\heartsuit}}
\newcommand{\derivedBiWhittakerCategory}{\mathscr{H}_{\psi}}

\subsection{Parabolic induction and very central $D$-modules}
\label{par-ind-intro}    Given an algebraic group $H$ and  a smooth  $H$-variety $X$, we write
${\scr D}^H(X)$ for the $H$-equivariant derived category of $D$-modules on $X$.
If $X$ is affine, we may (and will) identify sheaves of $D$-modules  with
$D(X)$-modules, so the abelian heart of ${\scr D}^H(X)$
gets identified with the category 
$D(X)\mmod^H$ of (strongly) $H$-equivariant $D(X)$-modules. In the special case $X=T$ and $H=W$, the category
$\derivedCategoryOfWEquivariantDTModules$ 
is equivalent
to the derived category of its heart $D(T)\mmod^W$ and the latter category
is  equivalent
to $D(T)^W\mmod$,
thanks to the Morita equivalence
\begin{equation}\label{Morita Equivalence}D(T)^W \sim D(T)\rtimes W.
\end{equation} 
  It turns out that our functor $\ups$ can be expressed
  in terms of the more familiar functor
  $\derivedCategoryOfWEquivariantDTModules \to \derivedCategoryofAdGEquivariantDgModules$   of parabolic induction. The latter functor is exact, \cite{BezrukavnikovYomDinOnParabolicRestrictionofPerverseSheaves}, so it restricts to an exact  functor $\Ind_T^G: \differentialOperatorsOnT\mmod \to \differentialOperatorsOnG\mmod^{G}$ between the respective abelian categories. Furthermore,
for any $\cf\in \derivedCategoryOfWEquivariantDTModules$ the object $\Ind_T^G(\cf)$ comes equipped
with a canonical $W$-action, see \cite[Section 3.2]{ChenOnTheConjecturesofBravermanKazhdan}. Let $\Ind_T^G(\cf)^W$ denote the direct summand
of $W$-invariants.

\cref{Theorem Asserting Existence of Ngo Functor as Braided Monoidal Fully Faithful Functor to Center} combined with \cite[Theorem 1.6]{GinzburgParabolicInductionandtheHarishChandraDModule} yields the following result, which proves \cite[Conjecture 2.9]{BenZviGunninghamSymmetriesofCategoricalRepresentationsandtheQuantumNgoAction}:

\begin{Corollary}\label{Jcor} The functor $\ups$ is isomorphic to the  composite functor
  \begin{equation}\label{Big Composite for Ngo}
    \biWhittakerDifferentialOperatorsonG\mmod\xrightarrow{\ \operatorname{Obl}^{{\mathfrak J}}_{\differentialOperatorsOnT^W} \ }
    \differentialOperatorsOnT^W\mmod \xrightarrow[\ \, \text{\em equivalence}\  \,]{
      \text{\em Morita}\ }
    \differentialOperatorsOnT\mmod^W\xrightarrow{\  \Ind_T^G \ }
      \differentialOperatorsOnG\mmod^G.
    \end{equation}
  \end{Corollary}

   In \cite{BenZviGunninghamSymmetriesofCategoricalRepresentationsandtheQuantumNgoAction}, \cite{ChenAVanishingConjecturetheGLnCase} and \cite{BezrukavnikovDeshpandeVanishingSheavesandtheGeometricWhittakerModel}, the authors considered
   a full monoidal subcategory of $\derivedCategoryofAdGEquivariantDgModules$, sometimes referred to as `{\em very central $D$-modules}', defined as follows.
   Let $p: G\to G/N$ be the natural projection.
  Recall that the Harish-Chandra functor may be defined as the
   push-forward functor
   $\int_p: \derivedCategoryofAdGEquivariantDgModules \to \derivedCategoryOfNEquivariantDModulesOnBaseAffineSpace$ for  equivariant derived categories of $D$-modules.
   An object $M\in \derivedCategoryofAdGEquivariantDgModules$ is called
   very central     if the support of $\int_p M$ is contained in the
    closed $B$-orbit $B/N\subseteq G/N$, 
 ~\cite[Definition 2.12]{BenZviGunninghamSymmetriesofCategoricalRepresentationsandtheQuantumNgoAction}.

 \cref{Main Theorem 2} below
  provides, in particular, a remarkably simple interpretation of the heart
  of the category of very central $D$-modules.

  To explain this,
we introduce the full abelian subcategory
$D(T)\mmod^W_\circ$  of $D(T)\mmod^W$, resp. $D(G)\mmod^G_\circ$ of 
$D(G)\mmod^G$, whose objects $M$ are \textit{induced from their invariants} in the sense 
that the following map induced by the action of the subalgebra
$\Sym(\t)\subseteq D(T)$, resp. $U\g\subseteq D(G)$, 
\begin{equation}\label{circ}
  \Sym(\t)\otimes_{\Sym(\t)^W} M^W\to M,\en
  \text{resp.}\en
  U\g\otimes_{Z\g} M^G\to M,
  \end{equation}
  is a bijection. We remark that, the group $G$ being connected
    and the $D(G)$-module $M$ being {\em strongly} $G$-equivariant, the space 
    $M^G$ above equals
    $\{m\in M\mid \xi_lm = \xi_rm\textcolor{white}{z}  \forall \xi\in\g\}$, where $\xi_l$, resp. $\xi_r$,
    is the right, resp. left, invariant vector field associated to $\xi$.
    Therefore, replacing  in \eqref{circ}
    the algebra $U\LG$ of right invariant differential operators with the algebra $U\LG_{\text{op}}$ of left invariant differential operators leads to an equivalent
    property.

    In  \cref{Essential Image of Ngo Functor} we prove the following theorem that,
    combined with \cref{Theorem Asserting Existence of Ngo Functor as Braided Monoidal Fully Faithful Functor to Center}, confirms
    \cite[Conjecture 2.14]{BenZviGunninghamSymmetriesofCategoricalRepresentationsandtheQuantumNgoAction}:

    \begin{Theorem}\label{Main Theorem 2} For $M\in \differentialOperatorsOnG\mmod^G$, the following are equivalent:

\vi $M \in \differentialOperatorsOnG\mmod^G_\circ$.

\vii The support of the $D$-module  $\int_p M$ is contained in $B/N$, i.e.
  $M$ is very central;

\viii $M$ is contained in the essential image of the functor $\ups$.
\vskip 3pt
\noindent
Moreover, the the following functors  yield     braided monoidal  equivalences
    \begin{equation}\label{equiequi}
           D(T)\mmod^W_\circ\
      \xleftarrow[\operatorname{Obl}^{{\mathfrak J}}_{\differentialOperatorsOnT^W}]{\sim}\
      {\mathfrak J}\mmod\ 
      \xrightarrow[\ \ups\ ]{\sim} \
      \differentialOperatorsOnG\mmod^G_\circ.
      \end{equation}

      In particular, $\differentialOperatorsOnG\mmod^G_\circ$ is a {\em symmetric}
    monoidal category.
  \end{Theorem}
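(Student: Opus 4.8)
The plan is to prove the cycle of implications (i) $\Rightarrow$ (iii) $\Rightarrow$ (ii) $\Rightarrow$ (i), and then assemble the equivalences in \eqref{equiequi} from \cref{Theorem Asserting Existence of Ngo Functor as Braided Monoidal Fully Faithful Functor to Center} together with the characterization of the essential image.

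First I would establish (iii) $\Rightarrow$ (i). By \cref{Theorem Asserting Existence of Ngo Functor as Braided Monoidal Fully Faithful Functor to Center}(ii), any $M$ in the essential image of $\KnopNgo$ has the form $\mathbf{N}\o_{D(T)^W} E = \KnopNgo(E)$ for some $E\in\biWhittakerDifferentialOperatorsonG\mmod$, where we view $\mathbf{N}$ as a right $D(T)^W$-module via the Harish-Chandra radial parts isomorphism. The key computation is that, as a $(D(G), D(T)^W)$-bimodule, $\mathbf{N} = D(G)/D(G)\ad\LG$ is \emph{free} as a right $D(T)^W$-module, or at least that $\mathbf{N}^G \iso D(T)^W$ compatibly with the Harish-Chandra map, and that $U\g \o_{Z\g} \mathbf{N}^G \iso \mathbf{N}$; this is essentially the statement that $\mathbf{N}$ itself lies in $D(G)\mmod^G_\circ$. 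Granting this, one gets $(\KnopNgo E)^G = \mathbf{N}^G \o_{D(T)^W} E = E$ (using $\mathbf{N}^G\iso D(T)^W$), and then $U\g\o_{Z\g}(\KnopNgo E)^G = (U\g\o_{Z\g}\mathbf{N}^G)\o_{D(T)^W} E \iso \mathbf{N}\o_{D(T)^W} E = \KnopNgo(E)$, so $\KnopNgo(E)\in D(G)\mmod^G_\circ$. For (ii) $\Rightarrow$ (iii) (equivalently identifying very central modules with the essential image), I would use that $\int_p \KnopNgo(E)$ is computed by Harish-Chandra descent of $\mathbf{N}\o_{D(T)^W} E$ to $G/N$; the support condition $\operatorname{supp}\int_p M \subseteq B/N$ is precisely the statement characterizing the image of parabolic induction $\Ind_T^G$ up to the $W$-invariance passage, and by the Corollary identifying $\KnopNgo$ with $\Ind_T^G\circ(\text{Morita})\circ \operatorname{Obl}^{\mathfrak J}_{D(T)^W}$ together with \cite[Theorem 1.6]{GinzburgParabolicInductionandtheHarishChandraDModule}, the essential image of $\KnopNgo$ is exactly the very central subcategory. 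The remaining implication (i) $\Rightarrow$ (ii), or (i) $\Rightarrow$ (iii), should follow by reversing this: if $M$ is induced from its invariants, set $E := M^G$, which is a module over $\End_{D(G)}(\mathbf{N})^{\op}\cong D(T)^W$, hence — via the algebra embedding $D(T)^W\into\biWhittakerDifferentialOperatorsonG$ and the fact that $M^G$ in fact carries the full $\biWhittakerDifferentialOperatorsonG$-action coming from strong equivariance (this is where I expect to invoke the relative/centralizer description of \cref{Radj for Drinfeld Center via Relative Setting}) — a $\biWhittakerDifferentialOperatorsonG$-module, and then the bijection in \eqref{circ} identifies $M$ with $U\g\o_{Z\g} E \iso \KnopNgo(E)$.

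Once the three conditions are shown equivalent, the equivalences in \eqref{equiequi} follow formally: $\KnopNgo$ is fully faithful, braided monoidal and exact by \cref{Theorem Asserting Existence of Ngo Functor as Braided Monoidal Fully Faithful Functor to Center}, and (iii) identifies its essential image with $D(G)\mmod^G_\circ$, giving the braided monoidal equivalence on the right. For the left arrow, one checks that $\operatorname{Obl}^{\mathfrak J}_{D(T)^W}$ lands in $D(T)\mmod^W_\circ$ (the $T$-analogue of the argument above, using that $D(T)^W$ is free over $\Sym(\t)^W$ on the $W$-side, i.e. that $D(T)$ itself is induced from its $W$-invariants) and is essentially surjective and fully faithful onto it; the monoidal structure on $\biWhittakerDifferentialOperatorsonG\mmod$ constructed in \cref{Kostant-Whittaker Reduction Is Monoidal Localization Section} is symmetric, and since $\KnopNgo$ is a braided equivalence onto $D(G)\mmod^G_\circ$, the latter inherits a symmetric monoidal structure, which is the final assertion.

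The main obstacle, I expect, is the step showing that $M$ being induced from its $G$-invariants forces $M^G$ to be a $\biWhittakerDifferentialOperatorsonG$-module in a way compatible with $\KnopNgo$ — i.e. promoting the \emph{a priori} $D(T)^W = \End_{D(G)}(\mathbf{N})^{\op}$-action on $M^G$ to the full bi-Whittaker action and checking that $\KnopNgo(M^G) \iso M$ as objects of $D(G)\mmod^{\Ad G}$, not merely as $D(G)$-modules. This requires tracking the central (Drinfeld-center) structure: the isomorphism $M \iso U\g\o_{Z\g} M^G$ of \eqref{circ} is only a $D(G)$-module isomorphism, and one must verify that the adjoint-equivariant structure on $M$ matches the one transported from the canonical central lift of $U\g\o_{Z\g} E$ described before \cref{Theorem Asserting Existence of Ngo Functor as Braided Monoidal Fully Faithful Functor to Center}. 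I would handle this by characterizing $\KnopNgo(E)$ intrinsically — as the unique object of $D(G)\mmod^{\Ad G}$ whose underlying $D(G)$-module is $U\g\o_{Z\g}E$ with its standard equivariant structure — using fullness of $\KnopNgo$ and the relative description of the Drinfeld center from \cref{Radj for Drinfeld Center via Relative Setting}, so that the equivariant refinement comes for free.
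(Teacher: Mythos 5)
Your skeleton is right, and your closing paragraph correctly locates the difficulty (promoting the $\differentialOperatorsOnT^W$-action on $M^G$ to a $\biWhittakerDifferentialOperatorsonG$-action), but the proposal does not actually supply the mechanisms that make the two hard implications work, and one of your intermediate claims is false. The false claim: $\quantizationOfCentralizersHBAREQUALSONE$ is \emph{not} induced from its invariants, i.e.\ $U\LG\otimes_{Z\LG}\quantizationOfCentralizersHBAREQUALSONE^G\to\quantizationOfCentralizersHBAREQUALSONE$ is not an isomorphism. If it were, then by the very theorem being proved $\quantizationOfCentralizersHBAREQUALSONE^G\cong\differentialOperatorsOnT^W$ would carry a $\biWhittakerDifferentialOperatorsonG$-module structure, equivalently (by the left equivalence in \eqref{equiequi}) the map $\Symt\otimes_{\Symt^W}\differentialOperatorsOnT^W\to\differentialOperatorsOnT$ would be surjective, which already fails for $\operatorname{SL}_2$. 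The isomorphism $\quantizationOfCentralizersHBAREQUALSONE\otimes_{\differentialOperatorsOnT^W}E\iso U\LG\otimes_{Z\LG}E$ of \cref{Theorem Asserting Existence of Ngo Functor as Braided Monoidal Fully Faithful Functor to Center}(ii) holds only for $E$ carrying a $\biWhittakerDifferentialOperatorsonG$-action, not for $E=\differentialOperatorsOnT^W$. Fortunately (iii)$\Rightarrow$(i) survives, since \cref{Theorem Asserting Existence of Ngo Functor as Braided Monoidal Fully Faithful Functor to Center}(ii) directly identifies the underlying bimodule of $\KnopNgo(E)$ with $U\LG\otimes_{Z\LG}E$.

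The genuine gaps are in the other directions. For (i)$\Rightarrow$(iii), "strong equivariance'' and the relative-Drinfeld-center formalism do not produce a $\biWhittakerDifferentialOperatorsonG$-action on $M^G$; the paper's route is geometric: one first proves (\cref{Vanishing Conjecture For Parabolics for Central D Modules Induced from Invariants}(ii), resting on flatness of $\Delta_P=U\LG/U\LG\cdot\mathfrak{u}_P$ over $Z\LG$ and the support \cref{QCoh on Basic Affine With UP Invariant Sections Having Trivial L Representation Are Supported on Levi}) that $\Res_T^G(M)\cong\Symt\otimes_{Z\LG}M^G$, compares this with $\Res_T^G(M)\cong\differentialOperatorsOnT\otimes_{\differentialOperatorsOnT^W}M^G$, and only then invokes the descent criterion of \cite[Theorem 1.5.1]{Gin} to upgrade the $\differentialOperatorsOnT^W$-action. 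The same machinery gives (i)$\Rightarrow$(ii), which your cycle needs and which you do not address. For (ii)$\Rightarrow$(i)/(iii), your assertion that the support condition "is precisely the statement characterizing the image of parabolic induction'' is exactly what has to be proved, not something one can quote: given a very central $M$ one must (a) produce the $\biWhittakerDifferentialOperatorsonG$-structure on $M^G$ — the paper does this by pushing the isomorphism $\int_i\Res_T^G(M)\iso\int_{\pi^{\BorelSubgroup}}M$ through Whittaker reduction to land in the image of $\MiuraBimodule^W\otimes_{\biWhittakerDifferentialOperatorsonG}(\td)$ — and (b) show the counit $\Ind_T^G(\Res_T^G(M))^W\to M$ is an isomorphism, which the paper does by identifying $\Ind_T^G\Res_T^G$ with convolution against the Springer sheaf via $\mathrm{hc}^L\mathrm{hc}$. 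Neither step is present in, or replaceable by, your appeal to fullness of $\KnopNgo$, which only helps once both objects are already known to lie in its image.
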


   The left equivalence in \labelcref{equiequi} has been proved in
    \cite[Theorem 1.5.1]{Gin}, which states that $D(T)\mmod^W_\circ$ is exactly the essential image of the functor $\operatorname{Obl}^{{\mathfrak J}}_{\differentialOperatorsOnT^W}$, cf. also \cite{Lo}, \cite{LoRemark}, \cite{GannonClassificationofNondegenerateGCategories}, \cite{GannonUniversalCategoryOandGelfandGraevAction}, \cite{GannonDescentToTheCoarseQuotientForPseudoreflectionAndAffineWeylGroups}, \cite{BezrukavnikovDeshpandeVanishingSheavesandtheGeometricWhittakerModel} for closely related results.
The composed equivalence
      \[\differentialOperatorsOnG\mmod^G_\circ
      \cong D(T)\mmod^W_\circ\]
    has been obtained earlier by Bezrukavnikov and Deshpande, see \cite[Corollary 1.7]{BezrukavnikovDeshpandeVanishingSheavesandtheGeometricWhittakerModel}.
    Our  proof based on the equivalences in
    \eqref{equiequi}  is totally different from the one  in {\em loc. cit.}; 
    in particular, we bypass results from \cite[Section 6]{ChenOnTheConjecturesofBravermanKazhdan} and \cite{BezrukavnikovIonovTolmachovVarshavskyEquivariantDerivedCategoryofaReductiveGroupasaCategoricalCenter} used in 
\cite{BezrukavnikovDeshpandeVanishingSheavesandtheGeometricWhittakerModel}.

  \begin{Remark}
Recall that convolution of $D(G)$-modules is given by the formula
$M_1 \star M_2={M_1\otimes_{U\g} M_2}$, 
cf. \cite[Lemma 2.1]{BezrukavnikovFinkelbergOstrikCharacterDModulesViaDrinfeldCenterofHarishChandraBimodules} and \cref{Monoidality and Tensor Product Subsection};
in particular, the $U\LG$-bimodule structure on $M_1 \star M_2$ is completely determined by the $U\LG$-bimodule structure on $M_1$ and $M_2$. Therefore, by the definition of the functor $\ups$
one has an isomorphism 
\begin{equation}\label{ngo conv}
  M\star \ups(E) \,=\, M\otimes_{U\g} (U\g\otimes_{Z\LG} E)\,\cong\,
  M\otimes_{Z\LG} E,\quad E\in \J\mmod,\ M\in \mathcal{HC},
\end{equation}
of Harish-Chandra bimodules.  This formula, which has no analogue in the
  $\ell$-adic setting,
plays an important role in our arguments.
  \end{Remark}

\subsection{The exactness and vanishing conjectures}\label{Exactness and Vanishing Conjectures Subsection}As an application of our methods, we give proofs of conjectures of Braverman and Kazhdan in the $D$-module setting. Recall that, motivated by Langlands functoriality, Braverman and Kazhdan \cite{BravermanKazhdanGammaSheavesonReductiveGroups} 
defined, for a fixed choice of additive character $\psi: \mathbb{G}_a \to \mathbb{A}^1$
and a finite dimensional representation $G^\vee\to GL(V)$
  of the Langlands dual group $G^\vee$ with $V^{T^{\vee}} = \{0\}$, a certain $\Ad G$-equivariant perverse sheaf $\Phi_V$ on $G$,
    known as the \textit{Bessel sheaf} or $\gamma$\textit{-sheaf}.
    It was shown by Chen \cite{ChenAVanishingConjecturetheGLnCase} that such $\gamma$-sheaves lie in the essential image of the composite of functors in \labelcref{Big Composite for Ngo}. 
  
  Braverman and Kazhdan proposed several conjectures involving 
  $\gamma$-sheaves, in particular, the {\em Vanishing conjecture}
    and
    the {\em Exactness conjecture}. The $D$-module counterpart (of
    the generalization given in \cite{ChenAVanishingConjecturetheGLnCase})
    of the Vanishing conjecture reads
\begin{equation}\label{vanish conj}
        \textsf{Vanishing conjecture}:
   \qquad\; \F \in \derivedCategoryOfWEquivariantDTModulesVANISHINGCONDITION\enspace\Longrightarrow
    \enspace \operatorname{supp}\Big(\int_p \Ind_T^G(\F)^W\Big)\,\subseteq\, B/N.
  \end{equation} where $\derivedCategoryOfWEquivariantDTModulesVANISHINGCONDITION$ is the full triangulated subcategory of the derived category of $W$-equivariant $D$-modules on $T$ whose cohomologies lie in $D(T)\mmod^W_\circ$. The vanishing conjecture (in the original form as well as in the $D$-module setting)
  was proved by Chen \cite{ChenOnTheConjecturesofBravermanKazhdan},
  cf. also   \cite{ChenAVanishingConjecturetheGLnCase}, \cite{ChenOnTheConjecturesofBravermanKazhdan}, Laumon-Letellier \cite{LaumanLetellierNoteonaConjectureofBravermanKazhdan}, and Ng\^o \cite{NgoHankelTransformLanglandsFunctorialityandFunctionalEquationofAutomorphicLFunctions} for partial earlier results and also \cite{BezrukavnikovIonovTolmachovVarshavskyEquivariantDerivedCategoryofaReductiveGroupasaCategoricalCenter}, \cite{BezrukavnikovDeshpandeVanishingSheavesandtheGeometricWhittakerModel}.
A generalization of \eqref{vanish conj} where the Borel $B$ is replaced by an arbitrary parabolic was subgroup
  was found by Chen in \cite{ChenTowardTheDepthZeroStableBernsteinCenterConjecture}. We will use \cref{Main Theorem 2} to
    deduce \eqref{vanish conj}, as well as its parabolic generalization, as a corollary, see Remark \ref{Can Rederive Vanishing Conjecture Remark}.
        Thus, we obtain
  an alternate proof of the  general parabolic vanishing conjecture  in the $D$-module setting.
  \smallskip

 As a simple application of \labelcref{ngo conv},  in \cref{Proof of BK Exactness Conjecture Subsection} we also
 prove the following result, which is a generalization of 
 \cite[Conjecture 1.8]{BravermanKazhdanGammaSheavesonReductiveGroups}
 for the $D$-module analogue $\boldsymbol{\Phi}_V$
 of the $\gamma$-sheaf $\Phi_V$:
  

 \begin{Theorem}\label{Braverman-Kazhdan
     Exactness Conjecture}[\textsf{Exactness Conjecture}]
   Let ${V}$ be a representation of $G^{\vee}$ such that ${V}^{T^{\vee}} = \{0\}$. Then the functor $D(G)\mmod\to D(G)\mmod,\,M \to \PhiTHISGROUPRho{G}\star M$,
   of convolution with    $\PhiTHISGROUPRho{G}$, is exact.
\end{Theorem}

\begin{rems}\label{BK rem}
\vi In \cite[Conjecture 1.8]{BravermanKazhdanGammaSheavesonReductiveGroups}, Braverman and Kazhdan require the representation $V$ be $\sigma$\textit{-positive} for some $\sigma$ (in the sense of \cite{ChenAVanishingConjecturetheGLnCase}) which is stronger than our assumption  that ${V}^{T^{\vee}} = \{0\}$. 
  
   \vii It is likely that one can deduce the Exactness conjecture in the $\ell$-adic setting from the corresponding result in the 
  $D$-module setting 
  using the Riemann-Hilbert correspondence and standard comparison theorems: the proof of the vanishing conjecture
  in \cite{ChenOnTheConjecturesofBravermanKazhdan}
uses these comparison theorems to reduce the general case of the conjecture to the special
  case of  a certain $D(G)$-module which is a character sheaf, or in other words has nilpotent singular support, see \cite[Section 6]{ChenOnTheConjecturesofBravermanKazhdan}. 

  \viii The arguments of \cite{BravermanKazhdanGammaSheavesonReductiveGroups} show that,
  that in the special case  where
  $G = \GL({V})$ and the representation $G^\vee\to GL(V)$ is the identity, one has $\Phi_V\cong\mathrm{tr}^*(\psi)$, the pullback of the exponential $D$-module on
  $\mathbb{A}^1$ via the trace map $\mathrm{tr}: \GL_n \to \mathbb{A}^1$.
  \end{rems}

  \subsection{Transfer of very central $D$-modules}\label{transfer}
  \newcommand{\hv}{{H^\vee}}
  \newcommand{\gc}{{G^\vee}}
 \newcommand{\frj}{{\mathfrak J}}

\newcommand{\bfoo}{{\mathbf O}}
\newcommand{\bfok}{{\mathbf K}}
Let $\bfoo=k[\![t]\!]$, resp. $\bfok=k(\!(t)\!)$, so
$\Gr_{\gc}=\gc(\bfok)/\gc(\bfoo)$ is
the affine  Grassmannian.
The group ${{\mathbb G}_m}$ acts on  these objects by loop rotation.
Convolution of equivariant Borel-Moore homology gives the space
$
H^{BM}_{{{\mathbb G}_m}}(\gc(\bfoo)\backslash\gc(\bfok)/\gc(\bfoo))$ $=H^{BM}_{{{\mathbb G}_m}} (\gc(\bfoo)\backslash\Gr_{\gc})$
an  algebra structure and 
it was shown in \cite{BezrukavnikovFinkelbergEquivariantSatakeCategoryandKostantWhittakerReduction} that this algebra is isomorphic  to an asymptotic counterpart
 $\frj_{G,\hb}$ of the algebra ${\mathfrak J}_G$, cf. Section
 \ref{Monoidal Structure on BiWhit Mod and Monoidality}.
  The algebra embedding
  $H^\hdot_{{{\mathbb G}_m}}(\gc(\bfoo)\backslash\pt)\into
  H^{BM}_{{{\mathbb G}_m}}(\gc(\bfoo)\backslash\Gr_{\gc})$ corresponds via the isomorphism of \cite{BezrukavnikovFinkelbergEquivariantSatakeCategoryandKostantWhittakerReduction}
 and the canonical isomorphism $H^\hdot_{{{\mathbb G}_m}}(\gc(\bfoo)\backslash\pt)\cong k[\g^\vee]^{G^\vee}[\hb]\cong \Zhbarg$,
 to the natural algebra embedding
 $\Zhbarg\into \frj_{G,\hb}$.
  Writing $(\td)_{\hb=1}:=k[\hb]/(\hb-1)\o_{k[\hb]}(\td)$ for the specialization
 at $\hb=1$, one obtains an isomorphism
 $H^{BM}_{{{\mathbb G}_m}\ltimes\gc(\bfoo)} (\Gr_{\gc})_{\hb=1}\cong \frj_G$.

 Now let $G$ and $H$ be a pair of reductive groups
 with Lie algebras $\g$ and $\h$, respectively.
  Below we use subscripts to distinguish objects associated to $G$ from
 those associated to $H$; for example, we write $\frj_G$ and $\frj_H$ for the
 quantizations of the universal centralizers for $G$ and $H$, respectively.

 Let $f:  H^\vee\to \gc$ be a morphism of the Langlands dual groups
 and $df: \h^\vee \to \g^\vee$ the differential of $f$.
  The induced morphism $\hv(\bfoo)\to\gc(\bfoo)$
  gives  an $H^\vee(\bfoo)$-action on $\Gr_{\gc}$ and one has canonical isomorphisms \begin{equation}\label{Borel Moore As Free Module}H^{BM}_{{{\mathbb G}_m}}(\hv(\bfoo)\backslash\gc(\bfok)/\gc(\bfoo))\cong
  \Zhbarh\otimes_{\Zhbarg} H^{BM}_{{{\mathbb G}_m}}(\gc(\bfoo)\backslash\gc(\bfok)/\gc(\bfoo))
  \cong \Zhbarh\otimes_{\Zhbarg} \frj_{G,\hb},\end{equation}
where $Z\h$ is viewed as a $Z\g$-module via the composition of algebra maps
\[Z\g\iso k[\g^\vee]^{G^\vee} \xrightarrow{\ (df)^* \ }  k[\h^\vee]^{H^\vee} \iso Z\h.\] 
The Borel-Moore homology group on the left hand side of \labelcref{Borel Moore As Free Module} above comes equipped with the natural structure of a left module over the algebra
  \[H^{BM}_{{{\mathbb G}_m}}(\hv(\bfoo)\backslash\hv(\bfok)/\hv(\bfoo))\,
    \o_{k[\hb]}\,H^{BM}_{{{\mathbb G}_m}}(\gc(\bfoo)\backslash\gc(\bfok)/\gc(\bfoo))_{op}\ \cong\
  \frj_{H,\hbar}\,
  \o_{k[\hb]}\,(\frj_{G,\hb})_{op},
  \] completely analogously to the construction of the algebra structure of the equivariant Borel-Moore homology of $\Gr_{G^{\vee}}$. Taking specializations at $\hb=1$ we conclude that the map $f$ equips the tensor product $Z\h\otimes_{Z\g} \frj_G$ with the structure of a
  $(\frj_H,\frj_G)$-bimodule, see also \cite[Section 5.4]{GinzburgPointwisePurityDerivedSatakeandSymplecticDuality}.

We define the (abelian categorical) {\em transfer functor}
${\mathscr T}_f: D(G)\mmod^G_\circ\to  D(H)\mmod^H_\circ $ as a composition of
the following  chain of functors
\[
  D(G)\mmod^G_\circ\xrightarrow[\cong]{\  (\ups_G)\inv\  }
  \frj_G\mmod\xrightarrow{\  (Z\h\otimes_{Z\g}\frj_G)\o_{\frj_G}(\td)\ }
  \frj_H\mmod
  \xrightarrow[\cong]{\  \ups_H \  }
  D(H)\mmod^H_\circ.
\]

We sketch a proof of the following Proposition in Section
\ref{Proof of BK Exactness Conjecture Subsection}:

\begin{Proposition}\label{transfer cor} 
  \vi The functor ${\mathscr T}_f$  is monoidal and for any  $M\in D(G)\mmod^G_\circ$ there is a canonical isomorphism
  ${\mathscr T}_f(M)\cong {U\h\otimes_{Z\g}M^G}$
  of Harish-Chandra $(U\h,U\h)$-bimodules.

  \vii There is
  an isomorphism of functors ${\mathscr T}_{g\ccirc f}\cong
  {\mathscr T}_g\ccirc {\mathscr T}_f$, for any morphisms
  $L^\vee \xrightarrow{g} H^\vee \xrightarrow{f} G^\vee$ of reductive groups.

  \viii Let $f: H^\vee\to \GL({V})$ be a finite dimensional representation with trivial zero weight space, i.e. ${V}^{T_H^{\vee}}=0$ for $T_H^{\vee}$ a maximal torus of $H^\vee$. Then $\boldsymbol{\Phi}_{V, H}$ is isomorphic to ${\mathscr T}_f(\mathrm{tr}^*(\psi))$, cf. Remark \ref{BK rem}{\em{(iii)}}.
\end{Proposition}

\begin{rem}\cite{ChenFunctorialTransferforReductiveGroupsandCentralComplexes}, the author defines a constructible analogue of the functor ${\mathscr T}_f$ using combinatorics of root systems. We expect that, using the identification of the asymptotic bi-Whittaker differential operators and the nil-DAHA of \cite{Gin} and \cite{Lo}, the bimodule
  $Z\h\otimes_{Z\g} \frj_G$ can be identified with the $\D$-module analogue of the combinatorial construction given in \cite{ChenFunctorialTransferforReductiveGroupsandCentralComplexes}.
\end{rem}


  \subsection{Outline} In \cref{Preliminaries Section}, we recall the definition of the category $\HarishChandraBimodulesForTHISGROUPDefaultsToG$
  of Harish-Chandra bimodules, and discuss coalgebra type structures on
the ring $D(G)$ of differential operators on $G$
  inherited from the convolution monoidal structure on its category of $D(G)$-modules.
  In \cref{Kostant-Whittaker Reduction Is Monoidal Localization Section}, we construct
the coalgebra structure on $\biWhittakerDifferentialOperatorsonG$ mentioned in the introduction and study
monoidal structure on the categories of $\biWhittakerDifferentialOperatorsonG$-modules
and $\biWhittakerDifferentialOperatorsonG$-comodules
from an algebraic standpoint.
In \cref{Bi-Whittaker Reduction and Biwhittaker Comodules}, we use this to give an explicit relationship between the monoidal categories
  $\HarishChandraBimodulesForTHISGROUPDefaultsToG$
  and the category of comodules for $\biWhittakerDifferentialOperatorsonG$. After proving some preliminary results on the \textit{Miura bimodule} in \cref{Kostant-Whittaker reduction and the Miura bimodule section}, we quantize the Knop-Ng\^o morphism in \cref{Radj for Drinfeld Center via Relative Setting} using a foundational result, \cref{theorem asserting abstract central functor in biWhittaker case}, whose proof is deferred to \cref{Hopf Algebroids and Central Functors Section}. In \cref{Comparison to Parabolic Induction Section} we complete the proofs of our remaining theorems.

\subsection{Acknowledgements} We thank Paul Balmer, David Ben-Zvi, Roman Bezrukavnikov, Tsao-Hsien Chen, Peter Crooks, Pavel Etingof, Joakim F\ae rgeman, Sam Gunningham, Ivan Losev, George Lusztig, David Nadler, Cris Negron, Sam Raskin, Aaron Slipper, Germ\'an Stefanich, and Harold Williams for interesting and useful discussions. We would especially like to thank Tsao-Hsien Chen for comments on an earlier version of this paper and for explaining his vision of central sheaves.

  \section{Monoidal structure on $D(G)$-modules}
\label{Preliminaries Section}

\subsection{Conventions}
\label{conventions}
Given an associative ring $R$ we will  use the following terminology
\begin{defn}\label{Definition of Ring Object Coring Object Symmetric and Left Comodule}   
\vi An $R$\textit{-ring object} is a ring $A$ equipped with  a ring morphism $R\to A$ whose image is not necessarily central.

\vii An $R$\textit{-coring object} is a coring object 
in the monoidal category of $R$-bimodules, that is, an $R$-bimodule
$A$ equipped with  $R$-bimodule morphisms $\De: A\to A\o_R A$ and $\eps: A\to R$,
called coproduct and counit, respectively, which is coassociative in the sense that $(\Delta \otimes \mathrm{id}_A) \circ \Delta = (\mathrm{id}_A \otimes \Delta)$ and counital in the sense that $(\epsilon \otimes \mathrm{id}_A) \circ \Delta = \mathrm{id}_A = (\mathrm{id}_A \otimes \epsilon)\circ \Delta.$
      
\viii  A ring, resp. coring,  object $A$ over
 a   commutative ring $R$ is called an $R$-{\em algebra}, resp. $R$-{\em coalgebra},
      if   $A$ is symmetric as an $R$-bimodule, that is,
      $ra=ar$ for all $r\in R, a\in A$.

 \iv A \textit{left comodule} over an $R$-coring object $A$ is the data of a left
  $R$-module $C$ along with a coaction map
  $\operatorname{coact}: C \to A \otimes_R C$
  of $R$-modules for which $(\Delta \otimes \text{id}_M) \circ \operatorname{coact} = (\text{id}_A \otimes \operatorname{coact}) \circ \operatorname{coact}$ and $(\text{id}_A \otimes \epsilon) \circ \operatorname{coact} = \text{id}_C$, where $\Delta$, resp. $\epsilon$, is the comultiplication, resp. counit of $A$. Right comodules are defined in a  similar way.

  \vv Given an $R$-coring object $A$, and a left, resp. right, $A$-comodule $C_{\ell}$,
  resp.  $C_r$,
 with coaction map $\text{coact}_{\ell}$, resp. $\text{coact}_{r}$, define the \textit{cotensor product}
 \begin{equation}\label{cotensor not}
   C_{r} \otimes_{R}^A C_{\ell} := \big\{\sum\nolimits_i\, c^r_i \otimes c^{\ell}_i \in C_{r} \otimes_{R} C_{\ell} \mid \sum\nolimits_i \,\text{coact}_r(c^r_i) \otimes c^{\ell}_i = \sum\nolimits_i \, c^r_i \otimes \text{coact}_{\ell}(c^{\ell}_i)\big\}.
\end{equation}
\end{defn}

There are natural analogues of the above definitions in the 
graded setting. 
For any $\mathbb{Z}$-graded ring, resp.  coring, object $A$ we let $A\mmod$, resp.
$A\comod$, denote the category of $\mathbb{Z}$-\textbf{graded}  left modules, resp. comodules.

The coordinate ring  $\oo(H)$ of an algebraic group $H$ is a $k$-coalgebra.
  By a linear action (aka representation) of  an algebraic group $H$
  in a not necessarily finite dimensional vector space $V$ we mean
a  coaction $V\to \mathcal{O}(H)\o V$, i.e.
  a left $\oo(H)$-comodule structure on $V$, where  $\oo(H)$ is viewed as a coalgebra.
  Let $\text{Rep}(H)$ denote the symmetric monoidal category  of 
representations of $H$. In what follows, we will use the fact that a linear action of the multiplicative group ${\mathbb G}_m$ is the same thing as a ${\mathbb Z}$-grading without further mention.

For any $k$-algebra $R$, the tensor product  
$A=R\otimes\oo(H)$ is an  $R$-coring object.
In this case, for  $R\otimes\oo(H)$-comodules $C_\ell$ and $C_r$
as in 
  \eqref{cotensor not}, we use simplified notation
  \[C_{r} \otimes_{R}^H C_{\ell}:=(C_{r} \otimes_{R} C_{\ell})^H=C_{r} \otimes_{R}^{A} C_{\ell}.
  \]
 Here, we view both  $C_r$ and $C_\ell$ as representations of the group $H$ so that
  the space on the right may  be identified
  with the invariants
  of the diagonal $H$-action on $C_{r} \otimes_{R} C_{\ell}$.

Recall the following

\begin{Definition}\label{monoidal defn}
\vi A \textit{lax monoidal functor}, resp. \textit{left lax monoidal functor}, between monoidal categories $\mathcal{C}$ and $\mathcal{C}'$ is the data a functor $F: \C \to \mathcal{C}'$, a morphism $\epsilon: \mathbf{1}_{\C'} \to F(\mathbf{1}_{\C})$, resp. $\epsilon': F(\mathbf{1}_{\C}) \to \mathbf{1}_{\C'}$ where $\mathbf{1}_{\C}, \mathbf{1}_{\C'}$ are the respective monoidal units, and morphisms $\mu_{X, Y}: F(X) \otimes F(Y) \to F(X \otimes Y)$, resp.  $\mu'_{X, Y}: F(X \otimes Y) \to F(X) \otimes F(Y)$ natural in $X, Y \in \C$ which satisfy an associativity and unit constraint given explicitly by the diagrams of \cite[Definition 2.1]{MonoidalFunctorDefinition};  

\vii  If $\epsilon$, resp. $\epsilon'$, is an isomorphism and $\mu_{X, Y}$, resp. $\mu'_{X, Y}$, is an isomorphism for all $X, Y \in \C$ we say that this lax monoidal functor is \textit{monoidal}.
\end{Definition}

\begin{Remark}
  What we refer to as a monoidal functor is sometimes referred to as a \textit{strongly monoidal functor}, see \cite[Remark 2.2]{MonoidalFunctorDefinition}.
  \end{Remark}

We will use the following standard result, see e.g. \cite[Proposition 3.84, Proposition 3.85]{AguiarMahajanMonoidalFunctors}:

\begin{Lemma}\label{Adjoint to Monoidal Functor is Appropriately Lax Monoidal}
    The right adjoint, resp. left adjoint, to any monoidal functor $F$ is naturally lax monoidal, left lax monoidal, in such a way that the unit and counit maps are compatible with the monoidal structure. Moreover, if $F$ is braided monoidal then the lax monoidality, resp. left lax monoidality, morphisms are compatible with the braiding.
\end{Lemma}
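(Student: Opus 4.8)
The plan is to treat this as an instance of Kelly's doctrinal adjunction and to build the required structure maps on the adjoint functor as \emph{mates}, along the adjunction, of the coherence constraints of $F$. Write $F\colon \C\to\D$ for the given monoidal functor, with coherence isomorphisms $\mu_{X,Y}\colon F(X)\o F(Y)\iso F(X\o Y)$ and $\epsilon\colon \mathbf 1_\D\iso F(\mathbf 1_\C)$, and let $G\colon \D\to\C$ be a right adjoint with unit $\eta\colon \Id_\C\Rightarrow GF$ and counit $\varepsilon\colon FG\Rightarrow \Id_\D$. First I would define the lax structure of $G$: the morphism $\epsilon'\colon\mathbf 1_\C\to G(\mathbf 1_\D)$ is the adjunct of $\epsilon^{-1}$, namely $G(\epsilon^{-1})\circ\eta_{\mathbf 1_\C}$, and the morphism $\nu_{X,Y}\colon G(X)\o G(Y)\to G(X\o Y)$ is the adjunct of the composite
\[
F\big(G(X)\o G(Y)\big)\ \xrightarrow{\ \mu^{-1}_{G(X),G(Y)}\ }\ FG(X)\o FG(Y)\ \xrightarrow{\ \varepsilon_X\o\varepsilon_Y\ }\ X\o Y,
\]
so that $\nu_{X,Y}=G(\varepsilon_X\o\varepsilon_Y)\circ G(\mu^{-1}_{G(X),G(Y)})\circ\eta_{G(X)\o G(Y)}$; naturality of $\nu$ in $X$ and $Y$ is then immediate from naturality of $\eta,\varepsilon,\mu$. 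It is precisely here that the hypothesis that $F$ be \emph{strongly} monoidal enters: one needs $\mu$ and $\epsilon$ invertible for these composites to be defined, and this is why a strong monoidal functor has a lax (rather than an oplax) right adjoint.

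Next I would verify the associativity and unit axioms of \cref{monoidal defn}(i) for the pair $(\nu,\epsilon')$. The key observation is that forming mates along $F\dashv G$ is compatible with pasting of $2$-cells, so that each side of an axiom for $G$ is the mate of the corresponding side of the (inverted) axiom for $F$; since the axioms for $F$ hold by hypothesis, those for $G$ follow. By the same token, the assertion that the unit $\eta$ and counit $\varepsilon$ are monoidal natural transformations---i.e.\ that they intertwine $\mu$ with $\nu$, and $\epsilon$ with $\epsilon'$---is one of the standard triangle-type compatibilities of the mate correspondence and uses only the triangle identities of the adjunction. For the braided statement, I would start from the compatibility $F(\beta_{X,Y})\circ\mu_{X,Y}=\mu_{Y,X}\circ\beta_{F(X),F(Y)}$ of $F$ with the braidings, rewrite it in terms of $\mu^{-1}$, and pass it through the mate correspondence---whiskering by the braiding natural isomorphisms commutes with mate-formation---to obtain $G(\beta_{X,Y})\circ\nu_{X,Y}=\nu_{Y,X}\circ\beta_{G(X),G(Y)}$.

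The case of a left adjoint is entirely dual: if $G$ is left adjoint to $F$ one forms mates in the opposite direction, producing structure maps $G(X\o Y)\to G(X)\o G(Y)$ and $G(\mathbf 1_\D)\to\mathbf 1_\C$, i.e.\ a left lax (oplax) monoidal structure, and the same bookkeeping yields the axioms and the braided compatibility. I do not expect any genuine obstacle here---the whole statement is formal $2$-categorical diagram chasing---so the only point requiring care is tracking which direction each mate runs and which constraints must be inverted; the routine verifications are carried out in detail in \cite[Proposition 3.84, Proposition 3.85]{AguiarMahajanMonoidalFunctors}, which I would cite.
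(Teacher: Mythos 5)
Your proposal is correct and is essentially the standard doctrinal-adjunction argument that the paper itself defers to by citing \cite[Proposition 3.84, Proposition 3.85]{AguiarMahajanMonoidalFunctors}; the paper gives no independent proof. Your construction of the lax structure maps as mates of the inverted coherence constraints, and the verification of the axioms and braided compatibility via the mate correspondence, matches that reference.
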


Let $\D_X$ denote  the sheaf of differential operators on
   a smooth variety $X$ and $D(X)=\Gamma(X,\D_X)$  the algebra of differential operators on $X$.
   We will  use the notations of \cite{HTT} for morphisms of (left) $D$-modules. In particular, for any morphism of smooth algebraic varieties $f$, we let $\int_f$ denote the induced pushforward on the derived category of $D$-modules, resp. $f^{\dagger}$ denote
   the shifted inverse image functor as in \cite[Section 1.5]{HTT}.

\newcommand{\Dt}{{D_{\lhd}}}
\subsection{}\label{Monoidality and Tensor Product Subsection}
Throughout, we fix a connected reductive group $G$ with Lie algebra $\g$.
We will also use the group
$G_\op$, resp. Lie algebra $\g_\op$, obtained by taking an opposite multiplication, resp.
Lie bracket.
We have an isomorphism $G\iso G_\op,\,g\mto g\inv$, resp.  $\g\iso\g_\op,\,\xi\mto -\xi$.
The group $G \times G_{\text{op}}$ acts on $G$ by the formula $(g_1, g_2)g = g_1gg_2$.
We  identify  $U\LG$, resp. $U\LG_{\text{op}}$, with the subalgebra of $D(G)$ 
    of $G_{\mathrm{op}}$-invariant, resp.  $G$-invariant,
    differential operators.
        The algebra
        $U\LG\cap U\LG_{\text{op}}$ of bi-invariant differential operators
    equals $Z\LG$, the center of $U\g$; furthermore, the resulting algebra map
    \begin{equation}\label{Comoment Map for G times Gop with Identifications}U\LG \otimes_{Z\LG} U\LG_{\text{op}} \to {D}(G)\end{equation}
    is known to be injective. Thus, 
    we may (and will) view a  ${D}(G)$-module as a
    $(\U\LG,U\g)$-bimodule.

   Given a $G$-action $\al: G \times X \to X$
   one defines a convolution  operation
   $\mathcal{M}, \mathcal{N}\mapsto\mathcal{M}\star_{_{\D}} \mathcal{N}
   :=\int_{\alpha} \mathcal{M}\boxtimes\mathcal{N}$,
   where $\mathcal{M}$, resp.  $\mathcal{N}$ and
   $\mathcal{M}\star_{_{\D}} \mathcal{N}$, are objects
   of the derived category of $D$-modules on $G$,
   resp. $X$. The morphism $\al$ being smooth and affine,
   it is a straightforward consequence of the definition of $\int_{\alpha}$
that there is a canonical isomorphism of (left) $U\LG$-modules,
     cf. \cite[Lemma 2.1]{BezrukavnikovFinkelbergOstrikCharacterDModulesViaDrinfeldCenterofHarishChandraBimodules},
   \begin{align}\label{bfo}
    &\phi:\,R\Gamma(X, \mathcal{M} \star_{_{\D}} \mathcal{N})\xrightarrow{\ \sim \ }
    R\Gamma(G, \mathcal{M})\,\stackrel{_L}\otimes_{U\g}\, R\Gamma(X, \mathcal{N}),
    \en\text{such that}\nonumber\\
    &\phi(f m)=f_{(0)} m_{(0)}\otimes f_{(1)}m_{(1)},\quad\forall f\in \mathcal{O}(X),\
      m\in R\Gamma(X, \mathcal{M} \star_{_{\D}} \mathcal{N}).
\end{align}
Here, we have used Sweedler notation to write the map
$\alpha^*: \mathcal{O}(X)\to\mathcal{O}(G\times X)=\mathcal{O}(G)\otimes
   \mathcal{O}(X)$  as $f\mapsto f_{(0)}\otimes f_{(1)}$ and $   m = m_{(0)}\otimes m_{(1)}$, and we use the ring morphism \labelcref{Comoment Map for G times Gop with Identifications},
   resp.  (co)moment map $U\LG \to D(X)$ induced by the $G$ action on $X$,
   to view $R\Gamma(G, \mathcal{M})$ as a
   right $U\LG$-module (by left multiplication by the subring $U\LG_{\text{op}} \subseteq D(G)$),
   resp.  $R\Gamma(X, \mathcal{N})$ as a left $U\LG$-module. 
 
In the special case where $G$ acts on itself by left translations, so
 $\alpha =m$ is the multiplication map,
 the convolution operation gives  the derived category of $\D_G$-modules
 a monoidal structure.
 Since $G$ is affine, the latter category is equivalent to   the derived category of  $D(G)$-modules
 via the functor $\Gamma(G,\td)$ and  formula \eqref{bfo}
 shows that  convolution of $\D_G$-modules
 goes via the equivalence to the derived tensor product
  of $U\LG$-bimodules.
Since the tensor product is right exact, the abelian category ${D}(G)\modwithdash$ acquires a
monoidal structure $(\td)\star(\td)=(\td)\otimes_{U\LG}(\td)$.
We will use simplified notation $D=D(G)$ and
define  the  {\em transfer $D$-module} as the convolution
\begin{equation}\label{Representation Theoretic Transfer}
  \Dt:= {D}\star {D}=D\o_{U\g} D,
\end{equation}
of two copies of the rank one free {\em left}
${D}$-module $D$. Thus, $\Dt$ is a left ${D}$-module
and  the action of $D_\op$ on each of the two copies of 
${D}$  by
right multiplicaton induces, by functoriality, 
a  $(D\o D)_\op$-action on $\Dt$ 
by ${D}$-module endomorphisms.
Thus, $\Dt$ acquires the structure of a
$({D}, {D}\o {D})$-bimodule. The convolution in ${D}\mmod$ is given in terms of this object
  by the formula
 \[
   M\star M' := \Dt
     \otimes_{_{{D} \otimes{D}}}\, (M\otimes M'),
\quad M,M'\in {D}\mmod,\]
where $M\otimes M'$ is viewed as a left ${D} \otimes{D}$-module.

   The definition of $\Dt$ agrees with the corresponding
  notion in the theory of $D$-modules, cf. \cite{HTT}.
  Specifically, let $K_X$ denote the dualizing sheaf of a smooth variety $X$.
  Then, using bi-invariant trivializations
  $K_G\cong {\mathcal O}_G$, resp.  $K_{G\times G}\cong {\mathcal O}_{G\times G}$,
  one can check that there is a natural isomorphism
  of $(D, D\otimes D)$-bimodules
  \[\Dt\cong \Gamma\big(G\times G, \, K_{G\times G}
    \otimes_{\mathcal{O}_{G\times G}}\, m^*(K^{\otimes -1}_G\otimes_{\mathcal{O}_G}  \D_G)\big). 
    \]

The monoidal unit of the category $D(G)\mmod$ is the simple 
skyscraper ${D}$-module $\boldsymbol{1}_{{D}\mmod}$ supported 
 at the unit element of $G$.
 The restriction of the
 ${D}$-action to the subalgebra $U\g\otimes_{Z\g} U\g_{\text{op}}$
 identifies  $\boldsymbol{1}_{{D}\mmod}$ with
 the diagonal  $U\LG$-bimodule $(U\LG)_{\text{diag}}$, the 
 monoidal unit of the category $U\g\bimod$.
 Thus, we have the following monoidal functors
\begin{equation}\label{Two fun}
    \dd_G\mmod\xrightarrow[\cong]{\ \Gamma(G,\td)\ } D(G)\mmod
    \xrightarrow{\ \operatorname{Obl}^{D(G)}_{U\g\o U\g_{op}} \ } U\g\bimod,
    \end{equation}
   where the second functor is induced by the map 
\eqref{Comoment Map for G times Gop with Identifications}.
 
\subsection{}
\label{hc bimod}
 Let $k[\hbar]$ denote the graded polynomial algebra with $\hbar$ assigned degree 2. We
write $U_\hb\h$ for the asymptotic enveloping algebra of a Lie algebra $\h$,
resp.  $D_\hbar(X)$ for the algebra of asymptotic differential operators on  a variety  $X$.
Thus,   $U_\hb\h$ and $D_\hb(X)$ are graded $\khb$-algebras defined as 
 the Rees algebras associated with the  PBW filtration  on $U\h$ and  the  filtration on
 $D(X)$ by the order of the differential operator, respectively.

 Let  $\Dh=D_\hb(G)$.  We define convolution of  $\Dh$-modules by
 mimicking constructions above.  To this end, let  $Z_\hb\g$ be the center of $U_\hb\g$.
 The map \eqref{Comoment Map for G times Gop with Identifications} induces
 a  graded $\khb$-algebra embedding $U_\hb\h\o_{Z_\hb\g} U_\hb\g_{op}\to\Dh$.
 We view $\Dh$ as a left, resp. right, $\uhg$-module
 using {\textbf left} multiplication by the elements of the first, resp. second,
 tensor factor of
 $\uhg \o_{Z_\hb\g}\uhg_{\mathrm{op}}$ and put  $\Dht:=\Dh\o_{\Uhbarg}\Dh$.
 This is a $(\Dh,\Dh\o\Dh)$-bimodule 
 and one defines convolution of graded $\Dh$-modules 
  by the formula
 $
   M\star M' =\Dht
     \bigotimes_{{\Dh} \otimes{\Dh}} (M\otimes M')$.
This operation gives    the category $\Dh\mmod$ of graded $\Dh$-modules 
a monoidal structure.

  One has a diagram
  $\g^*\xleftarrow{\mu} T^*G \xrightarrow{\mu_\op} \g^*_\op$,
  of moment maps for the Hamiltonian actions
  on $T^*G$ of $G$ and $G_\op$, respectively.
  Using this diagram and the identification $\g^*_\op\cong\g^*$ of vector spaces,
  one can give $T^*G$  the structure of a \textit{groupoid scheme} in the sense of \cite[Definition 39.13.1]{StacksProject}.  A quantum analogue of this structure is a
  \textit{Hopf algebroid} structure on $\Dh$ in the sense of \cite[
  Definition 4.1]{BohmHopfAlgebroids};
  the structures on $\Dh$ and its category of modules and comodules that we will primarily use are collected in \cref{Differential Operators on G Has Hopf Algebroid Structure and Comodules Has Monoidal Stucture} below.

  To state the proposition we define a `coproduct'  $\Delta: {\Dh}\to \Dht={\Dh}\o_{\uhg} {\Dh}$ and a   `counit'  $\eps: \Dh\to\uhg$ as follows.
  First,  the  left ${\Dh}$-action on $\Dht$
  gives an algebra map
\begin{equation}\label{takeuchi}
  {\Dh} \to \End_{(\Dh\otimes \Dh)_{\text{op}}}(\Dht)
  =\End_{(\Dh \otimes \Dh)_{\text{op}}}({\Dh} \otimes_{\Uhbarg} {\Dh}).
\end{equation}
Second, the  ${D}$-module $\boldsymbol{1}_{{D}\mmod}$
 comes equipped with a natural filtration.
 The Rees object  associated
 with this filtration  gives a graded $\Dh$-module $\boldsymbol{1}_{{\Dh}\mmod}$ 
 which 
 may be identified, as a $(\Uhbarg, \Uhbarg)$-bimodule, with  the diagonal bimodule
 $(\uhg)_{\text{diag}}$.
  Now, the desired map $\Delta: {\Dh}\to \Dht$, resp. $\eps: \Dh\to\uhg$,
  is defined as the map given by the  ${\Dh}$-action on
  the element $1 \otimes 1 \in \Dht={\Dh}\o_{\uhg} {\Dh}$,
  resp. $1\in\uhg=\boldsymbol{1}_{{\Dh}\mmod}$.

Below, we  use various notions introduced
in Definition \ref{Definition of Ring Object Coring Object Symmetric and Left Comodule}    in the special case $R=\Uhbarg$ and $A=\Dh$, where $\Dh$ is viewed as  a 
 $(\uhg,\uhg)$-bimodule
using left multiplication by the elements of the subalgebra
$\uhg \o_{Z_\hb\g}\uhg\subseteq\Dh$.

  \begin{Proposition}\label{Differential Operators on G Has Hopf Algebroid Structure and Comodules Has Monoidal Stucture}
\vi
    The coproduct map $\Delta$ and the counit map $\epsilon$
    equip  ${\Dh}$ with the structure of a (graded) $\uhg$-coring object.
    
    \vii Any (left) ${\Dh}$-comodule $C$ has the canonical structure of a
    $(\uhg,\uhg)$-bimodule where the left action of $\uhg$ comes from
    the  ${\Dh}$-comodule structure. 
        For any ${\Dh}$-comodules $C,C'$,
    the tensor product $C \otimes_{\Uhbarg}C'$ of $(U_\hb\g,\Uhbarg)$-bimodules has
    the natural structure of a ${\Dh}$-comodule.
    Specifically,
    the right $\uhg$-action $C\o \uhg\to C$, resp. $\Dh$-coaction
    $C \otimes_{\Uhbarg} C'\to {\Dh\,\o_{\Uhbarg}\,(C \otimes_{\Uhbarg}C')}$,
    is given by the formula
    \begin{equation}\label{comod1}
           c \o u\mapsto cu := \epsilon(c_{(0)}u)c_{(1)},
                              \quad\text{\em resp.},\quad
    c \otimes c' \mapsto c_{(0)}c'_{(0)} \otimes c_{(1)} \otimes c'_{(1)},
  \end{equation}
  where we use Sweedler notation
  $\operatorname{coact}(c)= c_{(0)}\otimes c_{(1)}$ for the coaction of $\Dh$.

   \viii The operation $(\td)\otimes_{\Uhbarg} (\td)$ 
   equips the category ${\Dh}\comod$ with a monoidal structure such that the
   functor \begin{equation}\label{Monoidal Forgetful Functor}\DhbarG\comod \to U_\hb\g \otimes_{k[\hbar]} U_\hb\g_{op}\mmod\end{equation} induced by the
$U_\hb\g$-bimodule structure on a $\DhbarG$-comodule   is monoidal.
\end{Proposition}

\begin{proof}
The associativity of the convolution
implies that $\Delta$ is coassociative, and the fact that $\Uhbarg
$ is the monoidal unit implies counitality; this shows (i). 
Moreover, it is straighforward to check that, for any $\partial\in \Dh,\,u\in \uhg,\,u'\in \uhg_{\text{op}}$, we have
$\epsilon(u u'\partial)=(u u'\partial)1=u u'\epsilon(\partial)1=
u\epsilon(\partial)u'$. We claim that the algebra and
the coalgebra object structures on $\Dh$ satisfy the axioms of a \textit{left} $\uhg$\textit{-bialgebroid} in the sense of \cite[Definition 3.3]{BohmHopfAlgebroids}.
By our above analysis, this claim amounts to the statement 
that the map $\Delta$ factors through
the so-called \textit{Takeuchi product}, cf. {\em loc cit}
This last statement is an immediate consequence of formula   \eqref{takeuchi}.\footnote{We first learned of this interpretation of the Takeuchi product in the MathOverflow answer \cite{GunninghamWhyShouldTensorofDxModulesOverOXBeDxModule}.}
Alternatively, the fact that $\DhbarG$ is a left $\Uhbarg$-bialgebroid follows from our construction of the monoidal structure on $\DhbarG$-mod in \cref{Monoidality and Tensor Product Subsection} and \cite[Theorem 3.13]{BohmHopfAlgebroids}. Statements (ii) and (iii) then follow from \cite[Theorem 3.18]{BohmHopfAlgebroids}.
\end{proof}

\begin{Remark}
An analogue of \cref{Differential Operators on G Has Hopf Algebroid Structure and Comodules Has Monoidal Stucture} where $\Dh$ is replaced with $D(G)$ and $\Uhbarg$ is replaced with $U\LG$ also holds, with a completely analogous proof. In fact, one can show that the monoidal category of comodules for $D(G)$ is equivalent to the category of \textit{Harish-Chandra bimodules} for $G$. We prove an asymptotic upgrade of this statement in the next section, see \cref{HC forget}.
\end{Remark}

Let $D(G)\ad\g$ be the left ideal of $D(G)$ generated by the vector fields
$\ad\xi,\,\xi\in\g$, where $\ad\xi$ denotes the
image of $\xi\o 1- 1\o\xi\in U\g\o_{Z\g}U\g_{op}$.
 It is clear that the $D(G)$-module $\mathbf{N}=D(G)/D(G)\ad\g$, cf.  \cref{main1},
  is the image of the  diagonal  $U\LG$-bimodule  $(U\LG)_{\text{diag}}$ under the
  functor $D(G)\o_{_{U\g\o U\g_\op}} (\td)$, and
 \begin{equation} \label{N as Fiber of D}\mathbf{N}\,\cong\, D(G)\o_{_{U\g\o U\g_\op}}  (U\LG)_{\text{diag}}.
 \end{equation}
  
  The diagonal bimodule
  is a coalgebra object in $U\g\bimod$ and the functor $D(G)\o_{_{U\g\o U\g_\op}} (\td)$ is a left adjoint of the monoidal
  functor $\mathrm{Obl}^{D(G)}_{U\g\o U\g_{\op}}$ in \labelcref{Two fun}, hence, it is left lax monoidal by \cref{Adjoint to Monoidal Functor is Appropriately Lax Monoidal}. We deduce

 \begin{Corollary}\label{N coalg} The $D(G)$-module
$\mathbf{N}$ has the natural structure of a coalgebra object in the monoidal category $\differentialOperatorsOnG\mmod$.
  \end{Corollary}
   The resulting comultiplication $\mathbf{N}\to \mathbf{N}\o_{U\g}\mathbf{N}$
  can also be described explicitly in terms of
  the  comultiplication $\Delta$ for $\differentialOperatorsOnG$
  considered in \cref{Monoidality and Tensor Product Subsection}. Specifically, it is straightforward to check that
  the composite \[\differentialOperatorsOnG \xrightarrow{\Delta} \differentialOperatorsOnG \otimes_{U\LG} \differentialOperatorsOnG \xrightarrow{} \quantizationOfCentralizersHBAREQUALSONE \otimes_{U\LG} \quantizationOfCentralizersHBAREQUALSONE\]
  sends the left ideal $D(G)\ad\g$ to zero.
  Hence, this composite factors through a map
  $\mathbf{N}\to \mathbf{N}\o_{U\g}\mathbf{N}$. 
  
\subsection{}
Let  $H$ be an algebraic group
with Lie algebra $\h$. 
A  {\em Hamiltonian} $H$-action on  an associative graded  $\khb$-algebra
${ A}$ is the data of an 
$H$-action $\act_{ A}:
H\times { A} \to { A},\, (h,a)\mto \act_{ A} h(a)$ by
graded $\khb$-algebra automorphisms
and
a graded $\khb$-algebra (comoment) map $\mu_A: U_\hb\h\to A$ such that
one has $\hb (d\act_{ A})(\xi)(a)=[\mu_A(\xi),\, a]$ for all $\xi\in\h,a\in { A}$,
where $d\act_{ A}$ is the differential of the $H$-action.
The action of $G$ on 
$U_\hb\g$ induced  by the adjoint action in $\g$ is Hamiltonian
with  the identity comoment map. 

Let ${A}$ be a graded $\khb$-algebra 
with a Hamiltonian $G$-action.  This action gives a
Hamiltonian action on the opposite algebra
$A_{op}$ such that the  comoment map $\mu_{A_{op}}: U_\hb\h\to A_{op}$  sends
$\xi$ to  $-\mu_A(\xi)$ for all $\xi\in \g$.  Given another graded $\khb$-algebra $B$
with Hamiltonian $H$-action,
one makes the diagonal $H$-action
 $h: a \o b\mto \act_{ A}(h)(a)\o \act_{B}(h)(b)$ on ${A} \otimes_{k[\hbar]} B$
 a
 Hamiltonian action by defining the comoment map
 $U_\hb \g \to {A} \otimes_{k[\hbar]} B$ by
the assignment $\xi\mto\mu_A(\xi)\o 1+1\o \mu_B(\xi)$
for all $\xi\in\g$.

Let $A\mmod^H\we$ be the abelian category of 
graded ${ A}$-modules $M$
equipped with an $H$-action $\act_M: H\times M\to M,\,(h,m) \mto \act_M h(m)$
by graded space automorphisms
such that $\act_M h(am)=\act_{ A}(h)(a)\act_M h(m)$, for all
$a\in { A},\,h\in H,\, m\in M$.
Let $A\mmod^H$ be the full subcategory of  $A\mmod^H\we$
whose objects $M$
have the property that for all $\xi\in \h, m\in M$ one has
$\hb (d\act_M)(\xi)(m)=\mu(\xi) m$, where $d\act_M: \h\o M\to M$ is the differential of $\act_M$.
More generally, given a Lie algebra homomorphism $\chi: \h\to k$,
one can define a  full subcategory $A\mmod^{H,\chi}$ of  $A\mmod^H\we$
whose objects $M$
have the property that for all $\xi\in \h, m\in M$ one has
$\hb (d\act_M)(\xi)(m)=(\mu(\xi) - \chi(\xi)) m$.
Objects of $A\mmod^H$, resp.  $A\mmod^{H,\chi}$,
are  usually referred to as  $(A,H)$-modules, resp.
$(A,H,\chi)$-modules.

We will primarily be interested in  the case of the algebra
$A=U_\hb\g \otimes_{k[\hbar]} U_\hb\g_{op}$ equipped with the
diagonal Hamiltonian action of the group $G$ induced by the adjoint
action of $G$ on $\g$.
Let $\mathcal{HC}_{\hb}:=(U_\hb\g \otimes_{k[\hbar]} U_\hb\g_{op})\mmod^G$ be
the category of
(asymptotic) {\em Harish-Chandra bimodules}.
We equip
this category with  monoidal structure
$(\td)\otimes_{\uhg} (\td)$.

\begin{Corollary}\label{HC forget}\vi The forgetful functor $\DhbarG\comod \to \Uhbarg\mmod$ lifts to an equivalence of categories \begin{equation}\label{DComod Is UgmodGweak}\DhbarG\comod \xrightarrow{\sim}  \Uhbarg\mmod^G\we\end{equation} and, moreover, the functor \labelcref{Monoidal Forgetful Functor} lifts to a monoidal equivalence of categories
  \begin{equation}\label{DComod to HChbar}\DhbarG\comod
    \xrightarrow{\sim} \mathcal{HC}_{\hb};\end{equation}
  in particular,
 the monoidal unit  $\boldsymbol{1}_{{D}\mmod}$ of $\Dh\mmod$
 goes to the diagonal $(\Uhbarg, \Uhbarg)$-bimodule  $(\uhg)_{\text{diag}}$.

  \end{Corollary}
  \begin{proof}
For any left comodule $C$ and $c\in C$, we may write the coaction
  $\operatorname{coact}(c)=c_{(0)} \otimes c_{(1)}$
  so that $c_{(0)} \in \mathcal{O}(G)$. This equips $C$ with an $\O(G)$-comodule structure. Conversely, given such a coaction, we may use the same coaction map and the isomorphism $\DhbarG \cong \Uhbarg \otimes \mathcal{O}(G)$ to give the $\DhbarG$-coaction, and these constructions evidently give mutually inverse equivalences of categories. 
 Moreover, for any $C \in \DhbarG\comod$ and any $\xi \in \LG$ we have
  \begin{align*}c \xi := \epsilon(c_{(0)} \xi)c_{(1)} &= \epsilon( \xi c_{(0)} -  \hbar\xi(c_{(0)}))c_{(1)} = \epsilon( \xi c_{(0)})c_{(1)} - \hbar\epsilon( \xi(c_{(0)}))c_{(1)}\\
                                                  &=  \xi\epsilon(c_{(0)})c_{(1)} - \hbar \epsilon( \xi(c_{(0)}))c_{(1)} =  \xi c - \hbar\epsilon( \xi(c_{(0)}))c_{(1)}.
  \end{align*}
  From this equality we see that \begin{equation}\label{HC Bimod Equation}\xi c - c \xi = \hbar \epsilon(\xi(c_{(0)}))c_{(1)}\end{equation} and
so the $U\LG$-bimodule structure in \cref{Differential Operators on G Has Hopf Algebroid Structure and Comodules Has Monoidal Stucture}(ii) is a Harish-Chandra bimodule, and the monoidal structure of the functor \labelcref{Monoidal Forgetful Functor} gives our desired monoidal structure. Finally, forgetting to $\Uhbarg\mmod_{\we}^G$ and applying the inverse functor to \labelcref{DComod Is UgmodGweak} gives the inverse functor to \labelcref{DComod to HChbar}.
\end{proof}

\begin{examp}\label{torus ex} Let $T$ be a torus with Lie algebra $\t$ and ${\mathbb{X}}^*(T)\subset \O(T)$  the weight lattice. Any object $F\in U_\hb\t\mmod^T\we$
  has a canonical weight space decomposition
  $F=\oplus_{\lambda\in {\mathbb{X}}^*(T)} \, F^{(\la)}$ with respect to the $T$-action.
 Here, each $F^{(\la)}$ is a
  $U_\hb\t$-bimodule and we write $u: f\mto u f$ for the left
  action of $u\in U_\hb\t$. Observe that the formula \labelcref{HC Bimod Equation} says that the right action of a given $\xi \in \LT$ on an element $c \in F^{(\la)}$ by the formula $c \xi = \xi c - \hbar(d\lambda)_e(\xi)c$ where $(d\lambda)_e \in \LTd$ is the differential of $\lambda$ at the identity. 
    \end{examp}

The action of  the group $G \times G_{\text{op}}$ on the algebra $\DhbarG$ is Hamiltonian
with moment map given by the composition
$U_\hb\g\o_{\khb} U_\hb\g_{op}\onto U_\hb\g\o_{Z_\hb} U_\hb\g_{op}
\xrightarrow{\ \eqref{Comoment Map for G times Gop with Identifications} \ }
D_\hb$.
In particular, the adjoint action of the group $G$ on itself induces a Hamiltonian
$G$-action on $\Dh$ and the corresponding category 
$\Dh\mmod^{\Ad G}$  of $(\Dh, G)$-modules is the category of 
strongly $\Ad G$-equivariant graded $\Dh$-modules.

The category $\mathcal{HC}_\hb$ contains the full subcategory
$(\uhg\o_{Z_\hb\g}\uhg_{op})\mmod^G$ whose objects are symmetric
as $(Z_\hb\g,Z_\hb\g)$-bimodules.
Restricting the action on an
any $\Ad G$-equivariant $\Dh$-module $M$ gives an object of
$(\uhg\o_{Z_\hb\g}\uhg_{op})\mmod^G$.
Let $\Dh\comod^\De$
be a full subcategory  of $\Dh\comod$ whose objects are symmetric
as $(Z_\hb\g,Z_\hb\g)$-bimodules, cf. \eqref{comod1}.
    Thus, using \cref{HC forget} we
    one obtains the following
diagram of monoidal functors
\beq{D diag of functors} 
\xymatrix{
\Dh\mmod^{\Ad G}\ \ar[r]^<>(0.5){\ \obl \ } &
\   (\uhg\o_{Z_\hb\g}\uhg_{op})\mmod^G \
&&&
\Dh\comod^{\De}\ \ar[lll]_<>(0.5){\ \text{Corollary \ref{HC forget}} \ }_<>(0.5){\cong} 
}
\eeq

\begin{rems}
  \vi We have canonical graded algebra isomorphisms $\uhg/(\hb)\cong\Sym\g\cong
  \O(\g^*)$, resp. $\Dh/(\hb)\cong \O(T^*G)$.
  Thus, an object of $\mathcal{HC}_\hb$ annihilated by $\hb$ is the
  same thing as an $\Ad G$-equivariant {\em symmetric}
  $(\Sym\g, \Sym\g)$-bimodule, equivalently,
  an $\Ad G$-equivariant quasi-coherent sheaf on $\g^*$.
  Similarly,  an object of $\Dh\mmod^{\Ad G}$ annihilated by $\hb$ is the
  same thing as an $\Ad G$-equivariant
  $\O(T^*G)$-module which is symmetric  as a $(\Sym\g, \Sym\g)$-bimodule,
 equivalently, an  $\Ad G$-equivariant quasi-coherent sheaf on 
  the {\em scheme-theoretic zero fiber} of the 
  morphism $\mu-\mu_{op}: T^*G \to \g^*$. This zero fiber is the
  (possibly nonreduced) commuting {\em scheme}.
  \vskip3pt

  \vii The variable $\hb$ can be specialized at $1\in k$ 
in all definitions and results of this section;
in particular, one similarly obtains the monoidal category
$\mathcal{HC}$ of (nonasymptotic) Harish-Chandra $U\g$-bimodules, and are nonasymptotic analogues of \cref{Differential Operators on G Has Hopf Algebroid Structure and Comodules Has Monoidal Stucture},
and Lemma
\ref{HC forget}, and diagram \eqref{D diag of functors}.
\end{rems} 
\newcommand{\overlinepartial}{\overline{\partial}}
\newcommand{\overlinenu}{\overline{\nu}}
\section{Kostant-Whittaker Reduction}\label{Kostant-Whittaker Reduction Is Monoidal Localization Section}

\newcommand{\fs}{{\mathfrak s}}
\newcommand{\coh}{{\operatorname{QCoh}}}
\newcommand{\reg}{_{\text{reg}}}
\newcommand{\gm}{{{\mathbb G}_m}}
From now on, we fix a pair $B, \BorelSubgroup$, of
opposite Borel subgroups of $G$,
so  $T := \BorelSubgroup \cap \oppositeOfBorelSubgroup$ is a maximal torus.
Let $N$ be the unipotent radical 
of $\oppositeOfBorelSubgroup$.
Let $\mathfrak{b}, \bar{\mathfrak{b}}, \mathfrak{t}$,  and $\mathfrak{n}$
denote the respective Lie algebras.
We a fix a nondegenerate
character
$\psi: \LieAlgebraofUnipotentRadicalOfOppositeBorel \to k$ and
a cocharacter $\gamma: \mathbb{G}_m\to T$ such that
$\gamma(z)(\psi)=z^{-2}\cdot\psi,\,\forall z\in\mathbb{G}_m$. 
\subsection{Classical Kostant-Whittaker reduction}
\label{Asymptotic and Classical Variants Subsection}
We identify $\g^*$ with $\g$ via the Killing form $(\td,\td)$. Choose an $\mathfrak{sl}_2$-triple   $e,h,f\in\g$
  such that $h$ lies in $\mathfrak{t}$ and 
$(e,\td)|_{\fn}=\psi$.
Let ${\mathfrak s}\sset \g^*$  be the
image under the isomorphism $\g\iso\g^*,\, \xi\mto (\xi,\td)$
of the Kostant slice through $e$ associated with this triple.
Then, $\fs$ is contained
$(\psi+\LieAlgebraofUnipotentRadicalOfOppositeBorel^\perp)\cap \g^*\reg$.
Let $Z_\fs\to \fs$ be the restriction to $\fs$ of
the group scheme $Z\reg\to\g\reg$ of regular centralizers.

We define a $\bullet$-action of $\gm$ on $\fn^*$
by
$z: \xi\mto z\bullet \xi= z^2\cdot \Ad \gamma(z)(\xi)$.
Then,  $\psi\in\fn^*$ is a $\gm$-fixed point and the affine space  $\psi+ \fn^\perp$, resp. $\fs$,
 is $\gm$-stable. 
 We let $\gm$ act on
 the group scheme $Z_\fs$ as follows
\begin{equation}\label{diag twist}
  z:\, (g,\xi)\mto (\gamma(z)g\gamma(z)\inv,\,z^2\cdot \Ad \gamma(z)(\xi)).
  \end{equation}

 Let  $\coh^{\gm\times G}(\g^*)$, resp. $\coh^{\gm\ltimes Z_\fs}(\fs)$, 
 be the abelian category of equivariant quasi-coherent sheaves
 where the group  $\gm\times G$  acts on $\g^*$ by
 $(z,g): \xi \mto z^2 \Ad g(\xi)$,
 resp. $\gm\ltimes Z_\fs$  acts on $\fs$ by $(z,g): s\mto z\bullet s$.
  Tensor product of $\mathcal O$-modules gives these categories a monoidal structure. One has a well defined  restriction functor   $i^*: \coh^{\gm\times G}(\g^*)\to \coh^{\mathbb{G}_m \ltimes Z_{\mathfrak{s}}}(\fs)$,
even though
 the  closed embedding $i$ is {\em not} $\gm$-equivariant.
Furthermore, this functor  is  exact and monoidal.

There is an alternate, somewhat more canonical construction
of the  functor $i^*$ in terms of `{\em classical Kostant-Whittaker reduction}' that we are going to quantize
in subsequent subsections.
To explain the construction, let
$T^*_\psi(G/N)=G\times_N(e+\fn^\perp)$
be the $\psi$-twisted cotangent bundle on $G/N$. We have a commutative diagram
 \[
   \xymatrix{
       T^*_\psi(G/N)  \ \ar[rrr]^<>(0.5){\ q:\,(g,\xi)\,\mto\,\Ad g(\xi)\  }\ar[drrrr]_<>(0.5){p}&&&\ 
     \g^*_{\text{reg}}\ \ar@{^{(}->}[r]^<>(0.5){\jmath} & \ \g ^*\ \ar[d]_<>(0.4){\varpi}\
& \ \fs\ \ar[l]_<>(0.5){i}\ar[dl]^<>(0.4){\cong}\\
&&&&\ \fc \ &
}
   \]
   Here, $\varpi$ is the coadjoint quotient morphism,
      $\jmath$ is the open embedding,
      $p=\varpi\ccirc \jmath\ccirc q$,
      and the composition $\varpi\ccirc i$ is an isomorphism.

The  $\gm$-action on $\g^*$, resp. $G\times \g^*\reg$, defined by
 $z: \xi\mto z^2\cdot\xi$, resp. by formula \eqref{diag twist},
 induces a  $\gm$-action on $\fc=\g^*/\!/G$, resp. on the universal centralizer
 $J$. The canonical map
$J\to\fc$ intertwines the  $\gm$-actions on $J$ and $\fc$.
We extend the   $\gm$-action on $\fc$ to a $\gm\ltimes J$-action
by letting $J$ act on $\fc$ trivially.
    Finally, we define a $\gm\times G$-action on $T^*_\psi(G/N)$
by letting $G$, resp. $\gm$,  act by $g': (g,\xi)\mto (g'g,\xi)$, resp.
$z: (g,\xi)\mto (g\gamma(z),\, z^2\cdot \Ad \gamma(z)\inv(\xi))$.
The map $q$, resp.  $p, \varpi$, and the isomorphism $\fs\iso\fc$,
commutes with the $G$-actions, resp. $\gm$-actions.
Furthermore, the map $p$ is a $G$-torsor,
\cite[Lemma 3.2.3(iii)]{GinzburgKazhdanDifferentialOperatorsOnBasicAffineSpaceandtheGelfandGraevAction}, and $q$ is a $Z\reg$-torsor.
 
   For any   ${\cf}\in \coh^{\gm\times G}(\g^*\reg)$
   and $\xi\in \g^*\reg$, the stabilizer of $\xi$ in $G$ acts
   in the fiber of $\cf$ at $\xi$. This upgrades to an action
   $p^*J\otimes q^*\cf\to q^*\cf$, giving the sheaf $q^*{\cf}$ 
   a canonical $G\times (\gm\ltimes p^*J)$-equivariant structure,
  where the $\gm$-action on $p^*J$ is induced from the one on $J$.

  It is clear that one has the following mutually inverse monoidal equivalences:

\[
  \xymatrix{
     \coh^{\gm\times G}(\g^*\reg) \ar@<0.5ex>[rr]^<>(0.5){q^*}
   &&
  \coh^{\gm\ltimes (G\times q^*Z\reg)}(T^*_\psi(G/N))\ \ar@<0.5ex>[rrr]^<>(0.5){(p_*(\td))^G}
  \ar@<0.5ex>[ll]^<>(0.5){(q_*(\td))^{Z\reg}}&&&
    \coh^{\gm\ltimes J}(\fc).\ \ar@<0.5ex>[lll]^<>(0.5){p^*}
  }
\]

We consider the  composite  $\kappa_0:\ \coh^{\gm\times G}(\g^*)\xrightarrow{\jmath^*}
\coh^{\gm\times G}(\g^*_{\text{reg}})\xrightarrow{(p_*(\td))^G\ccirc q^*}\coh^{\gm\ltimes J}(\fc)$.

\begin{Corollary}\label{KostantWhittaker In Classical Setting Induces Monoidal Equivalence with G equivariant QCoh on greg}
  \vi The  functor $\kappa_0$
   is an exact monoidal functor  and
  the functor $\jmath_*\big(((q_*p^*)(\td))^{Z_{\text{\em reg}}}\big)$ is a right adjoint of $\kappa_0$.

  \vii The  functor
  $\coh^{\gm\times G}(\g^*)/\Ker(\kappa_0)\to$  $\coh^{\gm\ltimes J}(\fc)$
    induced by $\kappa_0$ is an equivalence and the functor $i^*$ corresponds to $\kappa_0$ under the isomorphism
    $\fs\iso \fc$.\qed
  \end{Corollary}

  \subsection{Kostant-Whittaker reduction}
 \label{psi reduction}
 For any $M\in \Rep(G\times\mathbb{G}_m)$, a $G$-representation equipped
 with a $G$-stable $\mathbb{Z}$-grading $M=\oplus_j\,M_j$,
 there is an an associated  `{\em Kazhdan grading}'  $M= \oplus_{n\in {\mathbb Z}}\  M_{[n]}$
 defined as follows.  
 For each integer $i$, let  $M^{(i)}=\{m\in M\mid \gamma(z)m=z^i\cdot m\; \forall z\in \mathbb{G}_m\}$ be the weight space for the  $\mathbb{G}_m$-action on $M$ obtained by restricting the $G$-action on $M$ along
 the map $\gamma: \mathbb{G}_m\to G$. Thus, we have
 $M=\oplus_{i\in {\mathbb Z}}\,M^{(i)}$.
 The homogeneous 
 components of the Kazhdan grading are defined by the formula
\[
  M_{[n]}:=\oplus_{\{i,j\in {\mathbb Z}\,\mid \,  n=i+j\}} \ M^{(i)}\cap M_j,\qquad
  n\in \mathbb{Z}.
\]

Let $\fn^\psi$ be the image of the map
$\fn\to\uhg,\,\xi\mto\xi - \psi(\xi)$.
This is an $\Ad N$-stable graded Lie subalgebra
of $\uhg$ where $\uhg$ is equipped with the Kazhdan grading  associated
with the adjoint action of $G$.
Hence,   $\fn^\psi M$, resp. $M\fn^\psi$, 
is an $N$-stable graded subspace of $M$
for any $M\in \uhg\mmod^G\we$, resp. $M\in \uhg_{op}\mmod^G\we$.

Let ${ A}$ be a graded $k[\hb]$-algebra with a Hamiltonian $G$-action
with comoment map $\mu$. We equip
$A\o_{\khb} A_{op}$ with the diagonal Hamiltonian action of $G$
and consider the category $(A\o_{\khb} A_{op})\mmod^G$.
Any object $M$ of this category  is in particular a $(\uhg,\uhg)$-bimodule and
a $G$-representation with a $G$-stable
$\mathbb{Z}$-grading, hence, $M$  has an associated   Kazhdan grading.
It follows from the above that  the space  $M/M\fn^\psi$, resp. $M/\fn^\psi M$, of $\fn^\psi$-coinvariants
acquires the structure of  an  $({ A}, N,\psi)$-module,
resp. $({ A}_{op}, N,\psi)$-module.  Since $\psi([\LieAlgebraofUnipotentRadicalOfOppositeBorel, \LieAlgebraofUnipotentRadicalOfOppositeBorel]) = 0$, 
the assignment $\xi \mapsto \psi(\xi)$ also gives 
a character of an opposite Lie algebra $\LieAlgebraofUnipotentRadicalOfOppositeBorel_{\text{op}}$,
to be denoted by $\psi$ as well. Hence, the image of the map
$\fn_\op\to \Uhbarg_\op,\, \xi\mto \xi-\psi(\xi)$, is a Lie subalgebra to be denoted by
$\fn^\psi_r$.
Thus, in the special case 
of the diagonal $A$-bimodule  $M=A$,
one has the  Kazhdan grading on the algebra $A$,
the image of the composite
$\fn^\psi\into\uhg\xrightarrow{\mu} { A}$ generates  a left, resp. right,
homogeneous ideal 
${ A} \fn^\psi$, resp.  $\fn^\psi{ A}$,
and the quotient  $E^\psi_\ell ={ A}/{ A}\fn^\psi$, resp. $E^\psi_r ={ A}/\fn^\psi{ A}$,
has the structure of  an  $({ A}, N,\psi)$-module,
resp. $({ A}_{op}, N, \psi)$-module.
For any $F\in A\mmod^{N,\psi}$, resp.
  $F\in A_{op}\mmod^{N_{op},\psi}$,
and there is a canonical isomorphism
$\Hom_{A\mmod^{N,\psi}}(E^{\psi}_\ell, F)\cong F^N$, resp.
$\Hom_{A_{op}\mmod^{N_{op},\psi}}(E^{\psi}_r, F)\cong F^{N_{op}}$.
In particular,  we have
\begin{equation}\label{Kostant Whittaker as Endomorphisms}
  (\End_{{ A}\mmod^{N,\psi}} E^{\psi}_\ell)_\op\cong ({ A}/{ A}\fn^\psi)^N,\enspace\text{resp. }\enspace
  \End_{{ A}_{op}\mmod^{N,\psi}} E^{\psi}_r \cong ({ A}/\fn^\psi{ A})^N.
\end{equation}
Composition of endomorphisms provides, via the above isomorphisms,
algebra structures on the objects appearing on the right hand side of the isomorphisms.

  The Kazhdan grading and the standard (doubled) grading on $\uhg$ induce
the same grading on $\zhg$ and
the composite
$\zhg\into\uhg\to A$ induces an algebra map $\zhg\to({ A}/{ A}\fn^\psi)^N$,
resp. $\zhg\to({ A}/\fn^\psi{ A})^N$,
that respects each of these gradings. 

In the special case $A=\uhg$
these  maps are algebra isomorphisms due to Kostant
\cite{KostantonWhittakerVectorsandRepresentationTheory}:
\begin{equation}
  \label{Unit Map for Monoidality of Asymptotic Kostant Whittaker}
  Z_\hb\g \cong(\Uhbarg/\Uhbarg \cdot
  \fn^\psi)^{{\unipotentRadicalOfOppositeBorel}} \cong (\Uhbarg/\fn^\psi\Uhbarg)^{{\unipotentRadicalOfOppositeBorel}}
\end{equation}

For any $A$ as above and a right, resp. left,
 ${ A}$-module $M$,  the coinvariants 
  \begin{equation}\label{Epsi}
    M/M\fn^{\psi}\cong M \otimes_{{ A}} E^{\psi}_{\ell},\enspace\text{resp. }\enspace
    M/\fn^\psi M\cong E^\psi_r \o_{ A} M
  \end{equation}
acquire the natural structure of a right module over
$({ A}/{ A}\fn^\psi)^N$, respectively left module for $({ A}/\fn^\psi{ A})^N$,
via the action on $E^{\psi}_{\ell}$, resp. $E^\psi_r$.
Similarly, for any
$({ A}\o_{\khb} A_{op},\,G)$-module $M$
the space
  \[
  \vkap_\hb(M):=(M/M\fn^\psi)^{N}=\Hom_{{ A}\mmod^N}(E^\psi_\ell,\, M/M\fn^\psi)\]
    acquires the structure of   a graded left module over the algebra
    ~$({ A}/{ A}\fn^\psi)^N\otimes_{\khb}({ A}/{ A}\fn^\psi)^N_{{op}} $.
        Observe that if $M={ A}$ is the diagonal bimodule for ${ A}$ then
  $\vkap_\hb(M)$
  is the diagonal  bimodule for $({ A}/{ A}\fn^\psi)^N$,
  so we will write $({ A}/{ A}\fn^\psi)^N=\vkap_\hb({ A})$.


In the case $A=\uhg$, part (i) of the followoing proposition is known as \textit{Skryabin's equivalence}; part (ii) is implicit in
  \cite[Section 4]{BezrukavnikovFinkelbergEquivariantSatakeCategoryandKostantWhittakerReduction} and proved in \cite[Theorem 4.1.4(iii)]{GinzburgHarishChandraBimodulesforQuantizedSlodowySlices}. 
 
\begin{Proposition}\label{Asymptotic Kostant-Whittaker Reduction Is Monoidal and Exact}
Let $A$ be a graded $\khb$-algebra  equipped with a Hamiltonian $G$-action. 

\vi The functors \[A/A\mathfrak{n}^{\psi}\otimes_{\kappa(A)} (-) : \kappa(A)\mmod \xleftrightarrow{\sim}  A\mmod^{N, \psi}: (-)^{\mathfrak{n}^{\psi}}\] give mutually inverse exact equivalences of abelian categories.

\vii The assignment $M\mapsto \vkap_\hb(M)$
yields an exact monoidal functor
$(A \o_{\khb} A_{op})\mmod^G\to\vkap_\hb({ A})\bimod$. Moreover, $\vkap_\hb$ commutes with arbitrary direct sums.
 \end{Proposition}

 \begin{proof} Part (i) is proved completely analogously to the non-asymptotic case of \cite[Proposition 3.4]{Gin}. 
 
 We now show (ii). Restricting the action along the algebra map
   $\mu_A\o \mu_{A_{op}}:  \uhg \o_{\khb} \uhg_{op}\to A \o_{\khb} A_{op}$
   gives an exact functor $(A \o_{\khb} A_{op})\mmod^G\to\mathcal{HC}_\hb$.
   Thus, to prove the exactness statement we may assume without loss of generality
   that $A=\uhg$, in which case  the exactness follows from the results discussed in \cite[Section 3.1]{Gin}, cf. also
\cite[Theorem 4.1.4(iii)]{GinzburgHarishChandraBimodulesforQuantizedSlodowySlices}.

To prove monoidality, we must construct for all
$M,M'\in ({ A}, G)\bimod$ a functorial isomorphism
 $\kostantWhittakerHBARtoZGBIMOD(M) \otimes_{\vkap_{\hb}({A})}
    \kostantWhittakerHBARtoZGBIMOD(M') \xrightarrow{\sim}
    \kostantWhittakerHBARtoZGBIMOD(M \otimes_{{A}} M')$.
    To this end, put $\bar {A}:=({ A}/{ A}\fn^\psi)^N$ and $E:=E^\psi_\ell=
   {A}/{A}\fn^\psi$.
   Observe that the left action of $\fn^\psi$ on
   $M/M\fn^\psi$ is locally nilpotent. It follows that the natural map $E
  \otimes_{\bar{A}}  \kostantWhittakerHBARtoZGBIMOD(M)
  \to M \otimes_{{A}} E$ is an isomorphism,
  by a version of Skryabin's isomorphism, \cite[Proposition 3.1.4(i)]{Gin}.  
  We deduce  a chain of    isomorphisms
  \begin{align*}
                                          \big(E
    \otimes_{\bar {A}} \kostantWhittakerHBARtoZGBIMOD(M)\big) \otimes_{\bar{A}}
                                         \kostantWhittakerHBARtoZGBIMOD(M') 
 &   \xrightarrow{\sim}\
                                         (M \otimes_{{A}}
                                         E)\  \otimes_{\bar{A}} \kostantWhittakerHBARtoZGBIMOD(M')\\
                                       &\xrightarrow{\sim}
                                         M \otimes_{{A}} \big(E\otimes_{\bar{A}}
                                         \kostantWhittakerHBARtoZGBIMOD(M')\big)
                                         \xrightarrow{\sim}\ (M \otimes_{{A}} M') \otimes_{{A}}
                                         E.  \end{align*}

                                       The isomorphism of the proposition is given by applying $(\td)^{\unipotentRadicalOfOppositeBorel}$ to the composite of the above isomorphisms using that
                                       the functor $(E\o_{\bar{A}} \td)^N$, resp.
                                       $(\td\o_{A} E)^N$, is the identity. Finally, $\vkap_\hb$ commutes with arbitrary direct sums since it is the composite of a tensor product and an equivalence of categories.
 \end{proof}


There are also completely analogous constructions in the specialized
setting where $\hb=1$.



\subsection{}
\label{Monoidal Structure on BiWhit Mod and Monoidality}
We have the Lie subalgebra $\fn^\psi_\ell:=\fn^\psi\subset\Uhbarg\subset \Dh$,
resp. $\fn^\psi_r\subset\Uhbarg_\op\subset \Dh$,
and the  Kostant-Whittaker reduction of $\DhbarG$:
\[\Dpsilhbar :=  (\DhbarG/\DhbarG\LieAlgebraofUnipotentRadicalOfOppositeBorelPSIl)^{\unipotentRadicalOfOppositeBorel},
  \enspace\text{resp.}\enspace
  \Dpsirhbar := (\DhbarG/\LieAlgebraofUnipotentRadicalOfOppositeBorelPSIr\DhbarG)^{\unipotentRadicalOfOppositeBorel_{\text{op}}}.\]

The residual action of the group $G_\op$, resp. $G$,  on  $\Dpsil$, resp.  $\Dpsir$,
is Hamiltonian.
Therefore,
one can apply Kostant-Whittaker reduction to the algebra $\Dpsirhbar$,
  resp.  $\Dpsilhbar$ , using the Lie subalgebra
  $\fn^\psi_\ell\sset \Uhbarg\sset \Dpsirhbar$, resp. $\fn^\psi_r\sset \Uhbarg_\op\sset \Dpsilhbar$.
  This way one obtains the following algebras and algebra maps:
\begin{equation}\label{Algebra Isos For Different Descriptions of biWhittaker}
  (\Dpsirhbar/\Dpsirhbar\LieAlgebraofUnipotentRadicalOfOppositeBorelPSIl)^{\unipotentRadicalOfOppositeBorel}
  \xrightarrow{} (\Dh/(\fn^\psi_r \Dh+\Dh\fn^\psi_\ell)) ^{\unipotentRadicalOfOppositeBorel \times \unipotentRadicalOfOppositeBorel_{\text{op}}} \xleftarrow{} (\Dpsilhbar/\LieAlgebraofUnipotentRadicalOfOppositeBorelPSIr\Dpsilhbar)^{\unipotentRadicalOfOppositeBorel_{\text{op}}}.
\end{equation}
By  Skryabin's equivalence, the functor $(\td)^{\unipotentRadicalOfOppositeBorel}$ is exact on the subcategory of $\Uhbarg$-modules for which $\LieAlgebraofUnipotentRadicalOfOppositeBorelPSI$ acts locally nilpotently.
Since any exact functor on an abelian category commutes with all finite limits and colimits (see for example \cite[Lemma 12.7.2]{StacksProject}) it follows that the
both maps in \eqref{Algebra Isos For Different Descriptions of biWhittaker} are algebra isomorphisms.\footnote{Observe that $\Dpsirhbar/\Dpsirhbar\LieAlgebraofUnipotentRadicalOfOppositeBorelPSIl$ is the colimit $\mathrm{colim}(\Dpsirhbar \otimes \Dpsirhbar\fn^\psi_\ell \rightrightarrows{} \Dpsirhbar)$ of the zero map and the action map, and $\Dpsirhbar$ itself can be obtained as the limit $\lim(D(G)/\fn^\psi_rD(G) \rightrightarrows{} \mathcal{O}(N) \otimes D(G)/\fn^\psi_rD(G))$ of the coaction map and the identity map. Taking instead the corresponding colimit of the limit, we see that the fact that exact functors commute with all limits and colimits implies that the canonical map $  (\Dpsirhbar/\Dpsirhbar\LieAlgebraofUnipotentRadicalOfOppositeBorelPSIl)
  \xrightarrow{} (\Dh/(\fn^\psi_r \Dh+\Dh\fn^\psi_\ell)) ^{\unipotentRadicalOfOppositeBorel_{\text{op}}}$ from the colimit of the limit to the limit of the colimit is an isomorphism. Taking $N$-invariants we obtain our above map is an isomorphism.}
We let $\biWhittakerDifferentialOperatorsonG_\hb$ denote the resulting algebra, referred to as the algebra
of (asymptotic) \textit{bi-Whittaker differential operators}.

It is clear that the maps $\zhg\to (\Dpsirhbar/\Dpsirhbar\LieAlgebraofUnipotentRadicalOfOppositeBorelPSIl)^{\unipotentRadicalOfOppositeBorel}$ and
$\zhg\to(\Dpsilhbar/\LieAlgebraofUnipotentRadicalOfOppositeBorelPSIr\Dpsilhbar)^{\unipotentRadicalOfOppositeBorel_{\text{op}}}$ correspond to each other
via the identification given by \eqref{Algebra Isos For Different Descriptions of biWhittaker}, so one obtains an algebra homomorphism $u_\J: \zhg\to \jh$.
This homomorphism is a quantization of the algebra map
$\O(\fc)\to \O(J)$ induced by the canonical morphism $J\to\fc$.

We are now going to construct a  $Z\g$-coalgebra structure on $\J$ that
quantizes the $\O(\fc)$-coalgebra structure  on $\O(J)$ induced by the group
scheme structure on $J$.   We will then  consider the monoidal
category of $\J$-comodules and use the functor $\vkap_\hb$ to construct 
a commutative diagram of monoidal functors, cf. \cref{Asymptotic Kostant Whittaker Reduction is Monoidal Localization}:
\beq{diag of functors}
\xymatrix{
  \Dh\mmod^{\Ad G}\  
  \ar[rr]^<>(0.5){\operatorname{Obl}} \ar[d]^<>(0.5){\vkap_\J}&& \  (\uhg\o_{Z_\hb}\uhg_{op})\mmod^{G}\
  \ar@{^{(}->}[r]^<>(0.5){\ \eqref{D diag of functors} \ }
  \ar[d]^<>(0.5){\kap_\hb} &
\    \Dh\comod\  \ar[d]^<>(0.5){\kappa_\hb} \\
\    \J\mmod\    \ar[r]^<>(0.5){\operatorname{Obl}}  & \  Z_\hb\g\mmod\  & \  \J\comod^\De\   \ar[l]_<>(0.5){\operatorname{Obl}} \ar@{^{(}->}[r]&\ \J\comod\  
   }
 \eeq
 where $\J\comod^\De$ is the full subcategory of $\J\comod$ whose objects are
 symmetric as $(Z_\hb\g, Z_\hb\g)$-bimodules, cf.
 Proposition \ref{Asymptotic Enhancement of All Preliminary Results}(ii).

%

We now describe the functor $\vkap_\J$ on the left more explicitly. Observe that, from the discussion in \cref{psi reduction}, since the action map $\DhbarG \otimes M \to M$ is a map of $\DhbarG$-modules (say) for any $M \in \DhbarG\mmod$, the coinvariants $M/\fn^\psi_rM \cong \DhbarG/\fn^\psi_r\DhbarG \otimes_{\DhbarG} M$ acquires a module structure for $\Dpsirhbar$ and, moreover, if $M \in \DhbarG\mmod^G$ then one readily checks that $\fn^\psi_{\ell}$ acts locally nilpotently on $M/\fn^\psi_rM$. Thus we may define $\vkap_\J$ as the composite
\beq{vkap mod}
\vkap_\J:\  D_\hb\mmod^G\  \xrightarrow{\  M \mto 
  M/\fn^\psi_r M \ }  \ \Dpsirhbar\mmod^{N, \psi}
\
\xrightarrow{\  (\td)^{N} \ } \
\J_\hb\mmod
\eeq  of the coinvariants and the invariants for the adjoint action of $N$.

As in \cref{HC forget}, the $\Dh$-module $\bone_{\Dh\mmod}$ supported at the unit of $G$ can be identified with $\uhg$, the diagonal $(\uhg,\uhg)$-bimodule.
Hence we find
$\vkap_\J(\bone_{\Dh\mmod})=(\uhg/\fn^\psi \uhg)^N=Z_\hb\g$.
  Thus, the $  Z_\hb\g$-action on itself by multiplication has a canonical
  extension to a $\J_\hb$-action. In particular, the action on the
  element $1\in Z_\hb\g$ gives a $\J_\hb$-module map $\eps: \J_\hb\to Z_\hb\g$,
  which is a quantization of the
  restriction map $\O(J)\to \O(\fc)$ induced by the unit
  section $\fc\into J$ of the group scheme $J$.

\begin{Proposition}\label{Asymptotic Enhancement of All Preliminary Results}
  \vi There is a left $\biWhittakerDifferentialOperatorsHBAR$-module structure on the tensor product \[\jht := \biWhittakerDifferentialOperatorsHBAR \otimes_{\Zhbarg} \biWhittakerDifferentialOperatorsHBAR\] of left $\Zhbarg$-modules which commutes with the obvious right module structure for the ring $\biWhittakerDifferentialOperatorsHBAR \otimes \biWhittakerDifferentialOperatorsHBAR$.
  The maps
    $\Delta: \biWhittakerDifferentialOperatorsHBAR \to \jht,\,
    u\mto u(1\o 1)$ and $\eps$ equip $\J_\hb$ with a graded $\Zhbarg$-coalgebra structure.
Furthermore, the operation
       \begin{equation}\label{Tensor for BiWhit Mod Via Transfer}
         E_1,E_2\mto E_1\star E_2:=\jht\o_{\jh\o\jh}(E_1\o E_2) \ (\cong E_1\o_\zhg E_2)
       \end{equation}
      gives     the category $\jh\mmod$ of graded left $\jh$-modules
      a  monoidal structure such that the forgetful functor $\jh\mmod \to \zhg\mmod$ is monoidal. \vskip 3pt

      \vii The $\zhg$-module structure on any left $\jh$-comodule
      (with the coalgebra structure on $\jh$ from {\em (i)})  can be extended to a $\zhg$-bimodule structure. Moreover, the category of $\biWhittakerDifferentialOperatorsHBAR$-comodules can be equipped with the canonical monoidal structure such that the forgetful functor $\jh\comod\to\zhg\bimod$ is monoidal. \vskip 3pt

    \viii If $M \in \DhbarG\mmod$, $M' \in \Dpsirhbar\mmod$, and $E \in \biWhittakerDifferentialOperatorsHBAR\mmod$, then there is a natural (graded) $\Dpsirhbar$-module structure on $M \otimes_{\Uhbarg} M'$ and $M' \otimes_{\Zhbarg} E$. Moreover, this assignment equips $\Dpsirhbar\mmod$ with the structure of a $(\DhbarG\mmod, \biWhittakerDifferentialOperatorsHBAR\mmod)$-bimodule category. 
      
    \iv One can equip $\Dpsirhbar$, viewed   as a $\uhg$-module, resp. $\zhg$-module, by left multiplication by the subalgebra $\uhg$, resp. $\zhg$, with the structure of a left $\Dh$-comodule, resp. right $\jh$-comodule. Combined together, this makes $\Dpsirhbar$ a $(\Dh,\jh)$-bicomodule.
\end{Proposition}

\begin{proof} In \cref{Monoidality and Tensor Product Subsection}, we equipped $\Dht := \DhbarG \otimes_{\Uhbarg} \DhbarG$ with a $(\DhbarG, \DhbarG \otimes\DhbarG)$-bimodule structure. The coinvariants $\DhbarG \otimes_{U\LG} (\DhbarG/\DhbarG\LieAlgebraofUnipotentRadicalOfOppositeBorelPSIl)$ obtains an induced $(\DhbarG, \DhbarG \otimes\Dpsilhbar)$-bimodule structure, where the right action of any $\overline{\partial} \in \Dpsilhbar$ is given by the action of any coset representative. To avoid excessive repetition, if an action of $\Dpsirhbar$ (respectively $\Dpsilhbar$ or $\biWhittakerDifferentialOperatorsHBAR$) is obtained from the action of any coset representative in $\DhbarG$, respectively $\DhbarG$ or $\Dpsirhbar$, for the remainder of this proof we will simply say the action is \textit{induced} from the associated $\DhbarG$, respectively $\DhbarG$ or $\Dpsirhbar$, module. We will use similar terminology for bimodules, so, summarizing our discussion above: the $(\DhbarG, \DhbarG \otimes\DhbarG)$-bimodule structure on $\DhbarG \otimes_{\Uhbarg} \DhbarG$ induces a $(\DhbarG, \DhbarG \otimes\Dpsilhbar)$-bimodule structure on 
$\DhbarG \otimes_{\Uhbarg} \DhbarG/\DhbarG\LieAlgebraofUnipotentRadicalOfOppositeBorelPSIl$. From this, one can similarly equip the invariants \[(\DhbarG \otimes_{U\LG} \DhbarG/\DhbarG\LieAlgebraofUnipotentRadicalOfOppositeBorelPSIl)^{\LieAlgebraofUnipotentRadicalOfOppositeBorelPSIr \otimes 1}\] for the right action of $\LieAlgebraofUnipotentRadicalOfOppositeBorelPSIr \subset \DhbarG$ with an induced $(\DhbarG, \Dpsirhbar \otimes\Dpsilhbar)$-bimodule structure. 

Next, observe that the $N_{\mathrm{op}}$-action on $G$ given by the formula $n\cdot g := gn$ equips $\DhbarG$ with the structure of an asymptotic Harish-Chandra bimodule for the group $N_{\mathrm{op}}$ such that the cotensor product $\DhbarG \otimes_{U\LG}^N \DhbarG/\DhbarG\LieAlgebraofUnipotentRadicalOfOppositeBorelPSIl$ (defined in \eqref{cotensor not}) has the property that \[(\DhbarG \otimes_{U\LG} \DhbarG/\DhbarG\LieAlgebraofUnipotentRadicalOfOppositeBorelPSIl)^{\LieAlgebraofUnipotentRadicalOfOppositeBorelPSIr \otimes 1} = \DhbarG \otimes_{U\LG}^N \DhbarG/\DhbarG\LieAlgebraofUnipotentRadicalOfOppositeBorelPSIl.\] Therefore, the invariants $\DhbarG \otimes_{U\LG}^{\unipotentRadicalOfOppositeBorel} (\DhbarG/\DhbarG\LieAlgebraofUnipotentRadicalOfOppositeBorelPSIl)$ acquires the structure of a $(\DhbarG, \Dpsirhbar \otimes \Dpsilhbar)$-bimodule. The monoidality of Kostant-Whittaker reduction for Harish-Chandra bimodules (\cref{Asymptotic Kostant-Whittaker Reduction Is Monoidal and Exact}) gives a canonical isomorphism \[\Dpsirhbar \otimes_{\Zhbarg} \Dpsilhbar \xrightarrow{\sim} \DhbarG \otimes_{\Uhbarg}^{\unipotentRadicalOfOppositeBorel} (\DhbarG/\DhbarG\LieAlgebraofUnipotentRadicalOfOppositeBorelPSIl)\] and, using this isomorphism, we transfer our above bimodule structure to a $(\DhbarG, \Dpsirhbar \otimes_{\Zhbarg} \Dpsilhbar)$-bimodule structure on $\Dpsirhbar \otimes_{\Zhbarg} \Dpsilhbar$. From this bimodule structure, we obtain an induced $(\Dpsirhbar, \Dpsirhbar \otimes \Dpsilhbar)$-bimodule structure on $\LieAlgebraofUnipotentRadicalOfOppositeBorelPSIl\Dpsirhbar\backslash \Dpsirhbar \otimes_{\Zhbarg} \Dpsilhbar$. From this, we obtain an induced $(\Dpsirhbar, \Dpsirhbar \otimes \biWhittakerDifferentialOperatorsHBAR)$-bimodule structure on the invariants \begin{equation}\label{Transfer for Left Biwhit Action on Dpsil Mod}
(\Dpsirhbar \otimes_{\Zhbarg} \Dpsilhbar/\Dpsilhbar\LieAlgebraofUnipotentRadicalOfOppositeBorelPSIr)^{\LieAlgebraofUnipotentRadicalOfOppositeBorelPSIr} \xleftarrow{\sim} \Dpsirhbar \otimes_{\Zhbarg} \biWhittakerDifferentialOperatorsHBAR
\end{equation} for left multiplication by $\LieAlgebraofUnipotentRadicalOfOppositeBorelPSIr \subseteq \DhbarG$. Now, if $M' \in \Dpsirhbar\mmod$ and $E \in \biWhittakerDifferentialOperatorsHBAR\mmod$, observe that the module structures give rise to an isomorphism \[(\Dpsirhbar \otimes_{\Zhbarg} \biWhittakerDifferentialOperatorsHBAR) \otimes_{\Dpsirhbar \otimes \biWhittakerDifferentialOperatorsHBAR} (M' \otimes E) \xrightarrow{\sim} M' \otimes_{\Zhbarg} E\] and so, using this isomorphism and the left $\Dpsirhbar$-module structure on $\Dpsirhbar \otimes_{\Zhbarg} \biWhittakerDifferentialOperatorsHBAR$, we equip $M' \otimes_{\Zhbarg} E$ with a $\Dpsirhbar$-module structure. 

One can trace through a completely analogous construction starting with the $(\DhbarG, \DhbarG \otimes \DhbarG \otimes \DhbarG)$-bimodule $\DhbarG \otimes_{\Uhbarg} \DhbarG \otimes_{\Uhbarg} \DhbarG$ to obtain a $(\Dpsirhbar, \DhbarG \otimes \Dpsirhbar \otimes \biWhittakerDifferentialOperatorsHBAR)$-bimodule structure on $\DhbarG \otimes_{\Uhbarg} \Dpsirhbar \otimes_{\Zhbarg} \biWhittakerDifferentialOperatorsHBAR$, and construct an assignment \begin{equation}\label{Assignment for Bimodule Category Action}(M, M', E) \mapsto M \otimes_{\Uhbarg} M' \otimes_{\Zhbarg} E \in \Dpsirhbar\mmod\end{equation} for any $M \in \DhbarG\mmod, M' \in \Dpsirhbar\mmod$, and $E \in \biWhittakerDifferentialOperatorsHBAR\mmod$. We also observe that the $\DhbarG$-module structure on $\Uhbarg$ induces a $\Dpsirhbar$-module structure on $\Uhbarg/\Uhbarg\cdot \LieAlgebraofUnipotentRadicalOfOppositeBorel^{\psi}$ and, from this, we obtain an induced $\biWhittakerDifferentialOperatorsHBAR$-module structure on $\Zhbarg \xrightarrow{\sim} \kostantWhittakerHBARtoZGBIMOD(\Uhbarg)$. From this and the assignment in \labelcref{Assignment for Bimodule Category Action}, we obtain that \[M \otimes_{\Uhbarg} M' \cong M \otimes_{\Uhbarg} M' \otimes_{\Zhbarg} \Zhbarg \in \Dpsirhbar\mmod\] for any $M \in \DhbarG\mmod$ and $M' \in \Dpsirhbar\mmod$.

Now, from the $(\Dpsirhbar, \Dpsirhbar \otimes \biWhittakerDifferentialOperatorsHBAR)$-bimodule structure on \labelcref{Transfer for Left Biwhit Action on Dpsil Mod}, we obtain an induced $(\biWhittakerDifferentialOperatorsHBAR, \Dpsirhbar \otimes \biWhittakerDifferentialOperatorsHBAR)$-bimodule structure on the coinvariants $\LieAlgebraofUnipotentRadicalOfOppositeBorelPSIl\Dpsirhbar\backslash \Dpsirhbar \otimes_{\Zhbarg} \biWhittakerDifferentialOperatorsHBAR$. From this, the invariants \[(\LieAlgebraofUnipotentRadicalOfOppositeBorelPSIl\Dpsirhbar\backslash \Dpsirhbar \otimes_{\Zhbarg} \biWhittakerDifferentialOperatorsHBAR)^{\LieAlgebraofUnipotentRadicalOfOppositeBorelPSIl \otimes 1} \xleftarrow{\sim} \biWhittakerDifferentialOperatorsHBAR \otimes_{\Zhbarg} \biWhittakerDifferentialOperatorsHBAR\] for the right action of $\LieAlgebraofUnipotentRadicalOfOppositeBorelPSIl \subseteq \Dpsirhbar$ obtains an induced $(\biWhittakerDifferentialOperatorsHBAR, \biWhittakerDifferentialOperatorsHBAR \otimes \biWhittakerDifferentialOperatorsHBAR)$-bimodule structure. 
Therefore, using the isomorphism in \labelcref{Tensor for BiWhit Mod Via Transfer}, we equip the tensor product of two graded $\biWhittakerDifferentialOperatorsHBAR$-modules with a graded $\biWhittakerDifferentialOperatorsHBAR$-module structure. This gives our desired monoidal operation. The unit of the $\biWhittakerDifferentialOperatorsHBAR$-module structure is given by the induced $\biWhittakerDifferentialOperatorsHBAR$-module structure on $\Zhbarg$ constructed above. Starting with the $(\DhbarG, \DhbarG \otimes \DhbarG \otimes \DhbarG)$-bimodule $\DhbarG \otimes_{\Uhbarg} \DhbarG \otimes_{\Uhbarg} \DhbarG$ and tracing through the above constructions, one can construct associativity isomorphisms and construct the monoidal structure on $\biWhittakerDifferentialOperatorsHBAR\mmod$. From the monoidal structure on $\biWhittakerDifferentialOperatorsHBAR\mmod$, one can prove the remainder of (i) and (ii) completely analogously as in the proof of \cref{Differential Operators on G Has Hopf Algebroid Structure and Comodules Has Monoidal Stucture}. 

One can also use closely parallel arguments to equip $\DhbarG \otimes_{\Uhbarg} \Dpsirhbar \otimes_{\Zhbarg}\biWhittakerDifferentialOperatorsHBAR$ with a $(\Dpsirhbar, \DhbarG \otimes \Dpsirhbar \otimes \biWhittakerDifferentialOperatorsHBAR)$-bimodule structure, starting with the $(\DhbarG, \DhbarG \otimes \DhbarG \otimes \DhbarG)$-bimodule $\DhbarG \otimes_{\Uhbarg} \DhbarG \otimes_{\Uhbarg} \DhbarG$. Using this, one can show that the assignment \labelcref{Assignment for Bimodule Category Action} gives $\Dpsilhbar\mmod$ admits the structure of a $(\biWhittakerDifferentialOperatorsHBAR\mmod, \DhbarG\mmod)$-bimodule category, showing (iii). Finally, the coaction map on $\Dpsirhbar$ is given by the formula \[\partial \mapsto \partial \cdot (1 \otimes 1 \otimes 1) \in \DhbarG \otimes_{\Uhbarg} \Dpsirhbar \otimes_{\Zhbarg} \biWhittakerDifferentialOperatorsHBAR\] which gives (iv).  
\end{proof}

\begin{Remark}
    \vi The coalgebra structure on $\biWhittakerDifferentialOperatorsHBAR$ may be viewed as a quantization of the structure exhibiting $J$ as a group scheme over $\LG\sslash G$. Moreover, \cref{Asymptotic Enhancement of All Preliminary Results}(iv) may be viewed as a quantization of the action of $G \times J_{\text{op}}$ on $T^*_\psi(G/N)$.
    
    \vii Essentially identical arguments can be used to equip $\Dpsilhbar$, viewed as a right $\Uhbarg$-module via left multiplication by the subring $\Uhbarg_{\mathrm{op}}$ and as a left $\Zhbarg$-module via left multiplication by the subring $\Zhbarg \subseteq \Dpsilhbar$, with the structure of a $(\biWhittakerDifferentialOperatorsHBAR, \DhbarG)$-bicomodule, as well as to equip the category $\Dpsilhbar\mmod$ with the structure of a $(\biWhittakerDifferentialOperatorsHBAR\mmod,\,\Dh\mmod)$-bimodule category.
   
    \viii There is also a `nonasymptotic' counterpart of \cref{Asymptotic Enhancement of All Preliminary Results} where\ $\hb$ is specialized to $1$. It is proved in a similar way.
\end{Remark}

\newcommand{\epsilonpsir}{\epsilon^{\psi_r}}
\newcommand{\epsilonpsil}{\epsilon^{\psi_{\ell}}}
\newcommand{\counitForbiWhittakerDifferentialOperatorsHBAR}{\overline{\epsilon}}
\section{Bi-Whittaker reduction and $\biWhittakerDifferentialOperatorsHBAR$-comodules}\label{Bi-Whittaker Reduction and Biwhittaker Comodules}
\subsection{} \label{class whitt}
The theorem below,
  which is the main result of this section, informally constructs a functor $\kostantWhittakerASYMPTOTICtoComodForASYMPTOTICbiWhit$ on $\AsymptoticHarishChandraBimodulesForTHISGROUPDefaultsToG$ with many similar properties to the pullback functor $\jmath^*$ for the open embedding $\jmath$ as in \cref{Asymptotic and Classical Variants Subsection}:

\begin{Theorem}\label{Asymptotic Kostant Whittaker Reduction is Monoidal Localization} 
  \vi  The functor $\kostantWhittakerForNONFILTEREDtoZGBIMOD_\hb$ lifts to an exact monoidal functor
  $\kostantWhittakerASYMPTOTICtoComodForASYMPTOTICbiWhit: \HarishChandraBimodulesForTHISGROUPDefaultsToG_\hb \to \biWhittakerDifferentialOperatorsonG_\hb\comod$ to the category of graded $\biWhittakerDifferentialOperatorsHBAR$-comodules which induces a monoidal equivalence of categories \begin{equation}\label{Monoidal Equivalence of Categories Induced by HC}\HarishChandraBimodulesForTHISGROUPDefaultsToG_\hb/\operatorname{ker}(\kappa_\hb)
    \iso \jh\comod.
\end{equation}

  \vii The assignment $C\mto \kostantWhittakerToWComod_\hb^R(C) := \Dpsirhbar \otimes_{\Zhbarg}^{\jh} C$ upgrades to a fully faithful functor
  \[\kappa_\hb^R: \jh\comod\to \HarishChandraBimodulesForTHISGROUPDefaultsToG_\hb\]
  which is right adjoint to $\kostantWhittakerASYMPTOTICtoComodForASYMPTOTICbiWhit$ and commutes with filtered colimits.
  
  \viii There is a natural isomorphism of functors $\kostantWhittakerASYMPTOTICtoComodForASYMPTOTICbiWhit(\td)\cong \Dpsilhbar \otimes_{\Uhbarg}^G (\td)$. 
\end{Theorem}

To prove \cref{Asymptotic Kostant Whittaker Reduction is Monoidal Localization} we first need the following

  \begin{Lemma}
    \label{Asymptotic Kostant-Whittaker Reduction Is Monoidal and Has Right Adjoint}
  The composite functor $\kostantWhittakerHBARandVIEWEDASLEFTZGMODULE: \AsymptoticHarishChandraBimodulesForTHISGROUPDefaultsToG
  \xrightarrow{\, \varkappa_\hb\, } \Zhbarg\bimod\xrightarrow{\,\operatorname{Obl}_r\,} \Zhbarg\mmod$
  admits a right adjoint $\kostantWhittakerHBARandVIEWEDASLEFTZGMODULE^R(\td) := \Dpsirhbar \otimes_{Z\LG} (\td)$. Here, the functor
  $\operatorname{Obl}_r$
  forgets the right $\Zhbarg$-action
  and  $\Dpsirhbar$ in the tensor product is viewed as a $Z_\hb\g$-module via {\em left} multiplication.
\end{Lemma}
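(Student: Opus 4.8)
The statement is equivalent to producing a natural isomorphism
$\Hom_{\mathcal{HC}_{\hbar}}(M,\, \Dpsirhbar \o_{\Zhbarg} V)\cong\Hom_{\Zhbarg}(\kostantWhittakerHBARandVIEWEDASLEFTZGMODULE(M),\, V)$ for $M\in\mathcal{HC}_{\hbar}$ and $V\in\Zhbarg\mmod$; producing such a natural isomorphism automatically exhibits $\Dpsirhbar\o_{\Zhbarg}(-)$ as a right adjoint, so there is nothing further to verify. Here $\Dpsirhbar$ is regarded as an object of $\mathcal{HC}_{\hbar}$ via its residual Hamiltonian $G$-action, and $\o_{\Zhbarg}$ glues $V$ along the left $\Zhbarg$-action of $\Dpsirhbar$.

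The plan is to construct a natural transformation $\alpha$ between the two sides and then check that it is an isomorphism. For the construction I would use that $\varkappa_{\hbar}$ is exact (\cref{Asymptotic Kostant-Whittaker Reduction Is Monoidal and Exact}(ii)) and preserves direct sums, hence all colimits; since $\Dpsirhbar\o_{\Zhbarg}V$ is obtained from $\Dpsirhbar$ by the colimit operation $(-)\o_{\Zhbarg}V$, and $\varkappa_{\hbar}(\Dpsirhbar)=\biWhittakerDifferentialOperatorsHBAR$ is the bi-Whittaker reduction of \eqref{Algebra Isos For Different Descriptions of biWhittaker}, one obtains $\kostantWhittakerHBARandVIEWEDASLEFTZGMODULE(\Dpsirhbar\o_{\Zhbarg}V)\cong\operatorname{Obl}_r(\biWhittakerDifferentialOperatorsHBAR\o_{\Zhbarg}V)$; composing with the counit $\biWhittakerDifferentialOperatorsHBAR\to\Zhbarg$ of the $\Zhbarg$-coalgebra $\biWhittakerDifferentialOperatorsHBAR$ (\cref{Asymptotic Enhancement of All Preliminary Results}(i)) yields a morphism $\varepsilon_V\colon\kostantWhittakerHBARandVIEWEDASLEFTZGMODULE(\Dpsirhbar\o_{\Zhbarg}V)\to V$, and I set $\alpha(\phi):=\varepsilon_V\circ\kostantWhittakerHBARandVIEWEDASLEFTZGMODULE(\phi)$.

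Next I would check that $\alpha$ is an isomorphism by dévissage. Both sides, as contravariant functors of $M$, are left exact and send direct sums to products --- for the right-hand side this uses exactness of $\kostantWhittakerHBARandVIEWEDASLEFTZGMODULE$ and that $(-)^{\unipotentRadicalOfOppositeBorel}$ on locally nilpotent modules commutes with direct sums --- and, since $\freeFunctorASYMPTOTICSETTING$ is left adjoint to the forgetful functor $\mathcal{HC}_{\hbar}\to\Rep(G\times\Gm)$ of graded $\khb$-linear representations, every $M$ has a presentation by free Harish--Chandra bimodules $\freeFunctorASYMPTOTICSETTING(W)=\Uhbarg\o W$; hence it suffices to treat $M=\freeFunctorASYMPTOTICSETTING(W)$, and then, writing $W$ as a filtered colimit of finite-dimensional subrepresentations, to treat $W$ finite-dimensional. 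For such $W$, using that $(-)^{G\times\Gm}$ is exact and commutes with $(-)\o_{\Zhbarg}V$, and that $\kostantWhittakerHBARandVIEWEDASLEFTZGMODULE(\freeFunctorASYMPTOTICSETTING(W))$ is a finitely generated projective $\Zhbarg$-module (the standard fact --- classical, then deformed --- that Kostant--Whittaker reduction sends free bimodules on finite-dimensional representations to such, in view of \cref{Asymptotic Kostant-Whittaker Reduction Is Monoidal and Exact}(i) and Kostant's isomorphism \eqref{Unit Map for Monoidality of Asymptotic Kostant Whittaker}, with classically $\kappa_0$ sending the free bimodule on $W$ to $\oo(\fc)\otimes W$), the map $\alpha_{\freeFunctorASYMPTOTICSETTING(W),V}$ identifies with $\beta_W\o_{\Zhbarg}\operatorname{id}_V$ for a $V$-independent $\Zhbarg$-linear map $\beta_W\colon\Hom_{G\times\Gm}(W,\Dpsirhbar)\to\kostantWhittakerHBARandVIEWEDASLEFTZGMODULE(\freeFunctorASYMPTOTICSETTING(W))^{\vee}$. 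Everything thus reduces to showing $\beta_W$ is an isomorphism, which I would do by passing to the associated graded for the Kazhdan filtration: $\operatorname{gr}\beta_W$ is the classical comparison map for $\operatorname{Obl}_r\circ\kappa_0$ against the push--pull functor $\jmath_*\circ q_*\circ p^*$, an isomorphism by \cref{KostantWhittaker In Classical Setting Induces Monoidal Equivalence with G equivariant QCoh on greg} --- concretely because $T^*_\psi(G/\unipotentRadicalOfOppositeBorel)\to\fc$ is a $G$-torsor whose associated bundles are trivial ($\fc\cong\mathbb{A}^{\operatorname{rk}G}$) --- and hence $\beta_W$ itself is an isomorphism by a filtered Nakayama argument.

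The hard part will be this last step: matching the associated graded of $\Dpsirhbar\o_{\Zhbarg}(-)$ and of $\kostantWhittakerHBARandVIEWEDASLEFTZGMODULE$ with the classical push--pull and Kostant--Whittaker functors precisely enough to recognize $\operatorname{gr}\beta_W$ as the classical comparison map of \cref{KostantWhittaker In Classical Setting Induces Monoidal Equivalence with G equivariant QCoh on greg}, and checking that the Kazhdan filtrations are exhaustive and bounded below on the relevant finitely generated pieces so that the graded-to-filtered lifting applies. Everything preceding this is formal, given the exactness of Kostant--Whittaker reduction.
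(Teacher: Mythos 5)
Your proposal is correct in outline but takes a genuinely different route from the paper. The paper's proof is purely algebraic and self-contained: it writes down an explicit unit (via the $\DhbarG$-comodule structure on a Harish-Chandra bimodule and the monoidality isomorphism of \cref{Asymptotic Kostant-Whittaker Reduction Is Monoidal and Exact}) and counit (via $\overline{\epsilon}$), then verifies the triangle identities by reducing to $M=\Zhbarg$ using exactness and computing directly in Sweedler notation; no finiteness, freeness, or classical-limit input is needed. You instead construct only the comparison map of $\Hom$-functors and prove it is an isomorphism by d\'evissage to free objects $\freeFunctorASYMPTOTICSETTING(W)$ with $W$ finite-dimensional, followed by degeneration along the Kazhdan filtration to the classical adjunction of \cref{KostantWhittaker In Classical Setting Induces Monoidal Equivalence with G equivariant QCoh on greg}. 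This is legitimate (a natural transformation between left-exact, sum-to-product contravariant functors that is an isomorphism on a projective generating family is an isomorphism), but it is heavier: it requires the Kostant/Bezrukavnikov--Finkelberg freeness of $\kostantWhittakerHBARandVIEWEDASLEFTZGMODULE(\freeFunctorASYMPTOTICSETTING(W))$ over $\Zhbarg$, the graded Nakayama bookkeeping, and geometric input that the formal adjunction does not need. Two cautions. First, \cref{KostantWhittaker In Classical Setting Induces Monoidal Equivalence with G equivariant QCoh on greg} gives the right adjoint $\jmath_*\ccirc (q_*)^{Z\reg}\ccirc p^*$ of $\kappa_0$ valued in $J$-\emph{equivariant} sheaves on $\fc$; the classical limit of $\Dpsirhbar\otimes_{\Zhbarg}(\td)$ is rather $\jmath_*q_*p^*$, the right adjoint of the composite with the forgetful functor to plain $\mathcal{O}(\fc)$-modules, so you must compose with the cofree-comodule adjunction before matching $\operatorname{gr}\beta_W$ with the classical comparison map. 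Second, you are in effect front-loading the deformation machinery that the paper only deploys later, in the proof of \cref{Asymptotic Kostant Whittaker Reduction is Monoidal Localization}(iv); what your approach buys in exchange is that you never need the explicit unit map or the Sweedler-notation verification of the triangle identities.
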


\newcommand{\rhopsir}{\rho^{\Dpsir}}
\begin{proof}[Proof of \cref{Asymptotic Kostant-Whittaker Reduction Is Monoidal and Has Right Adjoint}]
  We first construct the counit $c(M)$ of the desired adjunction. In \cref{Monoidal Structure on BiWhit Mod and Monoidality} we equipped $\Zhbarg$ with a left
  $\biWhittakerDifferentialOperatorsHBAR$-module structure. Define the map $\overline{\epsilon}: \biWhittakerDifferentialOperatorsHBAR \to \Zhbarg$ by the formula $\overline{\epsilon}(w) = w\cdot1$. For any $M \in \Zhbarg\modwithdash$, define $c(M):
  \kostantWhittakerHBARandVIEWEDASLEFTZGMODULE(\kostantWhittakerHBARandVIEWEDASLEFTZGMODULE^R(M)) \to M$ to be the composite of the following maps\[\kostantWhittakerHBARandVIEWEDASLEFTZGMODULE(\kostantWhittakerHBARandVIEWEDASLEFTZGMODULE^R(M)) = \kostantWhittakerHBARandVIEWEDASLEFTZGMODULE(\Dpsirhbar \otimes_{\Zhbarg} M) \xleftarrow{\sim} \biWhittakerDifferentialOperatorsHBAR \otimes_{\Zhbarg} M \xrightarrow{\overline{\epsilon} \otimes \text{id}} \Zhbarg \otimes_{\Zhbarg} M \xrightarrow{\sim} M\] where the leftward pointing arrow is the isomorphism induced by the exactness of $\kostantWhittakerHBARandVIEWEDASLEFTZGMODULE$ and the final arrow is the isomorphism induced by the module structure.

  We now construct the unit map of our desired adjunction. Recall that any asymptotic Harish-Chandra bimodule $C$ is equivalently a comodule for $\DhbarG$ by \cref{Differential Operators on G Has Hopf Algebroid Structure and Comodules Has Monoidal Stucture}(iv). Let $\rho^C$ denote the $D_\hbar$-coaction on $C$ and consider
    a chain of maps (read left to right)
    \begin{equation}\label{Unit Map for C Def}C \xrightarrow{\rho^C} \DhbarG \otimes_{\Uhbarg}^G C \xrightarrow{} \DhbarG  \otimes_{\Uhbarg}^{\unipotentRadicalOfOppositeBorel}C/C\LieAlgebraofUnipotentRadicalOfOppositeBorelPSI \xleftarrow{\sim} \Dpsirhbar \otimes_{\Zhbarg} \kostantWhittakerHBARandVIEWEDASLEFTZGMODULE(C)\end{equation} where the unlabeled arrow is induced by the quotient map and the final arrow is the (graded) monoidality isomorphism of \cref{Asymptotic Kostant-Whittaker Reduction Is Monoidal and Exact}.
    We claim that the composite of the maps in \labelcref{Unit Map for C Def}, which we denote $u(C)$, gives the unit of our adjunction.

We now show that $\kostantWhittakerHBARandVIEWEDASLEFTZGMODULE^R(c(M)) \circ u(\kostantWhittakerHBARandVIEWEDASLEFTZGMODULE^R(M))$
is the identity for any $\Zhbarg$-module $M$. We first prove this in the special case when $M = \Zhbarg$. Unwinding the definitions, we see that we wish to show the composite (read left to right) of the maps \begin{equation*}\Dpsirhbar \xrightarrow{\rhopsir} \DhbarG  \otimes_{\Uhbarg}^G \Dpsirhbar  \xrightarrow{}\DhbarG  \otimes_{\Uhbarg}^{\overline{N}} \Dpsirhbar/\Dpsirhbar\LieAlgebraofUnipotentRadicalOfOppositeBorelPSIl \xleftarrow{\sim} \Dpsirhbar \otimes_{\Zhbarg} \biWhittakerDifferentialOperatorsHBAR \xrightarrow{\text{id} \otimes \counitForbiWhittakerDifferentialOperatorsHBAR} \Dpsirhbar \otimes_{\Zhbarg} \Zhbarg \xrightarrow{\overlinepartial \otimes z \mapsto z\overlinepartial} \Dpsirhbar\end{equation*} is the identity, where $\rhopsir$ denotes the coaction on $\Dpsirhbar$ from \cref{Asymptotic Enhancement of All Preliminary Results}(iv). If we choose $\overline{\partial} \in \Dpsirhbar$ and write $\rhopsir(\overline{\partial}) = \partial_{(0)} \otimes \overline{\partial_{(1)}}$ for $\partial_{(0)} \in \DhbarG$ and $\overline{\partial_{(1)}} \in \Dpsirhbar$ and choose some $x_i, y_i \in \DhbarG$ so that $\overline{x_i} \in \Dpsirhbar, \overline{y_i} \in \Dpsirhbar$, and $\overline{\overline{y_i}} \in \biWhittakerDifferentialOperatorsHBAR$ for which $\sum_{i} x_i \otimes \overline{y_i} = \partial_{(0)} \otimes \overline{\partial_{(1)}}$ in $\DhbarG  \otimes_{\Uhbarg}^{\overline{N}} \Dpsirhbar/\Dpsirhbar\LieAlgebraofUnipotentRadicalOfOppositeBorelPSIl$, then this map is given explicitly by the formula \[\overline{\partial} \mapsto \sum_i\counitForbiWhittakerDifferentialOperatorsHBAR(\overline{\overline{y_i}})\overline{x_i} = \sum_i\epsilonpsir(\overline{y_i})\overline{x_i} = \sum_i\overline{\epsilon(y_i)x_i}\] in $\Dpsirhbar$, where $\epsilonpsir$ denotes the $\Dpsirhbar$-module structure on $\Uhbarg/\Uhbarg\cdot\LieAlgebraofUnipotentRadicalOfOppositeBorelPSI$ described explicitly in \cref{Monoidal Structure on BiWhit Mod and Monoidality}. By assumption, then, for such $x_i, y_i$ there exists some $\sum_j p_j \otimes \overline{p_j'} \in \DhbarG  \otimes_{\Uhbarg} \Dpsirhbar$ and some $\xi \in \LieAlgebraofUnipotentRadicalOfOppositeBorelPSIl$ such that $\sum_i x_i \otimes \overline{y_i} = \partial_{(0)} \otimes \overline{\partial_{(1)}} + \sum_j p_j \otimes \overline{p_j'}\xi$ in $\DhbarG  \otimes_{\Uhbarg} \Dpsirhbar$ and so we see that 
    \[\sum_i\overline{\epsilon(y_i)x_i} = \overline{\epsilon(\partial_{(1)})\partial_{(0)}} + \sum_j\overline{\epsilon(p_j'\xi)p_j} = \overline{\epsilon(\partial_{(1)})\partial_{(0)}} = \overlinepartial\]as desired. 
    
    Now assume $M$ is a general $\Zhbarg$-module. Observe that, since $\kostantWhittakerHBARandVIEWEDASLEFTZGMODULE$ and $\kostantWhittakerHBARandVIEWEDASLEFTZGMODULE^R$ are both right exact and commute with arbitrary direct sums--the former by \cref{Asymptotic Kostant-Whittaker Reduction Is Monoidal and Exact}(ii) and the latter as it is given by tensoring with some module. Therefore, the composite $\kostantWhittakerHBARandVIEWEDASLEFTZGMODULE^R\kostantWhittakerHBARandVIEWEDASLEFTZGMODULE\kostantWhittakerHBARandVIEWEDASLEFTZGMODULE^R$ is also right exact and commutes with arbitrary direct sums, and so there is an isomorphism \[\kostantWhittakerHBARandVIEWEDASLEFTZGMODULE^R\kostantWhittakerHBARandVIEWEDASLEFTZGMODULE^R\kostantWhittakerHBARandVIEWEDASLEFTZGMODULE^R(\Zhbarg) \otimes_{\Zhbarg} M \cong \kostantWhittakerHBARandVIEWEDASLEFTZGMODULE^R\kostantWhittakerHBARandVIEWEDASLEFTZGMODULE\kostantWhittakerHBARandVIEWEDASLEFTZGMODULE^R(M)\] natural in $M$ given by say \cite[Lemma 56.2.6]{StacksProject}. This isomorphism intertwines the maps $\kostantWhittakerHBARandVIEWEDASLEFTZGMODULE^R(c(M)) \circ u(\kostantWhittakerHBARandVIEWEDASLEFTZGMODULE^R(M))$ and $\kostantWhittakerHBARandVIEWEDASLEFTZGMODULE^R(c(\Zhbarg)) \circ u(\kostantWhittakerHBARandVIEWEDASLEFTZGMODULE^R(\Zhbarg)) \otimes_{\Zhbarg} M$ in the obvious way, which one can check directly. Therefore our computation that $\kostantWhittakerHBARandVIEWEDASLEFTZGMODULE^R(c(M)) \circ u(\kostantWhittakerHBARandVIEWEDASLEFTZGMODULE^R(M))$ is the identity when $M = \Zhbarg$ implies $\kostantWhittakerHBARandVIEWEDASLEFTZGMODULE^R(c(M)) \circ u(\kostantWhittakerHBARandVIEWEDASLEFTZGMODULE^R(M))$ is the identity for an arbitrary $M \in \Zhbarg\mmod$. The verification that $u(\kostantWhittakerHBARandVIEWEDASLEFTZGMODULE(B))\circ \kostantWhittakerHBARandVIEWEDASLEFTZGMODULE(u(B))$ is the identity for any $B \in \HarishChandraBimodulesForTHISGROUPDefaultsToG$ is similar. 
\end{proof}

\begin{Corollary}\label{Dpsirhbar and BiWhitHbar Are Flat}
  The $\Zhbarg$-modules $\Dpsirhbar$ and $\biWhittakerDifferentialOperatorsHBAR$ are flat. 
\end{Corollary}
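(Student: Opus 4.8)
The plan is to read off both flatness statements directly from the adjunction just established, using the general principle that a tensor-product functor which is also a right adjoint is automatically exact. Concretely, \cref{Asymptotic Kostant-Whittaker Reduction Is Monoidal and Has Right Adjoint} exhibits $\kostantWhittakerHBARandVIEWEDASLEFTZGMODULE^R = \Dpsirhbar \otimes_{\Zhbarg}(\td)$ as a right adjoint of $\kostantWhittakerHBARandVIEWEDASLEFTZGMODULE$; being a right adjoint it is left exact, and being a functor of the form $\Dpsirhbar \otimes_{\Zhbarg}(\td)$ it is right exact, hence exact. Exactness of $\Dpsirhbar \otimes_{\Zhbarg}(\td)$ is by definition the assertion that $\Dpsirhbar$ is flat over $\Zhbarg$ (where, as in the lemma, $\Dpsirhbar$ is regarded as a $\Zhbarg$-module via left multiplication).

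For $\biWhittakerDifferentialOperatorsHBAR$ I would use the identification $\kostantWhittakerHBARandVIEWEDASLEFTZGMODULE \circ \kostantWhittakerHBARandVIEWEDASLEFTZGMODULE^R \cong \biWhittakerDifferentialOperatorsHBAR \otimes_{\Zhbarg}(\td)$ that is already recorded inside the proof of \cref{Asymptotic Kostant-Whittaker Reduction Is Monoidal and Has Right Adjoint}: applying Kostant-Whittaker reduction along $\LieAlgebraofUnipotentRadicalOfOppositeBorelPSIl$ to $\Dpsirhbar \otimes_{\Zhbarg} M$ and invoking the monoidality isomorphism of \cref{Asymptotic Kostant-Whittaker Reduction Is Monoidal and Exact} together with \eqref{Algebra Isos For Different Descriptions of biWhittaker} yields a natural isomorphism $\kostantWhittakerHBARandVIEWEDASLEFTZGMODULE(\Dpsirhbar \otimes_{\Zhbarg} M) \cong \biWhittakerDifferentialOperatorsHBAR \otimes_{\Zhbarg} M$. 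Since $\kostantWhittakerHBARandVIEWEDASLEFTZGMODULE$ is exact by \cref{Asymptotic Kostant-Whittaker Reduction Is Monoidal and Exact}(ii) and $\kostantWhittakerHBARandVIEWEDASLEFTZGMODULE^R$ is exact by the previous paragraph, the composite $\biWhittakerDifferentialOperatorsHBAR \otimes_{\Zhbarg}(\td)$ is exact, i.e.\ $\biWhittakerDifferentialOperatorsHBAR$ is flat over $\Zhbarg$. The nonasymptotic statements follow identically (or by specializing $\hbar$ to $1$).

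There is no genuine obstacle here; the only point that requires any care is the bookkeeping behind the isomorphism $\kostantWhittakerHBARandVIEWEDASLEFTZGMODULE \circ \kostantWhittakerHBARandVIEWEDASLEFTZGMODULE^R \cong \biWhittakerDifferentialOperatorsHBAR \otimes_{\Zhbarg}(\td)$ — namely that it is natural in $M$ and compatible with the relevant $\Zhbarg$-module structures — but this is exactly what the counit construction in the proof of the preceding lemma supplies, so it may simply be cited rather than redone.
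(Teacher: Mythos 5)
Your proof is correct and follows essentially the same route as the paper: flatness of $\Dpsirhbar$ from the fact that a right adjoint of the form $\Dpsirhbar\otimes_{\Zhbarg}(\td)$ is automatically exact, and flatness of $\biWhittakerDifferentialOperatorsHBAR$ from exactness of the composite $\kostantWhittakerHBARandVIEWEDASLEFTZGMODULE\circ\kostantWhittakerHBARandVIEWEDASLEFTZGMODULE^R$ together with its identification with $\biWhittakerDifferentialOperatorsHBAR\otimes_{\Zhbarg}(\td)$. The only cosmetic difference is that the paper obtains that identification from $\kostantWhittakerHBARandVIEWEDASLEFTZGMODULE$ being a left adjoint (hence colimit-preserving), whereas you invoke the monoidality isomorphism; both are valid.
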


\begin{proof}
  Since the functor
  $\Dpsirhbar \otimes_{\Zhbarg} (\td)$ is a right adjoint,  it is  left exact and thus $\Dpsirhbar$ is flat.
  Hence,  the composite  functor
  $\kostantWhittakerHBARandVIEWEDASLEFTZGMODULE(\Dpsirhbar \otimes_{\Zhbarg} (\td))$
  is  exact. Using the isomorphism 
  \[\biWhittakerDifferentialOperatorsHBAR \otimes_{\Zhbarg} (\td)\cong \kostantWhittakerHBARandVIEWEDASLEFTZGMODULE(\Dpsirhbar \otimes_{\Zhbarg} (\td))\] given by the exactness of $\kostantWhittakerHBARandVIEWEDASLEFTZGMODULE$ in \cref{Asymptotic Kostant-Whittaker Reduction Is Monoidal and Exact}(ii), we deduce that $\biWhittakerDifferentialOperatorsHBAR \otimes_{\Zhbarg} (\td)$
  is exact, proving the flatness statement for
  $\biWhittakerDifferentialOperatorsHBAR$.
\end{proof}

Since $\DhbarG \cong \Uhbarg \otimes \mathcal{O}(G)$ is flat as a $\Uhbarg$-module and $\kostantWhittakerHBARandVIEWEDASLEFTZGMODULE$ is exact by \cref{Asymptotic Kostant-Whittaker Reduction Is Monoidal and Exact}, \cref{Asymptotic Kostant Whittaker Reduction is Monoidal Localization}(iii) follows immediately from the following more general result: 

\begin{Lemma} Assume $D$ is some $R$-coring object
    which is flat as an $R$-module 
    and \[\kostantWhittakerSTANDIN: D\comod\to \mathcal{A}\] is some exact functor. 
    Then there is a canonical isomorphism $\kostantWhittakerSTANDIN(C) \to \kostantWhittakerSTANDIN(D) \otimes_{R}^{D} C$ which is natural in $C$.
\end{Lemma}

\begin{proof}
    For any comodule $C$, the coaction map naturally factors through a map $C \xrightarrow{} D \otimes_{R}^{D} C$ of $D$-comodules by definition of a comodule. This map is easily shown to be an isomorphism, see for example \cite[Section 21.1]{BrzezinskiWisbauerCoringsandComodules}. Therefore applying $\kostantWhittakerSTANDIN$ to this isomorphism we obtain an isomorphism \[\kostantWhittakerSTANDIN(C) \xrightarrow{\sim} \kostantWhittakerSTANDIN(D \otimes_{R}^{D} C) \xrightarrow{\sim} \kostantWhittakerSTANDIN(D \otimes_{R} C)^{D} \xleftarrow{\sim} \kostantWhittakerSTANDIN(D) \otimes_{R}^{D} C\] where the middle arrow, respectively rightmost arrow, is an isomorphism by the left exactness, respectively right exactness, of $\kostantWhittakerSTANDIN$. These maps are evidently natural in $C$.
\end{proof}

\begin{proof}[Proof of \cref{Asymptotic Kostant Whittaker Reduction is Monoidal Localization}(i)-(ii)] From the adjunction of \cref{Asymptotic Kostant-Whittaker Reduction Is Monoidal and Has Right Adjoint}, we formally obtain the structure of a \textit{comonad} on the endofunctor $T := \kostantWhittakerHBARandVIEWEDASLEFTZGMODULE\kostantWhittakerHBARandVIEWEDASLEFTZGMODULE^R$, that is, natural transformations $T \to T^2$ and $T \to \mathrm{id}$ satisfying the obvious co-associativity and co-identity conditions.\footnote{These conditions given by taking the conditions morphisms for a \textit{monad} must satisfy, given in \cite[Definition VI.1]{MacLaneCategoriesForTheWorkingMathematician}, and reversing the arrows.} Moreover, since $\kostantWhittakerHBARandVIEWEDASLEFTZGMODULE$ is exact, we see that this comonad is naturally isomorphic to tensoring with the $\Zhbarg$-module $\biWhittakerDifferentialOperatorsHBAR := \kostantWhittakerHBARandVIEWEDASLEFTZGMODULE\kostantWhittakerHBARandVIEWEDASLEFTZGMODULE^R(\Zhbarg)$, and thus $\kostantWhittakerHBARandVIEWEDASLEFTZGMODULE\kostantWhittakerHBARandVIEWEDASLEFTZGMODULE^R(\Zhbarg)$ acquires the structure of a graded $\Zhbarg$-coalgebra. Tracing through the constructions of the coalgebra structure on $\biWhittakerDifferentialOperatorsHBAR$ in \cref{Asymptotic Enhancement of All Preliminary Results}, we see that these two a priori distinct coalgebra structures agree. We therefore may obtain our desired functor $\kostantWhittakerASYMPTOTICtoComodForASYMPTOTICbiWhit$ by following the procedure as in \cite[Chapter VI]{MacLaneCategoriesForTheWorkingMathematician}, with the direction of the arrows reversed. Explicitly, to any $\DhbarG $-comodule $C$, the coaction on $\kostantWhittakerHBARandVIEWEDASLEFTZGMODULE(C)$ is given by the composite, read left to right, of the maps:\[\kostantWhittakerHBARandVIEWEDASLEFTZGMODULE(C) \xrightarrow{\kostantWhittakerHBARandVIEWEDASLEFTZGMODULE(u(C))} \kostantWhittakerHBARandVIEWEDASLEFTZGMODULE\kostantWhittakerHBARandVIEWEDASLEFTZGMODULE^R(\kostantWhittakerHBARandVIEWEDASLEFTZGMODULE(C)) \xleftarrow{\sim} \biWhittakerDifferentialOperatorsHBAR \otimes_{\Zhbarg} \kostantWhittakerHBARandVIEWEDASLEFTZGMODULE(C)\] where the final arrow is an isomorphism by the exactness of Kostant-Whittaker reduction, see \cref{Asymptotic Kostant-Whittaker Reduction Is Monoidal and Exact}.

  This comodule structure can also be described by the following formula: for any $\overline{b} \in (C/C\LieAlgebraofUnipotentRadicalOfOppositeBorelPSI)^{\unipotentRadicalOfOppositeBorel}$, there exists $\overline{\overline{\partial_i}} \in \biWhittakerDifferentialOperatorsHBAR$ and $\overline{y_i} \in (C/C\LieAlgebraofUnipotentRadicalOfOppositeBorelPSI)^{\unipotentRadicalOfOppositeBorel}$ such that $\overline{b_{(0)}} \otimes \overline{b_{(1)}} = \sum_i \overline{\partial_i} \otimes \overline{y_i}$ in $\DhbarG /\DhbarG \LieAlgebraofUnipotentRadicalOfOppositeBorelPSIl \otimes_{\Uhbarg} C/C\LieAlgebraofUnipotentRadicalOfOppositeBorelPSI$, and this coaction map is defined by $\overline{b} \mapsto \sum_i \overline{\overline{\partial_i}} \otimes \overline{y_i}$. We can directly compute that the $\Zhbarg$-bimodule structure on $\kostantWhittakerHBARtoZGBIMOD(C)$ upgrades to that of $\biWhittakerDifferentialOperatorsHBAR$-comodule, where we recall that any $\biWhittakerDifferentialOperatorsHBAR$-comodule acquires a $\Zhbarg$-bimodule structure by \cref{Asymptotic Enhancement of All Preliminary Results}(ii). 
  
  The fact $\kostantWhittakerASYMPTOTICtoComodForASYMPTOTICbiWhit^R: \biWhittakerDifferentialOperatorsHBAR\comod\to \DhbarG \comod$ is a right adjoint follows formally from the fact that the unit and counit map of the adjunction constructed in \cref{Asymptotic Kostant-Whittaker Reduction Is Monoidal and Has Right Adjoint} factors through the $\biWhittakerDifferentialOperatorsHBAR$-cotensor product. The induced counit map for this adjunction is readily checked to be an isomorphism, and so $\kostantWhittakerASYMPTOTICtoComodForASYMPTOTICbiWhit^R$ is fully faithful. 

We now equip $\kostantWhittakerASYMPTOTICtoComodForASYMPTOTICbiWhit$ with a monoidal structure. Observe that we may equivalently describe the induced coaction $\rho_{\kostantWhittakerASYMPTOTICtoComodForASYMPTOTICbiWhit(C)}$ as follows: 
it is the unique map of $\Zhbarg \otimes_{k[\hbar]} \Zhbarg_{\op}$-modules such that the induced diagram of $\Zhbarg \otimes \Zhbarg_{\op} \otimes \biWhittakerDifferentialOperatorsHBAR^{\op}$-modules 
\begin{equation}\label{Definition of Comodule Structure on Kappa as Commuting Diagram}\xymatrix@R+2em@C+2em{ \kostantWhittakerHBARandVIEWEDASLEFTZGMODULE^2(C \otimes_{\Uhbarg} \DhbarG ) \ar[r]^{\kostantWhittakerHBARandVIEWEDASLEFTZGMODULE^2(\widetilde{\rho}_C)} & \kostantWhittakerHBARandVIEWEDASLEFTZGMODULE^2(\DhbarG  \otimes_{\Uhbarg} C)\\ \kostantWhittakerHBARandVIEWEDASLEFTZGMODULE(C) \otimes_{\Zhbarg} \biWhittakerDifferentialOperatorsHBAR \ar[u]^{\sim} \ar[r]^{\tilde{\rho}_{\kostantWhittakerHBARandVIEWEDASLEFTZGMODULE(C)}} & \biWhittakerDifferentialOperatorsHBAR  \otimes_{\Zhbarg} \kostantWhittakerHBARandVIEWEDASLEFTZGMODULE(C)\ar[u]^{\sim}}
\end{equation}commutes, where $\kostantWhittakerForNONFILTEREDtoZGBIMOD^2$ denotes the Kostant-Whittaker reduction for the group $G \times G_{\text{op}}$, $\widetilde{\rho}_C$ is the right $\DhbarG$-linear map induced from the coaction $\rho^C$, and the unlabeled arrows are the isomorphisms induced by exactness and monoidality; for example: the left vertical arrow is given by the formula $\overline{b} \otimes \overline{\overline{\partial}} \mapsto \overline{\overline{b \otimes \partial}}$. Similarly observe that the monoidal structure on the category $\DhbarG\comod$ (which we have shown is equivalent as a monoidal category to $\HarishChandraBimodulesForTHISGROUPDefaultsToG$ in \cref{Differential Operators on G Has Hopf Algebroid Structure and Comodules Has Monoidal Stucture}(iv)) in \cref{Differential Operators on G Has Hopf Algebroid Structure and Comodules Has Monoidal Stucture} has the property and is characterized by the fact that \begin{equation}\label{Equation Characterizing Coaction on Tensor Product as Commuting The Algebra Past and Then Commuting It Past Again}\widetilde{\rho}_{C \otimes_{\Uhbarg} C'} = (\widetilde{\rho}_{C} \otimes \text{id}_{C'}) \circ (\text{id}_C \otimes \widetilde{\rho}_{C'})\end{equation}and, moreover, that this map is isomorphism of $\Uhbarg \otimes \Uhbarg_{\op} \otimes \DhbarG ^{\op}$-modules.

Now, using \eqref{Definition of Comodule Structure on Kappa as Commuting Diagram} we obtain commutative diagrams \begin{equation*}\xymatrix@R+2em@C+2em{\kostantWhittakerASYMPTOTICtoComodForASYMPTOTICbiWhit^2(C \otimes_{\Uhbarg} C' \otimes_{\Uhbarg} \DhbarG) \ar[r]^{\kostantWhittakerASYMPTOTICtoComodForASYMPTOTICbiWhit^2(\text{id}_C \otimes \widetilde{\rho}_{C'})} &  \kostantWhittakerASYMPTOTICtoComodForASYMPTOTICbiWhit^2(C \otimes_{\Uhbarg} \DhbarG \otimes_{\Uhbarg} C') \ar[r]^{\kostantWhittakerASYMPTOTICtoComodForASYMPTOTICbiWhit^2(\widetilde{\rho}_C \otimes \text{id}_{C'})}  & \kostantWhittakerASYMPTOTICtoComodForASYMPTOTICbiWhit^2(\DhbarG \otimes_{\Uhbarg} C \otimes_{\Uhbarg} C') \\ \kostantWhittakerASYMPTOTICtoComodForASYMPTOTICbiWhit(C) \otimes_{\Zhbarg} \kostantWhittakerASYMPTOTICtoComodForASYMPTOTICbiWhit(C') \otimes_{\Zhbarg} \biWhittakerDifferentialOperatorsHBAR \ar[u]^{\sim} \ar[r]^{\text{id}_{\kappa(C)} \otimes \widetilde{\rho}_{\kostantWhittakerASYMPTOTICtoComodForASYMPTOTICbiWhit(C')}}& \kostantWhittakerASYMPTOTICtoComodForASYMPTOTICbiWhit(C) \otimes_{\Zhbarg} \biWhittakerDifferentialOperatorsHBAR \otimes_{\Zhbarg}  \kostantWhittakerASYMPTOTICtoComodForASYMPTOTICbiWhit(C') \ar[u]^{\sim} \ar[r]^{\tilde{\rho}_{\kostantWhittakerASYMPTOTICtoComodForASYMPTOTICbiWhit(C)} \otimes \text{id}_{C'}} & \biWhittakerDifferentialOperatorsHBAR \otimes_{\Zhbarg} \kostantWhittakerASYMPTOTICtoComodForASYMPTOTICbiWhit(C) \otimes_{\Zhbarg} \kostantWhittakerASYMPTOTICtoComodForASYMPTOTICbiWhit(C') \ar[u]^{\sim}}
\end{equation*} and \begin{equation*}\xymatrix@R+2em@C+2em{  \kostantWhittakerASYMPTOTICtoComodForASYMPTOTICbiWhit(C \otimes_{\Uhbarg} C') \otimes_{\Zhbarg} \biWhittakerDifferentialOperatorsHBAR \ar[d]^{\sim} \ar[r]^{\tilde{\rho}_{\kostantWhittakerASYMPTOTICtoComodForASYMPTOTICbiWhit(C \otimes_{\Uhbarg} C')}} & \biWhittakerDifferentialOperatorsHBAR \otimes_{\Zhbarg} \kostantWhittakerASYMPTOTICtoComodForASYMPTOTICbiWhit(C \otimes_{\Uhbarg} C')\ar[d]^{\sim} \\
\kostantWhittakerASYMPTOTICtoComodForASYMPTOTICbiWhit^2(C \otimes_{\Uhbarg} C' \otimes_{\Uhbarg} \DhbarG) \ar[r]^{\kostantWhittakerASYMPTOTICtoComodForASYMPTOTICbiWhit^2(\widetilde{\rho}_{C \otimes_{\Uhbarg} C'})} & \kostantWhittakerASYMPTOTICtoComodForASYMPTOTICbiWhit^2(\DhbarG \otimes_{\Uhbarg} C \otimes_{\Uhbarg} C')} 
\end{equation*} where as above all unlabeled arrows are induced by exactness and monoidality.  Applying $\kostantWhittakerASYMPTOTICtoComodForASYMPTOTICbiWhit^2$ to the equation \eqref{Equation Characterizing Coaction on Tensor Product as Commuting The Algebra Past and Then Commuting It Past Again} 
for the $\DhbarG$-comodule $C \otimes_{\Uhbarg} C'$, we obtain the equality \begin{equation}\label{Equality of Morphisms for Tensoring}\kostantWhittakerASYMPTOTICtoComodForASYMPTOTICbiWhit^2(\widetilde{\rho}_{C \otimes_{\Uhbarg} C'}) = \kostantWhittakerASYMPTOTICtoComodForASYMPTOTICbiWhit^2(\widetilde{\rho}_{C} \otimes \text{id}_{C'}) \circ \kostantWhittakerASYMPTOTICtoComodForASYMPTOTICbiWhit^2(\text{id}_C \otimes \widetilde{\rho}_{C'})\end{equation} of morphisms.
It follows from \labelcref{Equality of Morphisms for Tensoring} and the two commutative diagrams above that the monoidality isomorphism of \cref{Asymptotic Kostant-Whittaker Reduction Is Monoidal and Exact} respects the $\biWhittakerDifferentialOperatorsHBAR$-comodule structure, and completes our proof that $\kostantWhittakerASYMPTOTICtoComodForASYMPTOTICbiWhit$ is monoidal. 

We now verify that $\kostantWhittakerASYMPTOTICtoComodForASYMPTOTICbiWhit^R$ preserves filtered colimits. The compact objects in $\HarishChandraBimodulesForTHISGROUPDefaultsToG$ are precisely those Harish-Chandra bimodules which are finitely generated as left, equivalently right, $\Uhbarg$-modules. The functor $\kappa_{\hbar}$ sends finitely generated $\Uhbarg$-modules to finitely generated $\Zhbarg$-modules. In particular, since an object $C \in\biWhittakerDifferentialOperatorsHBAR\text{-comod}$ is compact only if the underlying $\Zhbarg$-bimodule is compact,
we see that $\kappa_{\hbar}$ preserves compact objects. Since any object of $\HarishChandraBimodulesForTHISGROUPDefaultsToG$ is a filtered colimit of finitely generated ones, a Yoneda lemma argument gives that $\kostantWhittakerASYMPTOTICtoComodForASYMPTOTICbiWhit^R$ preserves filtered colimits. 

Finally, since $\kostantWhittakerASYMPTOTICtoComodForASYMPTOTICbiWhit$ is monoidal, the universal property of quotient categories induces the desired monoidal functor in \labelcref{Monoidal Equivalence of Categories Induced by HC}, see for example \cite{DayNoteonMonoidalLocalization}. Moreover, \cite[Proposition III.5]{GabrielDesCategoriesAbeliannes} implies that this induced functor is an equivalence.
\end{proof}

  One has a monoidal functor
  $\freeFunctorASYMPTOTICSETTING: \text{Rep}(G \times \mathbb{G}_m) \to \AsymptoticHarishChandraBimodulesForTHISGROUPDefaultsToG$ given by tensor product
  $V \mapsto \Uhbarg \otimes V$ of graded vector spaces.
  
  \begin{Lemma}\label{Fr lem} 
    For any, not necessarily finite dimensional, $G$-representation $V$
    the unit
    $\freeFunctorASYMPTOTICSETTING(V) \to \kostantWhittakerASYMPTOTICtoComodForASYMPTOTICbiWhit^R\kostantWhittakerASYMPTOTICtoComodForASYMPTOTICbiWhit(\freeFunctorASYMPTOTICSETTING(V))$
    for the adjunction $(\kostantWhittakerASYMPTOTICtoComodForASYMPTOTICbiWhit, \kostantWhittakerASYMPTOTICtoComodForASYMPTOTICbiWhit^R)$  is an isomorphism.
    \end{Lemma}
    \begin{proof}
          We have isomorphisms \[\DhbarG \otimes_{\Uhbarg} \Uhbarg/\Uhbarg \cdot \mathfrak{n}^{\psi} \cong \mathcal{O}(G) \otimes \Uhbarg/\Uhbarg \cdot \mathfrak{n}^{\psi} \cong
        \oplus_{\lambda} \, (V_{\lambda}^{\text{dim}(V_{\lambda})} \otimes \Uhbarg/\Uhbarg \cdot \mathfrak{n}^{\psi}).\]
Since the quotient $\Uhbarg/\Uhbarg \cdot \mathfrak{n}^{\psi}$ has nonnegative grading, it follows that the grading on $\Dpsilhbar$, respectively $\Dpsirhbar$ induced by the actions of $G_{\text{op}}$, respectively $G$, have isotypic components bounded below.

The functor $\kostantWhittakerASYMPTOTICtoComodForASYMPTOTICbiWhit^R$ commutes with filtered colimits and $\kostantWhittakerASYMPTOTICtoComodForASYMPTOTICbiWhit$ commutes with all colimits, and in particular filtered colimits, as $\kostantWhittakerASYMPTOTICtoComodForASYMPTOTICbiWhit$ is a left adjoint; both of these claims are justified by \cref{Asymptotic Kostant Whittaker Reduction is Monoidal Localization}(ii). Therefore it suffices to prove the claim when $V$ is a finite dimensional representation. In this case,
following \cite{BezrukavnikovFinkelbergEquivariantSatakeCategoryandKostantWhittakerReduction}, observe that the unit map $u(\freeFunctorASYMPTOTICSETTING(V))$ from the adjunction of \cref{Asymptotic Kostant Whittaker Reduction is Monoidal Localization} is a map in $\AsymptoticHarishChandraBimodulesForTHISGROUPDefaultsToG$ from a graded
  vector space with   $\freeFunctorASYMPTOTICSETTING(V)$
  separated filtration whose isotypic components are bounded below, since $\kostantWhittakerASYMPTOTICtoComodForASYMPTOTICbiWhit^R\kostantWhittakerASYMPTOTICtoComodForASYMPTOTICbiWhit(\freeFunctorASYMPTOTICSETTING(V)) \subseteq \Dpsirhbar \otimes_{\Zhbarg} \kostantWhittakerASYMPTOTICtoComodForASYMPTOTICbiWhit(\freeFunctorASYMPTOTICSETTING(V))$ and the latter is the tensor product of a vector space whose grading is bounded from below with $\Dpsirhbar$, whose isotypic components are bounded below by the above analysis.
By  graded Nakayama
  and separatedness, then, to show $u(\freeFunctorASYMPTOTICSETTING(V))$ is an isomorphism it therefore suffices to check that $k \otimes_{k[\hbar]} u(\freeFunctorASYMPTOTICSETTING(V))$ is an isomorphism. Moreover, since $\kostantWhittakerASYMPTOTICtoComodForASYMPTOTICbiWhit(\freeFunctorASYMPTOTICSETTING(V))$ is torsion free as a $k[\hbar]$-module, the natural map \[k \otimes_{k[\hbar]} \kostantWhittakerASYMPTOTICtoComodForASYMPTOTICbiWhit^R\kostantWhittakerASYMPTOTICtoComodForASYMPTOTICbiWhit(\freeFunctorASYMPTOTICSETTING(V)) \xrightarrow{} \kostantWhittakerInCLASSICALSETTINGwithGRADEDLIFTtoREPOFWZERO^R\kostantWhittakerInCLASSICALSETTINGwithGRADEDLIFTtoREPOFWZERO(k \otimes_{k[\hbar]} \freeFunctorASYMPTOTICSETTING(V)) \cong \Dpsirzero \otimes_{Z\LG}^{\biWhittakerDifferentialOperatorsZERO} \kostantWhittakerInCLASSICALSETTINGwithGRADEDLIFTtoREPOFWZERO(\Sym(\LG) \otimes V)\] is injective, 
  and so it suffices to show that the composite of $k \otimes_{k[\hbar]} u(\freeFunctorASYMPTOTICSETTING(V))$ with this map is an isomorphism. However, this composite is the unit map for $\kostantWhittakerInCLASSICALSETTINGwithGRADEDLIFTtoREPOFWZERO$. Therefore by \cref{KostantWhittaker In Classical Setting Induces Monoidal Equivalence with G equivariant QCoh on greg} this unit map is an isomorphism
  if and only if the unit map $\Sym(\LG) \otimes V \to \jmath_*\jmath^*(\Sym(\LG) \otimes V)$ for $(\jmath^*, \jmath_*)$ is an isomorphism.
  But this map is an isomorphism since the complement of $\LGd_{\text{reg}}$ in $\LGd$ has codimension at least two and $\Sym(\LG) \otimes V$ is a vector bundle on $\LGd$.
     \end{proof}

Various versions of the following corollary
     have appeared earlier (at least in the classical setting $\hb=0$) 
         in the context of 
\textit{Sicilian theories} and \textit{Coulomb branches}, see 
\cite{MooreTachikawaOn2dTQFTsWhoseValuesareHolomorphicSymplecticVarieties}, \cite{BravermanFinkelbergNakajimaRingObjectsIntheEquivariantDerivedSatakeCategoryArisingFromCoulombBranches}, \cite{GinzburgKazhdanDifferentialOperatorsOnBasicAffineSpaceandtheGelfandGraevAction}, \cite{BielawskiOnTheMooreTachikawaVarieties}, \cite{GannonCotangentBundleofParabolicBaseAffineSpaceandKostantWhittakerDescent}, \cite{CrooksMayrandMooreTachikawaViaShiftedSymplecticGeometry}.

\begin{Corollary}\label{MT Iso for Differential Operators}
  The unit, respectively counit, for the adjoint pair $(\kostantWhittakerASYMPTOTICtoComodForASYMPTOTICbiWhit, \kostantWhittakerASYMPTOTICtoComodForASYMPTOTICbiWhit^R)$
   constructed above gives a algebra isomorphisms $\DhbarG \xrightarrow{\sim} \Dpsirhbar \otimes_{\Zhbarg}^{\biWhittakerDifferentialOperatorsHBAR} \Dpsilhbar$ and $\biWhittakerDifferentialOperatorsHBAR \xrightarrow{\sim} \Dpsilhbar \otimes_{\Uhbarg}^G \Dpsirhbar$. Similar algebra isomorphisms exist in the non-filtered and associated graded settings.
  \end{Corollary}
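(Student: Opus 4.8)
The strategy is to read off both isomorphisms directly from the adjunction $(\kostantWhittakerASYMPTOTICtoComodForASYMPTOTICbiWhit, \kostantWhittakerASYMPTOTICtoComodForASYMPTOTICbiWhit^R)$ of \cref{Asymptotic Kostant Whittaker Reduction is Monoidal Localization}, applied to the unit algebra objects. First, I would compute $\kostantWhittakerASYMPTOTICtoComodForASYMPTOTICbiWhit^R\kostantWhittakerASYMPTOTICtoComodForASYMPTOTICbiWhit(\DhbarG)$. By \cref{Asymptotic Kostant Whittaker Reduction is Monoidal Localization}(iii) we have $\kostantWhittakerASYMPTOTICtoComodForASYMPTOTICbiWhit(\DhbarG) \cong \Dpsilhbar \otimes_{\Uhbarg}^G \DhbarG \cong \Dpsilhbar$ as algebras (using $\DhbarG \cong \Uhbarg_{\op}\otimes \mathcal{O}(G)$ and that $(\td)^G$ kills the $\mathcal{O}(G)$-factor down to the unit), and then $\kostantWhittakerASYMPTOTICtoComodForASYMPTOTICbiWhit^R$ of this is, by definition, $\Dpsirhbar \otimes_{\Zhbarg}^{\biWhittakerDifferentialOperatorsHBAR}\Dpsilhbar$. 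Since $\DhbarG$ is a rank-one free left $\DhbarG$-module, it is $\freeFunctorASYMPTOTICSETTING(\mathcal O(G))$ in the notation of the preceding proof, so \cref{Asymptotic Kostant Whittaker Reduction is Monoidal Localization}(iv) applies (after passing to the filtered colimit over finite-dimensional subrepresentations of $\mathcal O(G)$, which is licit since both $\kostantWhittakerASYMPTOTICtoComodForASYMPTOTICbiWhit$ and $\kostantWhittakerASYMPTOTICtoComodForASYMPTOTICbiWhit^R$ commute with filtered colimits by parts (i)--(ii)) and shows the unit $\DhbarG \to \kostantWhittakerASYMPTOTICtoComodForASYMPTOTICbiWhit^R\kostantWhittakerASYMPTOTICtoComodForASYMPTOTICbiWhit(\DhbarG)$ is an isomorphism. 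This gives the first isomorphism; that it is an \emph{algebra} map follows because the unit of the adjunction is a monoidal natural transformation (using that $\kostantWhittakerASYMPTOTICtoComodForASYMPTOTICbiWhit$ is monoidal and $\kostantWhittakerASYMPTOTICtoComodForASYMPTOTICbiWhit^R$ is lax monoidal by \cref{Adjoint to Monoidal Functor is Appropriately Lax Monoidal}), hence sends the algebra object $\DhbarG$ to an algebra map.

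For the second isomorphism, I would dualize: apply $\kostantWhittakerASYMPTOTICtoComodForASYMPTOTICbiWhit\kostantWhittakerASYMPTOTICtoComodForASYMPTOTICbiWhit^R$ to the monoidal unit $\Zhbarg \in \jh\comod$. Since $\kostantWhittakerASYMPTOTICtoComodForASYMPTOTICbiWhit^R$ is fully faithful (part (ii)), the counit $\kostantWhittakerASYMPTOTICtoComodForASYMPTOTICbiWhit\kostantWhittakerASYMPTOTICtoComodForASYMPTOTICbiWhit^R \Rightarrow \Id$ is an isomorphism, so $\kostantWhittakerASYMPTOTICtoComodForASYMPTOTICbiWhit\kostantWhittakerASYMPTOTICtoComodForASYMPTOTICbiWhit^R(\Zhbarg) \cong \Zhbarg$; but the left-hand side is also computed, using part (iii) $\kostantWhittakerASYMPTOTICtoComodForASYMPTOTICbiWhit(\td)\cong \Dpsilhbar\otimes_{\Uhbarg}^G(\td)$ and the definition $\kostantWhittakerASYMPTOTICtoComodForASYMPTOTICbiWhit^R(\Zhbarg) = \Dpsirhbar\otimes_{\Zhbarg}^{\jh}\Zhbarg \cong \Dpsirhbar$ (a comodule absorbs the trivial comodule under cotensor), yielding $\Dpsilhbar \otimes_{\Uhbarg}^G \Dpsirhbar$. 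Wait --- this would only give $\Dpsilhbar\otimes_{\Uhbarg}^G\Dpsirhbar \cong \Zhbarg$, which is not what is claimed; the correct reading is that one should instead compute $\kostantWhittakerASYMPTOTICtoComodForASYMPTOTICbiWhit$ applied to $\DhbarG$ more carefully, namely $\kostantWhittakerASYMPTOTICtoComodForASYMPTOTICbiWhit(\DhbarG) = \Dpsilhbar \otimes_{\Uhbarg}^G \DhbarG$, and then observe that \cref{MT Iso for Differential Operators}'s first isomorphism combined with the identification $\kostantWhittakerASYMPTOTICtoComodForASYMPTOTICbiWhit(\DhbarG) \cong \biWhittakerDifferentialOperatorsHBAR$ --- which holds because $\kostantWhittakerASYMPTOTICtoComodForASYMPTOTICbiWhit$ sends the $\DhbarG$-comodule $\DhbarG$ (the unit of $\DhbarG\comod \simeq \HarishChandraBimodulesForTHISGROUPDefaultsToG_\hbar$) to the unit $\biWhittakerDifferentialOperatorsHBAR$ of $\jh\comod$, as $\kostantWhittakerASYMPTOTICtoComodForASYMPTOTICbiWhit$ is monoidal --- gives precisely $\biWhittakerDifferentialOperatorsHBAR \cong \Dpsilhbar \otimes_{\Uhbarg}^G \Dpsirhbar$. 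Here one uses part (iii) again to rewrite $\Dpsilhbar\otimes_{\Uhbarg}^G \DhbarG$, pushing the second factor through to $\Dpsirhbar$ via the $(\DhbarG, \jh)$-bicomodule structure on $\Dpsirhbar$ from \cref{Asymptotic Enhancement of All Preliminary Results}(iv), and the monoidality of $\kostantWhittakerASYMPTOTICtoComodForASYMPTOTICbiWhit$ to see the algebra structures match.

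The non-filtered ($\hbar=1$) and associated-graded ($\hbar=0$) versions follow verbatim from the stated non-asymptotic and classical counterparts of all the ingredients (\cref{KostantWhittaker In Classical Setting Induces Monoidal Equivalence with G equivariant QCoh on greg} for $\hbar=0$, and the remark that \cref{Asymptotic Enhancement of All Preliminary Results} and \cref{Asymptotic Kostant Whittaker Reduction is Monoidal Localization} have non-asymptotic analogues), or alternatively by specializing $\hbar$ and using the flatness of $\Dpsirhbar$ and $\biWhittakerDifferentialOperatorsHBAR$ over $\Zhbarg$ (\cref{Dpsirhbar and BiWhitHbar Are Flat}) together with graded Nakayama, exactly as in the proof of part (iv). The main obstacle I anticipate is purely bookkeeping: keeping straight \emph{which} of the two sides $\Dpsilhbar$, $\Dpsirhbar$ plays the role of $\kostantWhittakerASYMPTOTICtoComodForASYMPTOTICbiWhit(-)$ versus $\kostantWhittakerASYMPTOTICtoComodForASYMPTOTICbiWhit^R(-)$ in each of the two unit/counit triangles, and verifying that the algebra structure transported across the cotensor product agrees with the intended one --- but no genuinely new computation beyond what is already in \cref{Asymptotic Kostant Whittaker Reduction is Monoidal Localization} is required.
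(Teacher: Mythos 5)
Your argument for the first isomorphism is correct and coincides with the paper's: take $V=\mathcal{O}(G)$ in \cref{Asymptotic Kostant Whittaker Reduction is Monoidal Localization}(iv), so that the unit at $\freeFunctorASYMPTOTICSETTING(\mathcal{O}(G))\cong\DhbarG$ is an isomorphism onto $\kostantWhittakerASYMPTOTICtoComodForASYMPTOTICbiWhit^R\kostantWhittakerASYMPTOTICtoComodForASYMPTOTICbiWhit(\DhbarG)=\Dpsirhbar\otimes_{\Zhbarg}^{\jh}\Dpsilhbar$, and it is an algebra map because the unit of the adjunction is a monoidal natural transformation. (The filtered-colimit reduction is unnecessary, since (iv) is stated for arbitrary $V$, but harmless.)

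The second half contains a genuine error. Having correctly abandoned the computation at $\Zhbarg$, you then assert $\kostantWhittakerASYMPTOTICtoComodForASYMPTOTICbiWhit(\DhbarG)\cong\biWhittakerDifferentialOperatorsHBAR$ on the grounds that ``$\kostantWhittakerASYMPTOTICtoComodForASYMPTOTICbiWhit$ sends the unit of $\DhbarG\comod$ to the unit of $\jh\comod$.'' But $\DhbarG$ is \emph{not} the monoidal unit of $\DhbarG\comod\simeq\AsymptoticHarishChandraBimodulesForTHISGROUPDefaultsToG$: the tensor structure there is $\otimes_{\Uhbarg}$, whose unit is the diagonal bimodule $\Uhbarg$, and $\kostantWhittakerASYMPTOTICtoComodForASYMPTOTICbiWhit(\Uhbarg)\cong\Zhbarg$ by Kostant's isomorphism \eqref{Unit Map for Monoidality of Asymptotic Kostant Whittaker}. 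In fact you computed $\kostantWhittakerASYMPTOTICtoComodForASYMPTOTICbiWhit(\DhbarG)\cong\Dpsilhbar$ (the one-sided Whittaker reduction) in your first paragraph, so your second paragraph contradicts your first; $\jh$ is the \emph{two-sided} reduction and is not $\kostantWhittakerASYMPTOTICtoComodForASYMPTOTICbiWhit(\DhbarG)$. For the same reason, ``pushing the second factor of $\Dpsilhbar\otimes_{\Uhbarg}^{G}\DhbarG$ through to $\Dpsirhbar$'' cannot yield $\Dpsilhbar\otimes_{\Uhbarg}^{G}\Dpsirhbar$: cotensoring a comodule with the coalgebra itself is the absorption isomorphism, so $\Dpsilhbar\otimes_{\Uhbarg}^{G}\DhbarG\cong\Dpsilhbar$, full stop.

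The argument you were circling is the paper's: since $\kostantWhittakerASYMPTOTICtoComodForASYMPTOTICbiWhit^R$ is fully faithful (part (ii)), the counit $\kostantWhittakerASYMPTOTICtoComodForASYMPTOTICbiWhit\kostantWhittakerASYMPTOTICtoComodForASYMPTOTICbiWhit^R\Rightarrow\Id$ is an isomorphism; evaluate it at the \emph{regular} comodule $C=\jh$. Then $\kostantWhittakerASYMPTOTICtoComodForASYMPTOTICbiWhit^R(\jh)=\Dpsirhbar\otimes_{\Zhbarg}^{\jh}\jh\cong\Dpsirhbar$ by absorption, and part (iii) gives $\kostantWhittakerASYMPTOTICtoComodForASYMPTOTICbiWhit(\Dpsirhbar)\cong\Dpsilhbar\otimes_{\Uhbarg}^{G}\Dpsirhbar$, so the counit is precisely the desired algebra isomorphism $\Dpsilhbar\otimes_{\Uhbarg}^{G}\Dpsirhbar\iso\jh$. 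With this repair, your treatment of the non-filtered and associated-graded cases goes through as stated.
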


  \begin{proof}
    The first claim follows from \cref{Fr lem}
      by taking $V = \mathcal{O}(G)$. The second claim follows from the fact that $\kostantWhittakerASYMPTOTICtoComodForASYMPTOTICbiWhit^R$ is fully faithful by \cref{Asymptotic Kostant Whittaker Reduction is Monoidal Localization}(iii) as well as the fact that a right adjoint is fully faithful if and only if the counit map is an isomorphism.
\end{proof}


\newcommand{\os}{{\operatorname{S}}}
\newcommand{\oq}{{\operatorname{Q}}}
\renewcommand{\BX}{{\mathbb{X}}}
\newcommand{\ft}{\mathfrak{t}}
\newcommand{\loc}{\operatorname{loc}}
\newcommand{\bN}{\quantizationOfCentralizersHBAREQUALSONE}
\newcommand{\bM}{\BM}
\section{Kostant-Whittaker reduction and the Miura bimodule}
\label{Kostant-Whittaker reduction and the Miura bimodule section}
In the rest of the paper we work in the `quantum setting', that is,
we specialize $\hb=1$ unless explicitly stated otherwise. 
The constructions and results of the previous section apply in the  $\hb=1$ setting,
with essentially identical proofs.
In particular, the functor $\kostantWhittakerForNONFILTEREDtoZGBIMOD$ lifts to a functor $\kostantWhittakerToWComod$ giving  the following adjoint pair
\[
  \xymatrix{
       \HarishChandraBimodulesForTHISGROUPDefaultsToG\
    \ar@<0.5ex>[rrr]^<>(0.5){\ \kostantWhittakerToWComod\,\cong\, \Dpsil \otimes_{U\LG}^G (\td) \ }
    &&& \
    \biWhittakerDifferentialOperatorsonG\comod\
    \ar@<0.5ex>[lll]^<>(0.5){\ \kostantWhittakerToWComod^R \,=\, \Dpsir \otimes_{Z\LG}^{\biWhittakerDifferentialOperatorsonG} (\td) \ }
  }
 \]
 such that $\kostantWhittakerToWComod^R$ is fully faithful and commutes with filtered colimits. 

 \subsection{The Miura bimodule and the Gelfand-Graev action}\label{The Miura bimodule and the Gelfand-Graev action}

 \label{The Miura bimodule and the Gelfand-Graev action}
 We now introduce the {\em Miura bimodule}, the object
 that provides a link between $\J$-modules and $D(T)$-modules, and recall some properties of it we will use that are discussed in more detail in \cite[Section 6.2]{Gin} and \cite[Section 6]{GinzburgKazhdanDifferentialOperatorsOnBasicAffineSpaceandtheGelfandGraevAction}.
 To this end, we consider the action on $D(G)$ of the subgroup
 $\bar N \times N_{op}\subset G \times G_{op}$,
 where $\bar N$, resp. $N_{op}$, acts by left, resp. right, translations.
 Let
 \[D(\overline{N}\backslash G/_{\psi}N) := 
   \big(\differentialOperatorsOnG/(\differentialOperatorsOnG\LieAlgebraOfUnipotentRadicalOfBorel + \LieAlgebraofUnipotentRadicalOfOppositeBorelPSIr\differentialOperatorsOnG)\big)^{\unipotentRadicalOfBorel \times N_{\mathrm{op}}}
 \] be the corresponding
 Hamiltonian reduction of $D(G)$  with respect to the
 Lie algebra $\bar\fn\o1+1\o\fn^\psi_r\subset U\g\o U\g_{op}$. 
 We may also perform this Hamiltonian reduction in stages and obtain the same algebra: more precisely, letting $\vkap_r(A) := (A/\fn^\psi A)^{N^{\mathrm{op}}}$ for any Hamiltonian $N^{\mathrm{op}}$-algebra $A$, one obtains the following
 algebra isomorphisms, \cite[Lemma 5.4.1]{Gin}:
 \newcommand{\brr}{_{\mathbf{r}}}
 \begin{equation}\label{Isomorphisms Like If We Had Exactness for QHR for
     N}
  \vkap_r(D(\unipotentRadicalOfBorel\backslash G))=
   (D(\unipotentRadicalOfBorel\backslash G)/(\fn^\psi_{r}
   D(\unipotentRadicalOfBorel\backslash G)))^{N_{op}}
\   \xrightarrow{\sim}\ D(\overline{N}\backslash G/_{\psi}N)
\    \xleftarrow{\sim}\  (\Dpsir/\Dpsir\LieAlgebraOfUnipotentRadicalOfBorel)^{\unipotentRadicalOfBorel},\end{equation}
 analogous to \eqref{Algebra Isos For Different Descriptions of biWhittaker}.

By completely analogous arguments as those in \cref{psi reduction}, we see that there is a $(\differentialOperatorsOnT, \Dpsir)$-bimodule structure on $\LieAlgebraOfUnipotentRadicalOfBorel\Dpsir\backslash \Dpsir$, resp.
 $(\Dpsir,\differentialOperatorsOnT)$-bimodule structure
 on $\Dpsir/\Dpsir\LieAlgebraOfUnipotentRadicalOfBorel$.
 This gives the space 
 \[\MiuraBimodule := \Dpsir/(\LieAlgebraOfUnipotentRadicalOfBorel\Dpsir +\Dpsir\LieAlgebraofUnipotentRadicalOfOppositeBorelPSIl),\enspace\text{resp.}\enspace
   \check{\mathbb{M}} := \Dpsir/(\LieAlgebraofUnipotentRadicalOfOppositeBorelPSIl\Dpsir+
  \Dpsir\LieAlgebraOfUnipotentRadicalOfBorel),
 \]
 the structure of a $(\differentialOperatorsOnT, \biWhittakerDifferentialOperatorsonG)$-bimodule, resp. 
 $(\biWhittakerDifferentialOperatorsonG, \differentialOperatorsOnT)$-bimodule,
 which we will refer to as the \textit{Miura bimodule}, resp. 
  \lq transposed\rq{} Miura bimodule. Observe that, if $U := U\LG$, we have an isomorphism \begin{equation}\label{Easy Iso For Miura from Skryabin}\Symt \otimes_{Z\LG} \biWhittakerDifferentialOperatorsonG \xrightarrow{\sim} \overline{\mathfrak{n}}U\backslash U/U\mathfrak{n}^{\psi} \otimes_{Z\LG} \biWhittakerDifferentialOperatorsonG \xrightarrow{\sim} \MiuraBimodule\end{equation} given by the inclusion map and, respectively, Skryabin's equivalence, see \cref{Asymptotic Kostant-Whittaker Reduction Is Monoidal and Exact}(i).
  
Recall next  that the algebra $D(\unipotentRadicalOfBorel\backslash G)$ has an action of the Weyl group known as the \textit{Gelfand-Graev action},
see \cite[Section 3.2]{GinzburgRicheDifferentialOperatorsOnBasicAffineSpaceandtheAffineGrassmannian}, \cite{GinzburgKazhdanDifferentialOperatorsOnBasicAffineSpaceandtheGelfandGraevAction}. The subalgebra $U\g_\op\sset D(\unipotentRadicalOfBorel\backslash G)$
is fixed by this action. Furthermore, the actions of the groups $G_\op, T$,
and $W$, combined together give an action of the group $(T \rtimes W) \times G_{\text{op}}$
on $D(\unipotentRadicalOfBorel\backslash G)$ by algebra automorphisms. The action
of $T \rtimes W$ survives the Whittaker reduction, so the $(\differentialOperatorsOnT, \biWhittakerDifferentialOperatorsonG)$-bimodule, resp. 
$(\biWhittakerDifferentialOperatorsonG, \differentialOperatorsOnT)$-bimodule,
structure on
$\Mi$, resp. $\check\Mi$, can be upgraded to a
$(\differentialOperatorsOnT\rtimes W,\, \biWhittakerDifferentialOperatorsonG)$-bimodule, resp. 
$(\biWhittakerDifferentialOperatorsonG,\, D(T)\rtimes W)$-bimodule, structure.
Here, and in what follows, we use the \textit{dot}-action of $W$ on $D(T)$: in other words, taking the isomorphism $\Symt \otimes \mathcal{O}(T) \xrightarrow{\sim} D(T)$ induced by multiplication, we take the action of $W$ which acts on the $\Symt$-factor by the $\rho$-shifted $W$-action and on $\mathcal{O}(T)$ by its usual action; see also \cite[Section 1.1]{Gin} for an alternate description of this action.

The bimodule structure on $\MiuraBimodule$ gives rise to a functor 
\[F := \MiuraBimodule \otimes_{\biWhittakerDifferentialOperatorsonG} (-):  \biWhittakerDifferentialOperatorsonG\mmod \to \differentialOperatorsOnT \rtimes W\mmod =
    \differentialOperatorsOnT\mmod^W,
\] which we will now equip with a monoidal structure. To do this, we will need the following proposition, whose proof is
entirely analogous to the proof of \cref{Asymptotic Enhancement of All Preliminary Results}.

\begin{Proposition}\label{Bimodule Category with Gelfand-Graev Action}
  \vi The category $\differentialOperatorsOnT\mmod^W$ 
  acquires the structure of a $(\differentialOperatorsOnT\mmod^W, \biWhittakerDifferentialOperatorsonG\mmod)$-bimodule category. \vskip 3pt
\vii The category ${D}(\unipotentRadicalOfBorel\backslash G)\mmod^W \,(=
  \unipotentRadicalOfBorel\backslash G\rtimes W\mmod)$
  is a $((\differentialOperatorsOnT \rtimes W)\mmod, \differentialOperatorsOnG\mmod)$-bimodule category.
\end{Proposition}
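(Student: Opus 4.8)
The plan is to realize both bimodule category structures by tensoring with explicit bimodules built from Kostant-Whittaker reduction, exactly as in \cref{psi reduction} and \cref{Asymptotic Enhancement of All Preliminary Results}(iii), and then to feed in the extra Gelfand-Graev $W$-action. For part (i), recall from the discussion above that restriction of differential operators on $\unipotentRadicalOfBorel\backslash G$ to the open $B_\op$-orbit identifies the algebra $(\Dpsir/\Dpsir\LieAlgebraOfUnipotentRadicalOfBorel)^{\unipotentRadicalOfBorel}$ with $\differentialOperatorsOnT$, and that the space $\Mi=\LieAlgebraOfUnipotentRadicalOfBorel\Dpsir\backslash\Dpsir/\Dpsir\LieAlgebraofUnipotentRadicalOfOppositeBorelPSIl$ carries a $(\differentialOperatorsOnT\rtimes W,\,\biWhittakerDifferentialOperatorsonG)$-bimodule structure. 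First I would define the action functor $\differentialOperatorsOnT\mmod^W \times \biWhittakerDifferentialOperatorsonG\mmod \to \differentialOperatorsOnT\mmod^W$ by $(N,E)\mapsto N\otimes_{\differentialOperatorsOnT} \big(\Mi\otimes_{\biWhittakerDifferentialOperatorsonG}E\big)$, where the outer $W$-action is the diagonal one coming from the $W$-action on $N$ and the left $W$-action on $\Mi$ (via the $\differentialOperatorsOnT\rtimes W$-structure); note $\Mi\otimes_{\biWhittakerDifferentialOperatorsonG}E$ is a $(\differentialOperatorsOnT\rtimes W)$-module since $E$ has no $W$-action, and tensoring over $\differentialOperatorsOnT$ with a $W$-module $N$ then produces a $W$-module. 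The associativity and unit constraints reduce to the coassociativity-type isomorphism $\Mi\otimes_{\biWhittakerDifferentialOperatorsonG}(\jt\otimes_{\biWhittakerDifferentialOperatorsonG\otimes\biWhittakerDifferentialOperatorsonG}(E_1\otimes E_2))\cong (\Mi\otimes_{\biWhittakerDifferentialOperatorsonG}E_1)\otimes_{\differentialOperatorsOnT}\cdots$, which one obtains from the bimodule identities among the various Kostant-Whittaker quotients of $\Dpsir$, together with \cref{Asymptotic Enhancement of All Preliminary Results}(iii) applied in the $\hbar=1$ setting. The monoidal unit $\Zhbarg\xrightarrow{\sim}\kappa(\Uhbarg)$ of $\biWhittakerDifferentialOperatorsonG\mmod$ acts by the identity because $\Mi\otimes_{\biWhittakerDifferentialOperatorsonG}(\text{unit})\cong \differentialOperatorsOnT$ as $\differentialOperatorsOnT\rtimes W$-bimodules, again by the isomorphisms \labelcref{Isomorphisms Like If We Had Exactness for QHR for N}.

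For part (ii), the same strategy applies but now with $\Dpsir$ replaced by the full algebra $\differentialOperatorsOnG$ and with no Whittaker reduction on the $G_\op$-side. The point is that $D(\unipotentRadicalOfBorel\backslash G)=\big(\differentialOperatorsOnG/\differentialOperatorsOnG\LieAlgebraOfUnipotentRadicalOfBorel\big)^{\unipotentRadicalOfBorel}$ is, by its very construction, a $\big(D(\unipotentRadicalOfBorel\backslash G),\differentialOperatorsOnG\big)$-bimodule (the right $\differentialOperatorsOnG$-action descends to the quotient because $\LieAlgebraOfUnipotentRadicalOfBorel\subset\differentialOperatorsOnG$ generates a left ideal), and it carries the Gelfand-Graev $W$-action compatibly with the $T$-action and commuting with the $G_\op$-action; hence after taking $W$ into account we get a $\big((\differentialOperatorsOnT\rtimes W),\differentialOperatorsOnG\big)$-bimodule structure on $D(\unipotentRadicalOfBorel\backslash G)$, which gives the right action of $\differentialOperatorsOnG\mmod$. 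The left action of $(\differentialOperatorsOnT\rtimes W)\mmod$ on $D(\unipotentRadicalOfBorel\backslash G)\mmod^W$ is the evident one by tensoring over $\differentialOperatorsOnT\rtimes W$. The action functor is then $(N,M)\mapsto N\otimes_{\differentialOperatorsOnT\rtimes W}\big(D(\unipotentRadicalOfBorel\backslash G)\otimes_{\differentialOperatorsOnG}M\big)$ for $N\in (\differentialOperatorsOnT\rtimes W)\mmod$, $M\in\differentialOperatorsOnG\mmod$, and the $W$-equivariance of the output is automatic from the Gelfand-Graev action on the middle factor; associativity follows from $\big(D(\unipotentRadicalOfBorel\backslash G)\otimes_{\differentialOperatorsOnG}\Dt\big)\otimes_{\differentialOperatorsOnG\otimes\differentialOperatorsOnG}(M_1\otimes M_2)\cong D(\unipotentRadicalOfBorel\backslash G)\otimes_{\differentialOperatorsOnG}(M_1\star M_2)$, using \labelcref{Representation Theoretic Transfer} and the fact that the Gelfand-Graev action is compatible with the convolution structure.

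The step I expect to be the main obstacle is verifying that the $W$-actions are genuinely compatible with the associativity constraints, i.e. that the associativity isomorphisms above are \emph{$W$-equivariant}, not merely isomorphisms of $\differentialOperatorsOnT$- or $\differentialOperatorsOnG$-modules. This is where one must invoke the precise compatibility of the Gelfand-Graev action with the $T$-action, the $G_\op$-action, and the convolution/coproduct structures on $D(\unipotentRadicalOfBorel\backslash G)$ and on $\biWhittakerDifferentialOperatorsonG$; the relevant statements are available in \cite{GinzburgKazhdanDifferentialOperatorsOnBasicAffineSpaceandtheGelfandGraevAction} and \cite{Gin}, but assembling them into a clean check of the pentagon with $W$-equivariance tracked throughout is the technical heart of the argument. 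Everything else is a formal bimodule-category bookkeeping exercise of the kind already carried out (without the $W$-decoration) in \cref{Monoidal Structure on BiWhit Mod and Monoidality}.
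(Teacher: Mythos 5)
Your proposal matches the paper's own (one-line) proof, which simply declares the statement ``entirely analogous to the proof of \cref{Asymptotic Enhancement of All Preliminary Results}'': in both cases the actions are realized by tensoring with the Miura bimodule $\Mi$ (for (i)) and with $D(\unipotentRadicalOfBorel\backslash G)$ (for (ii)), each carrying the Gelfand--Graev $W$-action compatibly with the $T$- and $G_\op$-actions. One small correction: the action of $\differentialOperatorsOnT\mmod^W$, resp. $(\differentialOperatorsOnT\rtimes W)\mmod$, must be by convolution, i.e. $\otimes_{\Symt}$ via the analogue of \labelcref{bfo} for $T$, rather than $\otimes_{\differentialOperatorsOnT}$, resp. $\otimes_{\differentialOperatorsOnT\rtimes W}$, which does not typecheck for a pair of left modules.
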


Next, we observe that the fact that we have a $W$-equivariant isomorphism \begin{equation}\label{Iso by Rest}D(\overline{N}\backslash G/_{\psi}N) \xrightarrow{\sim} \D(T)\end{equation} induced by restriction to the big cell, 
and so we obtain that the category $D(\overline{N}\backslash G/_{\psi}N)\mmod^W$ is free rank one module category over the monoidal category $D(T)\mmod^W$. More precisely, if we let $\delta \in D(\overline{N}\backslash G/_{\psi}N)\mmod^W$ denote the skyscraper module supported at the identity coset then we have an equivalence of categories
\[\operatorname{act}_{\delta}: D(T) \rtimes W\mmod \to D(\overline{N}\backslash G/_{\psi}N)\mmod^W\] given by $E \mapsto E\star \delta$ and, since this functor admits a right adjoint $\operatorname{act}_{\delta}^R$, an inverse is given by $\operatorname{act}_{\delta}^R$. Using the ring isomorphism \labelcref{Iso by Rest}, we can describe $\operatorname{act}_{\delta}$ explicitly as the functor which sends a $D(T)$-module $M$ to its restriction of scalars along the above ring map. Using this fact and the isomorphism \labelcref{Easy Iso For Miura from Skryabin} we see that there is a natural isomorphism \begin{equation}\label{Natural Iso}\operatorname{act}_{\delta}^R(\delta \otimes_{Z\LG}(-)) \cong F\end{equation} and so $F$ acquires a monoidal structure from the monoidal structure on $\operatorname{act}_{\delta}^R(\delta \otimes_{Z\LG}(-))$ given by \cite[Proposition 2.23]{KalSaf}. We use this monoidal structure on $F$ in what follows.

There is also an action of the Weyl group $W$ on
  $D(\unipotentRadicalOfBorel\backslash G)$ by algebra automorphisms
    known as the \textit{Gelfand-Graev action},
see \cite[Section 3.2]{GinzburgRicheDifferentialOperatorsOnBasicAffineSpaceandtheAffineGrassmannian}, \cite{GinzburgKazhdanDifferentialOperatorsOnBasicAffineSpaceandtheGelfandGraevAction}. The subalgebra $U\g_\op\sset D(\unipotentRadicalOfBorel\backslash G)$
is fixed by this action. Furthermore, the actions of the groups $G_\op, T$,
and $W$, combined together give an action of the group $(T \rtimes W) \times G_{\text{op}}$
on $D(\unipotentRadicalOfBorel\backslash G)$ by algebra automorphisms. 

We let $W$ act on $D(T)$ via the \textit{dot}-action defined 
using the isomorphism
$\Symt \otimes \mathcal{O}(T) \xrightarrow{\cong} D(T)$ induced by multiplication
by the formula $w: a\o f \mto (w\cdot a) \o w^*(f)$, for all
$w\in W,\, a\in\Symt,\, f\in \mathcal{O}(T)$.
Here, $f\mto w^*(f)$ is the usual action on the functions and
$a\mto w\cdot a$ is the $\rho$-shifted $W$-action on $\Symt$, the `dot-action';
see also \cite[Section 1.1]{Gin} for an alternate interpretation of the $W$-action on $D(T)$
defined in this way.
The action 
of $T \rtimes W$ on ${D}(\unipotentRadicalOfBorel\backslash G)$
survives the Whittaker reduction, so the $(\differentialOperatorsOnT, \biWhittakerDifferentialOperatorsonG)$-bimodule, resp. 
$(\biWhittakerDifferentialOperatorsonG, \differentialOperatorsOnT)$-bimodule,
structure on
$\Mi$, resp. $\check\Mi$, can be upgraded to a
$(\differentialOperatorsOnT\rtimes W,\, \biWhittakerDifferentialOperatorsonG)$-bimodule, resp. 
$(\biWhittakerDifferentialOperatorsonG,\, D(T)\rtimes W)$-bimodule, structure. 
\subsection{Parabolic induction and the Miura bimodule}\label{New parabolic induction and Miura}
The functor of parabolic induction
is defined as a pull-push functor $\int_p q^*(\td):  
{\scr D}^W(T)\to {\scr D}^{\Ad G}(G)$ 
with respect to the diagram
\begin{equation}\label{spr diag}
  G\, \xleftarrow{\ gbg\inv \leftarrowtail (g,b) \, :\,  p\ } \,\tilde{G}= G\times^B  B\,
  \xrightarrow{\ q:\, (g,b) \rightarrowtail bN/N \ }\, B/N=T,
  \end{equation}
where the Borel subgroup $B$ acts on itself by conjugation
and the map 
$p$ is the Grothendieck-Springer morphism.
It is known that parabolic induction is an exact functor \cite{BezrukavnikovYomDinOnParabolicRestrictionofPerverseSheaves} and we let $\IndTG: \differentialOperatorsOnT \rtimes W\modwithdash \xrightarrow{} \differentialOperatorsOnG\modwithdash^{{\Ad G}}$ denote the induced functor between the corresponding abelian
hearts.

It was shown in \cite{GinzburgParabolicInductionandtheHarishChandraDModule} that
the functor $\IndTG$ has an algebraic description in terms of  the
$D(G)$-module
$\bN=D(G)/D(G)\ad\g$, cf. \cref{N coalg}.
In more detail,  Harish-Chandra proved that the algebra
$(D(G)/D(G)\ad\g)^{\Ad G}$ obtained from $D(G)$ by Hamiltonian reduction
with respect to the adjoint action of $G$ is
isomorphic to $\differentialOperatorsOnT^W$ via the
radial parts isomorphism,  see \cref{main1}.
Therefore, the left action of $D(G)$ and the natural right action of $(D(G)/D(G)\ad\g)^{\Ad G}$ on
$D(G)/D(G)\ad\g$ give $\bN$ the structure  of a
$(D(G), \differentialOperatorsOnT^W)$-bimodule.

Let $\bM:=\bN\o_{\differentialOperatorsOnT^W} D(T)$.
This tensor product has the structure of
a $(D(G),\, D(T)\rtimes W)$-bimodule where the action of $W$ comes from the
natural $W$-action
on the second tensor factor. Let $\mathcal{M}$ denote the $D_G$-module whose global sections are $\bM$. There are isomorphisms of functors, cf.
\cite[Theorem 3.9]{GinzburgParabolicInductionandtheHarishChandraDModule}:
\begin{equation}\label{Tensoring with quantizationOfCentralizers Is Parabolic Induction No W Invariants}
\operatorname{Ind}_T^G(\td)\cong
\mathbf{M} \otimes_{\differentialOperatorsOnT} (\td) \cong \quantizationOfCentralizersHBAREQUALSONE \otimes_{\differentialOperatorsOnT^W} \differentialOperatorsOnT \otimes_{\differentialOperatorsOnT} (\td).
\end{equation}
Taking $W$-invariants one deduces an isomorphism
\begin{equation}\label{Tensoring with quantizationOfCentralizers Is Parabolic Induction}
    \mathbf{N} \otimes_{\differentialOperatorsOnT^W} (\td) \cong \IndTG(\td)^W.
  \end{equation}

  The right $D(T)^W$-action on the $D(G)$-module $\mathbf{N}$
induces, by functoriality,  an algebra map $D(T)^W\to (\End_{\J}\kappa(\mathbf{N}))_{op}$.
Similarly,  one can apply $\kap(\td)$ to
$\mathbf{M}$ viewed as a $D(G)$-module. Then, the action of the algebra
$D(T)\rtimes W$ on
$\mathbf{M}$ induces  an algebra map $D(T)\rtimes W\to (\End_{\J}\kappa(\mathbf{M}))_{op}$.
This makes $\kappa(\mathbf{N})$ a
$(\J,\differentialOperatorsOnT^W)$-bimodule
and $\kappa(\mathbf{M})$ a $(\J,\,D(T)\rtimes W)$-bimodule, respectively. The following result, when combined with \labelcref{Tensoring with quantizationOfCentralizers Is Parabolic Induction}, gives a natural isomorphism $\kappa(\IndTG(\td)^W) \cong \check{\mathbb{M}}^W \otimes_{\D(T)} (\td)$:
    
\begin{Proposition}\label{Equivalence of Bimodules for KostantWhittaker for N and Miura Fixed Points Summer 2025 Version}
  There is an isomorphism $\kappa(\mathbf{M})\cong \check{\mathbb{M}}$,
  resp. $\kappa(\quantizationOfCentralizersHBAREQUALSONE) \cong \check{\mathbb{M}}^W$, 
  of $(\J,\,D(T)\rtimes W)$-bimodules, 
  resp. $(\J,\differentialOperatorsOnT^W)$-bimodules; moreover, $\kappa(\quantizationOfCentralizersHBAREQUALSONE)$ is a free rank one $\biWhittakerDifferentialOperatorsonG$-module.
\end{Proposition}

To prove \cref{Equivalence of Bimodules for KostantWhittaker for N and Miura Fixed Points Summer 2025 Version}, we first show the following lemma. In it, we write $a_{\ell}: \unipotentRadicalOfOppositeBorel \times G \to G$ for the map $a_{\ell}(m, g) := mg$ and, for the ease of exposition we write $(-)^{\unipotentRadicalOfOppositeBorel}$ for the \textit{derived} invariants functor.
\newcommand{\genericCharacterOfUnipotentRadicalOfOppositeBorel}{\mathring{\psi}}
\begin{Lemma}\label{Lem: Averaging is Coinvariants}
    For any $\F \in D_G$-mod and any character $\genericCharacterOfUnipotentRadicalOfOppositeBorel: \unipotentRadicalOfOppositeBorel \to \mathbb{G}_a$, there is an isomorphism \[R\Gamma(\int_{a_{\ell}}\O_{\unipotentRadicalOfOppositeBorel}^{\genericCharacterOfUnipotentRadicalOfOppositeBorel} \otimes_k \F)^{\unipotentRadicalOfOppositeBorel} \cong \LieAlgebraofUnipotentRadicalOfOppositeBorel^{\genericCharacterOfUnipotentRadicalOfOppositeBorel}U\LG\backslash U\LG \otimes_{U\LG}^L \Gamma(\F)\]  in the derived category of $D(\unipotentRadicalOfOppositeBorel_{\genericCharacterOfUnipotentRadicalOfOppositeBorel}\backslash G)$-modules which is natural in $\F$. 
\end{Lemma}

\begin{Remark}
    An analogous argument gives an isomorphism \[R\Gamma(\int_{\mathrm{act}: N \times X \to X}\O_{\unipotentRadicalOfOppositeBorel}^{\genericCharacterOfUnipotentRadicalOfOppositeBorel} \otimes_k \F)^{\unipotentRadicalOfOppositeBorel} \cong k_{\genericCharacterOfUnipotentRadicalOfOppositeBorel} \otimes^L_{U\LieAlgebraofUnipotentRadicalOfOppositeBorel} R\Gamma(\F)\] of the derived $\LieAlgebraofUnipotentRadicalOfOppositeBorel^{\genericCharacterOfUnipotentRadicalOfOppositeBorel}$-coinvariants of any complex $\F$ of $D_X$-modules for $X$ any variety with an action of $\unipotentRadicalOfOppositeBorel$. 
\end{Remark}
\begin{proof} For such $\F$, we have isomorphisms \[R\Gamma(\int_{a_{\ell}}\O_{\unipotentRadicalOfOppositeBorel}^{\genericCharacterOfUnipotentRadicalOfOppositeBorel} \otimes_k \F)^{\unipotentRadicalOfOppositeBorel} \cong R\Gamma((\int_{a_{\ell}}\O_{\unipotentRadicalOfOppositeBorel}^{\genericCharacterOfUnipotentRadicalOfOppositeBorel} \otimes_k \F)^{\unipotentRadicalOfOppositeBorel}) \cong (\O(\unipotentRadicalOfOppositeBorel) \otimes^L_{U\LieAlgebraofUnipotentRadicalOfOppositeBorel} \Gamma(\F))^{\unipotentRadicalOfOppositeBorel}\] since the derived invariants and the derived global sections commute and from the isomorphism \labelcref{bfo}, respectively. We moreover have \[(\O(\unipotentRadicalOfOppositeBorel) \otimes^L_{U\LieAlgebraofUnipotentRadicalOfOppositeBorel} \Gamma(\F))^{\unipotentRadicalOfOppositeBorel} \cong \O(\unipotentRadicalOfOppositeBorel)^{\unipotentRadicalOfOppositeBorel} \otimes^L_{U\LieAlgebraofUnipotentRadicalOfOppositeBorel} \Gamma(\F) \cong k \otimes^L_{U\LieAlgebraofUnipotentRadicalOfOppositeBorel} \Gamma(\F) \cong \LieAlgebraofUnipotentRadicalOfOppositeBorel^{\genericCharacterOfUnipotentRadicalOfOppositeBorel} U\LG\backslash U\LG \otimes_{U\LG}^L \Gamma(\F)\] and so combining this chain of isomorphisms we obtain our desired claim.
\end{proof}

\begin{proof}[Proof of \cref{Equivalence of Bimodules for KostantWhittaker for N and Miura Fixed Points Summer 2025 Version}]Let $\nu: \BorelSubgroup \to \BorelSubgroup/\unipotentRadicalOfBorel = T$
  be the quotient map,
  $\iota: \BorelSubgroup \xhookrightarrow{} G$  the closed embedding. Let $a_r: G \times \unipotentRadicalOfOppositeBorel \to G$ denote the action map $(g, m) \mapsto gm$, and let $j: \unipotentRadicalOfOppositeBorel \times \BorelSubgroup \xhookrightarrow{} \tilde{G}$ denote the open embedding induced by the Bruhat decomposition. Let $\varphi := (a_r \times \mathrm{id}) \circ (p \times \mathrm{id}_B \times q)$ and let \[\phi: \unipotentRadicalOfOppositeBorel \times \BorelSubgroup \times \unipotentRadicalOfOppositeBorel \to \unipotentRadicalOfOppositeBorel \times \BorelSubgroup \times \unipotentRadicalOfOppositeBorel\text{, resp. } a_{\ell r}: \unipotentRadicalOfOppositeBorel \times \BorelSubgroup \times \unipotentRadicalOfOppositeBorel \to G\] denote the map given by $\phi(n,g,m) := (n,g,n^{-1}m)$, resp. $a_{\ell r}(n,g,m) := ngm$. 
Observe that, with this notation, the diagram 
  \begin{equation}\label{Replacement for Big Commutative Diagram}\xymatrix@R+2em@C+2em{\unipotentRadicalOfOppositeBorel \times \BorelSubgroup \times \unipotentRadicalOfOppositeBorel \ar[r]^{j \times \mathrm{id}} \ar[d]^{\phi} & \tilde{G} \times \unipotentRadicalOfOppositeBorel \ar@/^1.5pc/[rr]^{\varphi} \ar[r]^{p \times \mathrm{id} \times q} & G \times \unipotentRadicalOfOppositeBorel \times T \ar[r]^{a_r \times \mathrm{id}} & G \times T\\ \unipotentRadicalOfOppositeBorel \times \BorelSubgroup \times \unipotentRadicalOfOppositeBorel \ar[urrr]_{\textcolor{white}{white}a_{\ell r}, \nu \circ \mathrm{pr}_{\BorelSubgroup}} &
 }
  \end{equation}
commutes. Moreover, the argument of \cite[Section 4.6, Step 3]{RaskinAffineBBLocalization} shows that the morphism \begin{equation}\int_{\varphi}\O_{\tilde{G}} \otimes_k \mathcal{O}_{\unipotentRadicalOfOppositeBorel}^{\psi} \to \int_{\varphi}j_*(\O_{\unipotentRadicalOfOppositeBorel} \otimes_k \O_{\BorelSubgroup}) \otimes_k \O_{\unipotentRadicalOfOppositeBorel}^{\psi}\end{equation} induced by restriction of sections is an isomorphism of bi-Whittaker $D$-modules on $G \times T$. In particular, we obtain an induced isomorphism \begin{equation}\label{Map Induced by the Unit of the Adjunction for j on Global Sections}(R\Gamma\int_{\varphi}\O_{\tilde{G}} \otimes_k \mathcal{O}_{\unipotentRadicalOfOppositeBorel}^{\psi})^{\unipotentRadicalOfOppositeBorel \times \unipotentRadicalOfOppositeBorel_{\mathrm{op}}} \xrightarrow{\sim} (R\Gamma\int_{\varphi}j_*(\O_{\unipotentRadicalOfOppositeBorel} \otimes_k \O_{\BorelSubgroup}) \otimes_k \O_{\unipotentRadicalOfOppositeBorel}^{\psi})^{\unipotentRadicalOfOppositeBorel \times \unipotentRadicalOfOppositeBorel_{\mathrm{op}}}\end{equation} on invariants of the global sections. 

We observe that there are isomorphisms \[(R\Gamma\int_{\varphi}\O_{\tilde{G}} \otimes_k \mathcal{O}_{\unipotentRadicalOfOppositeBorel}^{\psi})^{\unipotentRadicalOfOppositeBorel \times \unipotentRadicalOfOppositeBorel_{\mathrm{op}}}  \cong (R\Gamma\int_{a_r \times \mathrm{id}}\int_{p \times \mathrm{id} \times q}\O_{\tilde{G}} \otimes_k \mathcal{O}_{\unipotentRadicalOfOppositeBorel}^{\psi})^{\unipotentRadicalOfOppositeBorel \times \unipotentRadicalOfOppositeBorel_{\mathrm{op}}} \cong R\Gamma(\int_{a_r \times \mathrm{id}}\mathcal{M} \otimes_k \mathcal{O}_{\unipotentRadicalOfOppositeBorel}^{\psi})^{\unipotentRadicalOfOppositeBorel \times \unipotentRadicalOfOppositeBorel_{\mathrm{op}}}\] \[\cong (\mathbf{M}/\mathbf{M}\LieAlgebraofUnipotentRadicalOfOppositeBorelPSIr)^{N} =: \kappa(\mathbf{M})\] by applying, respectively, the functoriality of pushforward, \cite[Theorem 2.2]{GinzburgParabolicInductionandtheHarishChandraDModule}, and \cref{Lem: Averaging is Coinvariants}. We have thus identified the domain of the isomorphism \labelcref{Map Induced by the Unit of the Adjunction for j on Global Sections} with $\kappa(\mathbf{M})$.

We now identify the codomain of \labelcref{Map Induced by the Unit of the Adjunction for j on Global Sections} with the Miura bimodule. Observe that the commutativity of the diagram \labelcref{Replacement for Big Commutative Diagram} and the functoriality of pushforward implies that \[\int_{\varphi}j_*(\O_{\unipotentRadicalOfOppositeBorel} \otimes_k \O_{\BorelSubgroup}) \otimes_k \O_{\unipotentRadicalOfOppositeBorel}^{\psi} \cong \int_{a_{\ell r}  \times \nu \circ \mathrm{pr}_{\BorelSubgroup}}\int_{\phi}\O_{\unipotentRadicalOfOppositeBorel} \otimes_k \O_{\BorelSubgroup} \otimes_k \O_{\unipotentRadicalOfOppositeBorel}^{\psi} \cong \int_{a_{\ell r}  \times \nu \circ \mathrm{pr}_{\BorelSubgroup}}\O_{\unipotentRadicalOfOppositeBorel}^{\psi} \otimes_k \O_{\BorelSubgroup} \otimes_k \O_{\unipotentRadicalOfOppositeBorel}^{\psi}\] and so if we take invariants of global sections we see that \[(\Gamma\int_{a_{\ell r}  \times \nu \circ \mathrm{pr}_{\BorelSubgroup}}\O_{\unipotentRadicalOfOppositeBorel}^{\psi} \otimes_k \O_{\BorelSubgroup} \otimes_k \O_{\unipotentRadicalOfOppositeBorel}^{\psi})^{\unipotentRadicalOfOppositeBorel \times \unipotentRadicalOfOppositeBorel_{\mathrm{op}}} \cong \Gamma (\int_{\unipotentRadicalOfOppositeBorel \times G \times T \to G \times T}(\O_{\unipotentRadicalOfOppositeBorel}^{\psi} \otimes_k \Dpsir/\Dpsir\LieAlgebraOfUnipotentRadicalOfBorel))^{\unipotentRadicalOfBorel} \cong \LieAlgebraofUnipotentRadicalOfOppositeBorelPSIl\backslash \Dpsir/\Dpsir\LieAlgebraOfUnipotentRadicalOfBorel =: \MiuraBimodule\] by direct computation and \cref{Lem: Averaging is Coinvariants} for the group $G \times T$ respectively, as desired.

This constructs a $(\biWhittakerDifferentialOperatorsonG, D(T))$-bimodule isomorphism $\iota: \kappa(\mathbf{M}) \xrightarrow{\sim} \MiuraBimodule$. We temporarily use the notation $R := \biWhittakerDifferentialOperatorsonG \otimes D(T)_{\mathrm{op}}$, so that $\iota$ is an isomorphism of $R$-modules. It remains to show that $\iota$ is $W$-equivariant. However, observe that both $\kappa(\mathbf{M})$ and $\MiuraBimodule$ are generated as an $R$-module by a single element, $1 \in \kappa(\mathbf{M})$ and $\iota(1) \in \MiuraBimodule$, both of which are fixed by $W$. Since any element in $\kappa(\mathbf{M})$ can thus be written as $r \cdot 1$, the $W$-equivariance of the isomorphism immediately follows from the fact that both $\kappa(\mathbf{M})$ and $\MiuraBimodule$ have $R \rtimes W$-module structures: explicitly, \[\iota(wr\cdot 1) = \iota(w(r)w\cdot 1) = \iota(w(r)\cdot 1) = w(r)\iota(1) = wrw^{-1} \cdot\iota(1) = wr \cdot \iota(1)\] for any $r \in R$. Since any element of $\bM$ can be written in the form $r \cdot 1$, we immediately obtain that $\iota$ is $W$-equivariant.
\end{proof}

\subsubsection{Kostant-Whittaker reduction of $\bfN$ is free of rank one}\label{Kostant Whittaker reduction of bfN is free of rank one}
\newcommand{\ENGtoBiwhit}{E_{\LNbarpsir\bN\backslash\bN}}
\newcommand{\LNbarpsil}{\LieAlgebraofUnipotentRadicalOfOppositeBorelPSIl}
\newcommand{\LNbarpsir}{\LieAlgebraofUnipotentRadicalOfOppositeBorelPSIr}
\newcommand{\LNbarpsi}{\LieAlgebraofUnipotentRadicalOfOppositeBorelPSI}

\cref{Equivalence of Bimodules for KostantWhittaker for N and Miura Fixed Points Summer 2025 Version} in particular shows that $\kostantWhittakerToWComod(\bfN)$ is free of rank one as a $\biWhittakerDifferentialOperatorsonG$-module. This can also be derived in a more elementary way. We prove this here, and prove similar facts which can be derived by analogous methods. The results of \cref{Kostant Whittaker reduction of bfN is free of rank one} will not be used in what follows.

\begin{Proposition}
    The map $\overline{\overline{\partial}} \mapsto \overline{\partial} + \LieAlgebraofUnipotentRadicalOfOppositeBorelPSIr\bN$, resp. $\overline{\overline{\partial}} \mapsto \overline{\partial} + (\LieAlgebraOfUnipotentRadicalOfBorel + \LieAlgebraofUnipotentRadicalOfOppositeBorelPSIr)\bN$ induce isomorphisms \[\biWhittakerDifferentialOperatorsonG \xleftarrow{\sim} \kostantWhittakerForNONFILTEREDtoZGBIMOD(\bN)\text{, resp. } (\LieAlgebraOfUnipotentRadicalOfBorel + \LieAlgebraofUnipotentRadicalOfOppositeBorelPSIr)\bN\backslash \bN \xleftarrow{\sim} \MiuraBimodule\] of $\biWhittakerDifferentialOperatorsonG$-modules, resp. $D(T)$-modules.
\end{Proposition}

This result immediately follows by taking the invariants, respectively left coinvariants, of the following isomorphism for the $\LieAlgebraofUnipotentRadicalOfOppositeBorelPSIl$, respectively $\LieAlgebraOfUnipotentRadicalOfBorel$:

\begin{Lemma}
    The map $\Dpsir \to \LieAlgebraofUnipotentRadicalOfOppositeBorelPSIr\bN\backslash\bN$ given by the formula $\partial \mapsto \overline{\partial} + \LieAlgebraofUnipotentRadicalOfOppositeBorelPSIr\bN$ induces an isomorphism $\Dpsir/\Dpsir\LieAlgebraofUnipotentRadicalOfOppositeBorelPSIl \xrightarrow{\sim} \LieAlgebraofUnipotentRadicalOfOppositeBorelPSIr\bN\backslash\bN$ of left $\Dpsir$-modules, where the $\Dpsir$-module structure on $\LieAlgebraofUnipotentRadicalOfOppositeBorelPSIr\bN\backslash\bN$ is induced by the $D$-module structure on $\bN$.
\end{Lemma}

\begin{proof}
Observe that the fact \labelcref{N as Fiber of D} is an isomorphism of $D$-modules implies the induced isomorphism \begin{equation}\label{Unraveled Definition of N}\mathfrak{n}_r^{\psi}\mathbf{N}\backslash\mathbf{N} \cong \mathfrak{n}_r^{\psi}D\backslash D \otimes_{U\otimes_{Z} U_{\mathrm{op}}}U\end{equation} is an isomorphism of $\Dpsir$-modules.  We have an isomorphism \[\mathfrak{n}_r^{\psi}D\backslash D \xleftarrow{\sim} D^{\psi_r} \otimes_{U \otimes_Z Z} (U \otimes_Z \mathfrak{n}^{\psi}U_{\mathrm{op}}\backslash U_{\mathrm{op}})\] of $(\Dpsir, U \otimes_Z U_{\mathrm{op}})$-bimodules by Skryabin's equivalence; therefore we obtain an isomorphism \begin{equation}\label{Induced Skryabin Isomorphism for N}\mathfrak{n}_r^{\psi}D\backslash D \otimes_{U\otimes_{Z} U_{\mathrm{op}}}U \xleftarrow{\sim} D^{\psi_r} \otimes_{U \otimes_Z Z} (U \otimes_Z \mathfrak{n}^{\psi}U_{\mathrm{op}}\backslash U_{\mathrm{op}}) \otimes_{U \otimes_Z U_{\mathrm{op}}} U\end{equation} of $\Dpsir$-modules by applying $(-) \otimes_{U \otimes_Z U_{\mathrm{op}}} U$ to this isomorphism. Finally, observe that the map $U \to (U \otimes_Z \mathfrak{n}^{\psi}U_{\mathrm{op}}\backslash U_{\mathrm{op}}) \otimes_{U \otimes_Z U_{\mathrm{op}}} U$ of left $U$-modules determined by $1 \mapsto 1 \otimes 1 \otimes 1$ induces an isomorphism of left $U$-modules \[U/U\mathfrak{n}^{\psi} \xrightarrow{\sim} (U \otimes_Z \mathfrak{n}^{\psi}U_{\mathrm{op}}\backslash U_{\mathrm{op}}) \otimes_{U \otimes_Z U_{\mathrm{op}}} U.\] We therefore obtain an isomorphism \begin{equation}\label{Iso Folding Quotient for N}D^{\psi_r} \otimes_{U \otimes_Z Z} (U \otimes_Z \mathfrak{n}^{\psi}U_{\mathrm{op}}\backslash U_{\mathrm{op}}) \otimes_{U \otimes_Z U_{\mathrm{op}}} U \xleftarrow{\sim} D^{\psi_r} \otimes_{U \otimes_Z Z} U/U\mathfrak{n}^{\psi} \cong D^{\psi_r}/D^{\psi_r}\mathfrak{n}^{\psi}_{\ell}\end{equation} by applying $D^{\psi_r} \otimes_U (-)$ to this isomorphism. The composite of \labelcref{Unraveled Definition of N}, \labelcref{Induced Skryabin Isomorphism for N}, and \labelcref{Iso Folding Quotient for N} is given by the map $\partial \mapsto \partial + \LieAlgebraofUnipotentRadicalOfOppositeBorelPSIr\bN$ and so we see that our map is an isomorphism of $\Dpsir$-modules, as desired.
\end{proof}

\subsection{The Miura bimodule and comodules}\label{Miura Bimodule and Comodules Subsection}
Let $\fieldOfFractionsForSymt$ denote the field of fractions of $\Symt$.
Given a $\Symt$-module $M$, we write $M_{\loc} := \fieldOfFractionsForSymt \otimes_{\Symt} M$. When clear from context, we also use the same notation $M_{\loc} := \fieldOfFractionsForSymt \otimes_{Z\LG} M$ if $M$ is a $Z\LG$-module, and use the notation $f_{\loc}$ for the map induced by the tensor product of a map $f$ of modules for either $\Symt$ or $Z\LG$. 
Taking $W$-invariants of the isomorphism \labelcref{Easy Iso For Miura from Skryabin}, we see that $\MiuraBimodule^W$ is free of rank one as a right $\biWhittakerDifferentialOperatorsonG$-module. Thus, acting by $\differentialOperatorsOnT^W$ induces a ring morphism $\differentialOperatorsOnT^W \to \biWhittakerDifferentialOperatorsonG$. Since $1 \in \MiuraBimodule$ is fixed by $W$, we also obtain an induced $W$-equivariant map
\[\varphi: \differentialOperatorsOnT 
  \to \MiuraBimodule\]
where $W$ acts on $\differentialOperatorsOnT$ by the dot action as above.

\begin{Proposition}\label{Miura Bimodule is Cocommutative Coalgebra and Isomorphic to DT Upon Tensoring With Field of Fractions}
The $\Symt$-module structure on $\MiuraBimodule$ upgrades to the structure of a cocommutative coalgebra object in $(\differentialOperatorsOnT \rtimes W)\mmod$ such that $\varphi$ is a map of coalgebras. Moreover, the induced map $\fieldOfFractionsForSymt \otimes_{\Symt} \varphi$ is an isomorphism of coalgebras.
\end{Proposition}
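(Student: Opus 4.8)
The plan is to establish the coalgebra structure on $\MiuraBimodule$ by transporting it along the isomorphism \eqref{Miura Is Free Rank W BiWhit Module} from a known structure, and then to verify cocommutativity and the compatibility of $\varphi$ after localizing. First I would recall that by \cref{Asymptotic Kostant Whittaker Reduction is Monoidal Localization}(ii)--(iii) (in the $\hb = 1$ setting) the functor $\kostantWhittakerToWComod$ is monoidal with right adjoint $\kostantWhittakerToWComod^R = \Dpsir \otimes_{Z\LG}^{\biWhittakerDifferentialOperatorsonG}(\td)$, and that the Miura bimodule $\MiuraBimodule = \LieAlgebraOfUnipotentRadicalOfBorel\Dpsir\backslash\Dpsir/\Dpsir\LieAlgebraofUnipotentRadicalOfOppositeBorelPSIl$ is obtained by applying Kostant-Whittaker reduction to $\Dpsir$ for the residual $\unipotentRadicalOfBorel$-action. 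The key structural input is that $\differentialOperatorsOnT$, being $\kostantWhittakerForNONFILTEREDtoZGBIMOD$ applied to $D(\unipotentRadicalOfBorel\backslash G)$ via \eqref{Isomorphisms Like If We Had Exactness for QHR for N}, carries a coalgebra structure in $\differentialOperatorsOnT\mmod$ inherited — just as for $\Dh$ in \cref{Differential Operators on G Has Hopf Algebroid Structure and Comodules Has Monoidal Stucture} — from the convolution/groupoid structure: the comultiplication $\Delta_T: \differentialOperatorsOnT \to \differentialOperatorsOnT \otimes_{\Symt} \differentialOperatorsOnT$ is the unique left $\Symt$-linear map restricting to the standard coproduct on $\mathcal{O}(T)$, and it is cocommutative because $T$ is abelian (the groupoid scheme $T^*T$ is simply the group scheme $T^*T \to \t^*$ with $T$ acting trivially, so its "source" and "target" maps coincide and $\Delta_T$ is symmetric). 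Moreover, the Gelfand-Graev $W$-action on $D(\unipotentRadicalOfBorel\backslash G)$ fixes $U\g_\op$ and is compatible with Kostant-Whittaker reduction, so this coalgebra structure on $\differentialOperatorsOnT$ lives in $(\differentialOperatorsOnT \rtimes W)\mmod$.

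Next I would use the isomorphism $\MiuraBimodule \cong \LieAlgebraOfUnipotentRadicalOfBorel\Dpsir\backslash\Dpsir \otimes_{\Dpsir} \big(\Dpsir/\Dpsir\LieAlgebraofUnipotentRadicalOfOppositeBorelPSIl\big)$ and the exactness/monoidality of Kostant-Whittaker reduction (\cref{Asymptotic Kostant-Whittaker Reduction Is Monoidal and Exact}): the coalgebra object $\Dpsir$ (a $(\differentialOperatorsOnG, \biWhittakerDifferentialOperatorsonG)$-bicomodule by \cref{Asymptotic Enhancement of All Preliminary Results}(iv)) reduces, under $\LieAlgebraOfUnipotentRadicalOfBorel(\td)\backslash(\td)$ on the left and under the already-performed $\LieAlgebraofUnipotentRadicalOfOppositeBorelPSIl$-reduction on the right, to a $(\differentialOperatorsOnT, \biWhittakerDifferentialOperatorsonG)$-bicomodule; cotensoring the left $\differentialOperatorsOnT$-comodule structure with $\differentialOperatorsOnT$ itself (which is the counit object) produces a left $\differentialOperatorsOnT$-comodule, equivalently — via the bialgebroid formalism of \cite{BohmHopfAlgebroids} as in \cref{Differential Operators on G Has Hopf Algebroid Structure and Comodules Has Monoidal Stucture} — a $\differentialOperatorsOnT$-module with a compatible $\Symt$-bimodule structure. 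Concretely, the comultiplication $\MiuraBimodule \to \MiuraBimodule \otimes_{\Symt} \MiuraBimodule$ is characterized as the unique $\Symt$-bimodule map compatible with the left $\differentialOperatorsOnT$-action and restricting appropriately; one then checks coassociativity and counitality from those of $\differentialOperatorsOnG$ (equivalently $D(\unipotentRadicalOfBorel\backslash G)$) exactly as in the proofs already given. Cocommutativity follows because the relevant "source" and "target" maps for the groupoid $T^* T$ agree — this is the one place the torus being abelian is essential. That $\varphi: \differentialOperatorsOnT \to \MiuraBimodule$ is a coalgebra map is built into the construction: $\varphi$ sends $1 \mapsto 1$, is $\differentialOperatorsOnT$-linear, and both comultiplications are determined by their values on $\mathcal{O}(T) \subset \differentialOperatorsOnT$, where they agree by the standard coproduct; $W$-equivariance of $\varphi$ is immediate since $W$ acts by conjugation on both sides.

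For the localized statement, I would invoke \eqref{Miura Is Free Rank W BiWhit Module} $\Symt \otimes_{Z\LG} \biWhittakerDifferentialOperatorsonG \iso \MiuraBimodule$, observe that after base change to $\fieldOfFractionsForSymt$ the extension $\fieldOfFractionsForSymt/\fieldOfFractionsForSymt^W$ is finite Galois with group $W$ and $\Symt$ is finite flat over $\Symt^W$, so that $\fieldOfFractionsForSymt \otimes_{\Symt^W} \fieldOfFractionsForSymt \cong \prod_{w \in W} \fieldOfFractionsForSymt$ and $(\differentialOperatorsOnT)_{\loc} \cong \fieldOfFractionsForSymt \otimes_{\fieldOfFractionsForSymt^W} (\biWhittakerDifferentialOperatorsonG)_{\loc}$ by the generic freeness of $\MiuraBimodule^W$ over $\biWhittakerDifferentialOperatorsonG$. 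The map $\fieldOfFractionsForSymt \otimes_{\Symt}\varphi$ is then a $\fieldOfFractionsForSymt$-linear, $W$-equivariant map of free rank-$|W|$ modules (equivalently, of the cotensor squares) sending $1 \mapsto 1$, hence an isomorphism by a rank/Nakayama count or by noting it is the identity after identifying both sides with $\bigoplus_{w}\fieldOfFractionsForSymt \otimes_{\fieldOfFractionsForSymt^W}(\biWhittakerDifferentialOperatorsonG)_{\loc}$; and it automatically respects comultiplication because $\varphi$ does. I expect the \textbf{main obstacle} to be pinning down precisely the coalgebra structure on $\MiuraBimodule$ and verifying cocommutativity — i.e. making rigorous the claim that the $\differentialOperatorsOnT$-reduction of the $\differentialOperatorsOnG$-coring structure is symmetric — since this requires carefully tracking the left versus right $\Symt$-actions through the chain of Kostant-Whittaker reductions and Skryabin equivalences, rather than any deep new idea; the localization step is then essentially a finite Galois descent computation.
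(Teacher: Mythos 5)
Your overall shape is right (get the coalgebra structure on $\MiuraBimodule$ from the one on $\biWhittakerDifferentialOperatorsonG$, show $\varphi$ is compatible by a generator argument, relate cocommutativity to localization), but there are two genuine gaps, both at places you yourself flag as "the main obstacle."

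First, the construction of the coproduct. Your cotensor route produces at best a left $\differentialOperatorsOnT$-\emph{comodule} structure on $\MiuraBimodule$, i.e.\ a map $\MiuraBimodule \to \differentialOperatorsOnT \otimes_{\Symt} \MiuraBimodule$, which is not a comultiplication $\MiuraBimodule \to \MiuraBimodule \otimes_{\Symt} \MiuraBimodule$, and your fallback characterization ("the unique $\Symt$-bimodule map compatible with the left $\differentialOperatorsOnT$-action and restricting appropriately") fails because $\MiuraBimodule$ is \emph{not} generated over $\differentialOperatorsOnT \rtimes W$ by the element $1$: the image of $\varphi$ is a proper submodule (this is exactly why the proposition only asserts that $\varphi_{\loc}$, not $\varphi$, is an isomorphism). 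The paper's key idea, which your proposal is missing, is that $F = \MiuraBimodule \otimes_{\biWhittakerDifferentialOperatorsonG}(\td) = \operatorname{act}^R_{\delta}(\delta \otimes -)$ is a \emph{monoidal} functor $\biWhittakerDifferentialOperatorsonG\mmod \to (\differentialOperatorsOnT\rtimes W)\mmod$ (via the module-category argument with the convolution unit $\delta$), so $\MiuraBimodule = F(\biWhittakerDifferentialOperatorsonG)$ inherits a coalgebra structure because monoidal functors preserve coalgebra objects. By contrast, your argument that $\varphi$ is a coalgebra map is fine, since there the domain is $\differentialOperatorsOnT$, which \emph{is} cyclic over $\differentialOperatorsOnT\rtimes W$.

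Second, cocommutativity and the localization are established in the wrong logical order, and the direct arguments you give for each do not close. Cocommutativity of $\MiuraBimodule$ does not follow from $T$ being abelian, precisely because $\MiuraBimodule$ strictly contains $\varphi(\differentialOperatorsOnT)$; and your proof that $\varphi_{\loc}$ is an isomorphism (a rank count between "free rank-$|W|$ modules", or "noting it is the identity after identifying both sides") presupposes an identification of $\MiuraBimodule_{\loc}$ with $\differentialOperatorsOnT_{\loc}$ that is exactly what must be proved. The paper's route is: (a) prove $\varphi_{\loc}$ is an isomorphism using the nil-DAHA $\nilDAHAAtHBARISONE \supseteq \differentialOperatorsOnT\rtimes W$, the fact that $(\differentialOperatorsOnT\rtimes W)_{\loc} \to \nilDAHAAtHBARISONE_{\loc}$ is an isomorphism, and the presentation $\MiuraBimodule \cong \nilDAHAAtHBARISONE e$ for an idempotent $e$ (this is the essential external input, from \cite{Gin}, absent from your proposal); then (b) deduce cocommutativity from flatness of $\MiuraBimodule$ over $\Symt$ (which follows from \eqref{Miura Is Free Rank W BiWhit Module} and \cref{Dpsirhbar and BiWhitHbar Are Flat}), since flatness lets one check cocommutativity after base change to $\fieldOfFractionsForSymt$, where $\varphi_{\loc}$ identifies $\MiuraBimodule_{\loc}$ with the cocommutative coalgebra $\differentialOperatorsOnT_{\loc}$. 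So the abelian-ness of $T$ does enter, but only through the localization, not directly.
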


\begin{proof} We have constructed a monoidal structure on $F$ in \cref{The Miura bimodule and the Gelfand-Graev action}, and so $F$ preserves coalgebra objects. Since $\biWhittakerDifferentialOperatorsonG$ is a coalgebra object in $\biWhittakerDifferentialOperatorsonG\mmod$ and $\MiuraBimodule \cong F(\biWhittakerDifferentialOperatorsonG)$ we see that $\MiuraBimodule$ is a coalgebra object of the category of modules for $\differentialOperatorsOnT \rtimes W$. We temporarily let $\Delta_{\differentialOperatorsOnT}$ denote the comultiplication in $\differentialOperatorsOnT$ and $\Delta_{\MiuraBimodule}$ denote the comultiplication for the Miura bimodule. To show $\varphi$ is a coalgebra map, we must show \begin{equation}\label{Equality to Prove A Is Coalgebra Map}\Delta_{\MiuraBimodule} \circ \varphi = (\varphi \otimes \varphi) \circ \Delta_{\differentialOperatorsOnT}\end{equation} and that $\varphi$ is compatible with counits. However, by construction, $\Delta_{\differentialOperatorsOnT}$ and $\Delta_{\MiuraBimodule}$ are $\differentialOperatorsOnT \rtimes W$-module maps, and $\varphi$ is a $(\differentialOperatorsOnT \rtimes W)$-module map since it is given by acting on some element. Moreover, for $\nu, \nu' \in \differentialOperatorsOnT$ and $\partial \in \differentialOperatorsOnT \rtimes W$ we have \begin{align*}
    (\varphi \otimes \varphi)(\partial \cdot (\nu \otimes \nu')) 
    &:= (\varphi \otimes \varphi)(\partial_{(0)}\nu \otimes \partial_{(1)}\nu')
    = \varphi(\partial_{(0)}\nu) \otimes \varphi(\partial_{(1)}\nu') = \partial_{(0)}\varphi(\nu) \otimes \partial_{(1)}\varphi(\nu') 
\end{align*} and by the definition of the $(\differentialOperatorsOnT \rtimes W)$-module structure on the tensor product we have \[\partial_{(0)}\varphi(\nu) \otimes \partial_{(1)}\varphi(\nu') = \partial \cdot (\varphi(\nu) \otimes \partial_{(1)}\varphi(\nu'))
    = \partial \cdot (\varphi \otimes \varphi)(\nu \otimes \nu')\] and so combining these we see that $\varphi \otimes \varphi$ is also a $(\differentialOperatorsOnT \rtimes W)$-module map. Thus both terms in \labelcref{Equality to Prove A Is Coalgebra Map} are $(\differentialOperatorsOnT \rtimes W)$-module maps which send $1$ to $1 \otimes 1$ and therefore are the same map. Compatibility with counits is similar.
    
  Using \cite[Proposition 8.1.1]{Gin}, one can show that the left $(\differentialOperatorsOnT \rtimes W)$-module structure on $\MiuraBimodule$
  upgrades to a module for the \textit{nil DAHA} $\nilDAHAAtHBARISONE$, a certain $\Symt$-ring object for which there is an an algebra embedding
    $\differentialOperatorsOnT \rtimes W\into \nilDAHAAtHBARISONE$ for which the morphism
   \begin{equation}\label{nilDaha Is Iso On Field of Fractions} (\differentialOperatorsOnT \rtimes W)_{\loc} \to  \nilDAHAAtHBARISONE_{\loc}
  \end{equation} induced by inclusion is an isomorphism, by construction. Moreover, one can use the arguments of the proof of \cite[Theorem 8.1.2(i)]{Gin}
  to show that the left $\nilDAHAAtHBARISONE$-module $\MiuraBimodule$ is isomorphic to $\nilDAHAAtHBARISONE e$ for a certain idempotent $e \in \nilDAHAAtHBARISONE$. From this, using the isomorphism \labelcref{nilDaha Is Iso On Field of Fractions} we obtain that 
  $\varphi_{\loc}$ is an isomorphism. 

  It remains to prove that the coalgebra structure on $\mathbb{M}$  is cocommutative. First observe that $\MiuraBimodule$ is flat as a $\Symt$-module: indeed, $\Symt\o_{Z\g}\biWhittakerDifferentialOperatorsonG$ is flat as a $\Symt$-module since $\biWhittakerDifferentialOperatorsonG$ is flat as a $Z\LG$-module by \cref{Dpsirhbar and BiWhitHbar Are Flat}, so the isomorphism \labelcref{Easy Iso For Miura from Skryabin} gives our desired flatness. Thus to prove cocommutativity of $\MiuraBimodule$ it suffices to prove the cocommutativity of $\MiuraBimodule_{\loc}$. 
  However, the isomorphism $\varphi_{\loc}$ identifies $\MiuraBimodule_{\loc}$
 with the cocommutative coalgebra $\D(T)_{\mathrm{loc}}$, and so we obtain our desired cocommutativity. 
\end{proof}

\subsection{Trivial comodules}
Let $\biWhittakerDifferentialOperatorsonG\comod_{\text{triv}}$ denote the full subcategory of graded $\biWhittakerDifferentialOperatorsonG$-comodules which are isomorphic to comodules ${C}$ such that $\coact(c)=1\o c$ for all $c\in C$, where
$\coact: C \to \biWhittakerDifferentialOperatorsonG\o_{Z\g}C$ is the coaction.
This subcategory is a monoidal subcategory, and so we may consider the lax monoidal functor \begin{equation}\label{KappaR restricted to triv}\biWhittakerDifferentialOperatorsonG\comod_{\text{\em triv}} \subseteq \biWhittakerDifferentialOperatorsonG\comod \xrightarrow{\kappa^R} \HarishChandraBimodulesForTHISGROUPDefaultsToG\end{equation} obtained by restricting the lax monoidal functor $\kappa^R$. We also have a functor obtained by the composite \begin{equation}\label{Forget Tensor Up}\biWhittakerDifferentialOperatorsonG\comod_{\text{\em triv}} \xrightarrow{\mathrm{Obl}^{\biWhittakerDifferentialOperatorsonG}_{Z\LG}} Z\LG\mmod \xrightarrow{U\LG \otimes_{Z\LG}(-)} \HarishChandraBimodulesForTHISGROUPDefaultsToG\end{equation} of the forgetful functor $\mathrm{Obl}^{\biWhittakerDifferentialOperatorsonG}_{Z\LG}$ (which factors through $Z\LG$-mod and is monoidal by \cref{Asymptotic Enhancement of All Preliminary Results}(ii)) and the monoidal functor $U\LG \otimes_{Z\LG}(-)$. We claim these functors are naturally isomorphic:

\begin{Lemma}\label{Lax Monoidal Structure on Kappahbar Is Monoidal on Trivial Subcategory}
  There is a natural isomorphism of the composite of the functors in \labelcref{KappaR restricted to triv} and the composite of the functors in \labelcref{Forget Tensor Up}. This natural isomorphism respects the monoidal structure in the obvious way. In particular, the lax monoidality morphism \[\kappa^R(M_1) \otimes_{U\LG} \kappa^R(M_2) \xrightarrow{} \kappa^R(M_1 \otimes_{Z\LG} M_2)\] obtained from \cref{Adjoint to Monoidal Functor is Appropriately Lax Monoidal} is an isomorphism if $M_1, M_2$ are trivial $\biWhittakerDifferentialOperatorsonG$-comodules.
\end{Lemma}

\begin{proof}
  For ${C} \in \biWhittakerDifferentialOperatorsonG\comod_{\text{triv}}$ we have the following maps
  \[
    \kap^R({C}) \ = \ \Dpsir \otimes_{Z\g}^{\biWhittakerDifferentialOperatorsonG} {C} \ =\ 
    (D^{\psi_r})^{\biWhittakerDifferentialOperatorsonG}\otimes_{Z\g} {C}
\     \xleftarrow{}\ U\g\otimes_{Z\g} {C}.\]
 Here, the second equality  holds by the triviality assumption on $C$ and the map on the right is a isomorphism by \cref{Fr lem} with $V := k$ the trivial representation. Thus, we obtain the desired isomorphism and, tracing through the constructions, one readily checks that this isomorphism is natural in $C$ and respects the monoidal structure.
\end{proof}

\begin{Remark} There is an asymptotic analogue of the above lemma with identical proof.
\end{Remark}

\begin{Corollary}\label{If Youre A Flat Biwhittaker Comodule Whose Induced Bimodule Structure Comes from a Zg Module Your Comodule Structure is Trivial}
  Let $C$ be a left $\biWhittakerDifferentialOperatorsonG$-comodule which is flat as a $Z\LG$-module and such that $Z\LG$-bimodule structure of
  \cref{Asymptotic Enhancement of All Preliminary Results}
  is symmetric in the sense of \cref{Definition of Ring Object Coring Object Symmetric and Left Comodule}, then the $\biWhittakerDifferentialOperatorsonG$-comodule structure is trivial. 
\end{Corollary}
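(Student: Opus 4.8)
The plan is to deduce the corollary from the full faithfulness of $\kostantWhittakerToWComod^R\colon\biWhittakerDifferentialOperatorsonG\comod\to\HarishChandraBimodulesForTHISGROUPDefaultsToG$ together with \cref{Lax Monoidal Structure on Kappahbar Is Monoidal on Trivial Subcategory}. Let $C^{\mathrm{triv}}$ be the object of $\biWhittakerDifferentialOperatorsonG\comod_{\text{triv}}$ with the same underlying graded $Z\LG$-module as $C$ and coaction $y\mapsto 1\otimes y$; by \cref{Lax Monoidal Structure on Kappahbar Is Monoidal on Trivial Subcategory} one has $\kostantWhittakerToWComod^R(C^{\mathrm{triv}})\cong U\LG\otimes_{Z\LG}C$. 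Since $\kostantWhittakerToWComod^R$ is fully faithful, an isomorphism $\kostantWhittakerToWComod^R(C)\cong U\LG\otimes_{Z\LG}C$ in $\HarishChandraBimodulesForTHISGROUPDefaultsToG$ lifts to an isomorphism $C\cong C^{\mathrm{triv}}$ in $\biWhittakerDifferentialOperatorsonG\comod$, and by the very definition of $\biWhittakerDifferentialOperatorsonG\comod_{\text{triv}}$ this already shows that the comodule structure on $C$ is trivial. So it suffices to construct such an isomorphism, and I would package the hypotheses accordingly: writing $N:=\kostantWhittakerToWComod^R(C)=\Dpsir\otimes_{Z\LG}^{\biWhittakerDifferentialOperatorsonG}C\subseteq\Dpsir\otimes_{Z\LG}C$, the fact that for $z\in Z\LG$ the difference of left and right multiplication by $z$ on any Harish-Chandra bimodule lies in $\End_{\HarishChandraBimodulesForTHISGROUPDefaultsToG}$ (because $z$ is central in $U\LG$), combined with the isomorphism $\End_{\HarishChandraBimodulesForTHISGROUPDefaultsToG}(N)\cong\End_{\biWhittakerDifferentialOperatorsonG\comod}(C)$ and the $Z\LG$-bimodule isomorphism $\kostantWhittakerToWComod(N)\cong C$ from the counit, shows that the symmetry hypothesis on $C$ is equivalent to the symmetry of the $Z\LG$-bimodule $N$.

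Next I would turn this into a concrete statement about the coaction. Using the flatness of $\Dpsir$ and $\biWhittakerDifferentialOperatorsonG$ over $Z\LG$ (\cref{Dpsirhbar and BiWhitHbar Are Flat}), the flatness of $C$ over $Z\LG$, the splitting $\biWhittakerDifferentialOperatorsonG=u_{\biWhittakerDifferentialOperatorsonG}(Z\LG)\oplus\biWhittakerDifferentialOperatorsonG_{+}$ (with $\biWhittakerDifferentialOperatorsonG_{+}:=\ker\overline{\epsilon}$), and the identification $(\Dpsir)^{\biWhittakerDifferentialOperatorsonG}\cong U\LG$ coming from \cref{Asymptotic Kostant Whittaker Reduction is Monoidal Localization}(iv) with $V=k$, one commutes the cotensor product past $\otimes_{Z\LG}C$; this makes the desired isomorphism $N\cong U\LG\otimes_{Z\LG}C$ equivalent to the vanishing of the reduced coaction $\overline{\theta}\colon C\to\biWhittakerDifferentialOperatorsonG_{+}\otimes_{Z\LG}C$, the composite of the coaction with the projection $\biWhittakerDifferentialOperatorsonG\twoheadrightarrow\biWhittakerDifferentialOperatorsonG_{+}$. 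Thus the whole corollary reduces to: the symmetry hypothesis forces $\overline{\theta}=0$. Unwinding the $Z\LG$-bimodule structure of \cref{Asymptotic Enhancement of All Preliminary Results}, symmetry says precisely $\overline{\epsilon}\bigl([\,h_{z},c_{(0)}\,]\bigr)\,c_{(1)}=0$ for all $z\in Z\LG$ and $c\in C$, where $h_{z}$ is the image of $z$ under one of the two canonical embeddings $Z\LG\to\biWhittakerDifferentialOperatorsonG$ and $[\,\cdot,\cdot\,]$ is the commutator in $\biWhittakerDifferentialOperatorsonG$; equivalently $\overline{\epsilon}(c_{(0)}h_{z})\,c_{(1)}=zc$. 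This condition genuinely measures the non-centrality of $Z\LG$ inside $\biWhittakerDifferentialOperatorsonG$, and is vacuous in the classical limit (where $Z\LG$ is central in $\mathcal{O}(J)$), consistently with the corollary having no classical counterpart.

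The main obstacle is therefore the implication "symmetry $\Rightarrow\overline{\theta}=0$", which is not formal and is the only genuinely quantum input. To prove it I would localize at the fraction field $\fieldOfFractionsForSymt$ of $\Symt$ and use \cref{Miura Bimodule is Cocommutative Coalgebra and Isomorphic to DT Upon Tensoring With Field of Fractions}: over $\fieldOfFractionsForSymt$ the Miura bimodule becomes $\differentialOperatorsOnT_{\loc}$, the comodule-theoretic structure of $\biWhittakerDifferentialOperatorsonG$ linearizes, and one checks directly that a comodule whose associated $\Symt$-bimodule is symmetric must have trivial coaction after this localization (since coassociativity for $\overline{\theta}$ together with symmetry leaves no room for a nonzero reduced coaction over a field). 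One then descends from $\fieldOfFractionsForSymt$ back to $Z\LG$ using the flatness of $C$ over $Z\LG$ — which guarantees injectivity of $C\to\fieldOfFractionsForSymt\otimes_{\Symt}(\Symt\otimes_{Z\LG}C)$ and hence of $\overline{\theta}$-detection — together with a graded Nakayama and separatedness argument in the spirit of the proof of \cref{Asymptotic Kostant Whittaker Reduction is Monoidal Localization}(iv). Reconciling the localized computation with the integral statement (i.e.\ showing the vanishing of $\overline{\theta}$ survives de-localization) is the step where I expect the actual work to lie.
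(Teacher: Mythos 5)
Your overall route is the paper's: reduce to showing the coaction factors through the summand $Z\LG\cdot 1\otimes_{Z\LG}C$ of $\biWhittakerDifferentialOperatorsonG\otimes_{Z\LG}C$ (your ``$\overline{\theta}=0$''), pass to the fraction field $\fieldOfFractionsForSymt$ using flatness of $C$ and of $\biWhittakerDifferentialOperatorsonG$ to get injectivity of $C\to C_{\loc}$ and of $\biWhittakerDifferentialOperatorsonG\otimes_{Z\LG}C\to\biWhittakerDifferentialOperatorsonG_{\loc}\otimes_{\fieldOfFractionsForSymt}C_{\loc}$, and invoke \cref{Miura Bimodule is Cocommutative Coalgebra and Isomorphic to DT Upon Tensoring With Field of Fractions} to replace $\biWhittakerDifferentialOperatorsonG_{\loc}$ by $\differentialOperatorsOnT_{\loc}$. (Your opening detour through full faithfulness of $\kostantWhittakerToWComod^R$ is harmless but unnecessary: once the coaction lands in $Z\LG\otimes_{Z\LG}C$ the counit axiom already forces $p(c)=1\otimes c$, which is how the paper concludes.)

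The genuine gap is in the field-level step, which you dispose of with the assertion that ``coassociativity for $\overline{\theta}$ together with symmetry leaves no room for a nonzero reduced coaction over a field.'' Coassociativity is not the mechanism and does not suffice. What is actually needed (\cref{Comodule for DT With Two Zg Bimodule Structures Agreeing Implies Trivial Comodule Structure} in the paper) is the following: a $\differentialOperatorsOnT_{\loc}$-comodule decomposes as $\bigoplus_{\lambda\in\characterlatticeforT}\tilde{C}_{\lambda}$ with the right action on $\tilde{C}_{\lambda}$ being the left action translated by $\lambda$; the symmetry hypothesis only compares the two actions after restriction to $Z\LG\cong\Symt^{W,\cdot}$, i.e.\ after composing with the dot-action quotient $q:\LTd\to\LTd\sslash W$. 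So one must prove that for $\lambda\neq 0$ the graphs $\Gamma_{\lambda}^{\text{gen}}$ and $\Gamma_{0}^{\text{gen}}$ remain distinct after applying $q\times\operatorname{id}$ --- equivalently, that no nonzero character $\lambda$ satisfies $\xi+\lambda\in W\cdot\xi$ for generic $\xi$. This separation property of the Harish-Chandra quotient on the character lattice (proved in the paper by a finite-flatness counting argument on the fibers of $q$) is the only genuinely nontrivial input, and your proposal omits it; a priori, symmetry over $Z\LG$ rather than over all of $\Symt$ could leave room for weight components $\lambda$ lying in a dot-orbit of $0$. Relatedly, your expectation that the ``actual work'' lies in de-localizing is a misdiagnosis: that direction is immediate from the flatness-induced injectivity you already set up, whereas the work is precisely the generic-point computation you skipped.
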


\begin{proof} Let $K$ denote the kernel of the counit map $\eps: \J\onto Z\g$. We have a $Z\g$-module direct sum decomposition $\biWhittakerDifferentialOperatorsonG= Z\g\oplus K$ with splitting by the counit, so the coaction may be
  viewed as a map
  \[\coact: C\to (Z\g\o_{Z\g}C) \oplus (K\o_{Z\g}C).\]
  We claim that the image of this map is contained in the first direct summand.
  The statement of the corollary follows from the claim since an equation
  $\coact(c)=z\o c'$  for some $c,c'\in C$ and $z\in Z\g$ forces $z\o c' = 1\o c$,
by the identity axiom of comodules.

To prove the claim, observe that there are isomorphisms \[ (\J\o_{Z\g} C)_{\loc}\cong \J_{\loc} \o_{\fieldOfFractionsForSymt} C_{\loc}\text{ and }(K\o_{Z\g} C)_{\loc}\cong K_{\loc} \o_{\fieldOfFractionsForSymt} C_{\loc}\] which are consequences of the fact that $\fieldOfFractionsForSymt \otimes_{Z\LG} (-)$ is symmetric monoidal.  Thus one has the $\fieldOfFractionsForSymt$-module
  direct sum decomposition $\J_{\loc}=\fieldOfFractionsForSymt\oplus  K_{\loc}$
  and the localized map 
   \[\coact_{\loc}: C_{\loc}\to \J_{\loc}
  \o_{\fieldOfFractionsForSymt}  C_{\loc} = (\fieldOfFractionsForSymt\o_{\fieldOfFractionsForSymt}
  C_{\loc}) \oplus  (K_{\loc}\o_{\fieldOfFractionsForSymt} C_{\loc}).\]
    The assumption that $C$ be flat over $Z\g$ ensures that the natural map $\J\o_{Z\g} C\to \J_{\loc}\o_{\fieldOfFractionsForSymt} C_{\loc}$ is injective, so it suffices to prove that 
   the image of the map $\coact_{\loc}$ is contained in the  direct summand
  $\fieldOfFractionsForSymt\o_{\fieldOfFractionsForSymt}
   C_{\loc} $.

  We may view $C_{\loc}$ as a $\differentialOperatorsOnT_{\loc}$-comodule using the isomorphism of \cref{Miura Bimodule is Cocommutative Coalgebra and Isomorphic to DT Upon Tensoring With Field of Fractions}. By \cref{Asymptotic Enhancement of All Preliminary Results},
  this comodule has a canonical right $U\t$-action \eqref{comod1}
  which agrees with
  the right $Z\LG$-action on $C$ in the sense that, for any
  $q\in \fieldOfFractionsForSymt,\, c\in C$, and $z\in Z\g\subseteq U\t$, in
  $C_{\loc}=\fieldOfFractionsForSymt\o_{Z\g} C$ one has
  $(q\o c)z=q\o (cz)$.
  We see that the assumption that the $Z\LG$-bimodule structure on $C$
  be symmetric implies that  for any $z\in Z\g$ and $f\in C_{\loc}$ one
  has $zf=fz$.

  Now,  the $\differentialOperatorsOnT_{\loc}$-comodule $C_{\loc}$ has a weight decomposition
  $C_{\loc}=\oplus_{\lambda\in {\mathbb{X}}^*(T)} \, C_{\loc}^{(\la)}$ with respect to the $T$-action.
  It follows as in \cref{torus ex}  that we may write $C_{\loc} \cong C_{\loc}^{(\la)}$ where $C_{\loc}^{(\la)}$ is the subspace of elements $c$ with $\mathrm{coact}(c) = \lambda \otimes c$. 
  For any $\la\neq0$, 
  one can find $z\in Z\g$ such that $z\neq\tau_\la(z)$.
  For such a $z$ the equation $(z-\tau_\la(z))C_{\loc}^{(\la)}=0$
  forces $C_{\loc}^{(\la)}=0$, since $\fieldOfFractionsForSymt$ is a field  the equation $(z-\tau_\la(z))C_{\loc}^{(\la)}=0$
  forces $C_{\loc}^{(\la)}=0$. We conclude that $C_{\loc}=C_{\loc}^{(0)}$, so the 
  $\differentialOperatorsOnT_{\loc}$-comodule structure on $C_{\loc}$ is trivial; in particular, the image of $\coact_{\loc}$ is contained in the  direct summand $\fieldOfFractionsForSymt\o_{\fieldOfFractionsForSymt}
   C_{\loc}$ as desired. 
 \end{proof}

We recall that the Kostant-Whittaker reduction of any $M \in \D(G)\mmod^{\mathrm{Ad}(G)}$ acquires \textit{both} a $\biWhittakerDifferentialOperatorsonG$-module structure (as discussed in \cref{Monoidal Structure on BiWhit Mod and Monoidality}) in addition to its $\biWhittakerDifferentialOperatorsonG$\textit{-comodule} structure constructed in \cref{Bi-Whittaker Reduction and Biwhittaker Comodules}. However, the $Z\LG$-bimodule structure on $\kappa(M)$ is symmetric for any such $M$. Therefore, from \cref{If Youre A Flat Biwhittaker Comodule Whose Induced Bimodule Structure Comes from a Zg Module Your Comodule Structure is Trivial}, we obtain the following corollary:

  \begin{Corollary}\label{biWhit Comodule Structure on KappaN Is Trivial} If  $M\in D(G)\mmod^{\Ad G}$ has the property that $\kappa(M)$ is flat as a $Z\g$-module
 then the $\biWhittakerDifferentialOperatorsonG$-comodule structure on  $\kappa(M)$ is trivial. In particular, $\kostantWhittakerToWComod(\quantizationOfCentralizersHBAREQUALSONE)$ is trivial as a $\biWhittakerDifferentialOperatorsonG$-comodule.
\end{Corollary}

\begin{proof}
The first claim follows immediately from the above discussion. To see that $\kostantWhittakerToWComod(\quantizationOfCentralizersHBAREQUALSONE)$ is flat  is flat as a $Z\LG$-module, since we have an isomorphism $\kappa(\bfN) \cong \check{\MiuraBimodule}^W$ given by \cref{Equivalence of Bimodules for KostantWhittaker for N and Miura Fixed Points Summer 2025 Version}, we may show $\check{\MiuraBimodule}^W$ is flat as a $Z\LG$-module. We observe that by applying the transpose of the isomorphism \labelcref{Easy Iso For Miura from Skryabin} (or running the same arguments) and taking $W$-invariants, we see that $\check{\MiuraBimodule}^W$ is free of rank one as a $\biWhittakerDifferentialOperatorsonG$-module. However, $\biWhittakerDifferentialOperatorsonG$ is flat as a $Z\LG$-module by taking the specialization at $\hbar = 1$ of \cref{Dpsirhbar and BiWhitHbar Are Flat}.\end{proof}

\section{Construction of Knop-Ng\^o functor}\label{Radj for Drinfeld Center via Relative Setting}

\newcommand{\repGStandIn}{\mathcal{R}}
\newcommand{\HarishChandraBimoduleStandin}{\mathcal{H}}
\newcommand{\biWhitComodStandin}{\mathcal{C}}
\newcommand{\freeFunctorStandin}{F}

\subsection{Relative Drinfeld center}\label{Generalities for Monoidal Functor Subsection}
Given a monoidal functor
$\freeFunctorStandin: \repGStandIn \to \HarishChandraBimoduleStandin$ we define its \textit{relative Drinfeld center} as follows. It is the category $\mathcal{Z}_{\freeFunctorStandin}(\HarishChandraBimoduleStandin)$ whose objects are pairs $(B, \{Z_{F(R)}\})$ where $Z_{F(R)}$ is an isomorphism $B \otimes \freeFunctorStandin(R) \xrightarrow{\sim} \freeFunctorStandin(R) \otimes B$ for every object $R$ of $\repGStandIn$ which is natural in $R$ and such that the following diagram commutes for all $R_1, R_2 \in \repGStandIn$:
 \begin{equation}\label{Central Structure is Compatible with Tensor Structure}\xymatrix@R+2em@C+2em{B \otimes \freeFunctorStandin(R_1) \otimes \freeFunctorStandin(R_2)\ar[r]^{Z_{\freeFunctorStandin(R_1)} \otimes \text{id}} \ar[d]^{\text{id} \otimes m_{R_1, R_2}}  & \freeFunctorStandin(R_1) \otimes B \otimes \freeFunctorStandin(R_2) \ar[r]^{\text{id} \otimes Z_{\freeFunctorStandin(R_2)}}  & \freeFunctorStandin(R_1) \otimes \freeFunctorStandin(R_2) \otimes B \ar[dl]^{m_{R_1, R_2} \otimes \text{id}}\\
B \otimes \freeFunctorStandin(R_1 \otimes R_2)  \ar[r]^{Z_{\freeFunctorStandin(R_1 \otimes R_2)}} & \freeFunctorStandin(R_1 \otimes R_2) \otimes B}
\end{equation} where $m$ is the monoidality isomorphism for $\freeFunctorStandin$, and morphisms \[(B^1, \{Z^1_{\freeFunctorStandin(R)}\}) \xrightarrow{} (B^2, \{Z^2_{\freeFunctorStandin(R)}\})\] are given by maps $f: B^1 \to B^2$ in $\HarishChandraBimoduleStandin$ which intertwine the maps $Z^i_{F(R)}$ in the sense that \[Z^2_{\freeFunctorStandin(R)}(f \otimes \text{id}_{\freeFunctorStandin(R)}) = (\text{id}_{\freeFunctorStandin(R)} \otimes f)Z^1_{\freeFunctorStandin(R)}\] for all $R \in \repGStandIn$. 

It is not difficult to check that $\Z_{\repGStandIn}(\HarishChandraBimoduleStandin)$ admits a monoidal structure, where monoidality is given by the formula \[(B^1, \{Z^1_{\freeFunctorStandin(R)}\}) \otimes (B^2, \{Z^2_{\freeFunctorStandin(R)}\}) := (B^1 \otimes B^2, \{Z^{1,2}_{\freeFunctorStandin(R)}\})\] with $Z^{1, 2}_{\freeFunctorStandin(R)} := (Z^1_{\freeFunctorStandin(R)} \otimes \text{id}) \circ (\text{id}_{B^1} \otimes Z^2_{\freeFunctorStandin(R)})$, and unit ($\mathbf{1}_{\HarishChandraBimoduleStandin}, \{Z^{\mathbf{1}}_R\})$ defined from the unit isomorphisms as in \cite[Chapter 7.13]{EtingofGelakiNikshychOstrikTensorCategories}.

In the special case where $\freeFunctorStandin: {\mathcal R}\to {\mathcal R}$ is the identity functor on $\HarishChandraBimoduleStandin$ this recovers the usual Drinfeld center
construction, so ${\mathcal Z}_{\Id}({\mathcal H})=:{\mathcal Z} ({\mathcal H})$
  is the Drinfeld center of ${\mathcal H}$ with braiding given by the formula \[Z^1_{B_2}: (B^1, \{Z^1_{\freeFunctorStandin(R)}\}) \otimes (B^2, \{Z^2_{\freeFunctorStandin(R)}\})\xrightarrow{\sim}(B^2, \{Z^2_{\freeFunctorStandin(R)}\}) \otimes (B^1, \{Z^1_{\freeFunctorStandin(R)}\}).\] Unlike the Drinfeld center, 
the relative Drinfeld center is not braided in general.

We record the following observation, which is a routine check:

\begin{Proposition}\label{With a Monoidal Functor Theres (1)An Induced Functor on Relative Drinfeld Centers} Assume $\kostantWhittakerSTANDIN: \HarishChandraBimoduleStandin \to \biWhitComodStandin$ is a monoidal functor. There is an induced monoidal functor \[\underline{\kostantWhittakerSTANDIN}: \mathcal{Z}_{\freeFunctorStandin}(\HarishChandraBimoduleStandin) \to \mathcal{Z}_{\kostantWhittakerSTANDIN\freeFunctorStandin}(\biWhitComodStandin)\] by the formula $(B, \{Z_{\freeFunctorStandin(R)}\}) \mapsto (\kostantWhittakerSTANDIN(B), \{Z_{\kostantWhittakerSTANDIN\freeFunctorStandin(R)}\})$ where by definition $Z_{\kostantWhittakerSTANDIN\freeFunctorStandin(R)}$ is the isomorphism $\kostantWhittakerSTANDIN(B) \otimes \kostantWhittakerSTANDIN\freeFunctorStandin(R) \xrightarrow{\sim} \kostantWhittakerSTANDIN\freeFunctorStandin(R) \otimes \kostantWhittakerSTANDIN(B)$ making the diagram \begin{equation}\label{Diagram Defining Central Structure After Post Composing}\xymatrix@R+2em@C+2em{\kostantWhittakerSTANDIN(B \otimes \freeFunctorStandin(R)) \ar[d]^{\sim} \ar[r]^{\kostantWhittakerSTANDIN(Z_{R})} & \kostantWhittakerSTANDIN(\freeFunctorStandin(R) \otimes B) \ar[d]^{\sim} \\
\kostantWhittakerSTANDIN(B) \otimes \kostantWhittakerSTANDIN(\freeFunctorStandin(R)) \ar[r]^{Z_{\kostantWhittakerSTANDIN(R)}} & \kostantWhittakerSTANDIN(\freeFunctorStandin(R))\otimes \kostantWhittakerSTANDIN(B)} 
\end{equation} commute, where the vertical arrows are the monoidality isomorphisms.
\end{Proposition}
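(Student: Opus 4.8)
The plan is to verify directly that post-composition with $\kostantWhittakerSTANDIN$ takes an $\freeFunctorStandin$-central structure to a $\kostantWhittakerSTANDIN\freeFunctorStandin$-central structure, and that the resulting assignment is a monoidal functor; every step is a diagram chase using only naturality of the monoidality constraint of $\kostantWhittakerSTANDIN$ together with the monoidal coherence axioms. Write $m^{\kostantWhittakerSTANDIN}$, $m^{\freeFunctorStandin}$, $m^{\kostantWhittakerSTANDIN\freeFunctorStandin}$ for the monoidality constraints of $\kostantWhittakerSTANDIN$, of $\freeFunctorStandin$, and of the composite, and recall the identity $m^{\kostantWhittakerSTANDIN\freeFunctorStandin}_{R_1,R_2} = \kostantWhittakerSTANDIN(m^{\freeFunctorStandin}_{R_1,R_2}) \circ m^{\kostantWhittakerSTANDIN}_{\freeFunctorStandin R_1,\,\freeFunctorStandin R_2}$.

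First I would check that $Z_{\kostantWhittakerSTANDIN(\td)}$ is a well-defined central structure on $\kostantWhittakerSTANDIN(B)$ relative to $\kostantWhittakerSTANDIN\freeFunctorStandin$. Since $\kostantWhittakerSTANDIN$ is (strongly) monoidal, $m^{\kostantWhittakerSTANDIN}$ is a natural isomorphism, so the prescription of \eqref{Diagram Defining Central Structure After Post Composing} — conjugating $\kostantWhittakerSTANDIN(Z_R)$ by the two monoidality isomorphisms — genuinely defines an isomorphism $\kostantWhittakerSTANDIN(B)\otimes\kostantWhittakerSTANDIN\freeFunctorStandin(R)\iso\kostantWhittakerSTANDIN\freeFunctorStandin(R)\otimes\kostantWhittakerSTANDIN(B)$, and it is natural in $R$ because each of its three constituents is (for $R\mapsto\kostantWhittakerSTANDIN(Z_R)$ one uses naturality of $Z_{(\td)}$ and functoriality of $\kostantWhittakerSTANDIN$). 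To see that $(\kostantWhittakerSTANDIN(B), Z_{\kostantWhittakerSTANDIN(\td)})$ is really an object of $\mathcal{Z}_{\kostantWhittakerSTANDIN\freeFunctorStandin}(\biWhitComodStandin)$ I must check the hexagon \eqref{Central Structure is Compatible with Tensor Structure} for $\kostantWhittakerSTANDIN\freeFunctorStandin$: pasting the definition \eqref{Diagram Defining Central Structure After Post Composing} along each edge of that hexagon and inserting the naturality squares of $m^{\kostantWhittakerSTANDIN}$ together with the formula for $m^{\kostantWhittakerSTANDIN\freeFunctorStandin}$, the claim collapses to the $\kostantWhittakerSTANDIN$-image of the hexagon for $(B, Z_{(\td)})$, which commutes because $\kostantWhittakerSTANDIN$ is a functor.

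Next I would record functoriality, taking $\underline{\kostantWhittakerSTANDIN}(f):=\kostantWhittakerSTANDIN(f)$ on a morphism $f:(B^1, Z^1)\to(B^2, Z^2)$: applying $\kostantWhittakerSTANDIN$ to the intertwining identity $Z^2_R\circ(f\otimes\text{id})=(\text{id}\otimes f)\circ Z^1_R$ and conjugating by the relevant instances of $m^{\kostantWhittakerSTANDIN}$, using naturality of $m^{\kostantWhittakerSTANDIN}$ in each slot, produces the corresponding identity for $Z^1_{\kostantWhittakerSTANDIN(\td)}$ and $Z^2_{\kostantWhittakerSTANDIN(\td)}$, so $\kostantWhittakerSTANDIN(f)$ is a morphism of $\mathcal{Z}_{\kostantWhittakerSTANDIN\freeFunctorStandin}(\biWhitComodStandin)$; preservation of composition and identities is inherited from $\kostantWhittakerSTANDIN$. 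Then I would upgrade $\underline{\kostantWhittakerSTANDIN}$ to a monoidal functor whose underlying functor is $\kostantWhittakerSTANDIN$ and whose monoidality and unit constraints are those of $\kostantWhittakerSTANDIN$; the one substantive point is that $m^{\kostantWhittakerSTANDIN}_{B^1,B^2}$ is a morphism in $\mathcal{Z}_{\kostantWhittakerSTANDIN\freeFunctorStandin}(\biWhitComodStandin)$ from $\underline{\kostantWhittakerSTANDIN}(B^1, Z^1)\otimes\underline{\kostantWhittakerSTANDIN}(B^2, Z^2)$, whose central structure at $R$ is $(Z^1_{\kostantWhittakerSTANDIN(R)}\otimes\text{id})\circ(\text{id}\otimes Z^2_{\kostantWhittakerSTANDIN(R)})$ by the tensor formula in the relative center, to $\underline{\kostantWhittakerSTANDIN}\big((B^1, Z^1)\otimes(B^2, Z^2)\big)=\big(\kostantWhittakerSTANDIN(B^1\otimes B^2),\,(Z^{1,2})_{\kostantWhittakerSTANDIN(\td)}\big)$ with $Z^{1,2}_R=(Z^1_R\otimes\text{id})\circ(\text{id}\otimes Z^2_R)$; expanding $(Z^{1,2})_{\kostantWhittakerSTANDIN(R)}$ via \eqref{Diagram Defining Central Structure After Post Composing} and the formula for $Z^{1,2}_R$, this again reduces to naturality of $m^{\kostantWhittakerSTANDIN}$ and the associativity coherence, and the analogous, easier statement for the unit constraint uses the unit coherence. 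The associativity and unit axioms for $\underline{\kostantWhittakerSTANDIN}$ as a monoidal functor then hold because they are equalities between morphisms with the same underlying maps, which hold for $\kostantWhittakerSTANDIN$.

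I expect the only real work to be the bookkeeping in the hexagon verification closing the first step and in the monoidality step: each amounts to pasting three or four naturality squares of $m^{\kostantWhittakerSTANDIN}$ onto the $\kostantWhittakerSTANDIN$-image of a coherence diagram in $\HarishChandraBimoduleStandin$, the principal risk being to lose track of which of $m^{\kostantWhittakerSTANDIN}$, $m^{\freeFunctorStandin}$, $m^{\kostantWhittakerSTANDIN\freeFunctorStandin}$ sits on each edge. There is no conceptual obstruction — this is precisely the routine check the statement advertises.
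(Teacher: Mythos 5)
Your proof is correct: the paper states this proposition with no proof at all, calling it "a routine check," and your argument is precisely that routine check carried out — conjugating $K(Z_R)$ by the monoidality isomorphisms, verifying the hexagon via $m^{KF}_{R_1,R_2}=K(m^F_{R_1,R_2})\circ m^{K}_{FR_1,FR_2}$ and naturality of $m^K$, and checking that the constraints of $K$ serve as the monoidal structure of $\underline{K}$. Nothing is missing.
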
 

\begin{Remark}\label{Etingof Remark} One can also construct an equivalence of categories $\Z_{\freeFunctorStandin}(\HarishChandraBimoduleStandin) \cong \mathrm{Hom}_{\repGStandIn \boxtimes \repGStandIn}^{\otimes}(\repGStandIn, \HarishChandraBimoduleStandin)$ in settings where $\repGStandIn \boxtimes \repGStandIn$ is defined. This gives a natural interpretation of the functor in \cref{With a Monoidal Functor Theres (1)An Induced Functor on Relative Drinfeld Centers}: namely, the functor $\underline{K}$ is given by the formula $\vartheta \mapsto \vartheta \circ K$ for every $\vartheta \in \mathrm{Hom}_{\repGStandIn \boxtimes \repGStandIn}^{\otimes}(\repGStandIn, \HarishChandraBimoduleStandin)$. This fact will not be used in this paper what but provides motivation and context for our constructions.\footnote{We thank Pavel Etingof for explaining these facts to us.}

\end{Remark}

\subsection{Construction of Knop-Ng\^o functor}\label{Definition of Ngo Functor Subsection} 

Our construction is based on the following two theorems.

\begin{Theorem}\label{theorem asserting abstract central functor in biWhittaker case}
Any $M \in \biWhittakerDifferentialOperatorsonG$-mod, viewed as a trivial $\biWhittakerDifferentialOperatorsonG$-comodule, gives an element of the Drinfeld center of $\biWhittakerDifferentialOperatorsonG$-comod whose central structure $M \otimes C \xrightarrow{\sim} C \otimes M $ has inverse given by the formula $c \otimes m \mapsto c_{(0)}m \otimes c_{(1)}$. This assignment upgrades to a fully faithful, braided monoidal functor $\biWhittakerDifferentialOperatorsonG\modwithdash \to \Z(\biWhittakerDifferentialOperatorsonG\comod)$.
\end{Theorem}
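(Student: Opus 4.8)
The plan is to exhibit the functor explicitly and then check the required compatibilities. First I would take $M \in \biWhittakerDifferentialOperatorsonG\mmod$ and regard it as a trivial comodule, i.e.\ with coaction $m \mapsto 1 \otimes m$; via \cref{Asymptotic Enhancement of All Preliminary Results}(ii) the induced $\zg$-bimodule structure is then symmetric, and the object $M$ makes sense in $\biWhittakerDifferentialOperatorsonG\comod$. For an arbitrary comodule $C$ with coaction $\operatorname{coact}(c) = c_{(0)} \otimes c_{(1)}$ (with $c_{(0)} \in \jh$, $c_{(1)} \in C$), I would define a map $\beta_{M,C} \colon C \otimes_{\zg} M \to M \otimes_{\zg} C$ by the formula $c \otimes m \mapsto c_{(0)}m \otimes c_{(1)}$, where $c_{(0)}m$ uses the $\jh$-module structure on $M$. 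One checks using coassociativity and counitality that this is well defined over $\zg$ (this is where the symmetry of the bimodule structure on $M$ and the identity axiom of the comodule $C$ are used), and that its inverse is $m \otimes c \mapsto c_{(1)} \otimes S?$—more precisely, since the statement phrases $\beta_{M,C}$ as the \emph{inverse} of the central structure $M \otimes C \xrightarrow{\sim} C \otimes M$, I would simply verify $\beta_{M,C}$ is bijective by constructing a two-sided inverse directly from the comodule axioms rather than invoking an antipode (the coalgebra $\jh$ need not be Hopf). Naturality in $C$ is immediate from the definition.

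Next I would verify the hexagon/compatibility condition of \cref{Definition of Ring Object Coring Object Symmetric and Left Comodule}-style central structures: that $\beta_{M, -}$ is compatible with the monoidal structure on $\jh\comod$ coming from \cref{Asymptotic Enhancement of All Preliminary Results}(ii), i.e.\ $\beta_{M, C\otimes_{\zg} C'}$ factors as $\beta_{M,C}$ followed by $\beta_{M,C'}$ in the appropriate sense. This is a direct computation using the formula $c \otimes c' \mapsto c_{(0)}c'_{(0)} \otimes c_{(1)} \otimes c'_{(1)}$ for the coaction on the tensor product and the fact that the $\jh$-action on $M$ is associative. I would then observe that the assignment $M \mapsto (M, \beta_{M, -})$ is functorial in $M$: a $\jh$-module map $M \to M'$ is automatically a comodule map of trivial comodules and intertwines the central structures. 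This gives the functor $\biWhittakerDifferentialOperatorsonG\mmod \to \Z(\biWhittakerDifferentialOperatorsonG\comod)$.

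For monoidality, I would check that $(M, \beta_{M,-}) \otimes (M', \beta_{M',-}) \cong (M \otimes_{\zg} M', \beta_{M \otimes_{\zg} M', -})$ as objects of the Drinfeld center, using that the convolution $M \star M' \cong M \otimes_{\zg} M'$ on trivial comodules agrees with the plain tensor product and tracing through the definition of the tensor product of central structures. For the braiding, I would compare the braiding of $\Z(\jh\comod)$—which on the two objects $(M, \beta_{M,-})$ and $(M', \beta_{M',-})$ is $\beta_{M, M'}$ (or its inverse)—with the symmetry $M \otimes_{\zg} M' \cong M' \otimes_{\zg} M$ of the symmetric monoidal category $\jh\comod$ restricted to trivial comodules; on trivial comodules $\beta_{M,M'}(c \otimes m) = c_{(0)}m \otimes c_{(1)} = 1 \cdot m \otimes c = m \otimes c$ reduces to the flip, so the functor is braided monoidal, landing $\jh\mmod$ with its \emph{symmetric} structure inside $\Z(\jh\comod)$.

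Finally, for full faithfulness: a morphism $(M, \beta_{M,-}) \to (M', \beta_{M',-})$ in $\Z(\jh\comod)$ is a $\zg$-bimodule map $f\colon M \to M'$ compatible with the trivial coactions (hence a $\zg$-module map in the underlying $\zg\mmod$) which additionally intertwines all the $\beta$'s; I would show that intertwining $\beta_{-, C}$ for $C = \jh$ (the regular comodule) already forces $f$ to be a $\jh$-module map, and conversely every $\jh$-module map gives such a morphism, so $\Hom_{\Z}( (M,\beta),(M',\beta)) = \Hom_{\jh}(M, M')$. The main obstacle I anticipate is the well-definedness and bijectivity of $\beta_{M,C}$ over $\zg$ together with the verification of the central-structure axiom \labelcref{Central Structure is Compatible with Tensor Structure}: these require carefully bookkeeping the left $\zg$-action versus the right $\zg$-action (the latter defined via $cz := \epsilon(c_{(0)}z)c_{(1)}$) and using both that $M$ is symmetric as a $\zg$-bimodule and that $\jh$ is a $\zg$-\emph{coalgebra} (symmetric bimodule), i.e.\ the subtle point is precisely that all the relevant tensor products are over the commutative ring $\zg$ and the coaction interacts correctly with this.
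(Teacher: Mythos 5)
Your overall architecture (define the would-be inverse $c\otimes m\mapsto c_{(0)}m\otimes c_{(1)}$, check compatibility with the tensor product of comodules, deduce monoidality and the braiding from triviality of the comodule structure on $M$, and obtain full faithfulness by testing against the regular comodule $\J$) matches the paper's. However, two steps that you treat as routine are in fact the substance of the proof, and as written your argument has genuine gaps there.

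First, the invertibility of $\beta_{M,C}$. You propose to ``verify $\beta_{M,C}$ is bijective by constructing a two-sided inverse directly from the comodule axioms rather than invoking an antipode (the coalgebra $\J$ need not be Hopf).'' This cannot work: for a bialgebra(oid) that is not Hopf, the Galois-type map $c\otimes m\mapsto c_{(0)}m\otimes c_{(1)}$ is in general \emph{not} invertible --- already for $M=C=\J$ with its regular structures, invertibility of this map is essentially equivalent to the existence of an antipode, so no manipulation of coassociativity and counitality alone can produce the inverse. The paper's route is to first establish (\cref{biWhit is Hopf Algebroid}) that $\biWhittakerDifferentialOperatorsonG$ carries a Hopf algebroid structure with bijective antipode, induced from the anti-automorphism of $D(G)$ swapping $U\g$ and $U\g_{\op}$, and to use flatness of $\J$ over $Z\LG$ to equip every left comodule with a compatible right-bialgebroid coaction $\rho_R$; the explicit inverse is then $m\otimes c\mapsto c^{(1)}\otimes A(c^{(0)})m$, and the verification in \cref{Inverse to ZMC Has Explicit Formula} uses precisely the antipode axiom. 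Your aborted ``$c_{(1)}\otimes S?$'' was the right instinct; the antipode cannot be dispensed with.

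Second, you never check that $\beta_{M,C}$ is a morphism of $\J$-\emph{comodules}, which is required for $(M,\beta_{M,-})$ to define an object of $\Z(\J\comod)$ at all. Comparing the induced coactions on $C\otimes_{Z\LG}M$ and $M\otimes_{Z\LG}C$, this amounts to the \emph{cocommutativity} of the coproduct on $\J$ (\cref{Composite Giving Quantization of j x mapsto j jx is left comodule map}), which is itself a nontrivial input proved via the Miura bimodule and its generic identification with $D(T)$ (\cref{Miura Bimodule is Cocommutative Coalgebra and Isomorphic to DT Upon Tensoring With Field of Fractions}). So the theorem genuinely rests on two structural facts about $\J$ --- Hopf algebroid with bijective antipode, and cocommutativity --- that your proposal either declares avoidable or leaves implicit; supplying them is exactly what the paper's \cref{Hopf Algebroids and Central Functors Section} does.
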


Theorem \ref{theorem asserting abstract central functor in biWhittaker case} will be proved in \cref{Hopf Algebroids and Central Functors Section}.
The second theorem is the following general result on functoriality of Drinfeld center:

\begin{Theorem}\label{Functoriality of Drinfeld Center Under Strong Generation Assumption}
Let $\repGStandIn, \HarishChandraBimoduleStandin$ and  $\biWhitComodStandin$
  be cocomplete abelian categories equipped with monoidal structures which commute with colimits separately in each entry and let
\begin{equation}\label{Functors for Standin of Free and KostantWhittaker}\repGStandIn \xrightarrow{\freeFunctorStandin} \HarishChandraBimoduleStandin \xrightarrow{\kostantWhittakerSTANDIN} \biWhitComodStandin\end{equation}
a pair of monoidal functors between them such that the following holds:

\vi  Every object in $\repGStandIn$ is a filtered colimit of projective dualizable objects.

\vii The functor $\freeFunctorStandin$ commutes with colimits and its essential image forms a projective generating set.

\viii The functor $\kostantWhittakerSTANDIN$ admits a fully faithful right adjoint $\kostantWhittakerSTANDIN^R$ which commutes with filtered colimits.
\vskip 3pt 
 Then, there is a braided monoidal functor $\underline{K}: \Z(\HarishChandraBimoduleStandin) \to \Z(\biWhitComodStandin)$ which lifts the monoidal functor $\kostantWhittakerSTANDIN$, and this braided monoidal functor admits a fully faithful, braided lax monoidal right adjoint $\underline{K}^R$ which lifts the lax monoidal functor $\kostantWhittakerSTANDIN^R$. Moreover, the unit and counit map for the adjoint pair $(\underline{K}, \underline{K}^R)$ are induced by the unit and counit map for $\kostantWhittakerSTANDIN$.
\end{Theorem}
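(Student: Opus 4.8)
The plan is to construct the lift $\underline{K}\colon \Z(\HarishChandraBimoduleStandin)\to\Z(\biWhitComodStandin)$ by a two-step process: first pass to a relative Drinfeld center where the comonadic/Beck-type machinery is available, and then identify the relative Drinfeld centers with the honest ones using the projective generation hypothesis. Concretely, I would first apply \cref{With a Monoidal Functor Theres (1)An Induced Functor on Relative Drinfeld Centers} to the monoidal functor $\kostantWhittakerSTANDIN$ to obtain a monoidal functor $\underline{\kostantWhittakerSTANDIN}\colon\Z_{\freeFunctorStandin}(\HarishChandraBimoduleStandin)\to\Z_{\kostantWhittakerSTANDIN\freeFunctorStandin}(\biWhitComodStandin)$ between the \emph{relative} centers taken with respect to $\freeFunctorStandin$ and $\kostantWhittakerSTANDIN\freeFunctorStandin$ respectively. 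The content of hypotheses (i)--(ii) is then the assertion (this is the role of the forthcoming \cref{Forgetful Functor to Relative Drinfeld Center of Projective Generating Set is Equivalence} alluded to in the remark above) that the forgetful functors $\Z(\HarishChandraBimoduleStandin)\xrightarrow{\sim}\Z_{\freeFunctorStandin}(\HarishChandraBimoduleStandin)$ and $\Z(\biWhitComodStandin)\xrightarrow{\sim}\Z_{\kostantWhittakerSTANDIN\freeFunctorStandin}(\biWhitComodStandin)$ are equivalences: since $\freeFunctorStandin(\repGStandIn)$ is a projective generating set of dualizable objects, a half-braiding with all of $\HarishChandraBimoduleStandin$ is determined by, and can be reconstructed from, a compatible system of half-braidings with the objects $\freeFunctorStandin(R)$, using colimit-compatibility of the tensor product to extend, and dualizability of the $\freeFunctorStandin(R)$ to see the extension is an isomorphism. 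Composing $\underline{\kostantWhittakerSTANDIN}$ with these equivalences produces $\underline{K}$, and it is manifestly monoidal; braidedness follows because on the relative centers the braiding is still given by evaluating half-braidings, and $\underline{\kostantWhittakerSTANDIN}$ carries the half-braiding of $(B,Z_\bullet)$ to that of $(\kostantWhittakerSTANDIN(B),Z_{\kostantWhittakerSTANDIN(\td)})$ by construction, so it respects the braiding formula of \cref{Generalities for Monoidal Functor Subsection}.

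Next I would construct the right adjoint $\underline{K}^R$. The idea is that since $\kostantWhittakerSTANDIN$ has a fully faithful right adjoint $\kostantWhittakerSTANDIN^R$ which commutes with filtered colimits, $\kostantWhittakerSTANDIN$ is (by Barr--Beck, using cocompleteness and exactness coming from the adjunction) comonadic, or at least one has enough control to transport central structures: given an object $(C,Z^C_\bullet)\in\Z(\biWhitComodStandin)$, I want to equip $\kostantWhittakerSTANDIN^R(C)$ with a half-braiding. For each dualizable projective $R$ and each object $X\in\HarishChandraBimoduleStandin$, one uses the composite of unit/counit maps and the half-braiding $Z^C$ on $\kostantWhittakerSTANDIN(X)$ together with lax monoidality of $\kostantWhittakerSTANDIN^R$ (from \cref{Adjoint to Monoidal Functor is Appropriately Lax Monoidal}) to produce a candidate map $\kostantWhittakerSTANDIN^R(C)\otimes\freeFunctorStandin(R)\to\freeFunctorStandin(R)\otimes\kostantWhittakerSTANDIN^R(C)$; one checks it is an isomorphism using that $\freeFunctorStandin(R)$ is dualizable and that $\kostantWhittakerSTANDIN^R$ preserves the relevant tensor products on the subcategory where this holds (here the key input is \cref{Asymptotic Kostant Whittaker Reduction is Monoidal Localization}(iv), i.e.\ that the lax monoidal structure maps of $\kostantWhittakerSTANDIN^R$ are isomorphisms on free objects $\freeFunctorStandin(V)$). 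This defines $\underline{K}^R$ on the relative centers, hence on the honest ones via the equivalences above; full faithfulness of $\underline{K}^R$ follows from full faithfulness of $\kostantWhittakerSTANDIN^R$ because the forgetful functors from the centers are faithful and conservative and the unit/counit of $(\underline{K},\underline{K}^R)$ are by construction those of $(\kostantWhittakerSTANDIN,\kostantWhittakerSTANDIN^R)$. Braided lax monoidality of $\underline{K}^R$ is then automatic from \cref{Adjoint to Monoidal Functor is Appropriately Lax Monoidal}, since $\underline{K}$ is braided monoidal.

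Finally I would verify the compatibility claim: the unit $\Id\to\underline{K}^R\underline{K}$ and counit $\underline{K}\,\underline{K}^R\to\Id$ are, after applying the forgetful functors to $\HarishChandraBimoduleStandin$ and $\biWhitComodStandin$, the unit and counit of $(\kostantWhittakerSTANDIN,\kostantWhittakerSTANDIN^R)$. This is essentially forced by the construction: the half-braidings are uniquely determined by their restriction to $\freeFunctorStandin(\repGStandIn)$ and the underlying-object maps, so a morphism of central objects whose underlying morphism is the $\kostantWhittakerSTANDIN$-adjunction unit must \emph{be} the center-level unit once one knows both sides are well-defined morphisms in the center; well-definedness is the naturality square relating $Z^{(\td)}$ before and after, which holds by the hexagon-type axiom \eqref{Central Structure is Compatible with Tensor Structure} and naturality of the $\kostantWhittakerSTANDIN$-adjunction.

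The main obstacle I expect is the reduction from honest to relative centers, namely proving that the forgetful functor $\Z(\HarishChandraBimoduleStandin)\to\Z_{\freeFunctorStandin}(\HarishChandraBimoduleStandin)$ is an equivalence (and likewise for $\biWhitComodStandin$). One must show that a coherent family of half-braidings with the generating objects $\freeFunctorStandin(R)$ extends \emph{uniquely} to a half-braiding with every object, and that the extension is an isomorphism; the existence and uniqueness of the extension uses that every object is a colimit of objects of the form $X\otimes\freeFunctorStandin(R)$ (projective generation) plus colimit-compatibility of $\otimes$, while the isomorphism property genuinely needs dualizability of the $\freeFunctorStandin(R)$ (and, for $\biWhitComodStandin$, the further input that $\kostantWhittakerSTANDIN$ sends the generating set to a generating set, which is where one invokes that $\kostantWhittakerSTANDIN\freeFunctorStandin$ lands in dualizable objects whose images generate). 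Checking all the coherence diagrams (the hexagons of \eqref{Central Structure is Compatible with Tensor Structure} and their behaviour under colimits) is where the real bookkeeping lies, though each individual verification is routine; everything downstream---monoidality, braidedness, the adjunction, and the unit/counit identification---is then formal category theory.
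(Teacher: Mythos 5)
Your proposal is correct and follows essentially the same route as the paper: pass to relative Drinfeld centers, identify them with the honest centers using projective generation of $\HarishChandraBimoduleStandin$ by $\freeFunctorStandin(\repGStandIn)$ together with full faithfulness of $\kostantWhittakerSTANDIN^R$, build $\underline{\kostantWhittakerSTANDIN}^R$ via the projection-formula isomorphisms for dualizable objects (extended along filtered colimits), and transport the braided/lax monoidal structures formally. The only cosmetic difference is that you collapse the paper's two-step chain $\Z(\biWhitComodStandin)\simeq\Z_{\kostantWhittakerSTANDIN}(\biWhitComodStandin)\simeq\Z_{\kostantWhittakerSTANDIN\freeFunctorStandin}(\biWhitComodStandin)$ into one step; the first of these equivalences is obtained from the counit isomorphism (full faithfulness of $\kostantWhittakerSTANDIN^R$) rather than from any generation property of $\kostantWhittakerSTANDIN\freeFunctorStandin(\repGStandIn)$ inside $\biWhitComodStandin$, which is the cleaner way to organize the density argument you sketch.
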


Using \cref{theorem asserting abstract central functor in biWhittaker case}, we define the \textit{Knop-Ng\^o functor} $\ups$ to be the composite of the braided lax monoidal functors \[\biWhittakerDifferentialOperatorsonG\modwithdash \to \Z(\biWhittakerDifferentialOperatorsonG \comod) \xrightarrow{\underline{\kappa}^R}\Z(\HarishChandraBimodulesForTHISGROUPDefaultsToG)\] where the first arrow is given by \cref{theorem asserting abstract central functor in biWhittaker case} and the lax braided monoidal functor $\underline{\kappa}^R$ is defined in \cref{Functoriality of Drinfeld Center Under Strong Generation Assumption}, which satisfies the hypotheses of \cref{Functoriality of Drinfeld Center Under Strong Generation Assumption} by \cref{Asymptotic Kostant Whittaker Reduction is Monoidal Localization}. Equipping a $\biWhittakerDifferentialOperatorsonG$-module $M$ with a trivial $\biWhittakerDifferentialOperatorsonG$-comodule structure via the counit map for $\biWhittakerDifferentialOperatorsonG$, we see that the underlying object of $\ups(M)$ is \[\kostantWhittakerToWComod^R(M) \cong \Dpsir \otimes_{Z\LG}^{\biWhittakerDifferentialOperatorsonG} B = D^{\psi_r, \biWhittakerDifferentialOperatorsonG }\otimes_{Z\LG} B \xleftarrow{\sim} U\LG \otimes_{Z\LG} B\] where the final isomorphism is given by \cref{Fr lem}. Thus this functor is given on objects by tensor product $U\LG \otimes_{Z\LG} (\td)$ and so the braided lax monoidal structure on $\ups$ in fact has the property of being braided monoidal, and is fully faithful as it is the composite of functors which are fully faithful by \cref{theorem asserting abstract central functor in biWhittaker case} and \cref{Functoriality of Drinfeld Center Under Strong Generation Assumption}. 
By light abuse of notation, we identify the Knop-Ng\^o functor with its preimage under the global sections isomorphism $\Gamma: \differentialOperatorsOnG\mmod^{{\Ad G}} \xrightarrow{\sim} \Z(\HarishChandraBimodulesForTHISGROUPDefaultsToG)$ of \cite[Lemma 3.8]{BezrukavnikovFinkelbergOstrikCharacterDModulesViaDrinfeldCenterofHarishChandraBimodules}. This completes the proof of \cref{Theorem Asserting Existence of Ngo Functor as Braided Monoidal Fully Faithful Functor to Center}(i). \qed
\begin{proof}[Proof of \cref{Morphism Which Quantizes the Ngo Morphism as Map of Coalgebra Objects in Central HCG}]\label{QuantizationOfNgoMorphismAsCoringSubsection}
In \cref{Equivalence of Bimodules for KostantWhittaker for N and Miura Fixed Points Summer 2025 Version}, we identified $\kostantWhittakerToWComod(\bfN) 
\cong \biWhittakerDifferentialOperatorsonG$ as $\biWhittakerDifferentialOperatorsonG$-modules and, by \cref{biWhit Comodule Structure on KappaN Is Trivial}, this identification is also an isomorphism of trivial $\biWhittakerDifferentialOperatorsonG$-comodules. Applying $\kostantWhittakerToWComod^R$ to this isomorphism, we define our map as the composite \begin{equation}\label{Composite Giving Quantization of Ngo}\quantizationOfCentralizersHBAREQUALSONE \xrightarrow{u(\quantizationOfCentralizersHBAREQUALSONE)} \kostantWhittakerToWComod^R\kostantWhittakerToWComod(\quantizationOfCentralizersHBAREQUALSONE ) \cong \kostantWhittakerToWComod^R(\biWhittakerDifferentialOperatorsonG ) \cong U\LG \otimes_{Z\LG} \biWhittakerDifferentialOperatorsonG \end{equation}
where the first map of coalgebra objects is the unit of the adjunction given by \cref{Asymptotic Kostant Whittaker Reduction is Monoidal Localization}, 
and the final isomorphism is given by the fact that $\kostantWhittakerToWComod^R$ applied to a trivial comodule is given by $U\LG \otimes_{Z\LG} (-)$, see \cref{Lax Monoidal Structure on Kappahbar Is Monoidal on Trivial Subcategory}. Since each usage of a natural transformation is a monoidal transformation of strongly monoidal functors, this map is a map of coalgebra objects. 
Now, setting $\freeFunctorStandin := \freeFunctorForNONFILTEREDHarishChandraBimodules$ and $\kostantWhittakerSTANDIN := \kostantWhittakerToWComod$, we see that by \cref{Functoriality of Drinfeld Center Under Strong Generation Assumption} this unit map can be upgraded to a map in $\Z(\HarishChandraBimodulesForTHISGROUPDefaultsToG)$, as required. This morphism is a map of a quotient of a free $\differentialOperatorsOnG$-module which sends $1$ to $1 \otimes 1$, and therefore is given by acting on $1 \otimes 1$. 
\end{proof}

\begin{Remark}The composite of morphisms \labelcref{Composite Giving Quantization of Ngo} giving rise to the quantization of the Knop-Ng\^o morphism as a map of \textit{Harish-Chandra bimodules} does not require any of our above results on the relative Drinfeld center. In particular, using our arguments, one can construct the morphism \labelcref{quant ngo alg}, as a map of Harish-Chandra bimodules, without defining the functor $\ups$.
\end{Remark}


\subsection{Adjoint from Projection Formulas}\label{Projection Formula Subsection} In this subsection we prove some results required for the proof of \cref{Functoriality of Drinfeld Center Under Strong Generation Assumption}. The proof of the theorem will be completed in \cref{With Fully Faithful Right Adjoint Subsection}.

Observe first that, for any $C \in \biWhitComodStandin$ and $B \in \HarishChandraBimoduleStandin$, we have maps \begin{equation}\label{One Side Projection Formula Map in Generality Changed Definition}\kostantWhittakerSTANDIN^R(C) \otimes_{\HarishChandraBimoduleStandin} B \xrightarrow{\text{id} \otimes u(B)} \kostantWhittakerSTANDIN^R(C) \otimes_{\HarishChandraBimoduleStandin} \kostantWhittakerSTANDIN^R\kostantWhittakerSTANDIN(B) \xrightarrow{} \kostantWhittakerSTANDIN^R(C\otimes_{\biWhitComodStandin} \kostantWhittakerSTANDIN(B))\end{equation} where the second map is given by the lax monoidality morphism for $\kostantWhittakerSTANDIN^R$. Similarly, we have maps \begin{equation}\label{Other Side Projection Formula Map in Generality Changed Definition}B \otimes_{\HarishChandraBimoduleStandin} \kostantWhittakerSTANDIN^R(C) \xrightarrow{u(B) \otimes \text{id}} \kostantWhittakerSTANDIN^R\kostantWhittakerSTANDIN(B) \otimes_{\HarishChandraBimoduleStandin} \kostantWhittakerSTANDIN^R(C) \xrightarrow{} \kostantWhittakerSTANDIN^R(\kostantWhittakerSTANDIN(B) \otimes C).\end{equation} 
We now show  that the arguments of \cite{FlakeLaugwitzPosurProjectionFormulasandInducedFunctorsonCentersofMonoidalCategories} imply a relative verison of \cite[Theorem B]{FlakeLaugwitzPosurProjectionFormulasandInducedFunctorsonCentersofMonoidalCategories}:

\begin{Proposition}\label{Relative Version of Existence of Right Adjoint for Drinfeld Centers Via Projection Formula}
    The functor \[\underline{\kostantWhittakerSTANDIN}: \Z_{\freeFunctorStandin}(\HarishChandraBimoduleStandin) \to \Z_{\kostantWhittakerSTANDIN\freeFunctorStandin}(\biWhitComodStandin)\] admits a lax monoidal right adjoint $\underline{\kostantWhittakerSTANDIN}^R$ which can be defined on objects by the formula \[\underline{\kostantWhittakerSTANDIN}^R(C, \{Z_{\kostantWhittakerSTANDIN\freeFunctorStandin(R)}\}) := (\kostantWhittakerSTANDIN^R(C), \{\tilde{Z}_{\freeFunctorStandin(R)}\})\] where $\tilde{Z}_{\freeFunctorStandin(R)}$ is the unique natural transformation such that for all $R \in \repGStandIn$, the following diagram commutes: \begin{equation}\label{Diagram Defining Central Structure On Right Adjoint}\xymatrix@R+2em@C+2em{\kostantWhittakerSTANDIN^R(C) \otimes \freeFunctorStandin(R) \ar[r]^{\tilde{Z}_{\freeFunctorStandin(R)}} \ar[d]^{\sim} & \freeFunctorStandin(R) \otimes \kostantWhittakerSTANDIN^R(C) \ar[d]^{\sim} \\
\kostantWhittakerSTANDIN^R(C \otimes \kostantWhittakerSTANDIN\freeFunctorStandin(R)) \ar[r]^{\kostantWhittakerSTANDIN^R(Z_{\kostantWhittakerSTANDIN\freeFunctorStandin(R)})} & \kostantWhittakerSTANDIN^R(\kostantWhittakerSTANDIN\freeFunctorStandin(R) \otimes C)
}
\end{equation} where the left vertical, resp. right vertical, arrow is given by the composite of maps in \eqref{One Side Projection Formula Map in Generality Changed Definition}, resp. \eqref{Other Side Projection Formula Map in Generality Changed Definition}, and is an isomorphism. The functor $\underline{\kostantWhittakerSTANDIN}^R$ is fully faithful if $\kostantWhittakerSTANDIN^R$ is fully faithful.
\end{Proposition}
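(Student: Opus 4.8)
\textbf{Proof strategy for \cref{Relative Version of Existence of Right Adjoint for Drinfeld Centers Via Projection Formula}.}

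The plan is to follow the template of \cite[Theorem B]{FlakeLaugwitzPosurProjectionFormulasandInducedFunctorsonCentersofMonoidalCategories}, adapted to the relative setting, in three stages. First I would establish that the projection formula maps \eqref{One Side Projection Formula Map in Generality Changed Definition} and \eqref{Other Side Projection Formula Map in Generality Changed Definition} are isomorphisms whenever $B$ lies in the essential image of $\freeFunctorStandin$, i.e.\ $B = \freeFunctorStandin(V)$. This is where hypotheses (i)--(iii) get used: since $\freeFunctorStandin$ is monoidal and commutes with colimits, and since every object of $\repGStandIn$ is a filtered colimit of projective dualizable objects, it suffices to check the claim when $V$ is projective and dualizable; there one uses that $\kostantWhittakerSTANDIN\freeFunctorStandin(V)$ is dualizable in $\biWhitComodStandin$ (being the image of a dualizable object under a monoidal functor) together with the fact that $\kostantWhittakerSTANDIN^R$, being right adjoint to the monoidal $\kostantWhittakerSTANDIN$, is lax monoidal (\cref{Adjoint to Monoidal Functor is Appropriately Lax Monoidal}), so that tensoring-with-a-dualizable-object commutes with $\kostantWhittakerSTANDIN^R$ via the standard rigidity argument. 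Then since $\freeFunctorStandin$ commutes with filtered colimits and $\kostantWhittakerSTANDIN^R$ commutes with filtered colimits by (iii), the isomorphism propagates to all $V$.

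Second, I would use this projection-formula isomorphism along $F(V)$ to define the central structure $\tilde Z^1_{\freeFunctorStandin(\td)}$ on $\kostantWhittakerSTANDIN^R(C)$: the diagram \eqref{Diagram Defining Central Structure On Right Adjoint} determines $\tilde Z^1_{\freeFunctorStandin(V)}$ uniquely because the vertical arrows are the isomorphisms just constructed, and one must check that $\tilde Z^1$ is natural in $V$ and satisfies the hexagon/compatibility axiom \eqref{Central Structure is Compatible with Tensor Structure}. Naturality follows from naturality of the projection formula maps and of $Z^1$; the compatibility axiom is a diagram chase comparing the defining squares for $V = R_1$, $V = R_2$, and $V = R_1 \otimes R_2$, using that the projection formula isomorphisms are compatible with the monoidality constraints of $\freeFunctorStandin$ and $\kostantWhittakerSTANDIN^R$ (this last compatibility is itself part of what \cref{Adjoint to Monoidal Functor is Appropriately Lax Monoidal} records). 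This exhibits $\underline{\kostantWhittakerSTANDIN}^R(C, Z^1) := (\kostantWhittakerSTANDIN^R(C), \tilde Z^1)$ as an object of $\Z_{\freeFunctorStandin}(\HarishChandraBimoduleStandin)$.

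Third, I would verify the adjunction $\underline{\kostantWhittakerSTANDIN} \dashv \underline{\kostantWhittakerSTANDIN}^R$: the unit and counit are those of $\kostantWhittakerSTANDIN \dashv \kostantWhittakerSTANDIN^R$, and one checks they are morphisms in the respective relative centers, i.e.\ that they intertwine the central structures. This again reduces to a diagram chase over objects $\freeFunctorStandin(V)$, using the definition \eqref{Diagram Defining Central Structure On Right Adjoint} together with \eqref{Diagram Defining Central Structure After Post Composing}; the triangle identities are then inherited from those for $(\kostantWhittakerSTANDIN, \kostantWhittakerSTANDIN^R)$ since the forgetful functors $\Z_{(\td)}(\td) \to (\td)$ are faithful. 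The lax monoidal structure on $\underline{\kostantWhittakerSTANDIN}^R$ is obtained from \cref{Adjoint to Monoidal Functor is Appropriately Lax Monoidal} applied to the monoidal functor $\underline{\kostantWhittakerSTANDIN}$ (whose monoidality was recorded in \cref{With a Monoidal Functor Theres (1)An Induced Functor on Relative Drinfeld Centers}); one must only check the resulting lax structure maps are morphisms in the relative center, which is automatic once \eqref{Diagram Defining Central Structure On Right Adjoint} is known to be compatible with tensor products. Finally, full faithfulness of $\underline{\kostantWhittakerSTANDIN}^R$ when $\kostantWhittakerSTANDIN^R$ is fully faithful follows because the counit of $(\underline{\kostantWhittakerSTANDIN}, \underline{\kostantWhittakerSTANDIN}^R)$ is the counit of $(\kostantWhittakerSTANDIN, \kostantWhittakerSTANDIN^R)$, which is an isomorphism, and a right adjoint is fully faithful iff its counit is invertible. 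I expect the main obstacle to be the first stage --- pinning down precisely how the dualizability/rigidity argument interacts with the filtered-colimit hypotheses to force the projection formula maps to be isomorphisms --- since everything afterward is a (lengthy but routine) sequence of diagram chases of the kind already carried out in \cite{FlakeLaugwitzPosurProjectionFormulasandInducedFunctorsonCentersofMonoidalCategories}.
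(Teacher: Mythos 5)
Your proposal follows essentially the same route as the paper's proof: establish the projection formula maps \eqref{One Side Projection Formula Map in Generality Changed Definition} and \eqref{Other Side Projection Formula Map in Generality Changed Definition} as isomorphisms on dualizable objects (the paper simply cites \cite[Corollary 3.19]{FlakeLaugwitzPosurProjectionFormulasandInducedFunctorsonCentersofMonoidalCategories} where you redo the rigidity argument) and extend along filtered colimits via hypotheses (i)--(iii), use these to define $\tilde Z^1_{\freeFunctorStandin(\td)}$ by \eqref{Diagram Defining Central Structure On Right Adjoint}, verify that the unit and counit of $(\kostantWhittakerSTANDIN, \kostantWhittakerSTANDIN^R)$ are morphisms in the relative centers by the same diagram chase combining \eqref{Diagram Defining Central Structure After Post Composing} and \eqref{Diagram Defining Central Structure On Right Adjoint}, and deduce lax monoidality from \cref{Adjoint to Monoidal Functor is Appropriately Lax Monoidal} and full faithfulness from invertibility of the counit. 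The approach is correct and matches the paper's argument.
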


\begin{proof}
The composites of the maps in \eqref{One Side Projection Formula Map in Generality Changed Definition} and \eqref{Other Side Projection Formula Map in Generality Changed Definition} are isomorphisms for dualizable objects by \cite[Corllary 3.19]{FlakeLaugwitzPosurProjectionFormulasandInducedFunctorsonCentersofMonoidalCategories}. Since by assumption $\kostantWhittakerSTANDIN^R$ commutes with filtered colimits and $\kostantWhittakerSTANDIN$ commutes with colimits as it is a left adjoint, the projection formula holds for filtered colimits of dualizable objects. Therefore, $\tilde{Z}_{F(R)}$ is defined.
    
We claim that, for any $(B, \{Z_{\freeFunctorStandin(R)}\}) \in \Z_{\freeFunctorStandin}(\HarishChandraBimoduleStandin)$, respectively $(C, \{Z'_{\kostantWhittakerSTANDIN\freeFunctorStandin(R)}\}) \in \Z_{\kostantWhittakerSTANDIN\freeFunctorStandin}(\biWhitComodStandin)$, the unit map $u(B)$, respectively the counit map $c(C)$, for the adjunction $(\kostantWhittakerSTANDIN, \kostantWhittakerSTANDIN^R)$ is a map in $\Z_{\freeFunctorStandin}(\HarishChandraBimoduleStandin)$, respectively $\Z_{\kostantWhittakerSTANDIN\freeFunctorStandin}(\biWhitComodStandin)$. We show the claim for the unit map; the counit map is similar. We claim that the following diagram commutes: \vspace{-0.1in}
\begin{equation*}
\xymatrix@R+2em@C+2em{ 
B \otimes \freeFunctorStandin(R) \ar[r]_{u(B \otimes \freeFunctorStandin(R))\textcolor{white}{iddon}} \ar[d]_{Z_{F(R)}} \ar@/^1.5pc/[rrr]^{u(B) \otimes \mathrm{id}}& 
\kostantWhittakerSTANDIN^R\kostantWhittakerSTANDIN(B \otimes \freeFunctorStandin(R)) \ar[d]_{\kostantWhittakerSTANDIN^R\kostantWhittakerSTANDIN(Z_{F(R)})} \ar[r]^{\sim} & 
\kostantWhittakerSTANDIN^R(\kostantWhittakerSTANDIN(B) \otimes \kostantWhittakerSTANDIN\freeFunctorStandin(R)) \ar[d]^{\kostantWhittakerSTANDIN^R(Z_{\kostantWhittakerSTANDIN\freeFunctorStandin(R)})} & 
\kostantWhittakerSTANDIN^R\kostantWhittakerSTANDIN(B) \otimes \freeFunctorStandin(R)\ar[l]^{\eqref{One Side Projection Formula Map in Generality Changed Definition}}\ar[d]^{\tilde{Z}_{F(R)}}\\
\freeFunctorStandin(R) \otimes B \ar[r]^{u(\freeFunctorStandin(R) \otimes B)\textcolor{white}{iddon}} \ar@/_2pc/[rrr]^{\mathrm{id} \otimes u(B)} & 
\kostantWhittakerSTANDIN^R\kostantWhittakerSTANDIN(\freeFunctorStandin(R) \otimes B) \ar[r]^{\sim} & 
\kostantWhittakerSTANDIN^R(\kostantWhittakerSTANDIN\freeFunctorStandin(R) \otimes \kostantWhittakerSTANDIN(B)) & 
\freeFunctorStandin(R) \otimes \kostantWhittakerSTANDIN^R\kostantWhittakerSTANDIN(B) \ar[l]_{\eqref{Other Side Projection Formula Map in Generality Changed Definition}} 
}
\end{equation*}
where the unlabeled maps are the monoidality isomorphisms. Indeed, the leftmost box commutes by naturality, the middle two boxes commute by the respective definitions in \eqref{Diagram Defining Central Structure After Post Composing} and \eqref{Diagram Defining Central Structure On Right Adjoint}, and the top and bottom squares commute because the unit and counit for the adjoint pair $(\kostantWhittakerSTANDIN, \kostantWhittakerSTANDIN^R)$ are monoidal natural transfomations. Reading the exterior of this diagram shows that $u(B)$ is a map in $\Z_{\freeFunctorStandin}(\HarishChandraBimoduleStandin)$ as desired.

Applying \cref{Adjoint to Monoidal Functor is Appropriately Lax Monoidal} to our functor $K$ yields a lax monoidal structure on $\underline{\kostantWhittakerSTANDIN}^R$. Finally, the fully faithfulness claim follows since a right adjoint is fully faithful if and only if the counit is an isomorphism. 
\end{proof}


\subsection{Proof of \cref{Functoriality of Drinfeld Center Under Strong Generation Assumption}}
  \label{With Fully Faithful Right Adjoint Subsection} 
As $\kostantWhittakerSTANDIN$ is monoidal,  there is a monoidal functor $\mathcal{P}_{\kostantWhittakerSTANDIN}: \Z(\biWhitComodStandin) \to \Z_{\kostantWhittakerSTANDIN}(\biWhitComodStandin)$ given on objects by the formula $(C, \{Z_{\freeFunctorStandin(R)}\}) \mapsto (C, \{Z_{\kostantWhittakerSTANDIN\freeFunctorStandin(R)}\})$.

\begin{Proposition}\label{Pullback By KostantWhittakerStandIn Gives Equivalence of Categories}
The functor $\mathcal{P}_{\kostantWhittakerSTANDIN}$ is a monoidal equivalence of categories. 
\end{Proposition}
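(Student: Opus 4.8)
The plan is to exploit the hypothesis that the essential image of $\freeFunctorStandin$ is a projective generating set (condition (ii) of \cref{Functoriality of Drinfeld Center Under Strong Generation Assumption}), together with the fact that $\kostantWhittakerSTANDIN$ admits a fully faithful right adjoint $\kostantWhittakerSTANDIN^R$ (condition (iii)). The key observation is that for an object $(C, Z_{(\td)}) \in \Z(\biWhitComodStandin)$, the data of the half-braiding $Z_{X}\colon C \otimes X \iso X \otimes C$ natural in $X \in \biWhitComodStandin$ is \emph{equivalent} to the data of its restriction $Z_{\kostantWhittakerSTANDIN(\td)}$ along $\kostantWhittakerSTANDIN\freeFunctorStandin$, because every object of $\biWhitComodStandin$ can be reconstructed (as a colimit) from objects of the form $\kostantWhittakerSTANDIN\freeFunctorStandin(V)$. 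Concretely: since $\kostantWhittakerSTANDIN^R$ is fully faithful, $\biWhitComodStandin$ is a (co)localization of $\HarishChandraBimoduleStandin$, so every object of $\biWhitComodStandin$ is of the form $\kostantWhittakerSTANDIN(B)$; and by (ii) every $B$ is a colimit of objects $\freeFunctorStandin(V)$, so since $\kostantWhittakerSTANDIN$ commutes with colimits every object of $\biWhitComodStandin$ is a colimit of objects $\kostantWhittakerSTANDIN\freeFunctorStandin(V)$. Naturality plus continuity of $\otimes$ in each variable then forces a half-braiding to be determined by its values on the $\kostantWhittakerSTANDIN\freeFunctorStandin(V)$.

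First I would construct an inverse functor $\mathcal{Q}\colon \Z_{\kostantWhittakerSTANDIN}(\biWhitComodStandin) \to \Z(\biWhitComodStandin)$. Given $(C, Z_{\kostantWhittakerSTANDIN(\td)})$, one must produce a genuine half-braiding $Z_{X}$ for all $X \in \biWhitComodStandin$. Write $X \cong \kostantWhittakerSTANDIN(B)$ and $B \cong \operatorname{colim}_i \freeFunctorStandin(V_i)$ over the appropriate diagram; define $Z_{\kostantWhittakerSTANDIN\freeFunctorStandin(V_i)}$ using the given $Z_{\kostantWhittakerSTANDIN(\td)}$ data, and take the colimit, using that $C \otimes (\td)$ and $(\td) \otimes C$ commute with colimits. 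One must check this is independent of the presentation $X \cong \operatorname{colim}_i \kostantWhittakerSTANDIN\freeFunctorStandin(V_i)$ (this follows from naturality applied to maps between such presentations, using that $\kostantWhittakerSTANDIN\freeFunctorStandin(\td)$ generates) and that it satisfies the hexagon axiom for \emph{all} objects, not just those in the image of $\kostantWhittakerSTANDIN\freeFunctorStandin$ — again by reduction to generators and continuity. The compatibility diagram \eqref{Central Structure is Compatible with Tensor Structure} for $\mathcal{P}_{\kostantWhittakerSTANDIN}(C,Z_{(\td)})$ is exactly the hexagon/naturality of the original $Z$, so $\mathcal{P}_{\kostantWhittakerSTANDIN}$ does land in $\Z_{\kostantWhittakerSTANDIN}(\biWhitComodStandin)$ as claimed. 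Then one verifies $\mathcal{Q} \circ \mathcal{P}_{\kostantWhittakerSTANDIN} \iso \operatorname{id}$ and $\mathcal{P}_{\kostantWhittakerSTANDIN} \circ \mathcal{Q} \iso \operatorname{id}$; the first is nearly tautological once uniqueness of the extension is established, and the second says precisely that the extension of a restricted half-braiding recovers it, which again is the generation argument. Monoidality of $\mathcal{Q}$ is inherited from the formula $(B^1,Z^1) \otimes (B^2, Z^2) = (B^1 \otimes B^2, Z^{1,2})$ with $Z^{1,2}_X = (Z^1_X \otimes \operatorname{id}) \circ (\operatorname{id} \otimes Z^2_X)$, which is visibly compatible with restriction along $\kostantWhittakerSTANDIN\freeFunctorStandin$ and with colimits.

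The main obstacle I expect is the well-definedness and coherence of the extended half-braiding $\mathcal{Q}(C, Z_{\kostantWhittakerSTANDIN(\td)})$: one needs that the generating objects $\kostantWhittakerSTANDIN\freeFunctorStandin(V)$ are not merely a generating set but that the relevant colimit presentations are compatible enough — in practice one uses that projective generators give functorial bar-type resolutions, so $\operatorname{id}_{\biWhitComodStandin}$ is the colimit of a simplicial diagram built from $\kostantWhittakerSTANDIN\freeFunctorStandin$, and a half-braiding on the generators extends uniquely by left Kan extension along the (dense) inclusion, with the hexagon and naturality axioms descending because both sides of each axiom are maps between colimit-preserving functors that agree on the dense subcategory. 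The dualizability of the objects $V$ in (i) enters to make the projection-formula/rigidity manipulations in this density argument go through cleanly (parallel to how it is used in \cref{Relative Version of Existence of Right Adjoint for Drinfeld Centers Via Projection Formula}). Once density is set up correctly, everything else is a diagram chase.
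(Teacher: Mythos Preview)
Your approach is correct in outline but substantially more elaborate than necessary, because you are invoking conditions (i) and (ii) of \cref{Functoriality of Drinfeld Center Under Strong Generation Assumption} (projective generation by the essential image of $\freeFunctorStandin$, dualizability) when condition (iii) alone suffices. The paper's proof uses only that $\kostantWhittakerSTANDIN^R$ is fully faithful: this is equivalent to the counit $\kostantWhittakerSTANDIN\kostantWhittakerSTANDIN^R(X) \to X$ being an isomorphism for every $X \in \biWhitComodStandin$, so every object of $\biWhitComodStandin$ already lies in the essential image of $\kostantWhittakerSTANDIN$. Given $(C, Z_{\kostantWhittakerSTANDIN(\td)}) \in \Z_{\kostantWhittakerSTANDIN}(\biWhitComodStandin)$, one extends to $Z_X$ by transporting $Z_{\kostantWhittakerSTANDIN(\kostantWhittakerSTANDIN^R(X))}$ along the counit isomorphism; naturality in $X$ is immediate from naturality of the counit and of $Z_{\kostantWhittakerSTANDIN(\td)}$, and the triangle identity for the adjunction shows the extension recovers the original data on $X = \kostantWhittakerSTANDIN(B)$. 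Fullness and faithfulness are similarly direct cube-diagram chases using the counit.

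Your colimit-and-density argument is the machinery needed for the \emph{other} restriction functor $\mathcal{P}_{\freeFunctorStandin}$ (the paper's \cref{Forgetful Functor to Relative Drinfeld Center of Projective Generating Set is Equivalence}), where one restricts further from $\Z_{\kostantWhittakerSTANDIN}(\biWhitComodStandin)$ to $\Z_{\kostantWhittakerSTANDIN\freeFunctorStandin}(\biWhitComodStandin)$ and there is no localization structure to exploit --- there one genuinely needs that the $\kostantWhittakerSTANDIN\freeFunctorStandin(V)$ form a projective generating set so that natural transformations of cocontinuous functors are determined on them. By writing $B \cong \operatorname{colim}_i \freeFunctorStandin(V_i)$ you are effectively proving both propositions at once; this works, but obscures that $\mathcal{P}_{\kostantWhittakerSTANDIN}$ itself is the easy half and requires no generation hypothesis at all.
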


\begin{proof}
This is given by standard arguments: we give a proof for the convenience of the reader. This functor is evidently faithful; we now argue it is full: let $f: C_1 \to C_2$ be a map in $\biWhitComodStandin$ inducing a map in $\Z_{\kostantWhittakerSTANDIN}(\biWhitComodStandin)$. We wish to show that \begin{equation}\label{Fully Faithful Diagram Central}
\xymatrix@R+0.01em@C+0.01em{
C_1 \otimes C \ar[d]^{f \otimes \text{id}} \ar[r]^{Z^1_C} &  C \otimes C_1 \ar[d]^{\text{id} \otimes f} \\
C_2 \otimes C \ar[r]^{Z^2_{C}} &  C \otimes C_2
}
\end{equation} commutes. However, observe that this diagram naturally fits as the front face of a cube whose \lq back face\rq{} commutes by assumption that $f$ induces a map in $\Z_{\kostantWhittakerSTANDIN}(\biWhitComodStandin)$ and whose four other non-front faces commute by naturality of $Z^1, Z^2$, and the counit of our adjunction. Therefore \eqref{Fully Faithful Diagram Central} commutes and so $\mathcal{P}_{\kostantWhittakerSTANDIN}$ is fully faithful.

Finally, we argue $\mathcal{P}_{\kostantWhittakerSTANDIN}$ is essentially surjective: if $(C_1, Z^1_{\kostantWhittakerSTANDIN(B)}) \in \Z_{\kostantWhittakerSTANDIN}(\biWhitComodStandin)$, we define an extension $Z^1_C$ as the unique map which makes the following diagram commute:
\begin{equation*}
  \xymatrix{
    C_1 \otimes \kostantWhittakerSTANDIN\kostantWhittakerSTANDIN^R(C) \  \ar[d]^<>(0.5){Z^1_{\kostantWhittakerSTANDIN(\kostantWhittakerSTANDIN^R(C))}} \ar[rr]^<>(0.5){\sim} &&\    C_1 \otimes C\   \ar[d]^<>(0.5){Z^1_C}\\
\kostantWhittakerSTANDIN\kostantWhittakerSTANDIN^R(C) \otimes C_1 \  \ar[rr]^<>(0.5){\sim} &&\   C \otimes C_1
}
\end{equation*}
where the horizontal arrows induced by the counit, which are isomorphisms as $\kostantWhittakerSTANDIN$ is fully faithful by assumption. Using this definition along with the fact that the unit and counit of this adjunction are monoidal natural transformations, one can indeed check that $(C, Z_C^1) \in \Z(\biWhitComodStandin)$. Now observe that if $C = \kostantWhittakerSTANDIN(B)$ the diagram 
\begin{equation*}
\xymatrix{
  C_1 \otimes C \  \ar[d]^<>(0.5){Z^1_{\kostantWhittakerSTANDIN(B)}} \ar[rr]^<>(0.5){\mathrm{id} \otimes u(B)\textcolor{white}{whe}} &&
\    C_1 \otimes \kostantWhittakerSTANDIN\kostantWhittakerSTANDIN^R(C) \  \ar[d]^<>(0.5){Z^1_{\kostantWhittakerSTANDIN(\kostantWhittakerSTANDIN^R(C))}} \ar[rr]^<>(0.5){\sim} && \   C_1 \otimes C \  \ar[d]^<>(0.5){Z^1_C}\\
  C \otimes C_1 \  \ar[rr]^<>(0.5){u(B) \otimes \mathrm{id}\textcolor{white}{whe}} &&\  
  \kostantWhittakerSTANDIN\kostantWhittakerSTANDIN^R(C) \otimes C_1\   \ar[rr]^<>(0.5){\sim} && \  C \otimes C_1
}
\end{equation*} where $u(B)$ is the unit map, by construction of $Z^1_C$ and naturality of $Z^1_{\kostantWhittakerSTANDIN(\td)}$. The composite of the horizontal arrows in both rows of the diagram give the identity functor by the unit counit criterion of the adjunction, and so we see that $\mathcal{P}_{\kostantWhittakerSTANDIN}(C, Z_C^1) = (C, Z_{\kostantWhittakerSTANDIN(B)}^1)$ as required.
\end{proof}

Using that $\mathcal{P}_{\kostantWhittakerSTANDIN}$ is a monoidal equivalence of categories by \cref{Pullback By KostantWhittakerStandIn Gives Equivalence of Categories}, we may use $\mathcal{P}_{\kostantWhittakerSTANDIN}$ to equip $\Z_{\kostantWhittakerSTANDIN}(\biWhitComodStandin)$ with a braiding. 

\begin{Corollary}\label{underline KostantWhitstandin Is Braided Monoidal}
The functor $\underline{\kostantWhittakerSTANDIN}: \Z(\HarishChandraBimoduleStandin) \to \Z_{\kostantWhittakerSTANDIN}(\biWhitComodStandin)$ defined in \cref{With a Monoidal Functor Theres (1)An Induced Functor on Relative Drinfeld Centers} is braided monoidal.
\end{Corollary}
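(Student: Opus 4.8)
The plan is to show that the braiding on $\Z_{\kostantWhittakerSTANDIN}(\biWhitComodStandin)$ transported from $\Z(\biWhitComodStandin)$ via the equivalence $\mathcal{P}_{\kostantWhittakerSTANDIN}$ of \cref{Pullback By KostantWhittakerStandIn Gives Equivalence of Categories} is compatible, under $\underline{\kostantWhittakerSTANDIN}$, with the braiding on $\Z(\HarishChandraBimoduleStandin)$ coming from the usual Drinfeld center construction. The key observation is that $\underline{\kostantWhittakerSTANDIN}$ factors as a composite $\Z(\HarishChandraBimoduleStandin) \xrightarrow{\Z(\kostantWhittakerSTANDIN)} \Z(\biWhitComodStandin) \xrightarrow{\mathcal{P}_{\kostantWhittakerSTANDIN}} \Z_{\kostantWhittakerSTANDIN}(\biWhitComodStandin)$, where the first arrow is the functor on Drinfeld centers induced by the monoidal functor $\kostantWhittakerSTANDIN$ in the usual way (sending $(B, Z_{(\td)})$ to $(\kostantWhittakerSTANDIN(B), Z_{\kostantWhittakerSTANDIN(\td)})$ with central structure built exactly as in \eqref{Diagram Defining Central Structure After Post Composing}, but now for \emph{all} objects of $\biWhitComodStandin$ rather than just those in the image of $\kostantWhittakerSTANDIN\freeFunctorStandin$). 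Indeed, comparing the defining diagram \eqref{Diagram Defining Central Structure After Post Composing} with the definition of $\mathcal{P}_{\kostantWhittakerSTANDIN}$, one sees that $\mathcal{P}_{\kostantWhittakerSTANDIN} \circ \Z(\kostantWhittakerSTANDIN)$ sends $(B, Z_{(\td)})$ to $(\kostantWhittakerSTANDIN(B), Z_{\kostantWhittakerSTANDIN(\td)})$ with precisely the central structure prescribed by \eqref{Diagram Defining Central Structure After Post Composing}, which is the definition of $\underline{\kostantWhittakerSTANDIN}$.

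Given this factorization, the result follows from two standard facts, which I would invoke rather than reprove in detail. First, for any monoidal functor $F\colon \cat C \to \cat D$ the induced functor $\Z(F)\colon \Z(\cat C) \to \Z(\cat D)$ is braided monoidal: this is a classical fact about Drinfeld centers, following immediately from the explicit formula for the braiding $\sigma_{(X,Z^X),(Y,Z^Y)} = Z^X_Y$ together with the fact that $\kostantWhittakerSTANDIN$ is monoidal (so $\kostantWhittakerSTANDIN$ carries the braiding isomorphism of $\Z(\HarishChandraBimoduleStandin)$ to that of $\Z(\biWhitComodStandin)$ compatibly with the monoidality constraints). Second, $\mathcal{P}_{\kostantWhittakerSTANDIN}$ is braided monoidal essentially by construction: the braiding on $\Z_{\kostantWhittakerSTANDIN}(\biWhitComodStandin)$ was \emph{defined} by transport of structure along $\mathcal{P}_{\kostantWhittakerSTANDIN}$, so $\mathcal{P}_{\kostantWhittakerSTANDIN}$ intertwines the braidings tautologically, and it is monoidal by \cref{Pullback By KostantWhittakerStandIn Gives Equivalence of Categories}. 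The composite of two braided monoidal functors is braided monoidal, which gives the claim.

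The main point requiring care — and the only genuinely non-formal step — is the verification that the central structure $Z_{\kostantWhittakerSTANDIN(\td)}$ produced by $\underline{\kostantWhittakerSTANDIN}$ in \cref{With a Monoidal Functor Theres (1)An Induced Functor on Relative Drinfeld Centers}, which is only specified on objects of the form $\kostantWhittakerSTANDIN\freeFunctorStandin(R)$, agrees with the restriction of the full central structure on $\Z(\biWhitComodStandin)$ obtained by applying $\Z(\kostantWhittakerSTANDIN)$ and then $\mathcal{P}_{\kostantWhittakerSTANDIN}$. This is where conditions (i)--(iii) of \cref{Functoriality of Drinfeld Center Under Strong Generation Assumption} are needed: one must know that a half-braiding is determined by its values on a generating family, which here is the essential image of $\kostantWhittakerSTANDIN\freeFunctorStandin$ (projective generating by hypothesis (ii), since $\freeFunctorStandin$ has projective generating image and $\kostantWhittakerSTANDIN$ is a quotient functor admitting a right adjoint), together with the compatibility of half-braidings with colimits, which reduces the check to dualizable objects and then to the generators. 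I would organize this as a short lemma: a half-braiding on $\biWhitComodStandin$ is determined by its restriction to $\kostantWhittakerSTANDIN\freeFunctorStandin(\repGStandIn)$, using naturality plus the monoidal compatibility \eqref{Central Structure is Compatible with Tensor Structure} and the fact that every object of $\biWhitComodStandin$ is built from these generators by colimits. Once that lemma is in place the identification $\underline{\kostantWhittakerSTANDIN} = \mathcal{P}_{\kostantWhittakerSTANDIN} \circ \Z(\kostantWhittakerSTANDIN)$ is immediate and the braided monoidality follows as above.
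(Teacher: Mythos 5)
Your factorization $\underline{\kostantWhittakerSTANDIN} = \mathcal{P}_{\kostantWhittakerSTANDIN}\circ \Z(\kostantWhittakerSTANDIN)$ rests on the claim that ``for any monoidal functor the induced functor on Drinfeld centers is braided monoidal,'' and this is where the proposal breaks: the Drinfeld center is \emph{not} functorial for monoidal functors, so there is no ``classical fact'' to invoke. Given $(B, Z_{(\td)}) \in \Z(\HarishChandraBimoduleStandin)$, what you get for free is a half-braiding of $\kostantWhittakerSTANDIN(B)$ against objects in the essential image of $\kostantWhittakerSTANDIN$ --- that is precisely an object of $\Z_{\kostantWhittakerSTANDIN}(\biWhitComodStandin)$, not of $\Z(\biWhitComodStandin)$. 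The intermediate functor $\Z(\kostantWhittakerSTANDIN)$ exists in this setting only because $\mathcal{P}_{\kostantWhittakerSTANDIN}$ is an equivalence (\cref{Pullback By KostantWhittakerStandIn Gives Equivalence of Categories}), whose essential surjectivity extends a half-braiding from the image of $\kostantWhittakerSTANDIN$ to all of $\biWhitComodStandin$ via the counit isomorphism $\kostantWhittakerSTANDIN\kostantWhittakerSTANDIN^R(C)\cong C$, i.e.\ hypothesis (iii). Your final paragraph therefore mislocates the difficulty: the non-formal step is the \emph{existence} of the extended half-braiding, not the agreement of two central structures, and the relevant input is the fully faithful right adjoint $\kostantWhittakerSTANDIN^R$, not the projective-generation hypotheses (i)--(ii), which enter only through $\mathcal{P}_{\freeFunctorStandin}$ in \cref{Forgetful Functor to Relative Drinfeld Center of Projective Generating Set is Equivalence} and play no role in this corollary (here $\freeFunctorStandin=\operatorname{id}$, so the generating family is the essential image of $\kostantWhittakerSTANDIN$, not of $\kostantWhittakerSTANDIN\freeFunctorStandin$).

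Once repaired, your route does close up, but it is longer than necessary. The paper proves the corollary in one step, with no factorization: in the diagram \eqref{Diagram Defining Central Structure After Post Composing} defining $Z_{\kostantWhittakerSTANDIN(\td)}$, substitute $B:=B_1$, $R:=B_2$, $\freeFunctorStandin=\operatorname{id}$. The top arrow becomes $\kostantWhittakerSTANDIN(Z^1_{B_2})$, i.e.\ $\kostantWhittakerSTANDIN$ applied to the braiding of $\Z(\HarishChandraBimoduleStandin)$; the bottom arrow is $Z^1_{\kostantWhittakerSTANDIN(B_2)}$, which is the braiding of $\Z_{\kostantWhittakerSTANDIN}(\biWhitComodStandin)$ by the transport-of-structure definition (the extension built in \cref{Pullback By KostantWhittakerStandIn Gives Equivalence of Categories} restricts back to the given half-braiding on objects of the form $\kostantWhittakerSTANDIN(B_2)$); and the vertical arrows are the monoidality isomorphisms. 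The commutativity of that single square is literally the assertion that $\underline{\kostantWhittakerSTANDIN}$ is braided.
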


\begin{proof}
    Let $(B_1, Z^1), (B_2, Z^2) \in \Z(\HarishChandraBimoduleStandin)$. Applying the substitutions $B := B_1, R := B_2$, and $F = \text{id}$ in the diagram \eqref{Diagram Defining Central Structure After Post Composing}, we see that the upper horizontal arrow is $\kostantWhittakerSTANDIN(Z^1_{B_2})$ and the lower horizontal arrow is the braiding morphism in $\Z_{\kostantWhittakerSTANDIN}(\biWhitComodStandin)$ by construction of the braiding. The fact that this diagram commutes thus proves that $\underline{\kostantWhittakerSTANDIN}$ is braided. 
\end{proof}

Finally, we prove the following result which informally states that, in the above notation, the categories $\Z(\HarishChandraBimoduleStandin)$ and $\Z_{\kostantWhittakerSTANDIN}(\biWhitComodStandin)$ are determined by the restriction of the central structure to free objects: 

\begin{Proposition}\label{Forgetful Functor to Relative Drinfeld Center of Projective Generating Set is Equivalence}
    The functors \[\Z(\HarishChandraBimoduleStandin) \xrightarrow{\mathcal{P}_{\freeFunctorStandin}} \Z_{\freeFunctorStandin}(\HarishChandraBimoduleStandin) \text{, and } \Z_{\kostantWhittakerSTANDIN}(\biWhitComodStandin) \xrightarrow{\mathcal{P}_{\freeFunctorStandin}} \Z_{\kostantWhittakerSTANDIN\freeFunctorStandin}(\biWhitComodStandin)\] are monoidal equivalences of abelian categories.
\end{Proposition}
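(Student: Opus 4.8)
The plan is to treat the two forgetful functors simultaneously, since they have the same shape: in each case one is given a cocomplete abelian monoidal category $\mathcal M$ (equal to $\HarishChandraBimoduleStandin$, resp. $\biWhitComodStandin$) whose tensor product commutes with colimits in each variable, a colimit-preserving monoidal functor $\mathcal F\colon\repGStandIn\to\mathcal M$ (namely $\freeFunctorStandin$, resp. $\kostantWhittakerSTANDIN\circ\freeFunctorStandin$) whose essential image generates, and a restriction functor $\mathcal P_{\freeFunctorStandin}$ from a category of half-braidings on $\mathcal M$ relative to a \emph{larger} class of objects (all of $\HarishChandraBimoduleStandin$ for $\Z(\HarishChandraBimoduleStandin)$, resp. all of $\kostantWhittakerSTANDIN(\HarishChandraBimoduleStandin)$ for $\Z_{\kostantWhittakerSTANDIN}(\biWhitComodStandin)$) down to half-braidings relative to $\mathrm{ess.im}(\mathcal F)$. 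In the second case I would first observe that, because $\kostantWhittakerSTANDIN$ preserves colimits and $\mathrm{ess.im}(\freeFunctorStandin)$ generates $\HarishChandraBimoduleStandin$, every object $\kostantWhittakerSTANDIN(B)$ of the larger class is presented as a cokernel $\mathcal F(P_1)\to\mathcal F(P_0)\to\kostantWhittakerSTANDIN(B)\to 0$ with $P_i\in\repGStandIn$ (apply $\kostantWhittakerSTANDIN$ to such a presentation of $B$), so in both cases $\mathcal P_{\freeFunctorStandin}$ only restricts half-braidings between objects that are colimits of objects of $\mathrm{ess.im}(\mathcal F)$. Faithfulness of $\mathcal P_{\freeFunctorStandin}$ is immediate ($\mathcal P_{\freeFunctorStandin}$ is the identity on underlying objects and morphisms), and monoidality, together with compatibility with the braidings, is immediate from the definitions of the monoidal structures on source and target (both are $\otimes_{\mathcal M}$ equipped with the composite half-braiding, and the braidings are read off from the component at the other object). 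So the real work is fullness and essential surjectivity, which I would deduce from a single \emph{extension lemma}.

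The extension lemma would assert: a half-braiding $\{Z_{\mathcal F(R)}\colon B\otimes\mathcal F(R)\xrightarrow{\sim}\mathcal F(R)\otimes B\}_{R\in\repGStandIn}$ satisfying the hexagon identity \eqref{Central Structure is Compatible with Tensor Structure} extends uniquely to a half-braiding on $B$ relative to the larger class, and a morphism $f\colon B_1\to B_2$ intertwining such structures on $\mathrm{ess.im}(\mathcal F)$ automatically intertwines the extensions. To build the extension at an object $X$ of the larger class, I would fix a presentation $\mathcal F(P_1)\xrightarrow{d}\mathcal F(P_0)\xrightarrow{\pi}X\to 0$ as above; since $B\otimes(\td)$ and $(\td)\otimes B$ are right exact, $B\otimes\pi$ and $\pi\otimes B$ are cokernels, and once I know $Z_{\mathcal F(P_0)}$ and $Z_{\mathcal F(P_1)}$ are compatible with $d$ I get a unique $Z_X$ with $(\pi\otimes B)\circ Z_{\mathcal F(P_0)}=Z_X\circ(B\otimes\pi)$. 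Independence of the chosen presentation, naturality of $X\mapsto Z_X$, invertibility of each $Z_X$, and the full hexagon identity for the extended family then reduce — by right exactness of the tensor functors and the universal property of the presentations — to the corresponding statements on $\mathrm{ess.im}(\mathcal F)$ (using that $\mathcal F(R)\otimes\mathcal F(R')\cong\mathcal F(R\otimes R')$, so tensor products of image objects are again image objects). The same presentation argument shows any $f$ intertwining on $\mathrm{ess.im}(\mathcal F)$ intertwines everywhere, giving fullness; and applying the lemma to an arbitrary object of $\Z_{\freeFunctorStandin}(\HarishChandraBimoduleStandin)$, resp. $\Z_{\kostantWhittakerSTANDIN\freeFunctorStandin}(\biWhitComodStandin)$, produces a preimage under $\mathcal P_{\freeFunctorStandin}$, giving essential surjectivity.

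The hard part — and the only genuinely non-formal point — will be the compatibility of the half-braiding with the differential $d$, which in general is \emph{not} $\mathcal F$ of a morphism of $\repGStandIn$, since $\mathcal F$ is far from full. Here I would use that hypotheses (i)--(ii) of \cref{Functoriality of Drinfeld Center Under Strong Generation Assumption} make $\freeFunctorStandin$ monadic and, via its monoidality, exhibit it as the free-module functor for the algebra $A:=\freeFunctorStandin(\mathbf 1)$ of $\repGStandIn$ (as in the remark identifying $\HarishChandraBimodulesForTHISGROUPDefaultsToG$ with $U\LG\modwithdash(\Rep G)$); consequently, also using monoidality of $\kostantWhittakerSTANDIN$, every morphism of $\mathcal M$ between objects of $\mathrm{ess.im}(\mathcal F)$ is a composite of $\mathcal F$ of $\repGStandIn$-morphisms with structural action maps of the form $\mathcal F(A\otimes R)\cong\mathcal F(A)\otimes\mathcal F(R)\to\mathcal F(R)$, and the hexagon identity \eqref{Central Structure is Compatible with Tensor Structure} says precisely that $Z$ is compatible with the latter. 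I would note that this is the abstract skeleton of the essential-surjectivity step already carried out in the proof of \cref{Pullback By KostantWhittakerStandIn Gives Equivalence of Categories}, the difference being that there the fully faithful right adjoint $\kostantWhittakerSTANDIN^R$ puts every object of $\biWhitComodStandin$, up to canonical isomorphism, in the image of $\kostantWhittakerSTANDIN$, so no resolution is needed; here a projective generating set plays that role at the cost of the bookkeeping above, and everything else is a routine diagram chase.
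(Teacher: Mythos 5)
Your architecture is the same as the paper's: faithfulness is immediate, monoidality is the identity, and both fullness and essential surjectivity are reduced to a unique-extension statement for natural transformations of colimit-preserving functors along the projective generating subcategory $\mathrm{ess.im}(\freeFunctorStandin)$ — this is exactly the paper's \cref{In a Cocomplete Abelian Category with Compact Projective Generating Set Natural Transformations of Continuous Functors on Generating Subcategory Uniquely Lift to Natural Transformation}, which you reprove in the standard way via presentations. You are also right that the only non-formal point is the one you flag: an object of $\Z_{\freeFunctorStandin}(\HarishChandraBimoduleStandin)$ (resp.\ $\Z_{\kostantWhittakerSTANDIN\freeFunctorStandin}(\biWhitComodStandin)$) carries naturality only with respect to morphisms of the form $\freeFunctorStandin(g)$ with $g\in\repGStandIn$, whereas the extension lemma needs naturality with respect to \emph{all} morphisms of the ambient category between objects of $\mathrm{ess.im}(\freeFunctorStandin)$ — concretely, with respect to the differentials $d$ of your presentations, which are rarely of the form $\freeFunctorStandin(g)$.

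However, your disposal of that point does not go through as written. Writing $A$ for the underlying algebra object of $\freeFunctorStandin(\mathbf 1)$ in $\repGStandIn$, every morphism $\freeFunctorStandin(R)\to\freeFunctorStandin(R')$ does factor as $\freeFunctorStandin(\bar\phi)$ followed by the counit $\freeFunctorStandin(A\otimes R')\cong\freeFunctorStandin(A)\otimes\freeFunctorStandin(R')\to\freeFunctorStandin(R')$; but the hexagon \eqref{Central Structure is Compatible with Tensor Structure} does \emph{not} ``say precisely'' that $Z$ is compatible with this action map. It only rewrites $Z_{A\otimes R'}$ as the composite of $Z_A$ and $Z_{R'}$, and hence reduces the required compatibility to compatibility of $Z_A$ with the single map $\epsilon\colon\freeFunctorStandin(A)\to\mathbf 1_{\HarishChandraBimoduleStandin}$ (the counit at the unit object), i.e.\ to the identity $(\epsilon\otimes\mathrm{id}_B)\circ Z_A=\mathrm{id}_B\otimes\epsilon$ up to unitors. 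That $\epsilon$ is not $\freeFunctorStandin$ of any morphism of $\repGStandIn$, and neither the hexagon nor the unit axiom of the relative center supplies this identity; it is a genuine extra verification (in the application it amounts to the half-braiding intertwining the two $U\g$-action maps on $B$), which you must either derive from the axioms or check by hand. For what it is worth, the paper's own argument does not confront this either — it applies the extension lemma directly to $Z_{\freeFunctorStandin(\td)}$, implicitly treating it as already natural on the full subcategory of free objects — so you have correctly located the real subtlety; you just have not closed it.
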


We first recall the following standard categorical lemma:

\begin{Lemma}\label{In a Cocomplete Abelian Category with Compact Projective Generating Set Natural Transformations of Continuous Functors on Generating Subcategory Uniquely Lift to Natural Transformation}
Assume $\mathcal{A}$ is a cocomplete abelian category and $\mathcal{P}$ is a full subcategory of projective generators, and assume $F, G: \mathcal{A} \to \mathcal{B}$ are functors to some abelian category $\mathcal{B}$ which commute with colimits. Then any natural transformation of restricted functors $\eta: F|_{\mathcal{P}} \xRightarrow{} G|_{\mathcal{P}}$ extends uniquely to a natural transformation $\widetilde{\eta}: F \xRightarrow{} G$ which is a natural isomorphism if $\eta$ is a natural isomorphism.
\end{Lemma}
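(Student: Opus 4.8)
\textbf{Proof plan for \cref{In a Cocomplete Abelian Category with Compact Projective Generating Set Natural Transformations of Continuous Functors on Generating Subcategory Uniquely Lift to Natural Transformation}.}

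The plan is to build the extension $\widetilde\eta$ object by object using presentations of objects of $\mathcal{A}$ by objects of $\mathcal{P}$. First I would recall that, since $\mathcal{P}$ is a full subcategory of projective generators of the cocomplete abelian category $\mathcal{A}$, every object $A \in \mathcal{A}$ admits a presentation $P_1 \xrightarrow{d} P_0 \to A \to 0$ with $P_0, P_1$ coproducts of objects of $\mathcal{P}$; indeed one takes $P_0$ to be a coproduct of generators mapping onto $A$ and $P_1$ similarly mapping onto the kernel. Because $F$ and $G$ commute with colimits (hence with coproducts and cokernels), applying them to such a presentation yields right-exact sequences $F(P_1) \to F(P_0) \to F(A) \to 0$ and likewise for $G$. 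The components $\eta_{P_0}, \eta_{P_1}$ are already defined since $\eta$ extends uniquely from $\mathcal{P}$ to all coproducts of objects of $\mathcal{P}$ by naturality and the fact that $F,G$ preserve coproducts; these commute with $F(d), G(d)$ by naturality of $\eta$ on $\mathcal{P}$. Hence there is a unique induced map $\widetilde\eta_A: F(A) \to G(A)$ on the cokernels making the evident square commute.

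Next I would check that $\widetilde\eta_A$ is independent of the chosen presentation and is natural in $A$. Independence and naturality are handled simultaneously: given a morphism $f: A \to A'$ and presentations of $A$ and $A'$ by objects of $\mathcal P$, projectivity of $P_0$ (a coproduct of projectives is projective) lets one lift $f$ to a map of presentations; applying $F$ and $G$ and using the already-established naturality on coproducts of objects of $\mathcal P$, one gets that the square relating $\widetilde\eta_A$ and $\widetilde\eta_{A'}$ commutes after composing with the epimorphism $F(P_0)\twoheadrightarrow F(A)$, and since that map is epi the square itself commutes. Taking $A = A'$ and $f = \mathrm{id}$ but with two different presentations gives well-definedness. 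That $\widetilde\eta$ restricts to $\eta$ on $\mathcal P$ is immediate by taking the trivial presentation $P \xrightarrow{\mathrm{id}} P \to P \to 0$. Finally, if $\eta$ is a natural isomorphism, then $\widetilde\eta_A$ is the map induced on cokernels of two isomorphic right-exact sequences (via $\eta_{P_0}, \eta_{P_1}$), hence an isomorphism by the five lemma (or by constructing $\widetilde{\eta^{-1}}$ the same way and checking the composites are identities using uniqueness).

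The main obstacle I anticipate is purely bookkeeping rather than conceptual: one must be careful that "commutes with colimits" is used in the form that $F$ and $G$ preserve the particular coequalizer/cokernel presentations, and that the uniqueness clause is correctly deduced from the fact that $F(P_0) \to F(A)$ is an epimorphism in $\mathcal B$ (so a map out of $F(A)$ is determined by its precomposition with this epi). No genuinely hard step arises; the statement is a standard "left Kan extension along $\mathcal P \hookrightarrow \mathcal A$ computed via projective presentations" argument, and the only thing to verify with care is that all the relevant diagrams of solid arrows commute before passing to cokernels.
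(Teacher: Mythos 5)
The paper does not actually prove this lemma---it is stated as a ``standard categorical lemma'' and immediately used---so there is nothing to compare your argument against except the standard one, which is exactly what you have reproduced: present every object by coproducts of objects of $\mathcal{P}$, extend $\eta$ coproduct-wise, pass to cokernels, and use projectivity to lift morphisms to morphisms of presentations. The uniqueness and isomorphism clauses, the independence-of-presentation argument via $f=\mathrm{id}$, and the use of the epimorphism $F(P_0)\twoheadrightarrow F(A)$ are all correct as you describe them.

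There is one step you gloss over that is the only genuinely delicate point: the claim that the coproduct-wise extension $\eta_{P_0}=\bigoplus_i \eta_{Q_i}$ is natural with respect to the differential $d\colon P_1\to P_0$ (and likewise with respect to the lifted map $f_0\colon P_0\to P_0'$). You justify this ``by naturality of $\eta$ on $\mathcal{P}$,'' but $d$ is not a morphism in $\mathcal{P}$: after reducing to a single summand one is left with a map $Q\to\bigoplus_j Q_j'$ whose target lies outside $\mathcal{P}$, and if the coproduct is infinite such a map need not factor through a finite sub-coproduct, so one cannot in general reduce to naturality squares between objects of $\mathcal{P}$. This is repaired by either of two hypotheses that are implicit in the paper: (a) the objects of $\mathcal{P}$ are compact (as the lemma's internal label, ``Compact Projective Generating Set,'' indicates), in which case the map factors through a finite biproduct and additivity of $F,G$ finishes the check; or (b) $\mathcal{P}$ is closed under coproducts, in which case $d$ is itself a morphism of $\mathcal{P}$ and naturality is part of the hypothesis---this is what happens in the paper's application, where $\mathcal{P}$ is the essential image of the colimit-preserving functor $\freeFunctorStandin$. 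You should state which of these you are invoking; as literally written, with $\mathcal{P}$ an arbitrary full subcategory of projective generators, this step of your proof does not follow from the stated hypotheses.
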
 
\begin{proof}[Proof of \cref{Forgetful Functor to Relative Drinfeld Center of Projective Generating Set is Equivalence}] We show the latter functor is a monoidal equivalence, the former functor is an essentially identical argument. The monoidality morphisms are given by the identity maps. The functor $\mathcal{P}_{\freeFunctorStandin}$ is faithful since the forgetful functor to $\biWhitComodStandin$ is faithful; we now show it is full. Assume $(C_1, \{Z^1_{\kostantWhittakerSTANDIN(R)\}}), (C_2, \{Z^2_{\kostantWhittakerSTANDIN(R)}\}) \in \Z_{\kostantWhittakerSTANDIN}(\biWhitComodStandin)$ and $f: C^1 \to C^2$ is a map for which we have the equality \[(\text{id} \otimes f)\circ Z^1_{\kostantWhittakerSTANDIN\freeFunctorStandin(R)} = Z^2_{\kostantWhittakerSTANDIN\freeFunctorStandin(R)}\circ (f \otimes \text{id})\] for all $R \in \repGStandIn$. Then the two natural transformations $(\text{id} \otimes f)\circ Z^1_{\kostantWhittakerSTANDIN(R)}, Z^2_{\kostantWhittakerSTANDIN(R)}\circ (f \otimes \text{id})$ have the property that they agree upon restriction to a projective generating set, so they agree by the uniqueness statement of \cref{In a Cocomplete Abelian Category with Compact Projective Generating Set Natural Transformations of Continuous Functors on Generating Subcategory Uniquely Lift to Natural Transformation}. (Note that our assumptions that $\freeFunctorStandin, \kostantWhittakerSTANDIN$, and the monoidal structure on $\biWhitComodStandin$ in the first argument and the second argument all commute with colimits, and so these functors commute with colimits.) Similarly, to show that this functor is essentially surjective, observe that if we are given some $(C_1, \{Z^1_{\kostantWhittakerSTANDIN\freeFunctorStandin(R)}\}) \in \Z_{\kostantWhittakerSTANDIN\freeFunctorStandin}(\biWhitComodStandin)$ we may use the existence statement in \cref{In a Cocomplete Abelian Category with Compact Projective Generating Set Natural Transformations of Continuous Functors on Generating Subcategory Uniquely Lift to Natural Transformation} to extend $Z^1_{\kostantWhittakerSTANDIN\freeFunctorStandin(\td)}$ to a natural transformation $Z^1_{\kostantWhittakerSTANDIN(\td)}$ of functors $C_1 \otimes \kostantWhittakerSTANDIN(\td) \xRightarrow{} \kostantWhittakerSTANDIN(\td) \otimes C_1$.
\end{proof}

As above, we use the braidings on the domain categories in \cref{Forgetful Functor to Relative Drinfeld Center of Projective Generating Set is Equivalence} to equip the respective codomains with braidings for which the functors appearing in the statement of \cref{Forgetful Functor to Relative Drinfeld Center of Projective Generating Set is Equivalence} are braided monoidal.

\begin{proof}[Proof of \cref{Functoriality of Drinfeld Center Under Strong Generation Assumption}] We have shown in \cref{Forgetful Functor to Relative Drinfeld Center of Projective Generating Set is Equivalence} and \cref{Pullback By KostantWhittakerStandIn Gives Equivalence of Categories} that the horizontal arrows in the commutative diagram 
 \begin{equation*}
\xymatrix@R+0.01em@C+0.01em{
& \Z(\HarishChandraBimoduleStandin) \ar[r]^{\mathcal{P}_{\freeFunctorStandin}} \ar[d]^{\underline{\kostantWhittakerSTANDIN}} & \Z_{\freeFunctorStandin}(\HarishChandraBimoduleStandin) \ar[d]^{\underline{\kostantWhittakerSTANDIN}} \\
\Z(\biWhitComodStandin) \ar[r]^{\mathcal{P}_{\kostantWhittakerSTANDIN}} &  \Z_{\kostantWhittakerSTANDIN}(\biWhitComodStandin) \ar[r]^{\mathcal{P}_{\freeFunctorStandin}} & \Z_{\kostantWhittakerSTANDIN\freeFunctorStandin}(\biWhitComodStandin)
}
\end{equation*} give monoidal equivalences of categories which, by construction, are braided. Moreover, the left vertical arrow is braided by \cref{underline KostantWhitstandin Is Braided Monoidal}. Therefore by \cref{Relative Version of Existence of Right Adjoint for Drinfeld Centers Via Projection Formula} we obtain our desired fully faithful right adjoint; the fact that it is braided lax monoidal immediately follows from \cref{Adjoint to Monoidal Functor is Appropriately Lax Monoidal}.
\end{proof}

\newcommand{\upsFunctorFromGradedASYMPTOTICBiWhitModToCenterOfAsymptoticHCG}{\mathcal{N}g\text{\^o}}

\newcommand{\upsFunctorFromCLASSICALBiWhitModGRADEDToCenterOfClassicalGRADEDHarishChandra}{\ups^0}
\newcommand{\upsFunctorFromCLASSICALBiWhitModToCenterOfClassicalHarishChandra}{\overline{\ups}^0}

\section{Knop-Ng\^o functor and parabolic restriction}
\label{Essential Image of Ngo Functor}
\label{Comparison to Parabolic Induction Section}

In this section, we first prove \cref{Theorem Asserting Existence of Ngo Functor as Braided Monoidal Fully Faithful Functor to Center}(ii) and then use it to prove \cref{Braverman-Kazhdan Exactness Conjecture}. After that we complete the proof of \cref{Main Theorem 2}.
Recall that we set $\hb=1$ in \cref{Kostant-Whittaker reduction and the Miura bimodule section};  we continue to work in this setting in \cref{Comparison to Parabolic Induction Section}.


\subsection{Proof of \cref{Theorem Asserting Existence of Ngo Functor as Braided Monoidal Fully Faithful Functor to Center}(ii)}\label{Proof of First Theorem Part Two}
The functor $\Res^G_T=\int_q p^*: \differentialOperatorsOnG\modwithdash^{{\Ad G}}\to\differentialOperatorsOnT \rtimes W\modwithdash$
of parabolic restriction  is a right adjoint of $\Ind_T^G$, see \labelcref{spr diag}.
Therefore, it follows from the isomorphism \labelcref{Tensoring with quantizationOfCentralizers Is Parabolic Induction No W Invariants} and standard tensor-$\Hom$ adjunctions, that one has isomorphisms of functors, cf.
\cite[Theorem 4.4]{GinzburgParabolicInductionandtheHarishChandraDModule},
  \begin{equation}\label{res-iso}
    \Res^G_T(\td)^W \cong \Hom_{D(G)\mmod^G}(\mathbf{N},\td) =
    \Hom_{D(G\mmod^G)}(D(G)/D(G)\ad\g,\td)\cong (\td)^G.
  \end{equation} 
   Thus, for any $M \in \differentialOperatorsOnG\modwithdash^{{\Ad G}}$,
  the counit of the adjunction gives a morphism  of $D(G)$-modules
  $\alpha: \bN \otimes_{D(T)^W} M^G\cong(\Ind^G_T(M^G))^W \to M$.
  Explicitly, for $u\in D(G)$ and $m\in M^G$ the map $\al$ 
  sends
  $(u\text{mod}D(G)\ad\g)\o m\in D(G)/D(G)\ad\g\otimes_{D(T)^W} M^G$
  to $um$.

\begin{Lemma}\label{secondad}
  The map $\alpha$ is injective.
\end{Lemma}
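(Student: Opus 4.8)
The plan is to reduce the injectivity of $\alpha$ to a statement about the generic behavior of the module $M$, exploiting the explicit description of $\alpha$ in terms of the action map $D(G)\otimes M^G \to M$ and the known structure of $\mathbf{N}$ as a $(D(G), D(T)^W)$-bimodule. First I would recall, from \cref{Parabolic Restriction and Miura}, that $\mathbf{N} \cong D(G)/D(G)\ad\g$ and that, via the radial parts isomorphism, $\mathbf{N}$ is a torsion-free (indeed locally free of finite rank) right $D(T)^W$-module, because $\mathbf{N}$ realizes the Harish-Chandra bimodule whose restriction is $D(T)$ — a free rank $|W|$ right $D(T)^W$-module. The source $\Ind^G_T(M^G)^W \cong \mathbf{N}\otimes_{D(T)^W} M^G$ can thus be analyzed by localizing at the generic point of $\operatorname{Spec} Z\g \cong \operatorname{Spec}\Sym(\t)^W = \t^*/\!/W$: writing $Q = \operatorname{Frac}(\Sym(\t)^W)$, I would tensor everything with $Q$ over $Z\g$ and use that localization is exact.

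The key step is then the classical fact that parabolic restriction and induction are mutually inverse (up to the $W$-action) over the regular semisimple locus. Concretely, after inverting the discriminant, $\mathbf{N}$ becomes an invertible bimodule implementing a Morita-type equivalence between (a localization of) $D(G)^{\Ad G}$ and (a localization of) $D(T)\rtimes W$, so the counit $\alpha$ becomes an isomorphism after localization. Here I would invoke \labelcref{res-iso}, which gives $\Res^G_T(M)^W \cong M^G$, together with \cite[Theorem 1.6]{GinzburgParabolicInductionandtheHarishChandraDModule} or \cite[Theorem 4.4]{GinzburgParabolicInductionandtheHarishChandraDModule}, to identify $\alpha$ with the natural map and check that its localization $\alpha_{\mathrm{loc}}: Q\otimes_{Z\g}(\mathbf{N}\otimes_{D(T)^W} M^G) \to Q\otimes_{Z\g} M$ is injective — in fact an isomorphism onto the generically induced part. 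The point is that $M$, being strongly $\Ad G$-equivariant and holonomic-free in the relevant sense, has no sections supported on the non-regular-semisimple locus that could lie in the image kernel: a strongly equivariant $D(G)$-module is generically (on $\g/\!/G$) induced from its invariants.

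From there, injectivity of $\alpha$ itself follows once one knows that $\Ind^G_T(M^G)^W = \mathbf{N}\otimes_{D(T)^W} M^G$ embeds into its localization, i.e. that it is torsion-free over $Z\g$. This is where I would use that $\mathbf{N}$ is flat (even locally free) over $D(T)^W$ hence over $Z\g = \Sym(\t)^W$, via the radial parts description, so that $\mathbf{N}\otimes_{D(T)^W}(\td)$ preserves $Z\g$-torsion-freeness; and $M^G$ is itself $Z\g$-torsion-free because the $Z\g$-action on the invariants of a strongly equivariant $D$-module on a group is free (one can see this by the identification $M^G = \Res^G_T(M)^W$ with a $D(T)^W$-submodule of a $D(T)$-module, which is torsion-free over $\Sym(\t)^W$). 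Combining: $\Ind^G_T(M^G)^W \hookrightarrow Q\otimes_{Z\g}\Ind^G_T(M^G)^W \xrightarrow{\ \alpha_{\mathrm{loc}}\ } Q\otimes_{Z\g} M$ is injective, and since this composite factors through $\alpha$ followed by $M\hookrightarrow Q\otimes_{Z\g} M$, the map $\alpha$ must be injective.

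The main obstacle I expect is justifying the two torsion-freeness claims cleanly — specifically that $M^G$ is $Z\g$-torsion-free for an arbitrary strongly $\Ad G$-equivariant coherent $D(G)$-module $M$, and that $\mathbf{N}$ is $Z\g$-flat — without circularity, since some of these facts are intertwined with the results of \cite{GinzburgParabolicInductionandtheHarishChandraDModule} being quoted. An alternative route that sidesteps the generic-point argument, which I would keep in reserve, is to use the explicit factorization of $\alpha$ through $D(G)\otimes M^G \onto \mathbf{N}\otimes_{D(T)^W} M^G$ and argue directly that an element of the kernel, lifted to $D(G)\otimes M^G$, must already vanish in $\mathbf{N}\otimes_{D(T)^W} M^G$ by tracking the $Z\g$-bimodule structures and using \cref{Comodule for DT With Two Zg Bimodule Structures Agreeing Implies Trivial Comodule Structure}-style support arguments on $\t^*/\!/W \times \t^*/\!/W$; but the localization argument is shorter and more robust.
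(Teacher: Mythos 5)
Your overall strategy — embed $\Ind_T^G(M^G)^W$ into its localization over the generic point of $\operatorname{Spec}Z\g$ and check injectivity there — breaks down at the torsion-freeness step, and this is not a fixable technicality. The claim that $M^G\cong \Res^G_T(M)^W$ is $Z\g$-torsion-free "because a $D(T)$-module is torsion-free over $\Sym(\t)^W$" is false: a $D(T)$-module is torsion-free over $\O(T)$ when it has no sections supported on proper subvarieties of $T$, but over the subalgebra $\Sym(\t)\subset D(T)$ of invariant differential operators the situation is dual. For instance $\O_T$ decomposes into $T$-weight spaces, each of which is a simultaneous eigenspace for $\Sym(\t)$, so \emph{every} element of $\O_T$ is $\Sym(\t)$-torsion. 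Concretely, take $M=\O(G)$, which is strongly $\Ad G$-equivariant: then $M^G=\O(G)^G\cong\O(T)^W$ is entirely $Z\g$-torsion (each symmetrized matrix coefficient is killed by a maximal ideal of $Z\g$), so $Q\otimes_{Z\g}\Ind_T^G(M^G)^W=0$ and the source does not embed into its localization. Your localized map is then $0\to 0$, which is vacuously injective and says nothing about $\alpha$. A related conflation occurs in your "key step": the statement that induction and restriction are mutually inverse generically lives over the regular semisimple locus of $T$ (i.e., after inverting the discriminant in $\O(T)^W$), not over the generic point of $\t^*/\!/W=\operatorname{Spec}Z\g$; and the assertion that a strongly equivariant $M$ has "no sections supported on the non-regular-semisimple locus" is also false (the delta module at the identity, or a cuspidal character $D$-module, are counterexamples).

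The paper's proof sidesteps all of this with an adjunction argument that you almost set up but did not use. Let $K=\ker\alpha\sset\mathbf{N}\otimes_{D(T)^W}M^G$. Since $(\mathbf{N}\otimes_{D(T)^W}M^G)^G=M^G$ and $\alpha$ restricts to the identity on invariants, exactness of $(\td)^G$ on strongly equivariant modules gives $K^G=\Res^G_T(K)^W=0$. The Second Adjointness theorem makes $\Res^G_T(\td)^W$ a \emph{left} adjoint of $\Ind_T^G(\td)^W$ as well as a right adjoint, so $\Hom(K,\Ind_T^G(M^G)^W)=\Hom(\Res^G_T(K)^W,M^G)=0$, contradicting the nonzero inclusion $K\into\Ind_T^G(M^G)^W$ unless $K=0$. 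If you want to salvage a localization-style argument you would have to localize on the group side (regular semisimple locus of $T/W$) and separately handle the torsion, which is essentially what the cited works on the vanishing conjecture do at much greater length; the second-adjointness route is the short path here.
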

\begin{proof} If two functors are naturally isomorphic, then a standard Yoneda lemma argument proves that their adjoints are naturally isomorphic in such a way that the units and counits naturally intertwine. In particular, in view of the natural isomorphism \labelcref{res-iso}, it suffices to show that the counit \[c(M):  \Ind_T^G(\Res^G_T(M))^W \to M\] is injective for any $M \in D(G)\mmod^G$. Since the functor $\Ind_T^G(-)^W$ is fully faithful, the unit of the adjunction is a natural isomorphism. By definition of adjunction, $\Res_T^G(c(M)) \circ u(\Ind_T^G(M)^W) = \mathrm{id}_M$, so $\Res_T^G(c(M))$ is an isomorphism. Let $K$ denote the kernel of $c(M)$. Since $\Res_T^G$ is exact and $\Res_T^G(c(M))$ is an isomorphism, $\Res_T^G(K) = 0$. However, by the second adjointness theorem (see for example \cite[\S 0.2.1]{DrinfeldGaitsgoryOnATheoremofBraden} or \cite[Theorem 1.2]{BezrukavnikovYomDinOnParabolicRestrictionofPerverseSheaves}) we see that \[\Hom_{D(G)\mmod^G}(K, \Ind_T^G(\Res^G_T(M)) = \Hom_{D(T)\mmod}(\Res^G_T(K), \Res^G_T(M)) = \Hom_{D(T)\mmod}(0, \Res^G_T(M))\] must be zero, so the inclusion map $K \subseteq \Ind_T^G(\Res^G_T(M))^W$ must be the zero map and thus $K = 0$.
  \end{proof}

  \begin{proof}[Proof of \cref{Theorem Asserting Existence of Ngo Functor as Braided Monoidal Fully Faithful Functor to Center}(ii)]
    Using the isomorphisms
\[U\g\o_{Z\g}(\td)\,\cong\,(U\g\o_{Z\g}\J)\o_{\J}(\td),\enspace\text{resp.}
\enspace
\mathbf{N}\o_{D(T)^W}(\td)\,\cong\, (\mathbf{N}\o_{D(T)^W}\J)\o_{\J}(\td),
\]
we see that proving \cref{Theorem Asserting Existence of Ngo Functor as Braided Monoidal Fully Faithful Functor to Center}(ii) reduces to proving that the map
\begin{equation}\label{Counit for Parabolic Induction as Tensoring as Left Adjoint}
  \quantizationOfCentralizersHBAREQUALSONE\o_{D(T)^W} \J \ \xrightarrow{} \
    {U\LG}\otimes_{Z\LG} {\mathfrak J},\enspace u1_{\quantizationOfCentralizersHBAREQUALSONE}\o j\,
    \mapsto\,u(1\otimes j),
    \end{equation}
    is an isomorphism of $(D(G),\J)$-bimodules.
    The surjectivity of the map is clear since
    the element $1 \otimes 1$  generates $U\LG\otimes_{Z\LG} \biWhittakerDifferentialOperatorsonG$
    as a $(U\g\o \J_\op)$-module. To prove injectivity, we observe that 
    \[
      (\mathbf{N}\o_{D(T)^W} \J)^G\,=\,1\o \J\,=\,
      ({U\LG}\otimes_{Z\LG} {\mathfrak J})^G.
    \]
    Therefore, the restriction of the
    map \labelcref{Counit for Parabolic Induction as Tensoring as Left Adjoint} to $G$-invariants
    is the identity map. Hence, this map is an isomorphism and
    the desired injectivity now follows from \cref{secondad}.
    \end{proof}

    \subsection{Braverman-Kazhdan sheaves}
        \label{Proof of BK Exactness Conjecture Subsection} We now prove \cref{Braverman-Kazhdan Exactness Conjecture}. From the explicit formula \labelcref{bfo} for convolution of $D(G)$-modules, we see that the functor $\PhiTHISGROUPRho{G}\star(\td)$ is exact if and only if $\PhiTHISGROUPRho{G}$ is flat as a right $U\LG$-module. For a $G^{\vee}$-representation $V$, we may restrict $V$ to the torus $T^\vee$: Let $\PhiTHISGROUPRho{T}$ denote the corresponding $W$-equivariant
                 $D(T)$-module and write $\PhiTHISGROUPRho{T}^W:=(\PhiTHISGROUPRho{T})^W$.
                 By \cite[Theorem 1.4, Section 1.6]{ChenAVanishingConjecturetheGLnCase} and the non-holonomic variant of \cite[Theorem 1.5.1]{Gin} (which holds with the same proof)
                 that, using the notation of
                 \cref{Main Theorem 2}, we have
$\PhiTHISGROUPRho{T}^W\in D(T)\mmod^W_\circ$. Thus, it follows from the theorem that the
                 $D(T)^W$-action on  $\PhiTHISGROUPRho{T}^W$ 
                 has a canonical    extension to a $\biWhittakerDifferentialOperatorsonG$-action
                 and the $\Symt$-action map \begin{equation}\label{Descent Isomorphism for RhoBesselSheaves}\Symt \otimes_{\Symt^W} \PhiTHISGROUPRho{T}^W \xrightarrow{\sim} \PhiTHISGROUPRho{T}\end{equation}  is an isomorphism.
                 Now, by definition, one has
                 $\PhiTHISGROUPRho{G}:=\operatorname{Ind}_T^G(\PhiTHISGROUPRho{T})^W$.
                                  We deduce that $\PhiTHISGROUPRho{G}\cong \quantizationOfCentralizersHBAREQUALSONE \otimes_{\differentialOperatorsOnT^W} \PhiTHISGROUPRho{T}^W$. Hence,         for any $\differentialOperatorsOnG$-module $M$ we obtain isomorphisms \begin{align*}
                   \PhiTHISGROUPRho{G} \star M\cong\PhiTHISGROUPRho{G} \otimes_{U\LG} M
\cong (\quantizationOfCentralizersHBAREQUALSONE \otimes_{\differentialOperatorsOnT^W} 
\PhiTHISGROUPRho{T}^W) \otimes_{U\LG} M
 \cong \ups(\PhiTHISGROUPRho{T}^W) \otimes_{U\LG} M \cong \PhiTHISGROUPRho{T}^W \otimes_{Z\LG} M
                                  \end{align*} where the first isomorphism holds by \labelcref{bfo}, the second follows
                                  from  \cref{Theorem Asserting Existence of Ngo Functor as Braided Monoidal Fully Faithful Functor to Center}(ii), and the final follows from the definition of the functor $\ups$.
                                  We conclude that it suffices to prove that $\PhiTHISGROUPRho{T}^W$ is flat as a $Z\LG$-module. Since
                                  $\Symt$ is a faithfully flat $Z\LG$-module,  this is equivalent
                                   to proving that the $\Symt$-module $\Symt \otimes_{\Symt^W} \PhiTHISGROUPRho{T}^W$ is flat. 
                                   We see                from isomorphism
                                   \eqref{Descent Isomorphism for RhoBesselSheaves}
                                   that we are reduced to proving that the functor
                                   that $\PhiTHISGROUPRho{T} \star (\td)$ of convolution
                                   on $T$ is  exact, i.e. the statement of the theorem in the case of the torus.
                                   The representation $\rho|_{T^{\vee}}$ is a direct
                                   sum of 1-dimensional representations, some
                                   characters $\lambda_i,\,1 \leq i \leq n$ of $T^{\vee}$.
                                   Our assumption that $V^{T^\vee}=\{0\}$ implies that $\lambda_i \neq 0$ for all $i$. Following Braverman and Kazhdan, we
                                   view the $\lambda_i$'s as
                                   cocharacters $\mathbb{G}_m \to T$
                                   and let $\mathcal{E}=D(\mathbb{G}_m)e^x$ be the restriction of the exponential $D$-module on $\mathbb{A}^1$ to the open subset $\mathbb{G}_m$.
                                   Mimicking the arguments in \cite[(4.6)]{BravermanKazhdanGammaSheavesonReductiveGroups} in the $D$-module setting, we obtain
                                   an isomorphism \[\PhiTHISGROUPRho{T} \cong
                                     \Big(\int_{\lambda_{1}}\mathcal{E}\Big) \star \ldots
                                     \star \Big(\int_{\lambda_{n}}\mathcal{E}\Big) .\]
                                                                      Therefore it suffices to prove that
                                   for any nonzero character $\lambda$
                                   the functor of convolution with $\int_{\lambda}\mathcal{E}$ is exact.
    
                                   Let               $\partial=t\frac{d}{dt}$ be
                                   the $\mathbb{G}_m$-invariant differential operator on
                                   $\mathbb{G}_m = \text{Spec}(k[t^{\pm 1}])$ so that
                                                                      $D(\mathbb{G}_m)^{\mathbb{G}_m}=k[\partial]$.
                                                                      It is immediate to check that $\mathcal{E}$ is  torsion free as a $k[\partial]$-module.

                                                                      Thus, to complete the proof it suffices to prove that 
                                   for any nonzero character $\lambda$
                                   and                                    
                                   any $D(\mathbb{G}_m)$-module $\F$ which is torsion free as a
                                   $k[\partial] $-module the
functor  of convolution with $\int_{\lambda}\mathcal{F}$ is exact.
To prove this,
we factor $\lambda$ as a composition $\mathbb{G}_m\xrightarrow{x\mto x^n}
\mathbb{G}_m\into T$, where the the second homomorphism
is a closed embedding and  $n\neq 0$ since $\lambda$ is nonzero.
One can directly compute that $D$-module pushforward
along the map $x\mto x^n$ preserves the property of being $k[\partial]$-torsion free.
So we are reduced to the case where $\lambda$ is a closed embedding.
In that case, the map ${\mathbb Z}={\mathbb X}_*({\mathbb G}_m)\into {\mathbb X}_*(T^\vee)$
  is a split embedding, hence  the short exact sequence
  $1\to{\mathbb G}_m\to T^\vee\to T^\vee/{\mathbb G}_m\to 1$ splits. Thus, $\lambda$ imbeds ${\mathbb G}_m$ as a direct factor of $T^\vee$,
  so $\Symt \cong k[\partial] \otimes \Sym(\mathfrak{t}/\text{Lie}(\mathbb{G}_m))$
  Then, it follows from the definition of the pushforward
  $\int_{\lambda}$ that $\int_{\lambda}\mathcal{F}$ is
  flat over $\Sym(\mathfrak{t})$ whenever $\mathcal{F}$ is
  flat over $k[\partial]$.
  \qed

\begin{proof}[Sketch of proof of Proposition \ref{transfer cor} ] Part (i)
  follows from Theorem \ref{Main Theorem 2} using the isomorphism
  $U\h\otimes_{Z\h}(Z\h\otimes_{Z\g}\frj_G)\o_{\frj_G}(\td)\cong U\h\otimes_{Z\g}(\td)$.
  The second statement in (ii) holds by construction and the first statement
  follows from the monoidality of
  the functor  $Z\LH \otimes_{Z\LG} (-)$ and
  the fact that the equivalences of \cref{Main Theorem 2} are monoidal.
  
To prove (iii) observe that the assumption  ${V}^{T_H^{\vee}}=0$
ensures that $\boldsymbol{\Phi}_{{V}}$, a priori a complex of $\D$-modules, lies in the heart of the usual $t$-structure: see \cite[Proposition 6.4]{BravermanKazhdanGammaSheavesonReductiveGroups} or the proof of \cref{Braverman-Kazhdan Exactness Conjecture} above. Moreover, as we have recalled above in \cref{BK rem}(iii), one can show that $\mathrm{tr}^*(\psi) = \Upsilon(\mathrm{tr}|_{\mathrm{Diag}}^*(\psi))$ where $\mathrm{Diag} \subseteq \GL_n$ is the subset of diagonal matrices (see eg. \cite[Theorem 5.1]{BravermanKazhdanGammaSheavesonReductiveGroups}) and that $\boldsymbol{\Phi}_{{V}} := \Upsilon_G(\boldsymbol{\Phi}_{{V}|_{T^{\vee}}})$. Therefore, this claim follows from the fact that, if $G$ and $H$ are tori, then ${\mathscr T}_f$ is the pushforward by $f^{\vee}$, which can be checked explicitly. 
  \end{proof}

  \subsection{Vanishing for central $D$-modules induced from their invariants} We now prove a key vanishing result in the general parabolic setting. To this end, let $P$ be a parabolic subgroup which contains $\BorelSubgroup$. Let $U_P$ denote the unipotent radical of $P$, and let $\pi^P: G \to G/U_P$ denote the quotient map. Let $L$ denote the group $P/U_P$. Let $\mathfrak{l}$ denote its Lie algebra and $\mathfrak{u}_L$ denote the Lie algebra of the the unipotent radical $U_L$ of $L$.
  Let $\ell: P/U_P \xhookrightarrow{} G/U_P$ denote the closed embedding.
  We let  $P$ act on $G$ by conjugation $p\cdot g = pgp^{-1}$.
  This induces a $P$-action on $G/U_P$ for which $\pi^P$ is $P$-equivariant. In what follows we consider $G/U_P$ as a $P$-variety for this \lq adjoint\rq{} action of $P$.

  Let $W_L := N_L(T)/T$ denote the Weyl group for $L$, and let $\ups_L$ denote the Knop-Ng\^o functor for the group $L$. 
\begin{Theorem}\label{Vanishing Conjecture For Parabolics for Central D Modules Induced from Invariants}
If $M$ is a $\differentialOperatorsOnG$-module which is induced from its invariants, then:

\vi The canonical morphism $c: \int_{\ell}\ell^{\dagger}(\int_{\pi^P}M) \xrightarrow{} \int_{\pi^P}M$, a priori a morphism in the derived category of $\D_{G/U_P}$-modules, is an isomorphism of sheaves concentrated in degree zero. In particular, $\int_{\pi^P}M$ is supported on the closed subscheme $P/U_P$. 

\vii We have isomorphisms of left $D(P/U_P)$-modules:
\[\operatorname{Res}_L^G(M) = \ell^{\dagger}\Big(\int_{\pi^P}M\Big) \cong   U\mathfrak{l} \otimes_{Z\LG} M^G = \ups_L(M^G)\] 

\viii For any $\differentialOperatorsOnG$-module $E$ and $F\in D(T)\mmod^W_\circ$, viewed as a
$W$-equivariant $\differentialOperatorsOnT$-module,
there is an isomorphism \[\int_{\pi^P}(E \star_G \IndTG(F)^W) \cong (\int_{\pi^P}E) \star_L \operatorname{Ind}_{T}^L(F)^{W_L}.\]
\end{Theorem}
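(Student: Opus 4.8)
\textbf{Proof plan for \cref{Vanishing Conjecture For Parabolics for Central D Modules Induced from Invariants}.}

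The plan is to prove the three parts essentially in sequence, with part (i) being the technical heart and parts (ii), (iii) following by formal manipulations using the already-established machinery. For part (i), I would first reduce to a local statement about the $\D$-module $\int_{\pi^P} M$ on $G/U_P$. The key input is that $M$ is \emph{induced from its invariants}, i.e. the action map $U\LG \otimes_{Z\LG} M^G \to M$ is an isomorphism (equivalently, by \cref{Main Theorem 2} applied to $L$ in the parabolic setting, $M$ lies in the essential image of $\KnopNgo$). The strategy is to compute $\int_{\pi^P} M$ directly: since $\pi^P: G \to G/U_P$ is a $U_P$-torsor and $M$ is strongly $G$-equivariant, hence in particular strongly $U_P$-equivariant for the conjugation action, the pushforward $\int_{\pi^P} M$ is concentrated in a single degree, and its global sections can be identified with the $U_P$-coinvariants $M/\mathfrak{u}_P M$ where $\mathfrak{u}_P$ acts by the \emph{adjoint} vector fields $\ad\xi = \xi_\ell - \xi_r$. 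Using the identification $M \cong U\LG \otimes_{Z\LG} M^G$, one computes this coinvariant space: the point is that $\mathfrak{u}_P \subseteq \mathfrak{g}$ acting by $\ad$ on $U\LG \otimes_{Z\LG} M^G$ can be understood via the PBW-type decomposition $\mathfrak{g} = \mathfrak{u}_P^- \oplus \mathfrak{l} \oplus \mathfrak{u}_P$, and the coinvariants collapse onto the subspace $U\mathfrak{l} \otimes_{Z\LG} M^G$ supported on $P/U_P$. Concretely, one shows $(U\LG/\mathfrak{u}_P U\LG^{\ad})$ as a $U\mathfrak{l}$-module (via the Harish-Chandra projection for the pair $(\mathfrak{g}, \mathfrak{p})$) is free, so that $\int_{\pi^P} M$ has global sections $U\mathfrak{l} \otimes_{Z\LG} M^G$, which is manifestly supported on $P/U_P = Z(L)\cdot$(something), proving the support claim and the degree-zero concentration; the canonical adjunction map $c$ is then an isomorphism because both sides compute the same module.

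For part (ii), once (i) gives $\int_{\pi^P} M$ supported on $P/U_P$ and identified with $U\mathfrak{l} \otimes_{Z\LG} M^G$ as a $D(P/U_P)$-module, I would note that $\ell^\dagger$ simply recovers this sheaf (up to the appropriate cohomological shift, which is zero here by the degree concentration), so $\ell^\dagger(\int_{\pi^P} M) \cong U\mathfrak{l} \otimes_{Z\LG} M^G$. The identification with $\operatorname{Res}^G_L(M)$ follows from the definition of parabolic restriction as the pull-push along $G \xleftarrow{} \widetilde{G}_P \xrightarrow{} L$ together with the base-change/projection-formula comparison between $\int_{\pi^P} \ell^\dagger$ and $\operatorname{Res}^G_L$; alternatively one invokes \cref{res-iso} and \cite[Theorem 4.4]{GinzburgParabolicInductionandtheHarishChandraDModule} in the Levi setting. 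Finally the identification $U\mathfrak{l} \otimes_{Z\LG} M^G = \KnopNgo_L(M^G)$ is literally \cref{Theorem Asserting Existence of Ngo Functor as Braided Monoidal Fully Faithful Functor to Center}(ii) for the group $L$, using that $M^G = M^L$ (the $\mathfrak{u}_L$-part of $\mathfrak{l}$ acts trivially on $M^G$ since $M^G$ is already killed by all of $\mathfrak{g}$ acting via $\ad$, and in particular by the Levi's nilradical) — one must be slightly careful here that $Z\LG$ and $Z\mathfrak{l}$ interact correctly, but since $M^G$ is a $Z\LG$-module and the map $Z\LG \to Z\mathfrak{l}$ is the Harish-Chandra one, the tensor products match.

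For part (iii), the plan is to combine the isomorphism \labelcref{ngo conv} (i.e. $\KnopNgo(E) \star M_0 \cong E \otimes_{Z\LG} M_0$) with parts (i)–(ii) and the transitivity of pushforward. Concretely: $\IndTG(Y)^W \cong \KnopNgo(Y^W)$ by the Corollary following \cref{Theorem Asserting Existence of Ngo Functor as Braided Monoidal Fully Faithful Functor to Center} (combining $\operatorname{Obl}^{\mathfrak J}_{D(T)^W}$, Morita equivalence, and $\IndTG$), so $M_0 \star_G \IndTG(Y)^W \cong M_0 \star_G \KnopNgo(Y^W) \cong Y^W \otimes_{Z\LG} M_0$ by \labelcref{ngo conv}; now applying $\int_{\pi^P}$ and using that $\int_{\pi^P}$ is $U\LG$-linear (more precisely, commutes with $(\td) \otimes_{Z\LG} M_0$ by the same reasoning as \labelcref{bfo}), we get $\int_{\pi^P}(M_0 \star_G \IndTG(Y)^W) \cong Y^W \otimes_{Z\LG} \int_{\pi^P}(M_0)$, which by the same chain of identifications run in reverse for the group $L$ (using $\operatorname{Ind}_T^L(Y)^{W_L} \cong \KnopNgo_L(Y^W)$ — noting $Y^{W_L}$ versus $Y^W$ requires care, but the $\mathfrak J$-module structure on $Y^W$ restricts to the $\mathfrak J_L$-module structure on what becomes $\operatorname{Ind}_T^L(Y)^{W_L}$) equals $\int_{\pi^P}(M_0) \star_L \operatorname{Ind}_T^L(Y)^{W_L}$, as desired.

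\textbf{Main obstacle.} The hard part will be part (i): carrying out the explicit computation of the $U_P$-adjoint-coinvariants of $U\LG \otimes_{Z\LG} M^G$ and showing the result is precisely $U\mathfrak{l} \otimes_{Z\LG} M^G$, supported on $P/U_P$, with vanishing higher cohomology. This requires understanding the interaction of the three-term decomposition $\mathfrak{g} = \mathfrak{u}_P^- \oplus \mathfrak{l} \oplus \mathfrak{u}_P$ with the center $Z\LG$ and with the $\ad$-action, and proving a freeness statement for $U\LG$ as a module over $U\mathfrak{l}$ (via Harish-Chandra for $(\mathfrak{g},\mathfrak{p})$) after the relevant quotient — essentially a parabolic analogue of the computation underlying \cref{Theorem Asserting Existence of Ngo Functor as Braided Monoidal Fully Faithful Functor to Center}(ii). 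The subtlety about matching $W$-invariants with $W_L$-invariants in part (iii), and the corresponding compatibility of the $\mathfrak J$- and $\mathfrak J_L$-module structures, is a secondary point that will also need careful bookkeeping but should follow from the functoriality of Kostant–Whittaker reduction under the Levi inclusion.
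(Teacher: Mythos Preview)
Your plan for parts (ii) and (iii) is essentially the paper's, and fine. The real issue is part (i), where your proposed computation contains a genuine error.

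You write that the global sections of $\int_{\pi^P} M$ are the $\mathfrak{u}_P$-coinvariants of $M$ and that these ``collapse onto $U\mathfrak{l}\otimes_{Z\LG} M^G$.'' Neither half is right. First, $\pi^P$ is the right $U_P$-torsor, so the relevant coinvariants are for $\mathfrak{u}_P$ acting by \emph{left-invariant} vector fields (i.e.\ via $U\LG_{\text{op}}\subseteq D(G)$), not by the adjoint fields $\xi_\ell-\xi_r$; for a strongly $G$-equivariant module the adjoint fields act through the $G$-representation, which is the wrong thing. Second, and more seriously, once you use $M\cong U\LG\otimes_{Z\LG}M^G$ the correct computation gives
\[
R\Gamma\!\int_{\pi^P}\!M \;\cong\; M\otimes^L_{U\LG}\Delta_P \;\cong\; M^G\otimes^L_{Z\LG}\Delta_P,
\qquad \Delta_P:=U\LG/U\LG\cdot\mathfrak{u}_P,
\]
and $\Delta_P$ is \emph{not} $U\mathfrak{l}$: by PBW it has a filtration with associated graded $U\bar{\mathfrak{u}}_P\otimes U\mathfrak{l}$. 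So the sheaf $\int_{\pi^P}M$ is not ``manifestly supported on $P/U_P$'' from this computation alone. What this computation \emph{does} buy you---once you prove $\Delta_P$ is flat over $Z\LG$ (this is the correct freeness-type statement you are groping toward, and follows from miracle flatness applied to the parabolic Harish-Chandra map $Z\LG\to Z\mathfrak{l}$)---is concentration in degree zero.

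The support claim needs a separate argument. The paper supplies one that you are missing entirely: a geometric lemma showing that any $Z_P^\circ$-equivariant quasicoherent sheaf on $G/U_P$ whose $U_P$-invariant global sections carry the \emph{trivial} $Z(L)^\circ$-action is supported on $P/U_P$. This is proved by embedding $G/U_P$ in a representation and using weight considerations relative to $Z(L)^\circ=(\bigcap_r\ker\alpha_r)^\circ$. The point is that $(\Delta_P\otimes_{Z\LG}M^G)^{U_P}\cong U\mathfrak{l}\otimes_{Z\LG}M^G$ (this is where $U\mathfrak{l}$ actually enters) carries trivial $Z(L)^\circ$-action after the $2\rho_G-2\rho_L$ twist, and since every section is $U\mathfrak{l}$-generated by a $U_P$-invariant one, the lemma applies. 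Your ``PBW collapse'' idea does not substitute for this step.
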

To prove \cref{Vanishing Conjecture For Parabolics for Central D Modules Induced from Invariants} we will use the following

\begin{Proposition}\label{Parabolic Universal Verma is Flat Zg Module Proposition}
    The $Z\LG$-module $\Delta_P := U\LG/U\LG\cdot \mathfrak{u}_P$ is flat.
\end{Proposition}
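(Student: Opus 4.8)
The plan is to pass to associated graded rings with respect to the PBW filtration. Equip $U\LG$ with the PBW filtration; it induces a filtration on the central subalgebra $Z\LG$ with $\operatorname{gr} Z\LG \cong (\Sym\LG)^{G} = \mathcal{O}(\fc)$ by a theorem of Kostant \cite{KostantonWhittakerVectorsandRepresentationTheory}, and an exhaustive, separated, non-negatively indexed filtration on $\Delta_{P}=U\LG/U\LG\cdot\mathfrak{u}_{P}$. Fixing a decomposition $\LG=\mathfrak{u}_{P}^{-}\oplus\mathfrak{l}\oplus\mathfrak{u}_{P}$ adapted to $P$ and ordering PBW monomials accordingly, the subspace $U(\mathfrak{u}_{P}^{-}\oplus\mathfrak{l})$ is a filtered vector-space complement to the left ideal $U\LG\cdot\mathfrak{u}_{P}$, so $\operatorname{gr}\Delta_{P}\cong \Sym(\LG)/\Sym(\LG)\mathfrak{u}_{P}=\mathcal{O}(\mathfrak{u}_{P}^{\perp})$, where $\mathfrak{u}_{P}^{\perp}\subseteq\LGd$ is the annihilator of $\mathfrak{u}_{P}$, an affine space of dimension $\dim\mathfrak{p}$ with $\mathfrak{p}=\operatorname{Lie}(P)=\mathfrak{l}\oplus\mathfrak{u}_{P}$; via the Killing form $\mathfrak{u}_{P}^{\perp}$ is identified with $\mathfrak{p}$. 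Moreover the induced map $\operatorname{gr} Z\LG\to\operatorname{gr}\Delta_{P}$ is the comorphism of the restriction $\chi|_{\mathfrak{u}_{P}^{\perp}}\colon \mathfrak{u}_{P}^{\perp}\hookrightarrow\LGd\xrightarrow{\ \chi\ }\fc$ of the coadjoint quotient map. Since a module equipped with an exhaustive, separated, bounded-below filtration whose associated graded is flat over the associated graded ring is itself flat, it suffices to show that $\chi|_{\mathfrak{u}_{P}^{\perp}}$ is a flat morphism.

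For this I would invoke miracle flatness: the source $\mathfrak{u}_{P}^{\perp}$ is an affine space, hence Cohen--Macaulay, and the target $\fc\cong\mathbb{A}^{r}$ with $r=\operatorname{rk}\LG$ is regular, so $\chi|_{\mathfrak{u}_{P}^{\perp}}$ is flat as soon as every nonempty fiber has the expected dimension $\dim\mathfrak{p}-r$. Since $\mathfrak{u}_{P}^{\perp}$ contains $\mathfrak{t}$ and $\mathfrak{t}\to\fc$ is surjective, $\chi|_{\mathfrak{u}_{P}^{\perp}}$ is dominant, and as $\mathfrak{u}_{P}^{\perp}$ is irreducible every component of every fiber has dimension at least $\dim\mathfrak{p}-r$. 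For the reverse bound, note that $\chi|_{\mathfrak{u}_{P}^{\perp}}$ is $\Gm$-equivariant for the dilation action on $\LGd$ and the positive-weight action on $\fc$; hence the upper-semicontinuously defined closed locus $\{y\in\fc:\dim\chi|_{\mathfrak{u}_{P}^{\perp}}^{-1}(y)>\dim\mathfrak{p}-r\}$ is $\Gm$-stable and is therefore empty as soon as it does not contain the fixed point $0$. So it remains to bound the single fiber over $0$: via the Killing form $\chi|_{\mathfrak{u}_{P}^{\perp}}^{-1}(0)$ is identified with the nilpotent variety $\mathcal{N}\cap\mathfrak{p}$ of the parabolic subalgebra $\mathfrak{p}$. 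For $x\in\mathfrak{l}$, $n\in\mathfrak{u}_{P}$, the element $x+n$ is nilpotent in $\LG$ iff $x$ is nilpotent in $\mathfrak{l}$ (its semisimple part lies in $\mathfrak{p}$ and projects to that of $x$ in $\mathfrak{l}=\mathfrak{p}/\mathfrak{u}_{P}$, and a semisimple element of the nilpotent ideal $\mathfrak{u}_{P}$ vanishes), whence $\mathcal{N}\cap\mathfrak{p}=\mathcal{N}_{\mathfrak{l}}\times\mathfrak{u}_{P}$ with $\mathcal{N}_{\mathfrak{l}}$ the nilpotent cone of $\mathfrak{l}$. Using $\operatorname{rk}\mathfrak{l}=\operatorname{rk}\LG=r$ one gets $\dim(\mathcal{N}\cap\mathfrak{p})=(\dim\mathfrak{l}-r)+\dim\mathfrak{u}_{P}=\dim\mathfrak{p}-r$, the expected value, which finishes the proof.

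The content is classical, so I do not anticipate a genuine obstacle; the step requiring the most care is the passage to associated graded — namely that the PBW filtration on the left ideal $U\LG\cdot\mathfrak{u}_{P}$ has associated graded exactly $\Sym(\LG)\mathfrak{u}_{P}$ (which the explicit PBW splitting above supplies) and that the ``gr-flat implies flat'' principle applies to these not-finitely-generated but non-negatively-filtered modules — together with the identification of the special fiber $\mathcal{N}\cap\mathfrak{p}$. Equivalently, one is checking that a homogeneous regular sequence generating $(\Sym\LG)^{G}_{+}$ remains a regular sequence on the Cohen--Macaulay ring $\Sym(\LG)/\Sym(\LG)\mathfrak{u}_{P}$, which holds precisely because cutting by it drops Krull dimension by $r$; this reformulation can be used in place of miracle flatness if preferred.
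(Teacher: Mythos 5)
Your proof is correct, but it takes a genuinely different route from the paper's. You work entirely at the associated graded level: you identify $\operatorname{gr}\Delta_P$ with $\mathcal{O}(\mathfrak{u}_P^{\perp})$, apply miracle flatness to the restricted adjoint quotient $\mathfrak{u}_P^{\perp}\to\fc$, reduce the fiber-dimension bound to the single fiber $\mathcal{N}\cap\mathfrak{p}\cong\mathcal{N}_{\mathfrak{l}}\times\mathfrak{u}_P$ by $\Gm$-equivariance, and then lift flatness (in fact freeness) along the PBW filtration. The paper instead stays at the quantum level and factors the action map $Z\LG\to\Delta_P$ through a parabolic Harish-Chandra homomorphism $\chi_L^G\colon Z\LG\to U(\mathfrak{p}/\mathfrak{u}_P)^L$: the compatibility with the classical Harish-Chandra isomorphism shows $\chi_L^G$ is a finite morphism of affine spaces, hence finite flat by miracle flatness, while $\Delta_P$ is flat over $Z\mathfrak{l}$ because it is free over $U\mathfrak{l}$ and the Chevalley map for $\mathfrak{l}$ is flat; transitivity of flatness concludes. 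So both arguments hinge on miracle flatness, but applied to different morphisms: yours to the non-finite map $\mathfrak{u}_P^{\perp}\to\fc$, which forces you to actually compute $\dim(\mathcal{N}\cap\mathfrak{p})$, the paper's to the finite map $\Spec Z\mathfrak{l}\to\Spec Z\LG$, where equidimensionality of fibers is automatic. The paper's route has the side benefit of constructing $\chi_L^G$ and its Levi compatibility, which is the kind of structure exploited later (e.g.\ the identification $Z\mathfrak{l}\otimes_{Z\LG}Y^W\cong Y^{W_L}$ in the proof of the parabolic vanishing theorem); your route is more self-contained and yields the slightly stronger conclusion that $\Delta_P$ is free over $Z\LG$. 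One small caveat: for a non-proper morphism the jump locus $\{y\in\fc:\dim\chi|_{\mathfrak{u}_P^{\perp}}^{-1}(y)>\dim\mathfrak{p}-r\}$ need not literally be closed in the target; you should either run the semicontinuity argument on the source, where Chevalley's theorem does give a closed, $\Gm$-stable locus contracted to $0$ by dilation, or simply use the regular-sequence reformulation you offer at the end, which is airtight.
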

  
\begin{proof}   
  By the PBW theorem and the Harish-Chandra isomorphism
  the statement reduces to the claim that
  $\Sym\g/(\Sym\g) \mathfrak{u}_P$ is flat over $(\Sym\g)^G$.
 In the special case $P=G$ and $\mathfrak{u}_P=0$ this
is a classic result of Kostant. To prove the general case
one can mimic the arguments
used in the proof of \cite[Theorem 6.7.4(i)]{ChrissGinzburgRepresentationTheoryandComplexGeometry}.
  \end{proof}


  Let $Z_P$ denote the inverse image of the center $Z(L)$ under the map $\pi^P$.
  
\begin{Lemma}\label{QCoh on Basic Affine With UP Invariant Sections Having Trivial L Representation Are Supported on Levi}
  Let $\F $ be a $Z_P^{\circ}$-equivariant  quasi-coherent sheaf on $G/U_P$  such that the induced $Z(L)^{\circ}$-action on the vector space $\Gamma(\F)^{U_P}=\Gamma(G/U_P, \F)^{U_P}$ is trivial.
   Then any element $m \in \Gamma(\F)^{U_P}$ is supported on $P/U_P$.
\end{Lemma}

\begin{proof} It is known that there exists a finite dimensional representation $V\in \Rep(G)$ and a vector $v\in V$
  fixed by $U_P$ such that the map $g\mto g.v$ induces a 
  $G$-equivariant locally closed embedding $\eta: G/U_P \to V$ such that, if we write $V \cong \oplus_i V_{\lambda_i}$ as a direct sum decomposition
  into irreducible $G$-representations $V_{\lambda_i}$ with highest weight $\lambda_i \in\mathbb{X}^*(T)$ with respect to $\BorelSubgroup$ and write $v = \sum_{i} v_{\lambda_i}$ where $v_{\lambda_i}\in V_{\lambda_i}$, then each $v_{\lambda_i}$ is in the irreducible $P$-subrepresentation $S_1^i$ of $V_{\lambda}^i$ containing the highest weight line of $V_{\lambda}^i$ with respect to $\BorelSubgroup$.
Fix some irreducible summand $V_{\lambda_i}$ of $V$. We now argue that the support of $\Gamma(\F)^{U_P}$, as a $\Sym(V_{\lambda_i}^*)$-module, is contained in the $P$-subrepresentation $S_1^i$. We first observe that, by the Lie-Kolchin theorem, we may write $V_{\lambda_i}$ as an increasing union of $\BorelSubgroup$-representations \[\{0\} \subsetneq S_1^i \subsetneq S_2^i \subsetneq ... \subsetneq S^i_{n_i} = V_{\lambda_i}\] such that for all $j \geq 2$, $S^i_j/S^i_{j - 1}$ is a one dimensional $\BorelSubgroup$-representation. Choose a basis for $S_1^i$, and complete it via some vectors $\{u_2, ..., u_{n_i}\}$ to a basis which is compatible with this filtration, and let $u_j^*$ be the corresponding elements of the dual basis. To prove that the support of $\Gamma(\F)^{U_P}$ is contained in $S_1^i$, it obviously suffices to recursively prove the following claim for $j \in \{n_i, n_i-1, ..., 2\}$:
\begin{quote}\label{X}
    The vector $u_j^*$ acts on the $\Sym(S_j^*)$-module $\Gamma(\mathcal{F})^{U_P}$ by zero, and so $\Gamma(\mathcal{F})^{U_P}$ is naturally a $\Sym(S_{j-1}^*)$-module, and, moreover, $V_{\lambda_i}^{Z_P^{\circ}} \subseteq S_{j-1}^i$, where we take invariants for the \lq adjoint\rq{} $Z_P^{\circ}$-action.
\end{quote}

To prove this claim, first observe that, since $S_j^i/S_j^{i-1}$ is $\BorelSubgroup$-invariant, $u_j^* \in \Sym(S_j^{i, *})^{\unipotentRadicalOfBorel}$, so that multiplication by $u_j^*$ on $\Gamma(\F)$ preserves $\Gamma(\F)^{U_P}$, and $u_j^*$ is homogeneous with respect to the action of $T$. Since $u_j \in V_{\lambda_i}$, $T$ acts on the line spanned by $u_j^*$ by some character of the form $\lambda_i - \sum_r p_r\alpha_r - \sum_j q_j\beta_j$ where $\{\beta_j\}$ is the complement of $\{\alpha_r\}$ in the set of simple roots and $p_r, q_j$ are some nonnegative integers. Moreover, since $u_j$ is homogeneous for the $T$-action and is not in the $P$-subrepresentation $S_1^i$, $\sum_j q_j\beta_j \neq 0$.  Thus, since $Z(L)^{\circ} = (\cap_r \text{ker}(\alpha_r))^{\circ}$, $u_j^*$ is homogeneous with respect to the left $Z(L)^{\circ}$-action on $S_j^i$ for the character $\lambda_i - \sum_jq_j\beta_j$ of $Z(L)^{\circ}$, and this character is not $\lambda_i$. Therefore, $u_j^*$ is homogeneous with respect to the \lq adjoint\rq{} $Z(L)^{\circ}$-action on $S_j^i$ of degree $-\sum_jq_j\beta_j \neq 0$. Therefore this function must vanish on the fixed point subscheme $V_{\lambda_i}^{Z_P^{\circ}}$ and so $V_{\lambda_i}^{Z_P^{\circ}}$ lies in the closed subscheme $S_{j-1}^i$ of $S_j^i$ cut out by the vanishing of $u_j^*$. Moreover, the fact that $u_j^*$ is homogeneous of nonzero degree implies that if $m \in \Gamma(\F)^{U_P} = \Gamma(\F)^{Z_P^{\circ}}$, the ($\leq 1$ dimensional) vector subspace spanned by $u_j^*m \in \Gamma(\F)^{U_P}$ is homogeneous for the $Z(L)^{\circ}$ with respect to a nontrivial character of $Z(L)^{\circ}$. Thus, since the action of $Z(L)^{\circ}$ is trivial on $\Gamma(\F)^{U_P}$ by assumption, $u_j^*m = 0$, as desired, which proves our claim.

Our claim implies that the support of any $m \in \Gamma(\F)^{U_P}$, viewing $\F$ as a quasicoherent sheaf on $V$, has support contained in $\oplus_i (V_{\lambda_i} )^{Z_P^{\circ}} \cong V^{Z_P^{\circ}}$. Thus the support of any $U_P$-invariant global section of $\F$ is contained in $(G/U_P)^{Z_P^{\circ}}$. By the parabolic Bruhat decomposition, the points of $(G/U_P)^{Z_P^{\circ}}$ are precisely the points of $P/U_P$, and so \cref{QCoh on Basic Affine With UP Invariant Sections Having Trivial L Representation Are Supported on Levi} is proved. \end{proof}

\begin{proof}[Proof of \cref{Vanishing Conjecture For Parabolics for Central D Modules Induced from Invariants}]
The functor $R\Gamma$ is conservative since, as we have already noted above, $G/U_P$ is quasi-affine.
Therefore, it suffices to prove that $R\Gamma(c)$ is an isomorphism. Applying the same argument as in \cite[Example 6.1]{ChenOnTheConjecturesofBravermanKazhdan} gives an isomorphism \[R\Gamma\int_{\pi^P}M \cong M \mathop{\otimes}\limits^{L}_{U\LG} \Delta_P\]
of $D(G/U_P)$-modules for any $M \in \D(G)\mmod^G$ (which, as discussed in \cref{Monoidality and Tensor Product Subsection}, we view $M$ as a $U\LG$-bimodule via the ring map \labelcref{Comoment Map for G times Gop with Identifications}) where the $\mathcal{O}(G/U_P)$-action on the $\mathfrak{u}_P$-coinvariants $M \otimes_{U\LG} \Delta_P$ comes from the inclusion of the ring $\mathcal{O}(G/U_P) \cong \mathcal{O}(G)^{U_P}$ into 
  $D(G/U_P) = (\differentialOperatorsOnG/\mathfrak{u}_P\differentialOperatorsOnG)^{U_P}$.
Therefore we see that, since $M$ is induced from its invariants and $U\LG$ is flat as a $Z\LG$-module, we have isomorphisms \begin{equation}\label{For Very Centrals Pushforward is Universal Verma Tensor over Zg With G Invariants}R\Gamma\int_{\pi^P}M \cong M \mathop{\otimes}\limits^{L}_{U\LG} \Delta_P \cong (U\LG \mathop{\otimes}\limits_{Z\LG}  M^G) \mathop{\otimes}\limits^{L}_{U\LG} \Delta_P \cong (U\LG \mathop{\otimes}\limits_{Z\LG}^L  M^G)
  \mathop{\otimes}\limits^{L}_{U\LG} \Delta_P \cong  M^G \mathop{\otimes}\limits^{L}_{Z\LG} \Delta_P\end{equation}
and so $R\Gamma\big(\int_{\pi^P} M\big)$
is concentrated in degree zero by \cref{Parabolic Universal Verma is Flat Zg Module Proposition}. A base change argument identifies $\ell^{\dagger}\int_{\pi^P}$ with the parabolic restriction functor (see \cite[Remark 4.1]{BezrukavnikovYomDinOnParabolicRestrictionofPerverseSheaves}) which is $t$-exact by \cite[Theorem 5.4]{BezrukavnikovYomDinOnParabolicRestrictionofPerverseSheaves}. Since $D$-module pushforward by a closed embedding is $t$-exact, we see that 
the morphism $c$ in part (i) of  \cref{Vanishing Conjecture For Parabolics for Central D Modules Induced from Invariants}
is a map of objects concentrated in degree zero.

Since any section of $\F$ supported on $P/U_P$ is contained in the image of the map $c$, cf.
  \cite[Proposition 1.7.1(iii)]{HTT}, it suffices to prove that any section of $\int_{\pi^P}M$ is supported on $P/U_P$. Since $G/U_P$ is quasi-affine, hence $\F$  is globally generated \cite[Proposition 5.1.2]{EGAII},  it suffices to show the support of any element of $\Gamma(\int_{\pi^P}M)$ is contained in $P/U_P$.


  It of course further suffices to prove that the global sections
 $(2\rho_G - 2\rho_L)\o \Gamma\big(\int_{\pi^P}M\big)$
  is supported on $P/U_P$. However, by the isomorphism \eqref{For Very Centrals Pushforward is Universal Verma Tensor over Zg With G Invariants}, we see that all global sections of $\Gamma(M)$ are generated as a $U\LG$-module by the $U_P$-invariant sections, and moreover the $Z(L)$-representation on
$((2\rho_G - 2\rho_L)\o \Gamma\big(\int_{\pi^P}M\big))^{U_P}$
  is trivial.
Therefore by \cref{QCoh on Basic Affine With UP Invariant Sections Having Trivial L Representation Are Supported on Levi} all sections are supported on $P/U_P$, which establishes (i). 

Observe that, taking $U_P$-invariants of \eqref{For Very Centrals Pushforward is Universal Verma Tensor over Zg With G Invariants}, we obtain (ii). We now show (iii). Let $F$ be a $W$-equivariant $\differentialOperatorsOnT$-module such that $\Symt \otimes_{\Symt^W} F^W \xrightarrow{} F$ is an isomorphism. Then by \cref{Theorem Asserting Existence of Ngo Functor as Braided Monoidal Fully Faithful Functor to Center}(ii) we have $\ups(F^W) = \IndTG(F)^W$. Observe that \[\int_{\pi^P}(E \star_G \IndTG(F)^W) = \int_{\pi^P}(E \star_G \ups(F^W)) \simeq E \star_G \ups(F^W) \star_G \delta_{1U_P} \simeq E \star_G \int_{\pi^P}\ups(F^W)\] which by \cref{Vanishing Conjecture For Parabolics for Central D Modules Induced from Invariants}(i) identifies with \[E \star_G \int_{\ell}(\ell^{\dagger}\int_{\pi^P}\ups(F^W)) \simeq E \star_G \int_{\ell}(\delta_{1U_P}) \star_L \ell^{\dagger}\int_{\pi^P}\ups(F^W) \simeq (\int_{\pi^P}E) \star_L \ell^{\dagger}\int_{\pi^P}\ups(F^W)\] using the counit map of the adjunction and the $L_{\text{op}}$-equivariance of $\ell$. By \cref{Vanishing Conjecture For Parabolics for Central D Modules Induced from Invariants}(ii), we may continue this chain of isomorphisms to obtain \[\simeq (\int_{\pi^P}E) \otimes_{U\mathfrak{l}} U\mathfrak{l} \otimes_{Z\LG} F^W \cong (\int_{\pi^P}(E)\otimes_{Z\mathfrak{l}} (Z\mathfrak{l} \otimes_{Z\LG} F^W)  \cong (\int_{\pi^P}E)\otimes_{Z\mathfrak{l}} F^{W_L} \cong (\int_{\pi^P}E)\otimes_{U\mathfrak{l}} U\mathfrak{l} \otimes_{Z\mathfrak{l}} F^{W_L}\] 
\[(\int_{\pi^P}E)\otimes_{U\mathfrak{l}} \ups_L(F^{W_L}) = (\int_{\pi^P}E)\otimes_{U\mathfrak{l}} \operatorname{Ind}_T^L(F)^{W_L} = (\int_{\pi^P}E) \star_L \operatorname{Ind}_T^L(F)^{W_L}\] again using \cref{Theorem Asserting Existence of Ngo Functor as Braided Monoidal Fully Faithful Functor to Center}(ii).
\end{proof}

\subsection{Proof of \cref{Main Theorem 2}}\label{Proof of Main Theorem 2 Subsection}
We have constructed the Knop-Ng\^o functor, as a braided monoidal functor, and verified that it is fully faithful in \cref{Definition of Ngo Functor Subsection}. It remains to determine its essential image. Observe that, for any $E \in \biWhittakerDifferentialOperatorsonG\mmod$, the underlying Harish-Chandra bimodule of $\ups(E)$ is $U\LG \otimes_{Z\LG} E$, which is clearly induced from its invariants. Conversely, assume we are given a $\differentialOperatorsOnG$-module $M$ induced from its invariants. By \cref{Vanishing Conjecture For Parabolics for Central D Modules Induced from Invariants}(ii), the canonical map $\Symt \otimes_{Z\LG} M^G
\xrightarrow{} \operatorname{Res}_T^G(M)$ is an isomorphism. Since there are isomorphisms
\[\operatorname{Res}_T^G(M) \xleftarrow{\sim} \differentialOperatorsOnT \otimes_{\differentialOperatorsOnT^W} \operatorname{Res}_T^G(M)^W \cong \differentialOperatorsOnT \otimes_{\differentialOperatorsOnT^W} M^G\] by applying \cite[Proposition 4.1]{GinzburgParabolicInductionandtheHarishChandraDModule} and \cite[Theorem 4.4(i)]{GinzburgParabolicInductionandtheHarishChandraDModule}, respectively, we may trace through the constructions of these isomorphisms and obtain that the canonical map $\Symt \otimes_{Z\LG} M^G \xrightarrow{} \differentialOperatorsOnT \otimes_{\differentialOperatorsOnT^W}M^G$ is an isomorphism. Therefore, by \cite[Theorem 1.5.1]{Gin} (which also holds, with the same proof, after removing the condition of being holonomic) the $\differentialOperatorsOnT^W$-module structure on $\operatorname{Res}_T^G(M)^W = M^G$ upgrades to a module for $\biWhittakerDifferentialOperatorsonG$. We therefore obtain a map \[\ups(M^G) = \quantizationOfCentralizersHBAREQUALSONE \otimes_{\differentialOperatorsOnT^W} M^G \xrightarrow{} M\]
of central
Harish-Chandra bimodules (objects of $\D(G)\mmod^G$)
given by the module structure, i.e. given by the formula $\partial, m \mapsto \partial \cdot m$. Since it is clear this map is an isomorphism, we obtain an isomorphism $\ups(M^G) \cong M$ and so $M$ is in the essential image of the Knop-Ngo functor. 

It remains to show that a central
Harish-Chandra bimodule is induced from its invariants if and only if it is very central. By \cref{Vanishing Conjecture For Parabolics for Central D Modules Induced from Invariants}(i), any $G$-equivariant $\differentialOperatorsOnG$-module induced from its invariants is very central. Conversely, assume $M \in \differentialOperatorsOnG\modwithdash^{{\Ad G}}$ is very central. First, we claim that the $\differentialOperatorsOnT^W$-module structure on $M^G$ upgrades to a $\biWhittakerDifferentialOperatorsonG$-module structure. To see this, observe that, since $M$ is very central, the canonical map
\[\int_i\Res^G_T(M) = \int_ii^{\dagger}\int_{\pi^{\BorelSubgroup}}M \xrightarrow{} \int_{\pi^{\BorelSubgroup}}M\] is an isomorphism where $i$ is the inclusion of $\BorelSubgroup/\unipotentRadicalOfBorel$ into $G/\unipotentRadicalOfBorel$. In particular, using the exactness of parabolic restriction \cite{BezrukavnikovYomDinOnParabolicRestrictionofPerverseSheaves}, we see that $\int_{\pi^{\BorelSubgroup}}M$ is a complex concentrated in degree zero and moreover supported on an affine open subset. Thus applying the global sections functor and using \labelcref{bfo} (see also \cite[Example 6.1]{ChenOnTheConjecturesofBravermanKazhdan}) we obtain isomorphisms of complexes

\begin{align*}
U\LG/U\LG\cdot\LieAlgebraOfUnipotentRadicalOfBorel \otimes_{\Symt} \differentialOperatorsOnT \otimes_{\differentialOperatorsOnT^W} M^G 
& \cong U\LG/U\LG\cdot\LieAlgebraOfUnipotentRadicalOfBorel \otimes_{\Symt} \text{res}(M) 
\\ & \xrightarrow{\sim}  \Gamma\Big( \int_{\pi^{\BorelSubgroup}}M\Big) = R\Gamma\Big(\int_{\pi^{\BorelSubgroup}}M\Big)
= M \otimes_{U\LG}^L U\LG/U\LG\cdot\LieAlgebraOfUnipotentRadicalOfBorel
\end{align*} where the first isomorphism uses \cite{GinzburgParabolicInductionandtheHarishChandraDModule}. Applying the functor $\LieAlgebraofUnipotentRadicalOfOppositeBorelPSI\cdot U\LG\backslash U\LG \otimes_{U\LG} (\td)$ of left $\LieAlgebraofUnipotentRadicalOfOppositeBorelPSI$-coinvariants, we obtain an isomorphism \[\differentialOperatorsOnT \otimes_{\differentialOperatorsOnT^W} M^G \cong \LieAlgebraofUnipotentRadicalOfOppositeBorelPSI\cdot U\LG\backslash U\LG \otimes_{U\LG} M \otimes_{U\LG} U\LG/U\LG\cdot\LieAlgebraOfUnipotentRadicalOfBorel\] and so applying Skryabin's equivalence to this isomorphism we obtain an isomorphism \[\differentialOperatorsOnT \otimes_{\differentialOperatorsOnT^W} M^G \cong \Dpsilhbar/\Dpsilhbar\LieAlgebraofUnipotentRadicalOfOppositeBorelPSIr \otimes_{\biWhittakerDifferentialOperatorsonG} \kostantWhittakerForNONFILTEREDtoZGBIMOD(M) \otimes_{U\LG}U\LG/U\LG\cdot\LieAlgebraOfUnipotentRadicalOfBorel \cong \LieAlgebraOfUnipotentRadicalOfBorel^r\Dpsilhbar\backslash \Dpsilhbar/\Dpsilhbar\LieAlgebraofUnipotentRadicalOfOppositeBorelPSIr \otimes_{\biWhittakerDifferentialOperatorsonG} \kostantWhittakerForNONFILTEREDtoZGBIMOD(M) = \MiuraBimodule \otimes_{\biWhittakerDifferentialOperatorsonG} \kostantWhittakerForNONFILTEREDtoZGBIMOD(M)\] where $\MiuraBimodule$ is the Miura bimodule defined in \cref{Miura Bimodule and Comodules Subsection}. Tracing through the constructions, we see that this isomorphism is $W$-equivariant, and so taking invariants we obtain that $M^G \cong \MiuraBimodule^W \otimes_{\biWhittakerDifferentialOperatorsonG} \kostantWhittakerHBARtoZGBIMOD(M)$ lies in the essential image of the fully faithful functor $\MiuraBimodule^W \otimes_{\biWhittakerDifferentialOperatorsonG} (\td)$ and so we see that the $\differentialOperatorsOnT^W$-module structure on $M^G$ upgrades to a $\biWhittakerDifferentialOperatorsonG$-module, as desired.

We now claim that the counit map \begin{equation}\label{Counit Map for Very Centrals}\quantizationOfCentralizersHBAREQUALSONE \otimes_{\differentialOperatorsOnT^W} M^G \xrightarrow{} M\end{equation} for parabolic restriction is an isomorphism, which we will prove using arguments similar to those appearing in the proofs of \cite[Proposition 2.9]{BravermanKazhdanGammaSheavesonReductiveGroups} and  \cite[Proposition 3.3]{ChenOnTheConjecturesofBravermanKazhdan}. To this end, first observe that, since $M$ is very central, one can identify $\operatorname{Ind}_T^G(\operatorname{Res}_T^G(M))$ with $\text{hc}^L(\text{hc}(M))$ where $\text{hc}$ is the Harish-Chandra functor $\differentialOperatorsOnG\modwithdash^G \to \D_{G/N}\modwithdash^{B_{\text{ad}}}$ induced by pushforward, and $\text{hc}^L$ is its left adjoint. Direct computation proves the (well known) fact that $\text{hc}^L\text{hc}$ is naturally isomorphic to convolution with the Springer sheaf, and so we see that $\IndTG(\Res^G_T(M))$ is also given by convolution with the Springer sheaf. Therefore we see that the counit map \[\operatorname{Ind}_T^G(\operatorname{Res}_T^G(M))^W \to M\] is an isomorphism, and thus using \labelcref{Tensoring with quantizationOfCentralizers Is Parabolic Induction} we see that \eqref{Counit Map for Very Centrals} is an isomorphism as well, since it is the counit for a naturally isomorphic adjoint pair. Now, since $M^G$ has a $\biWhittakerDifferentialOperatorsonG$-module structure, we therefore obtain an isomorphism
$\ups(M^G) \xrightarrow{\sim} M$ and so $M$ lies in the essential image of the Knop-Ng\^o functor, as required.

\begin{rem}\label{Can Rederive Vanishing Conjecture Remark}
Since we have shown that a $G$-equivariant $\differentialOperatorsOnG$-module is very central if and only if it is induced from its $G$-invariants, \cref{Vanishing Conjecture For Parabolics for Central D Modules Induced from Invariants}(i) can be used to re-derive a Levi upgrade of the vanishing conjecture--a devissage argument immediately shows the claim at the level of derived categories follows from the vanishing conjecture for abelian categories.  This Levi upgrade was originally proved by Chen \cite[Theorem 6.2]{ChenTowardTheDepthZeroStableBernsteinCenterConjecture}.
\end{rem}

\subsection{Perverse sheaf counterpart of the $D$-module
  ${\mathbf N}\o_{D(T)^W}\mathfrak{J}$}\label{Perverse Sheaf Counterpart Remark} 
   Using \cref{Jcor}, it is possible to give a geometric construction of
     the object $\ups(\mathfrak{J})$ that appears in the quantized Knop-Ng\^o morphism
     that makes sense in the
      constructible setting (eg. of $\ell$-adic sheaves) as well.
      To this end, we define \[\wt \ups(\mathfrak{J}):= \ups(\biWhittakerDifferentialOperatorsonG) \otimes_{D(T)^W} D(T) \cong \mathbf{N}\o_{D(T)^W}\biWhittakerDifferentialOperatorsonG \otimes_{D(T)^W} D(T).\]
      Thus, $\wt \ups(\mathfrak{J})$ is a  $G\times W$-equivariant  $(D(G),\,D(T))$-bimodule which we may (and will) view as a left $D$-module on
      $G\times T$ using the isomorphism
      $D(T)\cong D(T)_{op}$ provided by the choice of a translation invariant trivialization of the canonical bundle on $T$.

One obtains a chain of functors
    \[
        \xymatrix{
          \mathsf{D}(G)\ \ar[rrr]^<>(0.5){\ \Av_*^{\bar N}\ccirc \Av_*^{N_{op},\psi} \ }
          &&& \ \mathsf{D}(G)^{(\bar N \times N,        \,\text{triv}\times\psi), W}\
          \ar[r]^<>(0.5){\ i^* \ }_<>(0.5){\cong} &
          \ \mathsf{D}(T)^W\  \ar[rr]^<>(0.5){\ \Ind_T^G(-)^W\ } &&\
          \mathsf{D}(G)^{\Ad G},
        }
      \]
      where the functor $i^*$ is known to be an equivalence by standard arguments, see for example \cite[Proposition 1.3]{GannonUniversalCategoryOandGelfandGraevAction}.
        Let $\Psi$ denote the composite functor and write
        $\Psi^{(1)}=\Psi\boxtimes \Id_{\mathsf{D}(T)}:\,
        \mathsf{D}(G\times T)\to \mathsf{D}(G\times T)^{\Ad G}$
        for a similarly defined functor that acts along the first factor of $G\times T$ only.
        
        Now, consider the natural diagram $T \xleftarrow{\ q  \ } \wt G \xrightarrow{\ \pi\ } G$
        where $\pi$ is the Grothendieck-Springer morphism.
        The map $\pi\times q: \wt G\to G\times T$ is a small projective morphism
        with image $G\times_{G/\!/\Ad G} T$.
         Hence,     
         $\int_{\pi\times q}\oo_{\wt G}$ is a simple holonomic
         $D$-module on $G\times T$ that comes equipped with a $G\times W$-equivariant structure
        with respect to the adjoint action of $G$ on $G$, resp. $W$ on $T$.
We remark that the $D(G)$-modules  $\mathbf N$ and $\ups(\mathfrak{J})$ are not
 holonomic.
 
        \begin{claim}\label{motivic claim} There is a canonical isomorphism
          $\wt\ups(\mathfrak{J})\cong \Psi^{(1)}(\int_{\pi\times q}\oo_{\wt G})$ of 
          $G\times W$-equivariant $\dd_{G\times T}$-modules, in particular,
          we have $\ups(\mathfrak{J})\cong\Psi^{(1)}(\int_{\pi\times q}\oo_{\wt G})^W$.
        \end{claim}

        Thus, the counterpart of  $\wt\ups(\mathfrak{J})$ in the constructible setting
        is provided by the  $G\times W$-equivariant perverse sheaf $\Psi^{(1)}((\pi\times q)_*C_{\wt G})$
        on $G\times T$
        where $C_{\wt G}$ is the constant perverse sheaf on $\wt G$.
        One can check that the perverse sheaf so defined has the `very central' property,
        as defined in \cite{ChenAVanishingConjecturetheGLnCase},  along the $T$-factor.
        This corresponds, in the $D$-module setting,
        to the fact that the right $D(T)^W$-action on
        $\mathbf{N}\o_{D(T)^W}\mathfrak{J}$ comes from a $\mathfrak{J}$-action.

\section{Proof of \cref{theorem asserting abstract central functor in biWhittaker case}}\label{Hopf Algebroids and Central Functors Section}
\newcommand{\generalHopfAlgebroidWithSourceEqualsTarget}{\mathcal{J}}
\newcommand{\commutativeBaseforGeneralHopfAlgebroid}{Z}
\subsection{}Recall that we have constructed a left $U\LG$-bialgebroid structure (in the sense of \cite[Definition 3.3]{BohmHopfAlgebroids}) on $\differentialOperatorsOnG$ in the proof of \cref{Differential Operators on G Has Hopf Algebroid Structure and Comodules Has Monoidal Stucture}, and used this to construct a left $\Zhbarg$-bialgebroid structure on $\biWhittakerDifferentialOperatorsHBAR$. We have also seen that the coalgebra structure on $\biWhittakerDifferentialOperatorsonG$ cocommutative in \cref{Miura Bimodule is Cocommutative Coalgebra and Isomorphic to DT Upon Tensoring With Field of Fractions}, and that $\biWhittakerDifferentialOperatorsonG$ is flat as a left $Z\LG$-module in \cref{Dpsirhbar and BiWhitHbar Are Flat}. Using the monoidal structure on the category of right $\differentialOperatorsOnG$-modules, one can completely analogously equip $\differentialOperatorsOnG$ with a right $U\LG_{\text{op}}$-structure, and run completely analogous arguments to prove that this induces a right $Z\LG$-bialgebroid structure on $\biWhittakerDifferentialOperatorsonG$ in the sense of \cite[Definition 3.1]{BohmHopfAlgebroids} with source and target map the inclusion of $\Zhbarg$ into $\biWhittakerDifferentialOperatorsHBAR$. 

Observe, moreover, that the ring $\differentialOperatorsOnG$ admits an algebra anti-automorphism $S$ uniquely determined by the conditions that it swaps $U\LG$ and $U\LG_{\op}$ and extends the antipode on $\mathcal{O}(G)$. It is not difficult to check that this equips $\differentialOperatorsOnG$ with the structure of a \textit{Hopf algebroid} with base algebras $U\LG$ and $U\LG_{\text{op}}$ in the sense of \cite[Definition 4.1]{BohmHopfAlgebroids}; since all of the maps in question are $U\LG$-linear or antilinear and restrict to $\mathcal{O}(G) \subseteq \differentialOperatorsOnG$ to the usual coproduct, counit, or antipode, it is not difficult to check the Hopf algebroid axioms hold for $\differentialOperatorsOnG$. Now, using the algebra isomorphism of \eqref{Algebra Isos For Different Descriptions of biWhittaker}, we see that $S$ induces an algebra involution $\overline{S}$ on $\biWhittakerDifferentialOperatorsHBAR$. The Hopf algebroid compatibilities are readily checked. We summarize this discussion in the following Proposition:

\begin{Proposition}\label{biWhit is Hopf Algebroid}
The ring structure on $\biWhittakerDifferentialOperatorsonG$ upgrades to a structure of a Hopf algebroid with both base algebras $Z\LG$ such that the antipode is bijective. Moreover, in either of the $Z\LG$-coalgebra structures on $\biWhittakerDifferentialOperatorsonG$, the comultiplication is cocommutative, and as a left $Z\LG$-module $\biWhittakerDifferentialOperatorsonG$ is flat.
\end{Proposition}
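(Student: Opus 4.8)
\textbf{Proof proposal for \cref{biWhit is Hopf Algebroid}.}

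The plan is to obtain all the required structure on $\biWhittakerDifferentialOperatorsonG$ by transport from the corresponding structure on $\differentialOperatorsOnG$, together with facts already established in the excerpt. First, I would record that $\differentialOperatorsOnG$ carries the two bialgebroid structures: the left $U\LG$-bialgebroid structure was constructed in the proof of \cref{Differential Operators on G Has Hopf Algebroid Structure and Comodules Has Monoidal Stucture}, and the same argument applied to the monoidal structure on the category of \emph{right} $\differentialOperatorsOnG$-modules (equivalently, using the opposite-group convolution) gives a right $U\LG_{\text{op}}$-bialgebroid structure. The antipode $S$ is the unique algebra anti-automorphism of $\differentialOperatorsOnG$ that restricts to the classical antipode on $\mathcal O(G)$ and swaps $U\LG$ with $U\LG_{\text{op}}$; its existence follows from the identification $\differentialOperatorsOnG \cong U\LG_{\text{op}} \otimes \mathcal O(G)$ (equivalently the PBW-type decomposition), and the Hopf-algebroid axioms of \cite[Definition 4.1]{BohmHopfAlgebroids} hold because every structure map in sight is $U\LG$-(anti)linear and restricts on the subalgebra $\mathcal O(G)$ to the ordinary Hopf-algebra structure, where the axioms are classical.

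Next I would transport everything along the algebra isomorphisms of \eqref{Algebra Isos For Different Descriptions of biWhittaker}. The key point, used repeatedly in \cref{Kostant-Whittaker Reduction Is Monoidal Localization Section} and \cref{Bi-Whittaker Reduction and Biwhittaker Comodules}, is that Kostant-Whittaker reduction for the relevant Harish-Chandra bimodule categories is exact (\cref{Asymptotic Kostant-Whittaker Reduction Is Monoidal and Exact}) and monoidal, hence commutes with the finite limits and colimits used to phrase the bialgebroid/Hopf-algebroid axioms. Concretely: the left $Z\LG$-bialgebroid structure on $\biWhittakerDifferentialOperatorsonG$ was already produced in \cref{Asymptotic Enhancement of All Preliminary Results} and \cref{Asymptotic Kostant Whittaker Reduction is Monoidal Localization}; the right $Z\LG$-bialgebroid structure is obtained by the mirror-image construction using right modules and the right-hand isomorphism of \eqref{Algebra Isos For Different Descriptions of biWhittaker}; and $\overline S$ is defined as the image of $S$ under \eqref{Algebra Isos For Different Descriptions of biWhittaker}, which makes sense because $S$ interchanges the left ideal $\differentialOperatorsOnG\LieAlgebraofUnipotentRadicalOfOppositeBorelPSIl$ with the right ideal $\LieAlgebraofUnipotentRadicalOfOppositeBorelPSIr\differentialOperatorsOnG$ and hence descends through the two Whittaker reductions; it is an involution because $S^2=\Id$. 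The compatibility between $\overline S$, the two coproducts, and the two counits is then forced: both sides of each Hopf-algebroid identity are images under the (faithful) reduction functor of the corresponding identities on $\differentialOperatorsOnG$.

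Finally, the two supplementary assertions are already in hand: cocommutativity of the coproduct on $\biWhittakerDifferentialOperatorsonG$ is \cref{Miura Bimodule is Cocommutative Coalgebra and Isomorphic to DT Upon Tensoring With Field of Fractions} (via the cotensor/Miura-bimodule description, or directly by localizing to $\fieldOfFractionsForSymt$ where the coalgebra becomes that of $\differentialOperatorsOnT_{\loc}$, which is visibly cocommutative), and flatness of $\biWhittakerDifferentialOperatorsonG$ as a left $Z\LG$-module is \cref{Dpsirhbar and BiWhitHbar Are Flat}. I expect the only genuinely delicate step to be checking that $S$ really is well-defined as an anti-automorphism of $\differentialOperatorsOnG$ compatible with \emph{both} bialgebroid structures simultaneously (so that the bracketed ``Hopf algebroid with base algebras $U\LG$ and $U\LG_{\text{op}}$'' makes sense in the sense of B\"ohm), and that it descends through the simultaneous left/right Whittaker reductions; once that bookkeeping is done, exactness of Kostant-Whittaker reduction converts every remaining axiom into its already-known $\differentialOperatorsOnG$-counterpart, so nothing further is required.
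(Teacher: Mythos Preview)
Your proposal is correct and follows essentially the same route as the paper: build the Hopf algebroid structure on $\differentialOperatorsOnG$ with the antipode $S$ that swaps $U\LG$ and $U\LG_{\text{op}}$ and extends the Hopf-algebra antipode on $\mathcal O(G)$, verify the axioms there by $U\LG$-(anti)linearity plus reduction to $\mathcal O(G)$, then descend everything through the isomorphisms of \eqref{Algebra Isos For Different Descriptions of biWhittaker}, invoking \cref{Miura Bimodule is Cocommutative Coalgebra and Isomorphic to DT Upon Tensoring With Field of Fractions} for cocommutativity and \cref{Dpsirhbar and BiWhitHbar Are Flat} for flatness. Your extra remark that $S$ interchanges $\differentialOperatorsOnG\LieAlgebraofUnipotentRadicalOfOppositeBorelPSIl$ with $\LieAlgebraofUnipotentRadicalOfOppositeBorelPSIr\differentialOperatorsOnG$, hence descends to an involution $\overline S$, is exactly the point the paper leaves implicit when it says ``$S$ induces an algebra involution $\overline S$ on $\biWhittakerDifferentialOperatorsHBAR$.''
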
 

Using \cref{biWhit is Hopf Algebroid}, we now prove \cref{theorem asserting abstract central functor in biWhittaker case}. In fact, \cref{theorem asserting abstract central functor in biWhittaker case} follows formally from the properties of $\biWhittakerDifferentialOperatorsonG$ listed in \cref{biWhit is Hopf Algebroid}. To exhibit this, we hereafter adapt the following notation: we assume we are given a commutative $K$-algebra $\commutativeBaseforGeneralHopfAlgebroid$ for $K$ some commutative ring and a Hopf algebroid $\generalHopfAlgebroidWithSourceEqualsTarget$ with bijective antipode whose base algebras both are $\commutativeBaseforGeneralHopfAlgebroid$ for which the source map $s: \commutativeBaseforGeneralHopfAlgebroid \to \generalHopfAlgebroidWithSourceEqualsTarget$ for the left bialgebroid structure agrees with the map \begin{equation}\label{Composition of Equals and Target for Left Bialgebroid}\commutativeBaseforGeneralHopfAlgebroid = \commutativeBaseforGeneralHopfAlgebroid_{\text{op}} \xrightarrow{t} \generalHopfAlgebroidWithSourceEqualsTarget\end{equation} induced by the target of the left bialgebroid structure such that the following conditions hold:

\vi Both left and right multiplication by the image of the source map exhibit  $\generalHopfAlgebroidWithSourceEqualsTarget$ as a flat $\commutativeBaseforGeneralHopfAlgebroid$-module.

\vii The left (equivalently right) coring structure is cocommutative.
\vskip 2pt
If $\generalHopfAlgebroidWithSourceEqualsTarget$ is a graded algebra, we will say that $\generalHopfAlgebroidWithSourceEqualsTarget$ is a \textit{graded Hopf algebroid} if $\generalHopfAlgebroidWithSourceEqualsTarget$ is equipped with a grading for which the multiplication map, unit map for the algebra structure, the comultiplication and counit maps, and the antipode all respect the grading. 

As $\generalHopfAlgebroidWithSourceEqualsTarget$ is a Hopf algebroid, its category of modules $\generalHopfAlgebroidWithSourceEqualsTarget$-mod and comodules $\generalHopfAlgebroidWithSourceEqualsTarget\text{-comod}$ for its left bialgebroid of $\generalHopfAlgebroidWithSourceEqualsTarget$ both acquire a monoidal structure and, moreover, the monoidal structure on $\generalHopfAlgebroidWithSourceEqualsTarget$-modules is symmetric, as we review in \cref{Notation for Monoidal Structure on Bialgebroid}. As $\generalHopfAlgebroidWithSourceEqualsTarget\comod$ is monoidal, we may consider its \textit{Drinfeld center} $\Z(\generalHopfAlgebroidWithSourceEqualsTarget\comod)$ (defined in for example \cite[Section 7.13]{EtingofGelakiNikshychOstrikTensorCategories}) whose objects are given by pairs $(C', Z^{C'}_{C})$ where $C' \in \generalHopfAlgebroidWithSourceEqualsTarget\comod$ and $Z^{C'}_{C}$ is a collection of isomorphisms \[Z^{C'}_{C}: C \otimes_{\commutativeBaseforGeneralHopfAlgebroid} C' \xrightarrow{\sim} C' \otimes_{\commutativeBaseforGeneralHopfAlgebroid} C\] natural in $C$. The collection of such objects naturally form a braided monoidal category: indeed, as we also recall in \cref{Generalities for Monoidal Functor Subsection}, the Drinfeld center as defined in \cite{EtingofGelakiNikshychOstrikTensorCategories} is evidently equivalent to the braided monoidal category obtained from the relative Drinfeld center construction $\Z_{\freeFunctorStandin}(\HarishChandraBimoduleStandin)$ defined in \cref{Generalities for Monoidal Functor Subsection} in the special case $\freeFunctorStandin := \text{id}$. 

Observe that our assumptions imply that the comultiplication and counit maps for the left bialgebroid $\generalHopfAlgebroidWithSourceEqualsTarget$ form a $\commutativeBaseforGeneralHopfAlgebroid$-coalgebra. Therefore, to any $X \in \commutativeBaseforGeneralHopfAlgebroid\modwithdash$, we may view $X$ as a trivial comodule for this coalgebra. In particular, restricting scalars along the map $s$, we may view any $\generalHopfAlgebroidWithSourceEqualsTarget$-module as a trivial $\generalHopfAlgebroidWithSourceEqualsTarget$-comodule. With this observation, we may now state our generalization of \cref{theorem asserting abstract central functor in biWhittaker case}, which says that this construction lifts to an embedding of the category of $\generalHopfAlgebroidWithSourceEqualsTarget$-modules into the Drinfeld center of the category of $\generalHopfAlgebroidWithSourceEqualsTarget$-comodules:

\begin{Theorem}\label{In Specific Context Module to Trivial Comodule Gives Central Functor with Explicit Centrality and Inverse}
    There is a braided monoidal, fully faithful functor \begin{equation}\label{Br Mon FF Functor}\generalHopfAlgebroidWithSourceEqualsTarget\mmod \to \mathcal{Z}(\generalHopfAlgebroidWithSourceEqualsTarget\text{-comod})\end{equation} to the Drinfeld center of the category of $\generalHopfAlgebroidWithSourceEqualsTarget$-comodules such that the underlying comodule of $M$ is the $\commutativeBaseforGeneralHopfAlgebroid$-module $M$ (viewed as a trivial $\generalHopfAlgebroidWithSourceEqualsTarget$-comodule) and the central structure $Z_C$ has inverse given by the formula $c \otimes m \mapsto c_{(0)}m \otimes c_{(1)}$ for any $c \in C \in \generalHopfAlgebroidWithSourceEqualsTarget\text{-comod}$ and $m \in M \in \generalHopfAlgebroidWithSourceEqualsTarget\mmod$. If $\generalHopfAlgebroidWithSourceEqualsTarget$ is a graded Hopf algebroid, then there moreover is an analogous functor to \labelcref{Br Mon FF Functor} for the category of graded modules and comodules.
\end{Theorem}

The remainder of \cref{Hopf Algebroids and Central Functors Section} will be devoted to the proof of \cref{In Specific Context Module to Trivial Comodule Gives Central Functor with Explicit Centrality and Inverse}. In proving \cref{In Specific Context Module to Trivial Comodule Gives Central Functor with Explicit Centrality and Inverse}, we will also give an explicit formula \eqref{Defining Equation for ZMC} for the map $Z_C$, using the structure of comodules for Hopf algebroids discussed in \cref{Notation for Comodules of BiWhittaker Differential Operators}. 

\begin{Remark}
The statement of \cref{In Specific Context Module to Trivial Comodule Gives Central Functor with Explicit Centrality and Inverse} can be viewed as a variant of the known result that, for any finite abelian group $A$, there is a fully faithful functor, symmetric monoidal functor $Z_A: \mathcal{O}(A)\modwithdash \to \Z(\text{Rep}(A))$. While the functor in \cref{In Specific Context Module to Trivial Comodule Gives Central Functor with Explicit Centrality and Inverse} is constructed completely analogously to $Z_A$, new technical considerations occur in the proof that this functor is defined and has the desired properties in the Hopf algebroid setting. Therefore, we provide a complete proof of \cref{In Specific Context Module to Trivial Comodule Gives Central Functor with Explicit Centrality and Inverse} below.
\end{Remark}
\subsection{Notation for monoidal structure on bialgebroid}\label{Notation for Monoidal Structure on Bialgebroid}
Recall that, for any $C \in \generalHopfAlgebroidWithSourceEqualsTarget\text{-comod}$, in addition to the underlying (left) $\commutativeBaseforGeneralHopfAlgebroid$-module structure there is also a right $\commutativeBaseforGeneralHopfAlgebroid$-module structure on $C$, given by the formula $cz = \epsilon_L(c_{(0)}z)c_{(1)}$, where we use Sweedler's notation and where $\epsilon_L$ is the counit for the left bialgebroid of $\generalHopfAlgebroidWithSourceEqualsTarget$ such that the induced functor $\generalHopfAlgebroidWithSourceEqualsTarget \to \commutativeBaseforGeneralHopfAlgebroid\text{-bimod}$ is monoidal \cite[Lemma 3.17]{BohmHopfAlgebroids}. Explicitly, in Sweedler's notation, the $\generalHopfAlgebroidWithSourceEqualsTarget$-comodule structure on the bimodule tensor product $B \otimes_{\commutativeBaseforGeneralHopfAlgebroid} C$ is given by the formula 
\begin{equation}\label{General Form for Monoidal Structure on Comodules for Bialgebroid}
b \otimes c \mapsto b_{(0)}c_{(0)} \otimes b_{(1)} \otimes c_{(1)}\end{equation} for any $b \in B, c \in C$, see \cite[Theorem 3.18]{BohmHopfAlgebroids}. If $B, C$ are equipped with compatible gradings, the tensor product also upgrades to a graded $\generalHopfAlgebroidWithSourceEqualsTarget$-comodule.

As $\generalHopfAlgebroidWithSourceEqualsTarget$ is a Hopf algebroid, the category of $\generalHopfAlgebroidWithSourceEqualsTarget$-modules also acquires a monoidal structure such that restriction of scalars along the ring map \[\commutativeBaseforGeneralHopfAlgebroid \otimes_K \commutativeBaseforGeneralHopfAlgebroid_{\text{op}} \to \generalHopfAlgebroidWithSourceEqualsTarget\] gives a monoidal functor to the category of $\commutativeBaseforGeneralHopfAlgebroid$-bimodules \cite[Theorem 3.13]{BohmHopfAlgebroids}. Explicitly, given $\generalHopfAlgebroidWithSourceEqualsTarget$-modules $M$, $N$ we may define a $\generalHopfAlgebroidWithSourceEqualsTarget$-module structure on the tensor product $M \otimes_{\commutativeBaseforGeneralHopfAlgebroid} N$ by the formula\begin{equation}\label{Module Structure on Tensor Product of Modules for Left Bialgebroid}u(m \otimes n) := u_{(0)}m \otimes u_{(1)}n\end{equation} where $m \in M, n \in N$, and $u \in \generalHopfAlgebroidWithSourceEqualsTarget$. Our assumption that $s$ is the composite of the maps in \eqref{Composition of Equals and Target for Left Bialgebroid} implies that this restriction of scalars naturally factors through the monoidal subcategory of $\commutativeBaseforGeneralHopfAlgebroid$\textit{-modules} in $\commutativeBaseforGeneralHopfAlgebroid$-bimodules. Moreover, $\generalHopfAlgebroidWithSourceEqualsTarget$ is cocommutative by assumption, and so from the formula \eqref{Module Structure on Tensor Product of Modules for Left Bialgebroid} it immediately follows that restriction of scalars along $s$ upgrades to a symmetric monoidal functor to the category of $\commutativeBaseforGeneralHopfAlgebroid$-modules. As in the comodule case, if $M, N$ are equipped with compatible gradings, then $M \otimes_{\commutativeBaseforGeneralHopfAlgebroid} N$ is a graded $\generalHopfAlgebroidWithSourceEqualsTarget$-module.

This discussion in particular shows that we may view any $\generalHopfAlgebroidWithSourceEqualsTarget$-module and any $\generalHopfAlgebroidWithSourceEqualsTarget$-comodule both as $\commutativeBaseforGeneralHopfAlgebroid$-bimodules, and \textit{any unadorned tensor product hereafter is taken with respect to the bimodule tensor product.} Additionally, if $P$ and $Q$ are any $\commutativeBaseforGeneralHopfAlgebroid$-bimodules, we use the symbol $P \otimes_{\ell, \ell} Q$ to denote the tensor product of the corresponding left $\commutativeBaseforGeneralHopfAlgebroid$-module structures. We similarly use the notation $P \otimes_{r, r} Q$, $P \otimes_{\ell, r} Q$, and  $P \otimes_{r, \ell} Q = P \otimes Q$.  

\subsection{Comodules of bialgebroid vs. comodules of Hopf algebroid}\label{Notation for Comodules of BiWhittaker Differential Operators}By our flatness assumption, \cite[Theorem 4.8]{BohmHopfAlgebroids} immediately implies the following:
\begin{Proposition}
For any comodule $(C, \rho_L)$ for the left bialgebroid $\generalHopfAlgebroidWithSourceEqualsTarget$, there exists left comodule structure $\rho_R$ for the right bialgebroid for $\generalHopfAlgebroidWithSourceEqualsTarget$ 
such that $\rho_R$ is a comodule map for the left bialgebroid for $\generalHopfAlgebroidWithSourceEqualsTarget$ map and $\rho_L$ is a comodule map for the right bialgebroid of $\generalHopfAlgebroidWithSourceEqualsTarget$, i.e. \begin{equation}\label{CoModule Map for One is Comodule Map for Other}(\text{id}_{\generalHopfAlgebroidWithSourceEqualsTarget}\otimes \rho_R) \circ \rho_L = (\Delta_L \otimes \text{id}_{\generalHopfAlgebroidWithSourceEqualsTarget}) \circ \rho_R\text{, } (\text{id}_{\generalHopfAlgebroidWithSourceEqualsTarget} \otimes \rho_L) \circ \rho_R = (\Delta_R \otimes \text{id}_{\generalHopfAlgebroidWithSourceEqualsTarget}) \circ \rho_L  
\end{equation}
and moreover any morphism of $\generalHopfAlgebroidWithSourceEqualsTarget$-comodules is automatically compatible with $\Delta_R$.
\end{Proposition}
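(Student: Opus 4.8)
The plan is to deduce the Proposition directly from the general theory of comodules over Hopf algebroids, once the hypotheses are matched to ours. Recall that a comodule over the Hopf algebroid $\generalHopfAlgebroidWithSourceEqualsTarget$, in the sense of \cite[Section 4]{BohmHopfAlgebroids}, is by definition a module $C$ equipped with a left comodule structure $\rho_L$ for the constituent left bialgebroid \emph{together with} a left comodule structure $\rho_R$ for the constituent right bialgebroid, subject to precisely the two compatibility identities recorded in \eqref{CoModule Map for One is Comodule Map for Other}, and a morphism of such objects is a map compatible with both coactions. Thus the entire content of the statement is that, in our situation, the forgetful functor from Hopf algebroid comodules to left bialgebroid comodules is an isomorphism of categories, and this is exactly the conclusion of \cite[Theorem 4.8]{BohmHopfAlgebroids}.

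First I would check that the input hypotheses of \cite[Theorem 4.8]{BohmHopfAlgebroids} hold for $\generalHopfAlgebroidWithSourceEqualsTarget$. The key requirement is that $\generalHopfAlgebroidWithSourceEqualsTarget$ be flat as a module over its base algebra $\commutativeBaseforGeneralHopfAlgebroid$ with respect to each of left and right multiplication by the image of the source (equivalently target) map; this is precisely hypothesis (i) of the standing assumptions, which for the case $\generalHopfAlgebroidWithSourceEqualsTarget=\biWhittakerDifferentialOperatorsonG$ was established in \cref{Dpsirhbar and BiWhitHbar Are Flat} and packaged into \cref{biWhit is Hopf Algebroid}. The other ingredient is the bijectivity of the antipode, which is part of our standing hypotheses (and which, for $\biWhittakerDifferentialOperatorsonG$, follows from the corresponding property of the anti-automorphism $S$ of $\differentialOperatorsOnG$ transported along the algebra isomorphisms \eqref{Algebra Isos For Different Descriptions of biWhittaker}).

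With the hypotheses in place, \cite[Theorem 4.8]{BohmHopfAlgebroids} produces, for every left $\generalHopfAlgebroidWithSourceEqualsTarget$-comodule $(C,\rho_L)$, a canonical left comodule structure $\rho_R$ over the right bialgebroid satisfying both equalities in \eqref{CoModule Map for One is Comodule Map for Other}; concretely, $\rho_R$ is built from $\rho_L$ by inserting the bijective antipode, and flatness is what guarantees that the relevant cotensor products behave well enough for the two coactions to be mutually compatible. The same theorem shows the assignment $(C,\rho_L)\mapsto(C,\rho_L,\rho_R)$ is functorial and that any morphism of left $\generalHopfAlgebroidWithSourceEqualsTarget$-comodules is automatically a morphism of the associated Hopf algebroid comodules, hence in particular compatible with $\Delta_R$; this gives the final clause. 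The graded case is identical, tracking the grading, which is respected by all structure maps of $\generalHopfAlgebroidWithSourceEqualsTarget$ by assumption.

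The main (and essentially only) obstacle is bookkeeping of conventions: Böhm's Hopf algebroid permits two distinct base algebras and states comodule structures as left comodules over the left and right bialgebroids with particular placements of the coacting tensor factor, whereas we have identified both base algebras with $\commutativeBaseforGeneralHopfAlgebroid$ via \eqref{Composition of Equals and Target for Left Bialgebroid}, equating the source map of the left bialgebroid with the target map of the right. I would therefore devote the bulk of the write-up to reconciling these identifications and confirming that our flatness and cocommutativity hypotheses are stated on the correct side, after which \cite[Theorem 4.8]{BohmHopfAlgebroids} applies directly.
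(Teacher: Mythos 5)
Your proposal is correct and follows exactly the paper's own route: the paper's entire proof is the observation that the standing flatness assumption lets one apply \cite[Theorem 4.8]{BohmHopfAlgebroids} directly. Your additional care with Böhm's conventions (two base algebras identified with $\commutativeBaseforGeneralHopfAlgebroid$, placement of the coacting factor) is a reasonable elaboration of the same argument rather than a different approach.
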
 
For any $C \in \generalHopfAlgebroidWithSourceEqualsTarget\text{-comod}$, we use superscripts to refer to the coaction of $\rho_R$, i.e. we set $\rho_R(c) = c^{(0)} \otimes c^{(1)}$ and, as above, we use subscripts to refer to the coaction $\rho_L$. 

\subsection{Construction of central structure $Z_C$}\label{Construction of Central Structure} 
For a fixed $C \in \generalHopfAlgebroidWithSourceEqualsTarget\comod$ and $M \in \generalHopfAlgebroidWithSourceEqualsTarget\modwithdash$, we define a map of $\commutativeBaseforGeneralHopfAlgebroid$-bimodules via the formula $Z_C: M \otimes C \xrightarrow{} C \otimes M$ via the formula
\begin{equation}\label{Defining Equation for ZMC}
    Z_C(m \otimes c) := c^{(1)} \otimes A(c^{(0)})m
\end{equation} where $A$ is the antipode. When we wish to make the dependence on the $\generalHopfAlgebroidWithSourceEqualsTarget$-module $M$ clear, we also denote by $Z_C^M$.

Similarly we may define a map of $\commutativeBaseforGeneralHopfAlgebroid$-bimodules $C \otimes M \xrightarrow{} M \otimes C$ via the formula \begin{equation}\label{Inverse to ZMc in General}c \otimes m \mapsto c_{(0)}m \otimes c_{(1)}\end{equation} which we denote by $Z_C^{-1}$ or, when we wish to make the dependence on $M$ clear, we denote it by $Z_C^{M, -1}$. This notation is justified by the following claim:

\begin{Proposition}\label{Inverse to ZMC Has Explicit Formula}
The map $Z_C^M$ is graded if $\generalHopfAlgebroidWithSourceEqualsTarget$ is graded, and the inverse to $Z_C^M$ is given by the formula \eqref{Inverse to ZMc in General}. 
\end{Proposition}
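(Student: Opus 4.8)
The plan is to check that \eqref{Inverse to ZMc in General} does define a two-sided inverse of the map $Z_C^M$ from \eqref{Defining Equation for ZMC}, and that $Z_C^M$ respects gradings whenever $\generalHopfAlgebroidWithSourceEqualsTarget$ is graded; the gradedness claim is immediate and is disposed of last. First I would record that the assignment $c\otimes m\mapsto c_{(0)}m\otimes c_{(1)}$ is well defined as a morphism of $\commutativeBaseforGeneralHopfAlgebroid$-bimodules $C\otimes M\to M\otimes C$: this holds because $\rho_L$ is a morphism of $\commutativeBaseforGeneralHopfAlgebroid$-bimodules and $M$ is a $\generalHopfAlgebroidWithSourceEqualsTarget$-module, so the relevant $\commutativeBaseforGeneralHopfAlgebroid$-balancing relations hold once the conventions of \cref{Notation for Monoidal Structure on Bialgebroid} and \cref{Notation for Comodules of BiWhittaker Differential Operators} are unwound. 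Granting this, it suffices to show that the two composites with $Z_C^M$ are the respective identities. The tools are: the two mutual-compatibility identities \eqref{CoModule Map for One is Comodule Map for Other} between the left coaction $\rho_L$ and the right coaction $\rho_R$, provided by \cite[Theorem 4.8]{BohmHopfAlgebroids} using the $\commutativeBaseforGeneralHopfAlgebroid$-flatness of $\generalHopfAlgebroidWithSourceEqualsTarget$ recorded in \cref{biWhit is Hopf Algebroid}; the Hopf algebroid antipode axioms of \cite[Definition 4.1]{BohmHopfAlgebroids}, which give $A(h_{(1)})h_{(2)}$ and $h^{(1)}A(h^{(2)})$ as the images under the source maps of the two counits $\epsilon_R,\epsilon_L$; the cocommutativity of both coproducts $\Delta_L$ and $\Delta_R$ from \cref{biWhit is Hopf Algebroid}; and the comodule counit axioms $\epsilon_L(c_{(0)})c_{(1)}=c$ and $\epsilon_R(c^{(0)})c^{(1)}=c$.

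For the composite $Z_C^M\circ Z_C^{M,-1}$ I would compute $Z_C^M(c_{(0)}m\otimes c_{(1)})=(c_{(1)})^{(1)}\otimes A\bigl((c_{(1)})^{(0)}\bigr)c_{(0)}m$, then rewrite the iterated coaction $c_{(0)}\otimes(c_{(1)})^{(0)}\otimes(c_{(1)})^{(1)}$ using the first identity of \eqref{CoModule Map for One is Comodule Map for Other} as $c^{(0)}{}_{(1)}\otimes c^{(0)}{}_{(2)}\otimes c^{(1)}$, so that the expression becomes $c^{(1)}\otimes A(c^{(0)}{}_{(2)})\,c^{(0)}{}_{(1)}\,m$. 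Next I would apply cocommutativity of $\Delta_L$ to replace $A(h_{(2)})h_{(1)}$ by $A(h_{(1)})h_{(2)}$ with $h=c^{(0)}$, apply the antipode axiom to identify this with the source-map image of $\epsilon_R(c^{(0)})$, which acts on $M$ as the scalar $\epsilon_R(c^{(0)})\in\commutativeBaseforGeneralHopfAlgebroid$, and finally slide this scalar across the tensor product and collapse $\epsilon_R(c^{(0)})c^{(1)}=c$ to recover $c\otimes m$. The verification that $Z_C^{M,-1}\circ Z_C^M=\mathrm{id}$ is entirely parallel: it uses the second identity of \eqref{CoModule Map for One is Comodule Map for Other}, cocommutativity of $\Delta_R$, the antipode axiom for $h^{(1)}A(h^{(2)})$, and the counit axiom for $\rho_L$. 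The gradedness of $Z_C^M$ then follows at once, since in the graded setting $\rho_R$, $A$, and the action of $\generalHopfAlgebroidWithSourceEqualsTarget$ on $M$ are all degree-preserving, and \eqref{Defining Equation for ZMC} is a composite of such maps.

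I expect the only real obstacle to be the $\commutativeBaseforGeneralHopfAlgebroid$-module bookkeeping: one must track, at each step, which of the left/right $\commutativeBaseforGeneralHopfAlgebroid$-actions on each tensor factor is in play, both to justify that the Sweedler expressions land in the tensor products over $\commutativeBaseforGeneralHopfAlgebroid$ for which \cite[Definition 4.1]{BohmHopfAlgebroids} and \eqref{CoModule Map for One is Comodule Map for Other} are stated, and to justify moving the scalars $\epsilon_L(c_{(0)})$ and $\epsilon_R(c^{(0)})$ across the (unadorned, i.e. $\otimes_{r,\ell}$) tensor products and combining them with the bare comodule factor via the counit axioms. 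Once the conventions fixed in \cref{Notation for Monoidal Structure on Bialgebroid} and \cref{Notation for Comodules of BiWhittaker Differential Operators} are in force this is routine, but it is where essentially all the care is needed; no use of bijectivity of $A$ is required for this particular statement.
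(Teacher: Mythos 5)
Your proof is correct and uses the same essential inputs as the paper's: the mixed coassociativity identities \eqref{CoModule Map for One is Comodule Map for Other}, the antipode axiom of \cite[Definition 4.1]{BohmHopfAlgebroids}, and the comodule counit axioms. The differences are minor. The paper does not carry a general $m\in M$ through the Sweedler calculus: it first reduces to $M=\generalHopfAlgebroidWithSourceEqualsTarget$ via the diagram \eqref{Commutative Diagram Reducing General Module Claims to Module of Algebra Itself}, and then uses right $\generalHopfAlgebroidWithSourceEqualsTarget$-linearity of both maps to check the composites only on elements $1\otimes c$ and $c\otimes 1$; your direct computation on $m\otimes c$ is equivalent but places a bit more weight on the bimodule bookkeeping you correctly flag as the delicate point. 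The other difference is that your computation produces the product in the order $A\bigl((c^{(0)})_{(2)}\bigr)(c^{(0)})_{(1)}$, so you must invoke cocommutativity of $\Delta_L$ (resp.\ $\Delta_R$ for the other composite) before applying the antipode axiom, whereas the paper's rewriting of $Z_C^{\generalHopfAlgebroidWithSourceEqualsTarget}$ as the composite \eqref{Inverse to Composite Giving Quantization of j x mapsto j jx} delivers the factors in the order $A\bigl((c^{(0)})_{(1)}\bigr)(c^{(0)})_{(2)}$ matching \cite[Definition 4.1(iv)]{BohmHopfAlgebroids} directly; since cocommutativity is a standing hypothesis on $\generalHopfAlgebroidWithSourceEqualsTarget$, your use of it is legitimate and the two arguments are interchangeable. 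The treatment of gradedness agrees with the paper's.
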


Before proving \cref{Inverse to ZMC Has Explicit Formula}, we make one (non-circular) observation which will also be used later. Observe that the map given by the formula \eqref{Inverse to ZMc in General} fits into the following commutative diagram:
\begin{equation}\label{Commutative Diagram Reducing General Module Claims to Module of Algebra Itself}
\xymatrix@R+2em@C+2em{
C \otimes_{R, L} \generalHopfAlgebroidWithSourceEqualsTarget \otimes_{\generalHopfAlgebroidWithSourceEqualsTarget} M \ar[r]^{Z_C^{\generalHopfAlgebroidWithSourceEqualsTarget, -1} \otimes \text{id}_M} \ar[d]_{\sim} & \generalHopfAlgebroidWithSourceEqualsTarget \otimes_{L, L} C \otimes_{\generalHopfAlgebroidWithSourceEqualsTarget} M \ar[d]_{\sim}\\
C \otimes_{\commutativeBaseforGeneralHopfAlgebroid} M \ar[r]^{Z_C^{M, -1}} & M \otimes_{\commutativeBaseforGeneralHopfAlgebroid} C
}
\end{equation} where the downward arrows are the maps induced by the $\generalHopfAlgebroidWithSourceEqualsTarget$-module structure,\footnote{In particular, the rightmost arrow is given by the formula $b \otimes c \otimes m \mapsto bm \otimes c$.} and a similar diagram using the maps $Z_C^M$ and $Z_C^{\generalHopfAlgebroidWithSourceEqualsTarget}$ commutes.
\begin{proof}[Proof of \cref{Inverse to ZMC Has Explicit Formula}]
Using the commutative diagram \eqref{Commutative Diagram Reducing General Module Claims to Module of Algebra Itself} and the analogous diagram for $Z_C^M$, it suffices to show this claim when $M = \generalHopfAlgebroidWithSourceEqualsTarget$ itself. Letting $\mu$ denote the multiplication morphism in $\generalHopfAlgebroidWithSourceEqualsTarget$, one readily checks that $Z^{\generalHopfAlgebroidWithSourceEqualsTarget}_C$ identifies with the composite \begin{equation}\label{Inverse to Composite Giving Quantization of j x mapsto j jx}\generalHopfAlgebroidWithSourceEqualsTarget \otimes_{\ell, \ell} C \xrightarrow{A \otimes \text{id}} \generalHopfAlgebroidWithSourceEqualsTarget \otimes_{r, \ell} C \xrightarrow{\text{id} \otimes \rho_R} \generalHopfAlgebroidWithSourceEqualsTarget \otimes_{r, \ell} \generalHopfAlgebroidWithSourceEqualsTarget \otimes_{r, r} C \xrightarrow{\mu \otimes \text{id}}\generalHopfAlgebroidWithSourceEqualsTarget \otimes_{r, r} C \xrightarrow{A \otimes \text{id}} \generalHopfAlgebroidWithSourceEqualsTarget \otimes_{\ell, r} C   \xrightarrow{\text{swap}}C \otimes_{r, \ell} \generalHopfAlgebroidWithSourceEqualsTarget  \end{equation}
    which is in particular right $\generalHopfAlgebroidWithSourceEqualsTarget$-linear. When $M = \generalHopfAlgebroidWithSourceEqualsTarget$, the map \eqref{Inverse to ZMc in General} is right $\generalHopfAlgebroidWithSourceEqualsTarget$-linear as well, as it is induced by the comultiplication map for the left bialgebroid. Therefore it suffices to show that, for any $c \in C$, the formula \eqref{Inverse to ZMc in General} provides a right inverse to $Z_{C}^{\generalHopfAlgebroidWithSourceEqualsTarget}$ on objects of the form $1 \otimes c$, or in other words \begin{equation}\label{Half of Inverse Claim for Hopf Algebroids}Z^{\generalHopfAlgebroidWithSourceEqualsTarget}_C(c_{(0)} \otimes c_{(1)}) = c \otimes 1,\end{equation} and that the formula \eqref{Inverse to ZMc in General} also gives a left inverse to $Z_C^{\generalHopfAlgebroidWithSourceEqualsTarget}$ on objects of the form $1 \otimes c$ for $c \in C$, i.e. if we apply \eqref{Inverse to ZMc in General} to $Z^{\generalHopfAlgebroidWithSourceEqualsTarget}_C(1 \otimes c)$ we obtain $1 \otimes c$. We have equalities \[(\text{swap} \circ Z_C^{\generalHopfAlgebroidWithSourceEqualsTarget}(c_{(0)} \otimes c_{(1)})) = (A \otimes \text{id})(\mu \otimes \text{id})(\text{id} \otimes \rho_R)(A \otimes \text{id}) \rho_L(c) = (A \otimes \text{id})(\mu \otimes \text{id})(A \otimes \rho_R)\rho_L(c)\] \[= (A \otimes \text{id})(\mu \otimes \text{id})(A \otimes \text{id} \otimes \text{id})(\text{id} \otimes \rho_R) \rho_L(c) = (A \otimes \text{id})(\mu \otimes \text{id})(A \otimes \text{id} \otimes \text{id})(\Delta_t \otimes \text{id}_{\generalHopfAlgebroidWithSourceEqualsTarget})\circ \rho_R(c)\] \[= (A \otimes \text{id})\circ (\epsilon_s \otimes \text{id}) \circ \rho_R(c) = (A \otimes \textit{id})(1 \otimes c) = 1 \otimes c\] where the first step uses the description of $Z_C$ as in  \eqref{Inverse to Composite Giving Quantization of j x mapsto j jx} and the definition of $\rho_L(c)$, the second and third steps follow from the definition of the tensor product of maps, the fourth step uses \eqref{CoModule Map for One is Comodule Map for Other}, the fifth step is by the definition of the inverse in a Hopf algebroid, see for example \cite[Definition 4.1(iv)]{BohmHopfAlgebroids}, the sixth step uses that $C$ is a comodule and the seventh step uses the fact the antipode is an algebra morphism. Thus applying the swap map to both sides of this equality in \eqref{Half of Inverse Claim for Hopf Algebroids}. The fact that the formula \eqref{Inverse to ZMc in General} provides a left inverse to $Z_C$ is completely analogous. The fact that $Z_C^M$ is graded follows either by direct computation or by the observation that $Z_C^{M, -1}$ is obviously graded.
\end{proof}

Now, using cocommutativity, we may verify that $Z_C$ actually gives a map of $\generalHopfAlgebroidWithSourceEqualsTarget$-comodules: 

\begin{Proposition}\label{Composite Giving Quantization of j x mapsto j jx is left comodule map}
    Equipping $\generalHopfAlgebroidWithSourceEqualsTarget$ with a trivial left $\generalHopfAlgebroidWithSourceEqualsTarget$-comodule structure, the map $Z_C$ is a $\generalHopfAlgebroidWithSourceEqualsTarget$-comodule map. 
\end{Proposition}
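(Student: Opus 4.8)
The plan is to derive the comodule-map property from the explicit formula for $Z_C^{-1}$ recorded in \cref{Inverse to ZMC Has Explicit Formula} together with the cocommutativity of the coring structure on $\generalHopfAlgebroidWithSourceEqualsTarget$. By \cref{Inverse to ZMC Has Explicit Formula}, $Z_C = Z_C^M$ is an isomorphism of $\commutativeBaseforGeneralHopfAlgebroid$-bimodules with inverse $Z_C^{-1}\colon C \otimes M \to M \otimes C$, $c \otimes m \mapsto c_{(0)}m \otimes c_{(1)}$, given by \eqref{Inverse to ZMc in General}; since the inverse of an isomorphism of comodules is again a morphism of comodules, it suffices to show that $Z_C^{-1}$ is a morphism of left $\generalHopfAlgebroidWithSourceEqualsTarget$-comodules, where $M$ --- in particular $\generalHopfAlgebroidWithSourceEqualsTarget$ regarded as a module over itself, which is the case named in the statement --- is equipped with the trivial comodule structure obtained by restriction of scalars along the source map. (Alternatively one may first reduce to $M = \generalHopfAlgebroidWithSourceEqualsTarget$ via the commutative square \eqref{Commutative Diagram Reducing General Module Claims to Module of Algebra Itself}, as in the proof of \cref{Inverse to ZMC Has Explicit Formula}, but this is not needed.)

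Next I would unwind the two coactions. Because $M$ is a trivial comodule, formula \eqref{General Form for Monoidal Structure on Comodules for Bialgebroid} gives $\rho_{C \otimes M}(c \otimes m) = c_{(0)} \otimes c_{(1)} \otimes m$ and $\rho_{M \otimes C}(m \otimes c) = c_{(0)} \otimes m \otimes c_{(1)}$, all tensor products being over $\commutativeBaseforGeneralHopfAlgebroid$ in the sense fixed in \cref{Notation for Monoidal Structure on Bialgebroid}. Composing $Z_C^{-1}$ with $\rho_{M\otimes C}$ sends $c \otimes m$ to $(c_{(1)})_{(0)} \otimes c_{(0)}m \otimes (c_{(1)})_{(1)}$, while composing $\rho_{C\otimes M}$ with $\mathrm{id}_{\generalHopfAlgebroidWithSourceEqualsTarget}\otimes Z_C^{-1}$ sends $c \otimes m$ to $c_{(0)} \otimes (c_{(1)})_{(0)}m \otimes (c_{(1)})_{(1)}$, so the assertion reduces to the identity
\[
  c_{(0)} \otimes (c_{(1)})_{(0)}m \otimes (c_{(1)})_{(1)}
  =
  (c_{(1)})_{(0)} \otimes c_{(0)}m \otimes (c_{(1)})_{(1)}
\]
in $\generalHopfAlgebroidWithSourceEqualsTarget \otimes M \otimes C$.

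To prove this identity I would invoke coassociativity of $\rho_L$, namely $(\mathrm{id}_{\generalHopfAlgebroidWithSourceEqualsTarget}\otimes\rho_L)\circ\rho_L = (\Delta_L \otimes \mathrm{id}_C)\circ\rho_L$: writing $\Delta_L(j) = j' \otimes j''$, it turns $c_{(0)} \otimes (c_{(1)})_{(0)} \otimes (c_{(1)})_{(1)}$ into $(c_{(0)})' \otimes (c_{(0)})'' \otimes c_{(1)}$, so the left-hand side above becomes $(c_{(0)})' \otimes (c_{(0)})''m \otimes c_{(1)}$ and, after transposing the first two $\generalHopfAlgebroidWithSourceEqualsTarget$-factors, the right-hand side becomes $(c_{(0)})'' \otimes (c_{(0)})'m \otimes c_{(1)}$. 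These two expressions coincide because $\Delta_L$ is cocommutative: this is condition (ii) in the standing hypotheses on $\generalHopfAlgebroidWithSourceEqualsTarget$, established for $\biWhittakerDifferentialOperatorsonG$ in \cref{biWhit is Hopf Algebroid}, so that $j' \otimes j'' = j'' \otimes j'$ in $\generalHopfAlgebroidWithSourceEqualsTarget \otimes_{\commutativeBaseforGeneralHopfAlgebroid}\generalHopfAlgebroidWithSourceEqualsTarget$ for $j = c_{(0)}$, whence the claim upon applying the action map $a \otimes b \mapsto a \otimes bm$ with the spectator factor $c_{(1)}$ attached. I expect the only point demanding genuine care to be the bookkeeping with $\commutativeBaseforGeneralHopfAlgebroid$-bimodule balancings --- in particular, the legitimacy of transposing the first two $\generalHopfAlgebroidWithSourceEqualsTarget$-factors and of the trivial-comodule simplifications above --- which is underwritten by the hypothesis \eqref{Composition of Equals and Target for Left Bialgebroid} that the source map equals the opposite target map, exactly as used throughout \cref{Notation for Monoidal Structure on Bialgebroid}. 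Everything else is a routine Sweedler computation parallel to the one in the proof of \cref{Inverse to ZMC Has Explicit Formula}. A fully computational alternative would check the comodule-map property for $Z_C$ directly from its factorization \eqref{Inverse to Composite Giving Quantization of j x mapsto j jx} and the exchange relation \eqref{CoModule Map for One is Comodule Map for Other} between $\rho_L$ and $\rho_R$, but the route through $Z_C^{-1}$ should be shorter.
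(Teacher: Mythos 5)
Your proposal is correct and follows essentially the same route as the paper: reduce to showing that the explicit inverse $Z_C^{-1}(c\otimes m)=c_{(0)}m\otimes c_{(1)}$ is a comodule map, expand both composites in Sweedler notation using the tensor-product coaction formula \eqref{General Form for Monoidal Structure on Comodules for Bialgebroid}, and conclude from coassociativity of $\rho_L$ together with cocommutativity of $\Delta_L$. The only (harmless) difference is that the paper first reduces to $M=\generalHopfAlgebroidWithSourceEqualsTarget$ via right $\generalHopfAlgebroidWithSourceEqualsTarget$-linearity and checks on elements $c\otimes 1$, whereas you carry out the same Sweedler computation directly for a general module $M$.
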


\begin{proof}
    It suffices to show $Z_C^{-1}$ is a $\generalHopfAlgebroidWithSourceEqualsTarget$-comodule map. As above using \eqref{Inverse to ZMc in General} it suffices to prove this in the case $M = \generalHopfAlgebroidWithSourceEqualsTarget$. Let $P_L$ denote the induced comodule structure on $\generalHopfAlgebroidWithSourceEqualsTarget \otimes C$. We wish to show that \begin{equation}\label{Equality to Show ZMC is a Comodule Map}P_L \circ Z_C = (\text{id}_{\generalHopfAlgebroidWithSourceEqualsTarget} \otimes Z_C) \circ P_L\end{equation} for any comodule $C$. Since both maps are right $\generalHopfAlgebroidWithSourceEqualsTarget$-linear, it suffices to show this on objects of the form $c \otimes 1$. In this case, we see that \[P_L \circ Z_C(c \otimes 1) = P_L(c_{(0)} \otimes c_{(1)}) = c_{(1, 0)} \otimes c_{(0)} \otimes c_{(1)} = c_{(0, 1)} \otimes c_{(0, 0)}  \otimes c_{(1)}\] where the second equality uses the formula \eqref{General Form for Monoidal Structure on Comodules for Bialgebroid} for the coaction on the tensor product and the final equality is obtained by swapping the first two factors in the equality \[c_{(0)} \otimes c_{(1, 0)} \otimes c_{(1, 1)} = c_{(0, 0)} \otimes c_{(0, 1)} \otimes c_{(1)}\] which in turn holds since $C$ is a comodule. Similarly using \eqref{General Form for Monoidal Structure on Comodules for Bialgebroid} we have that \[(\text{id}_{\generalHopfAlgebroidWithSourceEqualsTarget} \otimes Z_C) \circ P_L(c \otimes 1) = (\text{id}_{\generalHopfAlgebroidWithSourceEqualsTarget} \otimes Z_C)(c_{(0)} \otimes 1 \otimes c_{(1)}) = c_{(0)} \otimes c_{(1, 0)} \otimes c_{(1, 1)} = c_{(0, 0)} \otimes c_{(0, 1)} \otimes c_{(1)}\] and since $\generalHopfAlgebroidWithSourceEqualsTarget$ is cocommutative assumption we obtain the equality in \eqref{Equality to Show ZMC is a Comodule Map} as desired.
\end{proof}

\subsection{Construction of functor}
Using the isomorphism in \cref{Construction of Central Structure}, we construct the functor in \cref{In Specific Context Module to Trivial Comodule Gives Central Functor with Explicit Centrality and Inverse}: 

\begin{Corollary}\label{Construction of Overline Ngo as Non Braided Functor}
    The assignment $M \mapsto (M, Z_C^M)$ upgrades to a functor $\generalHopfAlgebroidWithSourceEqualsTarget\modwithdash \to \mathcal{Z}(\generalHopfAlgebroidWithSourceEqualsTarget\text{-comod})$, where we view $M$ as a trivial $\generalHopfAlgebroidWithSourceEqualsTarget$-comodule as above. If $\generalHopfAlgebroidWithSourceEqualsTarget$ has a grading compatible with the algebra structure, coalgebra structures, and antipode, then there is a graded analogue.
\end{Corollary}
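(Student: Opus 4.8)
The plan is to verify directly, using the explicit formulas already in hand, that $M \mapsto (M, Z^M_C)$ lands in the Drinfeld center $\mathcal{Z}(\generalHopfAlgebroidWithSourceEqualsTarget\comod) = \mathcal{Z}_{\mathrm{id}}(\generalHopfAlgebroidWithSourceEqualsTarget\comod)$ of \cref{Generalities for Monoidal Functor Subsection}, that $\generalHopfAlgebroidWithSourceEqualsTarget$-module maps induce morphisms there, that the assignment respects identities and composition, and that everything is compatible with gradings. By \cref{Inverse to ZMC Has Explicit Formula}, for each comodule $C$ the map $Z^M_C$ is an isomorphism, with inverse given by the formula \eqref{Inverse to ZMc in General}, and by \cref{Composite Giving Quantization of j x mapsto j jx is left comodule map} it is a morphism of $\generalHopfAlgebroidWithSourceEqualsTarget$-comodules. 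So what remains for ``$(M, Z^M_{(\td)})$ is an object of the center'' is: naturality of $Z^M_C$ in the comodule variable $C$; the hexagon-type compatibility \eqref{Central Structure is Compatible with Tensor Structure} with the tensor product of comodules; and the (immediate) compatibility with the unit comodule $\commutativeBaseforGeneralHopfAlgebroid$.

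For naturality I would argue with the inverse $Z^{M,-1}_C$, which by \eqref{Inverse to ZMc in General} sends $c \otimes m \mapsto c_{(0)}m \otimes c_{(1)}$. Given a comodule map $g \colon C \to C'$, the two composites $Z^{M,-1}_{C'} \circ (g \otimes \mathrm{id}_M)$ and $(\mathrm{id}_M \otimes g) \circ Z^{M,-1}_C$ agree because $g$ intertwines the left coactions, i.e. $g(c)_{(0)} \otimes g(c)_{(1)} = c_{(0)} \otimes g(c_{(1)})$. As in the proof of \cref{Inverse to ZMC Has Explicit Formula}, the commutative square \eqref{Commutative Diagram Reducing General Module Claims to Module of Algebra Itself} reduces this (and each of the coherence checks below) for a general $\generalHopfAlgebroidWithSourceEqualsTarget$-module $M$ to the case $M = \generalHopfAlgebroidWithSourceEqualsTarget$, where the relevant maps are right $\generalHopfAlgebroidWithSourceEqualsTarget$-linear and it suffices to test on elements of the form $1 \otimes c$.

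The main point, and the least mechanical step, is the hexagon \eqref{Central Structure is Compatible with Tensor Structure}, which for $F = \mathrm{id}$ (so that the comodule monoidal structure is literally $\otimes_{\commutativeBaseforGeneralHopfAlgebroid}$ with associator the identity) asserts $(\mathrm{id}_{C_1} \otimes Z^M_{C_2}) \circ (Z^M_{C_1} \otimes \mathrm{id}_{C_2}) = Z^M_{C_1 \otimes_{\commutativeBaseforGeneralHopfAlgebroid} C_2}$. Passing to inverses, the left side applied to $c_1 \otimes c_2 \otimes m$ produces $(c_1)_{(0)}\bigl((c_2)_{(0)}m\bigr) \otimes (c_1)_{(1)} \otimes (c_2)_{(1)}$, while the right side, using the description \eqref{General Form for Monoidal Structure on Comodules for Bialgebroid} of the coaction on $C_1 \otimes_{\commutativeBaseforGeneralHopfAlgebroid} C_2$, namely $\rho_L(c_1 \otimes c_2) = (c_1)_{(0)}(c_2)_{(0)} \otimes (c_1)_{(1)} \otimes (c_2)_{(1)}$, produces $\bigl((c_1)_{(0)}(c_2)_{(0)}\bigr)m \otimes (c_1)_{(1)} \otimes (c_2)_{(1)}$; the two agree by associativity of the $\generalHopfAlgebroidWithSourceEqualsTarget$-action on $M$. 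Thus the identity is purely formal once one reduces to $M = \generalHopfAlgebroidWithSourceEqualsTarget$ to keep the various $\otimes_{\commutativeBaseforGeneralHopfAlgebroid}$'s and the left/right module distinctions of \eqref{Module Structure on Tensor Product of Modules for Left Bialgebroid} honest; the bookkeeping with Sweedler notation here is where essentially all the effort goes. Compatibility with the unit comodule is immediate from \eqref{Defining Equation for ZMC}. This establishes $(M, Z^M_{(\td)}) \in \mathcal{Z}(\generalHopfAlgebroidWithSourceEqualsTarget\comod)$.

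Finally, for a morphism $f \colon M \to M'$ in $\generalHopfAlgebroidWithSourceEqualsTarget\modwithdash$, $\generalHopfAlgebroidWithSourceEqualsTarget$-linearity gives $f(c_{(0)}m) = c_{(0)}f(m)$, which is exactly the statement that $f$ intertwines $Z^{M,-1}_{(\td)}$ and $Z^{M',-1}_{(\td)}$; hence $f$ is a morphism $(M, Z^M_{(\td)}) \to (M', Z^{M'}_{(\td)})$ in the center, and functoriality on identities and composites is automatic since the underlying-object functor to $\generalHopfAlgebroidWithSourceEqualsTarget\comod$ is faithful. For the graded statement, if $\generalHopfAlgebroidWithSourceEqualsTarget$ is a graded Hopf algebroid then the coaction, counit, multiplication and antipode are all graded, so $Z^{M,-1}_C$ — and hence $Z^M_C$, by \cref{Inverse to ZMC Has Explicit Formula} — preserves degrees for $M \in \generalHopfAlgebroidWithSourceEqualsTarget\modwithdash_{\text{gr}}$ and $C \in \generalHopfAlgebroidWithSourceEqualsTarget\comod_{\text{gr}}$, and all the verifications above are visibly degree-compatible; this yields the functor $\generalHopfAlgebroidWithSourceEqualsTarget\modwithdash_{\text{gr}} \to \mathcal{Z}(\generalHopfAlgebroidWithSourceEqualsTarget\comod_{\text{gr}})$.
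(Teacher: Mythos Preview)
Your proposal is correct and follows essentially the same approach as the paper: both reduce via the square \eqref{Commutative Diagram Reducing General Module Claims to Module of Algebra Itself} to $M=\generalHopfAlgebroidWithSourceEqualsTarget$, verify naturality in $C$ and the hexagon by direct Sweedler computation with the inverses $Z_C^{-1}$ using the tensor coaction formula \eqref{General Form for Monoidal Structure on Comodules for Bialgebroid}, and handle morphisms via $\generalHopfAlgebroidWithSourceEqualsTarget$-linearity. The only cosmetic difference is that the paper argues naturality for $Z_C^{\generalHopfAlgebroidWithSourceEqualsTarget}$ directly (noting it is built from the coaction, hence natural) rather than for its inverse, and invokes \eqref{Commutative Diagram Reducing General Module Claims to Module of Algebra Itself} again for the morphism statement rather than computing with $f(c_{(0)}m)=c_{(0)}f(m)$.
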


\begin{proof}
Observe that the map $Z_{C}^{\generalHopfAlgebroidWithSourceEqualsTarget}$ is induced by coaction map of $C$, which is a map of right $\commutativeBaseforGeneralHopfAlgebroid$-modules. Therefore the maps $Z_{C}^{\generalHopfAlgebroidWithSourceEqualsTarget}$ are natural in $C$. Using the diagram \eqref{Commutative Diagram Reducing General Module Claims to Module of Algebra Itself} we therefore see that $Z_{C}^{M}$ is natural in $C$ for any $\generalHopfAlgebroidWithSourceEqualsTarget$-module $M$. 

We wish to show that $(M, Z_C^M)$ gives an object in $\Z(\generalHopfAlgebroidWithSourceEqualsTarget\comod)$; to do this, we must show that, for any $B, C \in \generalHopfAlgebroidWithSourceEqualsTarget\text{-comod}$  that \[Z_{B \otimes C} = (\text{id}_B \otimes Z_C) \circ (Z_B \otimes \text{id}_C).\] Of course, we may show that their inverses agree.  
    
As above, using \eqref{Inverse to ZMc in General} it suffices to prove this equality in the case $M = \generalHopfAlgebroidWithSourceEqualsTarget$. Since all of these maps are right $\generalHopfAlgebroidWithSourceEqualsTarget$-module maps, we may show this equality on objects of the form $b \otimes c \otimes 1$. We see that \[Z_{B \otimes C}^{-1}(b \otimes c \otimes 1) = (b \otimes c)_{(0)} \otimes (b \otimes c)_{(1)} = b_{(0)}c_{(0)} \otimes b_{(1)} \otimes c_{(1)}\] by the definition of the comodule structure on the tensor product as in \eqref{General Form for Monoidal Structure on Comodules for Bialgebroid}. On the other hand, we have \[(Z_B^{-1} \otimes \text{id}_C) \circ (\text{id}_B \otimes Z_C^{-1})(b \otimes c \otimes 1) = (Z_B^{-1} \otimes \text{id}_C)(b \otimes c_{(0)} \otimes c_{(1)}) = b_{(0)}c_{(0)} \otimes b_{(1)} \otimes c_{(1)}\] obtained from twice applying \cref{Inverse to ZMC Has Explicit Formula}, and so we see these maps agree and thus $M$ indeed gives rise to an object of $\Z(\generalHopfAlgebroidWithSourceEqualsTarget\comod)$. Using the commutative diagram \eqref{Commutative Diagram Reducing General Module Claims to Module of Algebra Itself}, one also shows that any map $M_1 \to M_2$ of $\generalHopfAlgebroidWithSourceEqualsTarget$-modules induces a map in $\Z(\generalHopfAlgebroidWithSourceEqualsTarget\comod)$.
\end{proof}

\begin{proof}[Proof of \cref{In Specific Context Module to Trivial Comodule Gives Central Functor with Explicit Centrality and Inverse}]
We have defined the functor in \cref{Construction of Overline Ngo as Non Braided Functor}; it remains to show it is fully faithful and braided monoidal. Since the forgetful functor from the Drinfeld center of a monoidal category to the monoidal category is faithful, the functor of \cref{Construction of Overline Ngo as Non Braided Functor} is faithful. To see that it is full, assume that $M, M' \in \generalHopfAlgebroidWithSourceEqualsTarget\modwithdash$ and assume $f: M \to M'$ is a map in $\Z(\generalHopfAlgebroidWithSourceEqualsTarget\comod)$. We wish to show that $f$ is a map of $\generalHopfAlgebroidWithSourceEqualsTarget$-modules. However, observe that if we view $\generalHopfAlgebroidWithSourceEqualsTarget$ as a left $\generalHopfAlgebroidWithSourceEqualsTarget$-comodule via the left comultiplication of the left bialgebroid, the diagram
\begin{equation*}
\xymatrix@R+2em@C+2em{ 
\generalHopfAlgebroidWithSourceEqualsTarget \otimes_{\commutativeBaseforGeneralHopfAlgebroid} M \ar[r]^{Z_{\generalHopfAlgebroidWithSourceEqualsTarget}^{M, -1}} \ar[d]_{\text{id} \otimes f} & 
M \otimes_{\commutativeBaseforGeneralHopfAlgebroid} \generalHopfAlgebroidWithSourceEqualsTarget \ar[d]_{f \otimes \text{id}} \ar[r]^{\text{id} \otimes \epsilon_L} \ar[r]^{\sim} & 
M \otimes_{\commutativeBaseforGeneralHopfAlgebroid} \commutativeBaseforGeneralHopfAlgebroid \ar[d]^{f \otimes \text{id}} \ar[r]^{\sim} & 
M \ar[d]^f \\
\generalHopfAlgebroidWithSourceEqualsTarget \otimes_{\commutativeBaseforGeneralHopfAlgebroid} M' \ar[r]^{Z_{\generalHopfAlgebroidWithSourceEqualsTarget}^{M, -1}} & 
M' \otimes_{\commutativeBaseforGeneralHopfAlgebroid} \generalHopfAlgebroidWithSourceEqualsTarget \ar[r]^{\text{id} \otimes \epsilon_L} \ar[r]^{\sim} & 
M' \otimes_{\commutativeBaseforGeneralHopfAlgebroid} \commutativeBaseforGeneralHopfAlgebroid \ar[r]^{\sim} & 
M'
}
\end{equation*}
commutes: the leftmost square commutes since $f$ is a map in $\Z(\generalHopfAlgebroidWithSourceEqualsTarget\comod)$, the middle and final square commute since the counit $\epsilon_L$ for the left bialgebroid is $\commutativeBaseforGeneralHopfAlgebroid$-linear, and the unlabeled maps are induced by the module structure and in particular commute. Traversing the three arrows on the top of the diagram, we obtain the map $b \otimes m \mapsto \epsilon_L(b_{(1)})b_{(0)}m = bm$ where this equality holds since $\generalHopfAlgebroidWithSourceEqualsTarget$ is a coalgebra; similarly the bottom composition of the three arrows on the bottom of the diagram give the map $b \otimes m' \mapsto bm'$, and so this diagram shows that $f$ must be a map of $\generalHopfAlgebroidWithSourceEqualsTarget$-modules, as required. 

We now show braided monoidality. Denote the monoidal structure in $\mathcal{Z}(\generalHopfAlgebroidWithSourceEqualsTarget\text{-comod})$ by $\star$. We claim that the monoidality isomorphism \[(M, Z_C^M) \star (N, Z_C^N) \cong (M \otimes N, Z_C^{M \otimes N})\] is simply an equality, which of course is natural in $M$ and $N$. Using an analogous diagram to \eqref{Inverse to ZMc in General} it suffices to prove this when $M = N = \generalHopfAlgebroidWithSourceEqualsTarget$. 

Of course, the underlying (trivial) $\generalHopfAlgebroidWithSourceEqualsTarget$-comodules agree, and so it remains to verify the central structures are the same or, in other words, we wish to show that \begin{equation}\label{Equality of Central Morphisms to Show Monoidality}(Z^{\generalHopfAlgebroidWithSourceEqualsTarget}_C \otimes \text{id}_{\generalHopfAlgebroidWithSourceEqualsTarget}) \circ (\text{id}_{\generalHopfAlgebroidWithSourceEqualsTarget} \otimes Z^{\generalHopfAlgebroidWithSourceEqualsTarget}_C) = Z^{\generalHopfAlgebroidWithSourceEqualsTarget \otimes_{\commutativeBaseforGeneralHopfAlgebroid} \generalHopfAlgebroidWithSourceEqualsTarget}_C\end{equation} for any $C \in \generalHopfAlgebroidWithSourceEqualsTarget$-comod. As above, we may check that the inverse of both maps agree and, since both maps are right $\generalHopfAlgebroidWithSourceEqualsTarget \otimes_k \generalHopfAlgebroidWithSourceEqualsTarget$-module maps, we may check that these inverses agree on objects of the form $c \otimes 1 \otimes 1$. Now one directly checks that the inverse of the left map in \eqref{Equality of Central Morphisms to Show Monoidality} takes $c \otimes 1 \otimes 1$ to $c_{(0)} \otimes c_{(1, 0)} \otimes c_{(1, 1)}$ and that \[Z^{\generalHopfAlgebroidWithSourceEqualsTarget \otimes_{\commutativeBaseforGeneralHopfAlgebroid} \generalHopfAlgebroidWithSourceEqualsTarget, -1}_C(c \otimes 1 \otimes 1) = c_{(0)}(1 \otimes 1) \otimes c_{(1)} = c_{(0, 0)} \otimes c_{(0, 1)} \otimes c_{(1)}\] using the formula \eqref{Module Structure on Tensor Product of Modules for Left Bialgebroid}. Both of these sums agree by the definition of a comodule.

Finally, since our monoidality morphisms are equalities, to show that this functor is braided it suffices to prove that the braiding $Z_{N}^M: M  \otimes  N\to N\mathop{\otimes} M$ is the swap morphism for any $M, N \in \generalHopfAlgebroidWithSourceEqualsTarget\modwithdash$. However, this follows by direct computation: \[Z^M_{N}(m \otimes n) = n^{(1)} \otimes A(n^{(0)})m = n \otimes m\] since the comodule structure on $N$ is trivial. 
\end{proof}
\printbibliography
\end{document}